\renewcommand{\eprint}[1]{\href{https://arxiv.org/abs/#1}{#1}}
\DeclareMathOperator{\Hom}{Hom}
\DeclareMathOperator{\Lie}{Lie}
\newcommand{\hcL}{\hat{\cL}}
\newcommand{\mfa}{\mathfrak{a}}
\newcommand{\mfp}{\mathfrak{p}}
\newtheorem{Thm}{Theorem}[section]
\newtheorem{Lem}[Thm]{Lemma}
\newtheorem{Prop}[Thm]{Proposition}
\newtheorem{Cor}[Thm]{Corollary}
\theoremstyle{definition}
\newtheorem{Def}[Thm]{Definition}
\theoremstyle{remark}
\newtheorem{Rem}[Thm]{Remark}
\newtheoremstyle{named}{}{}{\itshape}{}{\bfseries}{.}{.5em}{#1 #3}
\theoremstyle{named}
\def\C{\mathbb{C}}
\def\Z{\mathbb{Z}}
\def\P{\mathbb{P}}
\def\fb{\mathfrak{b}}
\def\g{\mathfrak{g}}
\def\Frenkel:2013uda{\mathfrak{h}}
\def\sl{\mathfrak{sl}}
\def\cD{\mathcal{D}}
\def\cL{\mathcal{L}}
\def\cO{\mathcal{O}}
\def\cV{\mathcal{V}}
\def\cW{\mathcal{W}}
\def\a{\alpha}
\def\b{\beta}
\def\bo{\textbf{o}}
\def\=>{\Longrightarrow}
\def\to{\longrightarrow}
\def\o+{\oplus}
\def\bo+{\bigoplus}
\def\<{\langle}
\def\>{\rangle}
\def\({\left(}
\def\){\right)}
\def\^{\wedge}
\def\+{\dagger}
\def\half{\frac{1}{2}}
\def\dd[#1,#2]{\frac{d#1}{d#2}}
\def\del[#1,#2]{\frac{\partial #1}{\partial #2}}
\def\over[#1]{\overline{#1}}
\def\vec[#1]{\overrightarrow{#1}}
\def\mr@ignsp#1 {\ifx\:#1\@empty\else #1\expandafter\mr@ignsp\fi}%
\newcommand{\upxiltiref}[1]{\begingroup
\xdef\mr@no@sparg{\expandafter\mr@ignsp#1 \: }%
\def\mr@comma{}%
\@for\mr@refs:=\mr@no@sparg\do{\mr@comma\def\mr@comma{,}\ref{\mr@refs}}%
\endgroup}
\newcommand{\hypref}[2]{\ifx\href\asklFrenkel:2013udaas #2\else\href{#1}{#2}\fi}
\newcommand{\figref}[1]{Fig.~\upxiltiref{#1}}
\tikzset{->-/.style={decoration={
  markings,
  mark=at position .5 with {\arrow{latex}}},postaction={decorate}}}
\tikzset{
    >=latex
    }
\newcommand{\wt}{\widetilde}
\newcommand{\nc}{\newcommand}
\nc{\on}{\operatorname}
\nc{\la}{\lambda}
\nc{\wh}{\widehat}
\nc{\ghat}{\wh\g}
\nc{\mb}{\mathbf}
\begin{document}
\title{3d Mirror Symmetry for Instanton Moduli Spaces}

\author[P. Koroteev]{Peter Koroteev}
\address[Peter Koroteev]{\newline
Department of Mathematics,\newline
University of California,\newline
Berkeley, CA 94720, USA, and\newline
Rutgers University, Piscataway, NJ, 08854, USA \newline
Email: pkoroteev@berkeley.edu\newline
\href{https://math.berkeley.edu/\textasciitilde pkoroteev/}{https://math.berkeley.edu/\textasciitilde pkoroteev/}
}

\author[A. M. Zeitlin]{Anton M. Zeitlin}
\address[Anton M. Zeitlin]{\newline
          Department of Mathematics, \newline
          Louisiana State University, \newline
          Baton Rouge, LA, USA; \newline
          Email: zeitlin@lsu.edu
          \newline \href{http://math.lsu.edu/\textasciitilde zeitlin}{http://math.lsu.edu/\textasciitilde zeitlin}}

\date{\today}

\numberwithin{equation}{section}

\begin{abstract}
We prove that the Hilbert scheme of $k$ points on $\mathbb{C}^2$ (Hilb$^k[\mathbb{C}^2]$) is self-dual under three-dimensional mirror symmetry using methods of geometry and integrability. Namely, we demonstrate that the corresponding quantum equivariant K-theory is invariant upon interchanging its K\"ahler and equivariant parameters as well as inverting the weight of the $\mathbb{C}^\times_\hbar$-action. First, we 
find a two-parameter family $X_{k,l}$ of self-mirror quiver varieties of type A and study their quantum K-theory algebras. 
The desired quantum K-theory of Hilb$^k[\mathbb{C}^2]$ is obtained via direct limit $l\to\infty$ and by imposing certain periodic boundary conditions on the quiver data. Throughout the proof, we employ the quantum/classical (q-Langlands) correspondence between XXZ Bethe Ansatz equations and spaces of twisted $\hbar$-opers.
In the end, we propose the 3d mirror dual for the moduli spaces of torsion-free rank-$N$ sheaves on $\mathbb{P}^2$ with the help of a different (three-parametric) family of type A quiver varieties with known mirror dual.
\end{abstract}

\maketitle

\setcounter{tocdepth}{1}
\tableofcontents


\section{Introduction}

\subsection{3d Mirror Symmetry}
3d mirror symmetry is a relatively new phenomenon in mathematics.
Like the well-studied case of 2d mirror symmetry, it assigns to a given space $X$ its {\it mirror dual} $X^!$ with certain relations to the geometric data of $X$. The 2d Mirror symmetry turned out to be very fruitful, first in attacking the problems in theoretical physics and enumerative geometry via Gromov-Witten theory; later it expanded its value to other parts of mathematics through several possible formulations. Similar success is expected from the 3d counterpart.

Both 2d and 3d mirror symmetries originate in physics. While 2d mirror symmetry has its roots in the topological string theory \cite{Hori:2003ic}, 3d mirror symmetry phenomenon relates $\mathcal{N}=4$ supersymmetric 3d gauge theories \cite{Intriligator:1996ex,deBoer:1996ck}. The moduli space of supersymmetric vacua of such 3d theories can in general be represented as a union of two hyper-K\"ahler cones which are called {\it Higgs branch} and {\it Coulomb branch}. The resolution of the former is described as a holomorphic symplectic quotient and for a wide variety of theories are known as Nakajima quiver varieties \cite{nakajima1994}. They are parameterized by vacuum expectation values of 3d hypermultiplets. The latter spaces describe the physics of 3d vector multiplets and, unlike Higgs branches, they receive quantum corrections. In the past several years a rigorous mathematical treatment of 3d $\mathcal{N}=4$ Coulomb branches has been initiated by \cite{Nakajima:2015txa,Braverman:2016aa}.  They usually do not have a nice formulation as a holomorphic symplectic quotient except for several rare but important situations such as quiver varieties of type A, hypertoric varieties, Hilbert scheme of points in $\mathbb{C}^2$ and its instanton generalizations, etc.

Both Higgs and Coulomb branches admit deformations by two families of complex-valued parameters --  masses and Fayet-Iliopoulos (FI) parameters that play the role of 3d Abelian coupling constants. The 3d mirror symmetry is understood as the transformation between two theories where Higgs and Coulomb branches are exchanged (together with the corresponding parameters). In other words, if theories A and B are 3d mirror dual to each other then the Coulomb branch of theory A is isomorphic to the Higgs branch of theory B as hyper-K\"ahler spaces and vice versa.

There are several formulations of what quantities one would like to compare for 3d mirror dual spaces $X$ and $X^!$. 
One may follow the ``enumerative" path of 2d Mirror symmetry, which relates enumerative invariants counting rational curves of $X$ and $X^!$ through Gromov-Witten theory. In this paper, we follow the enumerative approach applied to the three-dimensional case. Intrinsically it is the closest to the physical interpretation of 3d mirror symmetry, stating that partition functions of the dual theories agree. 

Assuming that  $X$ and $X^!$ are both Nakajima quiver varieties, mathematically, 3d mirror symmetry corresponds to the relations between {\it vertex functions} of the equivariant quasimaps K-theoretic counts to Nakajima varieties.  Given such variety $X$ the FI parameters $\{z_i\}$, in this case, are known as {\it K\"ahler parameters} which play a role of quantum parameters in quantum K-theory, the mass parameters $\{a_i\}$ are known as the {\it equivariant parameters} and correspond to the characters of the maximal torus $A$, acting on the framing of $X$. Upon 3d mirror symmetry K\"ahler and equivariant parameters interchange.
 
For a given Nakajima variety $X$, vertex functions are analytic in $\{z_i\}$. Moreover, it is known that they are the solutions to the difference equations in both K\"ahler and equivariant parameters, known as quantum Knizhnik-Zamolodchikov (qKZ) \cite{Frenkel:92qkz} and dynamical equations correspondingly \cite{Tarasov_2002, Tarasov_2005}. Upon 3d mirror symmetry, the equations are interchanged and the vertex functions for $X^!$ are analytic in $\{a_i\}$. Thus these two solutions, analytic in $\{z_i\}$ and $\{a_i\}$-variables are known as $z$ and $a$ solutions of the qKZ equations. The transition matrix between these solutions introduced geometrically by Aganagic and Okounkov \cite{Aganagic:2016aa} is known as an elliptic stable envelope and their comparison for $X$ and $X^!$ is crucial for the establishment of the full enumerative setup of 3d mirror symmetry. For a recent study of elliptic stable envelopes in the context of 3d mirror symmetry and its flavors see \cite{Kononov:2020aa,Rimanyi:2019aa,Rimanyi:2019ab,Smirnov:2020aa,Dinkins:2020aa,Dinkins:2020ab,korzeit,BRADEN20102002,Bullimore:2016nji,braden2016quantizations}.

\subsection{Quantum K-theory Rings and 3d Mirror Symmetry.}
The vertex functions are not the only objects that one could compare for mirror pairs of Nakajima varieties. Like in Gromov-Witten theory in the 2d mirror symmetry case, where the counts of rational curves produced the deformation of the cohomology ring of $X$, known as quantum cohomology ring, the K-theoretic counts of quasimaps produce similar deformation in the K-theoretic setting, namely the quantum equivariant K-theory ring $K^q_T(X)$, where $T=\mathbb{C}^{\times}_{\hbar}\times A$ is an extension of the torus $A$ by the algebraic torus $\mathbb{C}^{\times}_{\hbar}$ scaling cotangent directions in algebraic symplectic reduction producing Nakajima variety. One obtains a classical equivariant K-theory ring of $X$, merely by sending all K\"ahler parameters to zero\footnote{We want to emphasize that we are working with the equivariant \textit{quasimap} quantum K-threory which treats $X$ as a GIT quotient and is motivated by calculations in 3d SUSY gauge theories in which the corresponding gauge linear sigma models as well-defined.}.

As it will be shortly explained below, the relations in the quantum K-theory ring are defined by the asymptotic behavior of the solutions to qKZ equations and the statement of the mirror symmetry on the level of quantum K-theory rings is simply the isomorphism of algebras
\begin{eqnarray}\label{isomk}
K^q_T(X)=K^q_{T'}(X^!)\,.
\end{eqnarray}
In this paper, we shall prove this theorem in most generality when $X$ and $X^!$ are quivers of type A. 

The most important set of quiver varieties for us will be the two-parameter family $k,l\in\mathbb{N}$ of quivers which we call $X_{k,l}$  \figref{fig:AmodelNew}.
\begin{figure}[h]
\includegraphics[scale=0.28]{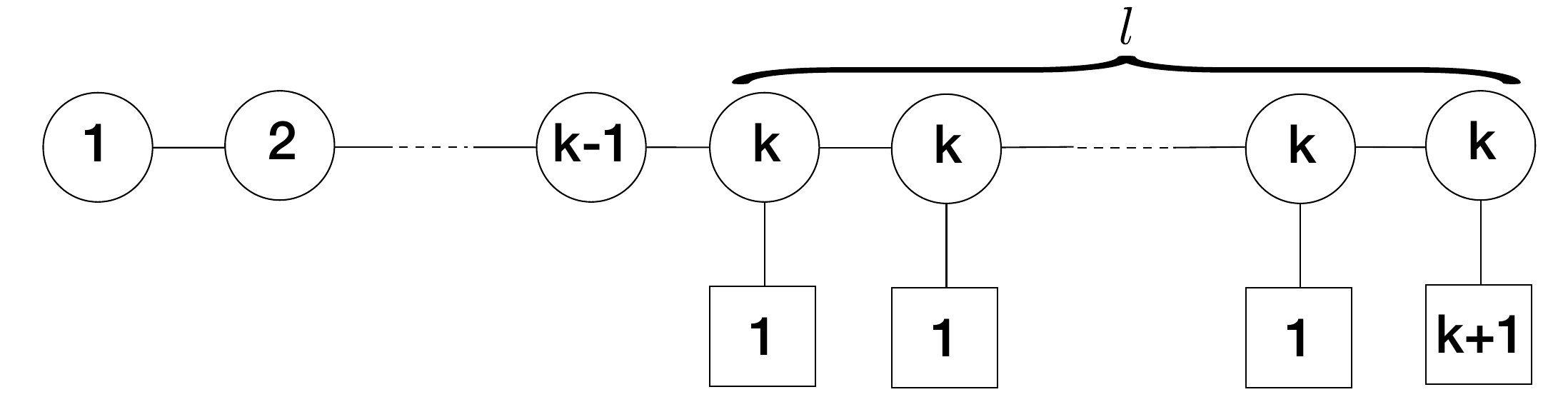}
\caption{Quiver family $X_{k,l}$.}
\label{fig:AmodelNew}
\end{figure}
Each representative of this family is 3d self-mirror dual. This fact alone is not surprising -- each quiver has $k+l$ equivariant and $k+l$ K\"ahler variables which are exchanged under 3d mirror symmetry. 

Our main theorem proves that the Hilbert scheme of $k$ points on $\mathbb{C}^2$ is self-dual in the sense of \eqref{isomk}, viewed as the simplest quiver variety with one loop. The idea of the proof is presented in \figref{fig:idea}. First, we take the direct limit $l\to\infty$ in the $X_{k,l}$ family to produce a one-parameter family of self-mirror quivers $X_{k,\infty}$ shown on the left in \eqref{isomk}. Second, after imposing certain periodic conditions on the quiver data we connect its K-theory with that of the quiver on the right of the figure. As a consequence, the latter quiver variety is also a self-mirror dual.
\begin{figure}[h]
\includegraphics[scale=0.32]{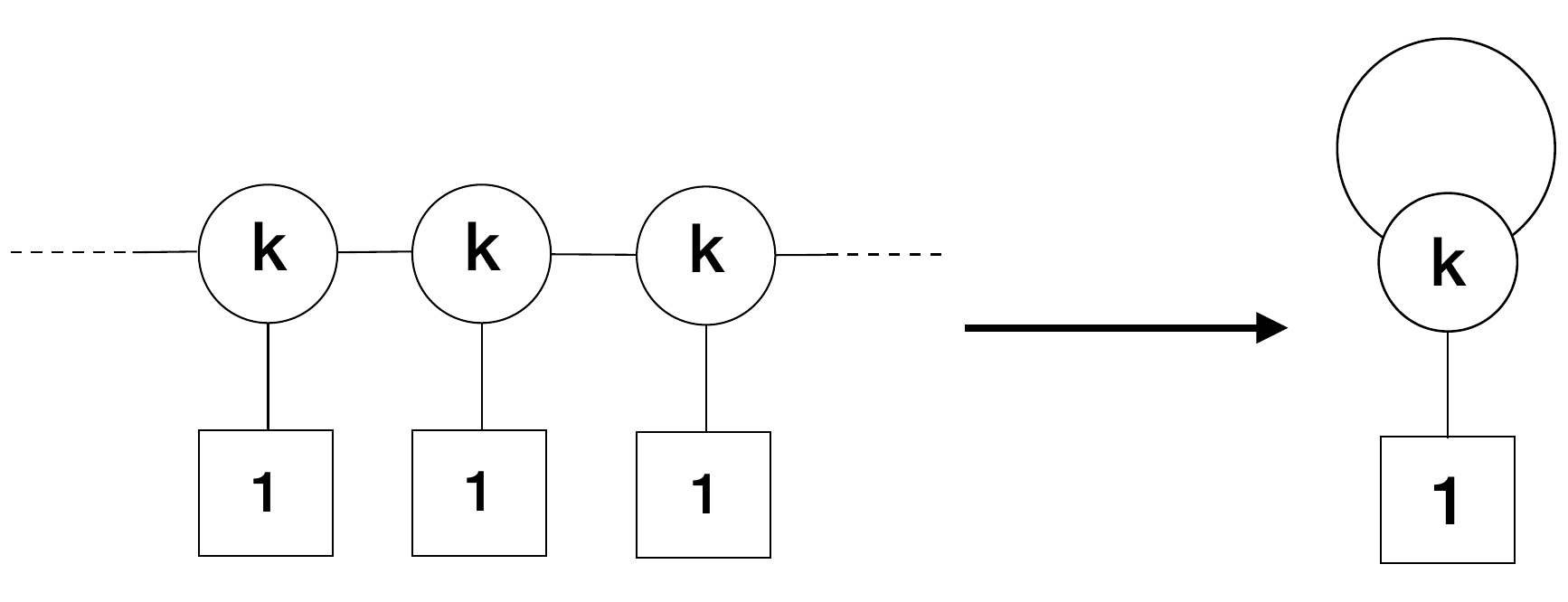}
\caption{Main idea of the proof of 3d self-mirror duality for ADHM spaces.}
\label{fig:idea}
\end{figure}

In the last Section, we also indicate how to generalize the mirror map to the Atiyah-Drinfeld-Hitchin-Manin (ADHM) instanton moduli spaces.

\subsection{Quantum K-theory rings of Nakajima Varieties and Bethe Algebra}
The difference equations, which govern the quasimap enumerative geometry of quiver varieties,  
have an intriguing connection with representation theory. Namely, the localized equivariant K-theory can be identified with the weight subspace of the tensor product of certain representations of the quantum algebra associated with a given quiver. 
In the case of quivers of ADE type, these representations are the finite-dimensional representations of the corresponding quantum affine Lie algebras. For quivers with one loop these algebras are known as {\it toroidal quantum algebras} \cite{Feigin:2009aa,Schiffmann_2013}. 
The aforementioned identification comes from the natural action of the quantum algebra in this space, the so-called R-matrix in particular. The appearance of the qKZ equations governing the vertex functions naturally emerges from this geometric realization of the R-matrix. Upon this identification, the equivariant parameters correspond to the evaluation parameters for quantum affine algebras and K\"ahler parameters parametrize the Cartan twist of the R-matrix. The deformation parameter of the quantum group $\hbar$ can be identified with the character of the algebraic torus scaling the cotangent directions in the symplectic reduction that produces the quiver variety.

At the same time, this representation space of the quantum algebra can be viewed as the {\it physical space} of the integrable model, known as XXZ spin chain. The essence of integrability is captured in the family of mutually commuting operators, known as {\it Bethe algebra} acting on this space, which is constructed using quantum algebra generators. A particular set of such operators naturally emerge from qKZ equations in the asymptotic which sends the difference parameter to zero and converts the qKZ equation into the eigenvalue problem for these operators. It is the question of simultaneous diagonalization of these operators' integrable family, which physicists address as finding the spectrum of the integrable model. In the 1980s physicists invented a procedure called {\it algebraic Bethe Ansatz} to solve this problem explicitly and diagonalize the elements of Bethe algebra simultaneously. The eigenvalues were parametrized by the solutions of the algebraic system of equations known as {\it Bethe Ansatz equations}. From the qKZ perspective, these equations could be read off the asymptotics 
of its solutions, which is defined by the so-called Yang-Yang function $\mathcal{Y}(\{a_i\}, \{z_i\},\hbar)$ \cite{Nekrasov:2011bc,Nekrasov:2009uh,Nekrasov:2009ui,Nekrasov:2013aa}, which, in its turn depends on K\"ahler and equivariant parameters. The Bethe equations emerge as the critical conditions $a_i\partial_{a_i}\mathcal{Y}=0$.

From a geometric perspective, Bethe algebra can be identified with the equivariant quantum K-theory ring of a Nakajima variety, which we discussed above. The natural generators of the quantum K-theory, the exterior powers of the tautological bundles are the natural generators of the Bethe algebra as well -- for each such tautological bundle the generating function for its exterior powers is known as a Baxter Q-operator in the theory of quantum integrable models \cite{Pushkar:2016qvw,Koroteev:2017aa}.

That leads us to another point of view on Bethe Ansatz equations, which is crucial for the following. Each such Q-operator associated with the tautological bundle has a counterpart, which geometrically can be characterized as the generator of the exterior powers of a ``dual'' bundle. Together they satisfy nonlinear difference equations for its eigenvalues, known as the $QQ$-system. Under mild non-degeneracy conditions, the space of solutions of the $QQ$-system is equivalent to the space of solutions of the system of Bethe Ansatz equations.

The $QQ$-system for semisimple Lie algebras can be understood from a representation-theoretic perspective: these are the relations between generators of the Grothendieck ring of the extended category of finite-dimensional modules, obtained by adding certain infinite-dimensional representations (known as prefundamenal representations) of Borel subalgebra only (see \cite{Frenkel:2016} for details). The $QQ$-systems in various contexts has been studied in \cite{Bazhanov:1998dq,MVflag,Mukhin_2003,Masoero_2016_SL,Masoero:2018rel}.

This $QQ$-system has another geometric realization from the difference version of certain connections on $\mathbb{P}^1$ known as $Z$-twisted $\hbar-$opers, which we discuss in the next subsection.

\subsection{The $(G,\hbar)$-opers and $QQ$-systems}
The $Z$-twisted $(G,\hbar)$-opers (here $G$ stands for a complex semisimple Lie group) with regular singularities, producing the $QQ$-systems for ADE type as well as for cyclic quivers, can be viewed as a deformation of the oper connections for principal G-bundles on $\mathbb{P}^1$, playing an important role in the geometric Langlands correspondence. In this example, studied in detail by E. Frenkel and his collaborators \cite{Frenkel:aa,Feigin:2006xs}, the opers with regular singularities and trivial monodromies were identified with the solutions of Bethe Ansatz equations of the Gaudin model. 
The latter is a certain scaling limit of the XXZ model. It turns out that  $(G,\hbar)$-opers, which are certain $\hbar$-connections in the sense of Baranovsky and Ginzburg \cite{Baranovsky:1996}, produce a correct $\hbar$-deformation of this correspondence replacing Gaudin model with XXZ model. The $\hbar$-connection is related to $\hbar$ multiplicative action on $\mathbb{P}^1$: $z\rightarrow \hbar z$, so that locally it is understood as a group-valued object $A(z)\in G(z)$, with the $\hbar$-gauge transformations $A(z)\to g(\hbar z)A(z)g^{-1}(z)$, $g(z)\in G(z)$. The oper condition produces certain constraints on $A(z)$ related to Borel reductions of the corresponding G-bundle over $\mathbb{P}^1$, where this connection acts. In the case of $G=SL(r+1)$ one can pass to the associated bundles and replace the reduction by Borel subgroup with the full flag of vector bundles. The $Z$-twisted condition is the statement that the oper is gauge equivalent to a regular semisimple element $Z\in H\subset G$. Of course, it is defined up to the action of the Weyl group. To fix the element $Z$, one requires the corresponding $\hbar$-connection to preserve another Borel reduction of G-bundle (in the case of $SL(r+1)$ another full flag of vector bundles). The resulting object consisting of the following elements: i) principal G-bundle over $P^1$, ii) its reduction to Borel subgroup, producing $(G,\hbar)$-oper condition, iii) the other Borel reduction which this  $(G,\hbar)$-oper preserves, and finally iv) the $Z\in H$ twist, is known as Miura $(G,\hbar)$-oper.

As it was proved in \cite{Frenkel:2020}, there is a one-to-one correspondence between $Z$-twisted Miura $(G,\hbar)$-oper with certain non-degeneracy conditions and solutions of the equations of the $QQ$-system/XXZ Bethe Ansatz equations. 

The {\it framed quiver}, which defines the quiver variety 
together with the equivariant data, can as well define a class of $Z$-twisted Miura $(G,\hbar)$-opers: we identify parameters of framing with regular singularities, the K\"ahler parameters $\{z_i\}$ with the root data of 
$Z=\prod_iz_i^{\check{\alpha}_i}$ and the degrees of the polynomials in the $QQ$-system with the ranks of the tautological bundles of quiver variety.

Using the geometric interpretation, one can identify the algebra of functions on the space of Miura $(G,\hbar)$-opers associated with a given quiver with the algebra $K^q_T(X)$ for a given quiver variety $X$. 

An interesting feature of the introduced formalism is the following. In \cite{Frenkel:2020} we developed the transformations, which we called quantum B\"acklund transformations, between various Miura $(G,\hbar)$-opers associated to a given $(G,\hbar)$-oper. 
These various Miura $(G,\hbar)$-opers correspond to quiver varieties (provided that they exist) with isomorphic quantum K-theory algebras. The quantum B\"acklund transformations correspond to the change of stability parameter. Thus functions on $(G,\hbar)$-oper produce a family of isomorphic algebras to a given $(G,\hbar)$-oper.

We note, that the difference oper construction can be used similarly with additive action on $\mathbb{P}^1$. Then the related $QQ$-system produces the Bethe equations for $XXX$ integrable model, which serves as an intermediate model between XXZ and Gaudin integrable models. Instead of the quantum group, it is based on Yangians. Geometrically, in the context of quiver varieties, $QQ$-system produces relations for the quantum cohomology ring.

Some of our primary targets in this paper are the $A_r$-quivers and their generalizations, $A_{\infty}$-quivers, closely related to ADHM instanton spaces. In \cite{Koroteev:2020mxs} we introduced a notion of $Z$-twisted Miura $(\overline{GL}(\infty), \hbar)$-opers, based on a completion of $GL(\infty)$ group, described by the infinite $QQ$-systems. One can impose periodic conditions on them according to the natural translational symmetry of the $A_{\infty}$ Dynkin diagram, thereby leaving us with a finite number of parameters. We refer to such 
$(\overline{GL}(\infty), \hbar)$-opers as {\it toroidal} $\hbar$-opers. The resulting algebra of functions on toroidal $\hbar$-opers coincides with the equivariant quantum K-theory of ADHM instanton spaces treated as cyclic quiver varieties.

In \cite{KSZ} we introduced another system of parameters for an $(SL(r+1),\hbar)$-oper, which corresponds to the roots of the polynomials defining the section of the line bundle from the flag of bundles defining the Miura $(SL(r+1),\hbar)$-oper connection (cf. the standard Bethe parameters corresponding to the roots of polynomials of the $QQ$-system).

These parameters, which we refer to as momenta of {\it oper magnetic frame}, allow an interesting geometric formulation. For example, \cite{Gaiotto:2013bwa,Koroteev:2017aa}, in the case of quiver variety corresponding to cotangent bundle to full flag variety, momenta of magnetic frame accompanied with K\"ahler parameters understood as the values of coordinates in symplectic phase space, provide the interpretation of the algebra of functions on the space of $Z$-twisted $(SL(r+1),\hbar)$-Miura opers as algebra of functions on the intersection of two Lagrangian subvarieties. One of those subvarieties is given by setting coordinates equal to the K\"ahler parameters and another one is defined by setting Hamiltonians of {\it trigonometric Ruijsenaars-Schneider (tRS) integrable many-body system} to be equal to symmetric functions of the equivariant parameters. One can generalize that interpretation to other quivers by using the degeneration of K\"ahler parameters in the case of a full flag (which means we will consider degenerate Miura $\hbar$-opers for the full flag). The resulting momenta variables, in this case, will be referred to as momenta of the {\it true magnetic frame}. They are in one-to-one correspondence with the oper magnetic frame.

This surprising relation between quantum spin chain integrable models of XXZ, XXX, and Gaudin type, as well as some other variations, and multiparticle systems which involve tRS systems and their various limits like Calogero model, is known as {\it quantum/classical duality} \cite{Gaiotto:2013bwa}. In the following, we will describe other incarnations of the quantum/classical duality, their relations to 3d mirror symmetry, and string theory.

\subsection{Quantum/Classical Duality and 3d Mirror Symmetry as Bispectral Duality}
On the level of integrable systems this puts 3d mirror symmetry in the network of the so-called {\it bispectral dualities} (in fact, terms 3d mirror and bispectral can be used interchangeably in this context). These dualities have been studied extensively in the literature on integrable systems and representation theory \cite{Feher:2009uw,Mukhin:2009uh,Feher:2010va,Feher:2012wc,Mukhin:wg}. More recently Gaiotto and the first author widely generalized these results by extending the network of dualities (see Section 4 in \cite{Gaiotto:2013bwa}). One can study bispectral dualities among tRS models which live on the classical side of the quantum/classical duality as well as among XXZ spin chains that belong to the quantum side. Thus at the trigonometric level (without invoking elliptic functions) bispectral dualities and quantum/classical dualities together comprise a closed network of dualities (see \figref{fig:network}). 
\begin{figure}[h]
\includegraphics[scale=0.4]{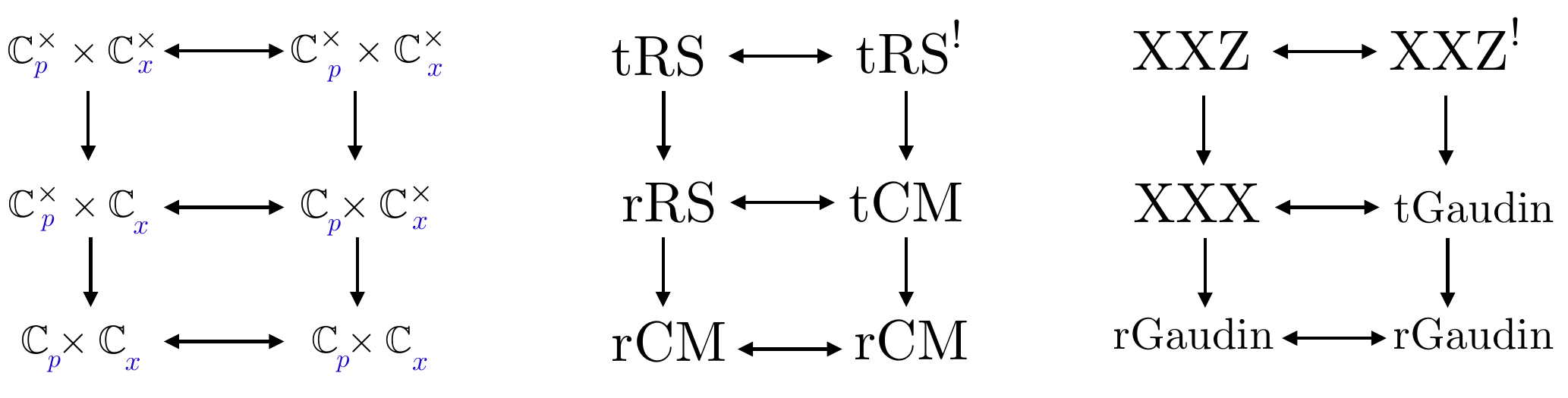}
\caption{The network of bispectral and quantum/classical dualities with additive $\mathbb{C}$ and multiplicative $\mathbb{C}^\times$ dependence on momenta $p$ and coordinates $x$ between classical many-body systems (middle) and quantum spin chains (right).}
\label{fig:network}
\end{figure}

The acronyms in the figure are the following. rCM, tCM -- rational and trigonometric Calogero-Moser systems; rRS, tRS -- rational and trigonometric Ruijsenaars-Schneider systems; rGaudin, tGaudin -- rational and trigonometric Gaudin models, XXX - Heisenberg spin chain. The horizontal arrows are the bispectral dualities that interchange momenta $p$ and coordinates $x$ in the corresponding classical systems in the middle of \figref{fig:network}. For instance, the Hamiltonians of the rRS model are trigonometric in momenta and rational in coordinates, and in the tCM model, they are the opposite. The vertical downward arrows show limits in the parameters \cite{Gaiotto:2013bwa}. For instance, if we send the speed of light $c$ to infinity the tRS$^!$ model becomes the tCM model. In the former, the momenta enter its Hamiltonians in the form $\exp(p/c)$ so they become rational in the nonrelativistic limit.

In the right column of \figref{fig:network} we see a similar pattern of dualities and limits for the dual spin chains. As explained above there exists a quantum-classical duality between tRS and XXZ chains. As part of the dictionary tRS coordinates become XXZ twists, tRS energy levels translate into XXZ anisotropies (shifts of the spectral parameter), and the tRS coupling constant is identified with the deformation parameter $\hbar$ on the spin chain. All these variables take values in $\mathbb{C}^\times$. We can now take identical limits on these parameters and express them on the spin chain side. Thusly XXZ can degenerate into XXX or into a trigonometric Gaudin model depending on which parameter is downgraded from $\mathbb{C}^\times$ to $\mathbb{C}$ and so on.

The aspects of the bispectral duality for the toroidal setting were studied previously in \cite{Feigin:2017,Feigin_2019}. It was shown that the Bethe algebras for quantum toroidal  $(\mathfrak{gl}_n,\mathfrak{gl}_m)$, constructed from two sets of parameters -- twists and evaluations parameters, commute with each other. This provides a hint at the existence of the duality between the solutions of the corresponding XXZ Bethe equations.

\vskip.1in

Let us describe the quantum/classical duality here for the case of XXZ Bethe equations for the cotangent bundle to the partial flag variety $T^*\mathbb{F}l_n$ (more details to follow in Section \ref{Sec:tRSElectric}). 
On the classical side, we have a pair of dual tRS models.
The Hamiltonians of the tRS model depend on coordinates $\chi_i$ and momenta $p_i\,,i=1,\dots,n$ by taking traces of powers of the tRS Lax matrix $T$. This Lax matrix together with another diagonal matrix $M$ built out of $\chi_i$ satisfy the defining relation for the Calogero-Moser space \cite{Oblomkov2004} which treats $M$ and $T$ on equal footing up to rescaling $\hbar\to\hbar^{-1}$ \eqref{eq:FlatConNew} (for the tRS model $\hbar$ plays the role of the coupling constant). 
The bispectral duality in this language consists in merely exchanging $M$ with $T$ and $\hbar$ with $\hbar^{-1}$. The dual model, tRS$^!$ is described by a similar relation where $T$ is diagonal and $M$ plays the role of the tRS Lax matrix.

The explicit map from the tRS model to its dual tRS$^!$ can be implemented via the Lagrangian correspondence. This also enables the connection with the dual XXZ chain living on the quantum side. A Lagrangian subvariety $\mathcal{L}\subset \mathcal{M}\times \mathcal{M}^!$ in the product of the phase spaces of tRS and tRS$^!$ respectively coincides with space of solutions of the XXZ Bethe ansatz equations for $T^*\mathbb{F}l_n$ where $\hbar$ plays the role of the Planck's constant. Moreover, this Lagrangian admits two distinct 3d mirror dual descriptions in terms of a pair of $T^*\mathbb{F}l_n$ varieties -- in one description $\chi_i$ are the equivariant parameters, in the other, they are the K\"ahler parameters.

By combining the magnetic frame and the oper frame formulations of the algebra of functions on the space of $Z$-twisted $(SL(r+1),\hbar)$-Miura opers/quantum equivariant K-theory, one obtains 3d mirror self-duality of the full flag variety $T^*\mathbb{F}l_n$. 

This construction can be generalized to other quivers by degenerations of equivariant parameters as we will show in this paper. One needs to consider restricted tRS models and their duals instead where certain constraints on their coordinates are implemented.
For instance, in the case of the cotangent bundle to partial flag variety, the algebra of functions on the space of $Z$-twisted $(SL(r+1),\hbar)$-Miura opers as algebra of functions on the intersection of two Lagrangian subvarieties, one of which is given by tRS Hamiltonians being equal to symmetric functions of K\"ahler parameters and another one is obtained by setting coordinates equal to equivariant parameters \cite{Zabrodin:}.

Another important version of this duality for the XXZ spin chain is as follows. We already discussed Baxter Q-operators and their relation to the generators of Bethe algebra. There is, however, another system of generators, known as {\it fundamental Hamiltonians} of the spin chain, emerging as coefficients in the polar decomposition of {\it transfer matrices}, which are related to the Q-operators via a rational transformation. 
The eigenvalues of the transfer matrix naturally emerge in the $\hbar$-oper formulation as well. Namely, they serve as generating functions for the coordinates on $Z$-twisted $(SL(r+1),\hbar)$-opers, and the rational transformation from the Q-operators to transfer matrices is known as $\hbar$-Miura transform \cite{Frenkel:ls}. 

\subsection{String Theory Origins}
The set of ideas to relate quantum spin chains and classical integrable systems in the way presented in this paper originates from string theory. Physics interpretation of the quantum/classical duality between the XXZ spin chain and tRS model was developed in \cite{Gaiotto:2013bwa} using Type IIB brane configuration which we shall briefly review in Section \ref{Sec:KThe}. 

The first brane configuration describes (in the field theory limit) the ground state of a 3d $\mathcal{N}=2^*$ quiver gauge theory whose color and flavor labels precisely correspond to quiver data of $X$ from \eqref{isomk}. More specifically, the equations for the supersymmetric vacua of this gauge theory are in one-to-one correspondence with the XXZ Bethe Ansatz equations such that Bethe roots play the role of vacuum expectation values of twisted chiral superfields,  the equivariant parameters are mapped to the masses of hypermultiplets, and twists are the K\"ahler parameters of $X$ \cite{Nekrasov:2009uh}.

The second brane configuration, which can be obtained from the first one by continuous transformation, describes the moduli space of vacua of a four-dimensional $\mathcal{N}=2^*$ theory compactified on a segment with BPS boundary condition on the left and the right. This moduli space is described by the moduli space of flat connections on punctured torus \cite{Donagi:1995cf}, which in this work we shall treat algebraically as the Calogero-Moser space \cite{cite-key}. The latter space is described by a relation that involves two matrices $M$ and $T$ whose size is equal to the number of particles in the dual tRS model.
The left boundary conditions of the gauge fields yield a Lagrangian submanifold described above which fixes the eigenvalues of $M$ in terms of the masses and imposes the so-called Slodowy structure on $T$. The right boundary conditions give another Lagrangian submanifold which fixes the eigenvalues of $T$ in terms of the FI parameters and imposes a Slodowy structure on $M$. The vacua of the quiver gauge theory described in the previous paragraph are the intersection points of the two Lagrangian submanifolds.

\subsection{Plan of the Paper and Main Results}
Section \ref{Sec:OpersBethe} is devoted to the structure of nondegenerate $Z$-twisted $(SL(r+1), \hbar)$-opers. It is mostly a review, with the results adopted to the needs of this paper (for a more detailed account see \cite{Koroteev:2020mxs,KSZ,Frenkel:2020}). We discuss the aforementioned notions of Z-twisted Miura $\hbar$ and their relations to the $QQ$-system/Bethe equations, quantum B\"acklund transformations, and the $\hbar$-Wronskian formulation which leads to the discussed above magnetic oper frame.

In Section \ref{Sec:QQA} we discuss the relation between $QQ$-systems and various $A_r$-quivers. We discuss some concrete examples, including an important family of $X_{k,l}$ quivers. They generalize the quiver corresponding to the cotangent bundle of the full flag variety by inserting a chain of extra $l$ vertices with 1-dimensional framings. An important feature of this example is that the oper magnetic frame coincides with the true magnetic frame as in the case $l=0$. That allows us to give a formulation of the algebra of functions on the related space of $Z$-twisted Miura $(SL(r+1), \hbar)$-opers as the algebra of functions on the intersection of two Lagrangian subvarieties as discussed above, one of which is given by setting tRS Hamiltonians to be equal to certain symmetric functions of the regular singularities.

In Section \ref{Sec:tRSElectric} we discuss the tRS system in more detail and introduce the notion of the electric frame. We start from a quiver corresponding to a cotangent bundle of the partial flag and then introduce the recursive procedure of how to degenerate it to obtain electric frame formulation for other quivers, in particular for $X_{k,l}$ family.

In Section \ref{Sec:KThe} we first discuss $A_r$-quiver varieties and the interpretation of quantum $K$-theory rings in terms of algebras of functions on $(SL(r+1), \hbar)$-opers corresponding to given quivers and the notion of 3d mirror symmetry on the level of those algebras. The second part of that section is devoted to the ``D-brane arithmetic": the string theory interpretation of quantum K-theories on quiver varieties and the prescription on how to calculate its 3d Mirror.

 Section \ref{Sec:MirrorElMag} provides a general prescription for proving the 3d mirror symmetry in the case of $A_r$ quivers using the quantum/classical duality, i.e. identifying electric and magnetic frames of 3d Mirror objects. We prove the 3d mirror theorem in type A in full generality and then discuss quiver  3d mirror self-dual family $X_{k,l}$.
 
 Section \ref{Sec:DirectLimit} is devoted to the properly computed direct limit of the equivariant quantum K-theory rings of $X_{k,l}$ family, taken with respect to $l$. As a result, we obtain the 3d mirror self-dual $A_{\infty}$-quiver with every vertex having a 1-dimensional framing. Imposing a natural periodicity condition (so that a mirror map is equivariant with respect to it), the resulting quantum equivariant K-theory ring turns out to be one of the Hilbert space of points on $\mathbb{C}^2$. This we prove 3d-mirror 
 self-duality of the moduli spaces of torsion-free sheaves of rank 1 on $\mathbb{C}^2$.

In section \ref{Sec:MirrorADHM} we propose the 3d Mirror dual varieties realized as Nakajima varieties associated to cyclic quiver) to torsion-free sheaves of rank $N$, which on the level of quantum equivariant K-theory emerge from $A_{\infty}$-quivers with periodicity conditions. We show how the same method, using finite rank $A_r$-quivers will allow us to prove the proposed duality.

\subsection{Acknowledgements}
We thank A. Smirnov for discussions at the early stage of the project. P.K. is partially supported by AMS Simons grant. A.M.Z. is partially supported by Simons Collaboration Grant 578501 and NSF grant
DMS-2203823.

\section{$(SL(r+1),\hbar)$-opers and Bethe Ansatz}\label{Sec:OpersBethe}

\subsection{Group-theoretic data and notations.}
Consider $SL(r+1)$ be the simple algebraic group of invertible $(r+1)\times (r+1)$
matrices over $\mathbb{C}$.  We fix a Borel subgroup $B_-$ with unipotent
radical $N_-=[B_-,B_-]$ of lower triangular matrices and strictly lower triangular matrices correspondingly. The maximal torus is the corresponding set of diagonal matrices $H\subset B_-$.  Let $B_+$ be the opposite Borel subgroup containing $H$.  
Let $\{
\alpha_1,\dots,\alpha_{r} \}$ be the set of positive simple roots for
the pair $H\subset B_+$.  Let $\{ \check\alpha_1,\dots,\check\alpha_{r}
\}$ be the corresponding coroots. Then the elements of the Cartan
matrix of the Lie algebra $\mathfrak{sl}(r+1)$ of $G$ are given by $a_{ij}=\langle
\alpha_j,\check{\alpha}_i\rangle$. The Lie algebra $\mathfrak{sl}(r+1)$ has Chevalley
generators $\{e_i, f_i, \check{\alpha}_i\}_{i=1, \dots, r}$, so
that $\fb_-=\Lie(B_-)$ is generated by the $f_i$'s and the
$\check{\alpha}_i$'s and $\fb_+=\Lie(B_+)$ is generated by the $e_i$'s 
and the $\check{\alpha}_i$'s. 
In the defining representation $\check{\alpha}_i\equiv E_{ii}-E_{i+1,i+1}$, $e_i\equiv E_{i,i+1}$, $f_i\equiv E_{i-1,i}$, where $E_{ij}$ stand for the matrix  with the only nonzero element 1 at ij-th place. 
The fundamental weights $\omega_1,\dots\omega_r$ are defined by the condition $\langle \omega_i,
\check{\alpha}_j\rangle=\delta_{ij}$. 

\subsection{Definition of Miura $(SL(r+1),\hbar)$-oper}

Let's consider the automorphism $M_\hbar: \P^1 \to \P^1$ sending $z \mapsto \hbar z$, where
$\hbar\in\C^\times$ is {\em not} a root of unity. For any bundle $F$ over $\P^1$, we denote $F^{\hbar}$ its pull-back under $M_{\hbar}$.

\begin{Def}    \label{qopflag}
  A meromorphic $(GL(r+1),\hbar)$-{\em oper}  on
  $\mathbb{P}^1$ is a triple $(A,E, \mathcal{L}_{\bullet})$, where $E$ is  a vector bundle of rank $r+1$ and $\mathcal{L}_{\bullet}$ is the corresponding complete flag of the vector bundles, 
  $$\mathcal{L}_{r+1}\subset ...\subset \mathcal{L}_{i+1}\subset\mathcal{L}_i\subset\mathcal{L}_{i-1}\subset...\subset \mathcal{L}_1=E,$$ 
  where $\mathcal{L}_{r+1}$ is a line bundle, so that 
  $A\in \Hom_{\cO_{U}}(E,E^\hbar)$ 
  satisfies the following conditions:\\ 
i) $A\cdot \mathcal{L}_i\subset \mathcal{L}_{i-1} $,\\
ii)  There exists a Zariski open dense subset $U \subset \P^1$, such that the restriction of
  the connection $A\in Hom(\mathcal{L}_{\bullet}, \mathcal{L}^\hbar_{\bullet})$ to $U \cap M_\hbar^{-1}(U)$, which belongs to $GL(r+1)$ and satisfies the condition that the induced operator 
  $\bar{A}:\mathcal{L}_{i}/\mathcal{L}_{i+1}\to \mathcal{L}^{\hbar}_{i-1}/\mathcal{L}^{\hbar}_{i}$ is an isomorphism on $U \cap M_\hbar^{-1}(U)$.\\
  An $(SL(r+1),\hbar)$-$oper$ is a $(GL(r+1),\hbar)$-oper with the condition that $det(A)=1$ on $U \cap M_\hbar^{-1}(U)$.
\end{Def}

Let us choose a trivialization so that $\mathcal{L}_{r+1}$ is generated by the vector $(0,\dots, 0, 1)$. Then we obtain that locally the $\hbar$-oper connection can be represented in the form

\begin{equation}\label{qop1}
A(z)=
\begin{pmatrix}
\star & \phi_1(z) & 0 & 0 & \dots &0& 0\\
\star& \star & \phi_2(z)  & 0 & \dots & 0&0\\
\star & \star &  \star & \phi_3(z)  & \dots & 0& 0\\
\vdots& \vdots & \cdots & \ddots & \ddots & \vdots & \vdots \\
\vdots& \vdots & \cdots & \dots & \ddots & \phi_{r-1}(z) & 0 \\
\star & \star & \star & \dots &\dots &\star &\phi_r(z)\\
\star& \star & \star & \dots &\dots&\star &\star
\end{pmatrix}
\end{equation}

where elements on the superdiagonal $\phi_i(z) \in\C(z)$ and  the rest matrix elements are such that
their zeros and poles are outside the subset $U \cap M_\hbar^{-1}(U)$ of
$\P^1$. 

Now we extend further this definition, adding extra {\it Miura structure}.

\begin{Def}    \label{Miuraflag}
  A {\em Miura $(SL(r+1),\hbar)$-oper} on $\mathbb{P}^1$ is a quadruple
  $(E, A, \mathcal{L}_{\bullet}, \hat{\mathcal{L}}_{\bullet})$, where $(E, A, \mathcal{L}_{\bullet})$ is a
  meromorphic $(SL(r+1),\hbar)$-oper on $\P^1$ and $\hat{\mathcal{L}}_{\bullet}=\{\mathcal{L}_i\}$  is another full flag 
  of subbundles in $E$ that is preserved by the
  $\hbar$-connection $A$.
\end{Def}

These two flags of bundles have various relative positions, namely if 
$\mathcal{L}_{\bullet}=g\cdot \hat{\mathcal{L}}_{\bullet}$ and $g\in B_-wB_-$, we say that they are in relative position $w$, where $w$ is Weyl group element of $SL(r+1)$. One can show 
that on a certain Zariski dense subset they are in {\it generic} position, namely $w=w_0$, the longest element from the Weyl group. It was shown in \cite{Frenkel:2020} that  Miura opers satisfy the following structural theorem: 

\begin{Thm}    \label{gen elt}
 Every Miura $(SL(r+1), \hbar)$-oper associated with the oper connection (\ref{qop1})can be written
in the form:
\begin{equation}    \label{gicheck}
\prod_i [g_i(z)]^{\check{\alpha}_i}e^{\frac{\phi_i(z)}{g_i(z)}e_i}, \qquad
g_i(z) \in \mathbb{C}(z),
\end{equation}
\end{Thm}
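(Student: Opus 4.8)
The plan is to read the factorization \eqref{gicheck} off the two flags directly, passing first to the frame adapted to the preserved Miura flag and then stripping off one simple-root factor at a time.

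First I would trivialize $E$ on a Zariski dense $U\subset\P^1$ so that the Miura flag $\hat{\mathcal{L}}_{\bullet}$ becomes the standard complete flag stabilized by $B_+$. By Definition \ref{Miuraflag} the $\hbar$-connection $A$ preserves $\hat{\mathcal{L}}_{\bullet}$, so in this frame $A(z)$ is upper-triangular, i.e.\ valued in $B_+$. Writing its diagonal part as $\prod_i g_i(z)^{\check{\alpha}_i}$---legitimate because $\det A=1$---produces the Cartan factors of \eqref{gicheck}, and the entire content of the theorem becomes the claim that the unipotent part $u\in N_+$ in $A=\bigl(\prod_i g_i^{\check{\alpha}_i}\bigr)u$ is the ordered product $\prod_i e^{(\phi_i/g_i)e_i}$ of simple-root exponentials, with no genuinely higher-root components.

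Next I would feed in the oper condition. Because $\mathcal{L}_{\bullet}$ and $\hat{\mathcal{L}}_{\bullet}$ are in generic relative position $w_0$, the oper flag lies in the big cell of flags transverse to $\hat{\mathcal{L}}_{\bullet}$, hence $\mathcal{L}_{\bullet}=v\cdot\mathcal{L}^{\mathrm{dec}}_{\bullet}$ for a unique $v\in N_+(z)$, with $\mathcal{L}^{\mathrm{dec}}_{\bullet}$ the standard decreasing flag. Conditions (i)--(ii) of Definition \ref{qopflag} then translate into the single matrix statement that $B:=(v^{\hbar})^{-1}A\,v$ is lower-Hessenberg with superdiagonal exactly the nonvanishing $\phi_i$ of \eqref{qop1}; note that conjugation by the (even $\hbar$-shifted) unipotent $v$ fixes the diagonal, so the diagonal of $B$ is again $\prod_i g_i^{\check{\alpha}_i}$. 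This is the precise mechanism by which the data $\{\phi_i\}$ of the oper form enter the Miura frame.

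The heart of the proof is the rigidity statement that compatibility with such a $v$ forces $u$ into the simple-root product. I would argue by induction on $r$, peeling off the bottom line $\hat{\mathcal{L}}_{r+1}=\langle e_1\rangle$ of the Miura flag: the action of $A$ on this preserved line yields $g_1$, the isomorphism on the top graded piece yields $\phi_1$, and the induced $\hbar$-connection on $E/\hat{\mathcal{L}}_{r+1}$ is again a Miura oper of rank $r$ (after the evident normalization of its determinant), to which the inductive hypothesis applies and gives $\prod_{i\ge2}g_i^{\check{\alpha}_i}e^{(\phi_i/g_i)e_i}$. Reassembling, the first row of $A$ above position $(1,1)$ must equal $\phi_1$ times the first row of the quotient connection, which is exactly the assertion $A=g_1^{\check{\alpha}_1}e^{(\phi_1/g_1)e_1}\cdot(\text{lift of the quotient})$ and reproduces \eqref{gicheck}. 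The step I expect to be the main obstacle is precisely this reassembly---showing the strictly-above-superdiagonal entries of $A$ are forced rather than free, equivalently that transversality together with the isomorphism condition on every successive quotient $\mathcal{L}_i/\mathcal{L}_{i+1}\to\mathcal{L}^{\hbar}_{i-1}/\mathcal{L}^{\hbar}_i$ leaves no room for a non-simple-root factor. The delicate point is the $M_\hbar$-pullback in $B=(v^{\hbar})^{-1}A\,v$: the relation $v^{\hbar}B=A\,v$ couples the value of $v$ at $z$ to its value at $\hbar z$, so it cannot be dispatched by a pointwise linear-algebra argument, and it is exactly this $\hbar$-shifted matching that rigidifies $u$ into the ordered simple-root product.
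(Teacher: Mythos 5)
The paper does not actually prove Theorem \ref{gen elt}: it quotes it from \cite{Frenkel:2020}, where the argument runs through the trivialization adapted to \emph{both} flags and a pointwise matrix computation. Measured against that, your proposal has a genuine gap, and in fact the statement you set out to prove is false under your reading. You interpret the theorem as saying that in an \emph{arbitrary} trivialization adapted to $\hat{\mathcal{L}}_\bullet$ (so $A\in B_+(z)$ and the oper flag is $v\cdot\mathcal{L}^{\mathrm{dec}}_\bullet$ for some $v\in N_+(z)$), the entries of $A$ strictly above the superdiagonal are forced. They are not: for \emph{any} $v(z)\in N_+(z)$ and any bidiagonal $S(z)$ (invertible diagonal, nonvanishing superdiagonal, $\det S=1$), the connection $A:=v(\hbar z)\,S(z)\,v(z)^{-1}$ together with the flags $v\cdot\mathcal{L}^{\mathrm{dec}}_\bullet$ and the standard one satisfies every condition of Definitions \ref{qopflag} and \ref{Miuraflag}. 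Taking $r=2$, $v=I+zE_{12}$ and $S$ with constant entries gives an upper-triangular $A$ with $A_{13}A_{22}\neq A_{12}A_{23}$ and $A_{13}\neq 0$, which is not of the form \eqref{gicheck} for any choice of $g_i$ and $\phi_i$. The free parameter is exactly $v$; so the $\hbar$-shifted relation $v^{\hbar}B=Av$ that you invoke at the end is not what rigidifies the unipotent part --- it is precisely what leaves it unconstrained in your chosen frame. Consequently the ``reassembly'' step, which you yourself flag as the main obstacle and never carry out, cannot be carried out, and the induction collapses at exactly that point.

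The statement that is true, and is what \cite{Frenkel:2020} proves, is that $A$ takes the form \eqref{gicheck} in a \emph{suitable} trivialization, namely the one adapted to both flags --- and you construct the relevant object without using it. Your $B=(v^{\hbar})^{-1}Av$ is the connection written in that trivialization. You record that $B$ has the shape \eqref{qop1} (zero above the superdiagonal, nonvanishing superdiagonal) and that its diagonal equals $\prod_i g_i^{\check\alpha_i}$, but you never use that $B$, being a product of three upper-triangular matrices, also lies in $B_+(z)$. Those two facts together force every entry of $B$ strictly above the superdiagonal to vanish, so $B$ is bidiagonal with nonzero superdiagonal, and such a matrix factors exactly as $\prod_i g_i^{\check\alpha_i}e^{(\phi_i/g_i)e_i}$ (for the appropriate ordering of the simple-root factors, which the notation of \eqref{gicheck} leaves implicit), with the $g_i$ read off the diagonal. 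Since $B$ represents the same $q$-connection in a legitimate trivialization, this finishes the proof --- pointwise, with no difference equations anywhere --- given the genericity of the relative position of the two flags, which you correctly take as input. In short: your first two steps coincide with the cited proof, but you then aim at a frame-dependent rigidity claim that is false, leave its key step unproven, and misattribute the source of rigidity to the $\hbar$-shift rather than to elementary triangular-times-Hessenberg linear algebra.
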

In other words, these are the matrices with nontrivial elements on the diagonal and above the diagonal.
The diagonal elements generate the abelian $Cartan$ $connection$ associated to Miura oper, namely 
$$
A^H(z)=\prod_i [g_i(z)]^{\check{\alpha}_i}.
$$  
\subsection{$Z$-twisted Miura $\hbar$-opers}

In this paper, we consider a class of (Miura) $\hbar$-opers that are gauge
equivalent to a constant element of $SL(r+1)$ (as $(SL(r+1),\hbar)$-connections). Moreover, we assume that such an element 
$Z$ be the regular element of the maximal torus $H$. One can express it as follows 
\begin{equation}    \label{Z}
Z = \prod_{i=1}^r \zeta_i^{\check \alpha_i}, \qquad \zeta_i \in
\C^\times.
\end{equation}

\begin{Def}    \label{Ztwoper}
  A {\em $Z$-twisted $(SL(r+1),\hbar)$-oper} on $\mathbb{P}^1$ is a $(SL(r+1),\hbar)$-oper
  that is equivalent to the constant element $Z \in H \subset H(z)$
  under the $\hbar$-gauge action of $SL(r+1)(z)$, i.e. if $A(z)$ is the
  meromorphic oper $\hbar$-connection (with respect to a particular
  trivialization of the underlying bundle), there exists $g(z) \in
  SL(r+1)(z)$ such that
\begin{eqnarray}    \label{Ag}
A(z)=g(\hbar z)Z g(z)^{-1}.
\end{eqnarray}
A {\em $Z$-twisted Miura $(SL(r+1),\hbar)$-oper} is a Miura $(SL(r+1),\hbar)$-oper on
$\mathbb{P}^1$ that is equivalent to the constant element $Z \in H
\subset H(z)$ under the $\hbar$-gauge action of $B_+(z)$, i.e.
\begin{eqnarray}    \label{gaugeA}
A(z)=v(\hbar z)Z v(z)^{-1}, \qquad v(z) \in B_+(z).
\end{eqnarray}
\end{Def}

It follows from Definition \ref{Ztwoper} that any $Z$-twisted
$(SL(r+1),\hbar)$-oper is also $Z'$-twisted for any $Z'$ in the Weyl group orbit of
$Z$. But if we endow it with the structure of a $Z$-twisted Miura
$(SL(r+1),\hbar)$-oper (by adding a flag $\hat{\mathcal{L}}_\bullet$ preserved by the
oper $\hbar$-connection), then we fix a specific element in this
$S_{r+1}$-orbit.

Thus we have the following Proposition, which allows to characterize $Z$-twisted Miura $(SL(r+1),\hbar)$-opers 
associated to $Z$-twisted $(SL(r+1),\hbar)$-opers.

\begin{Prop}    \label{Z prime}
  Let $Z \in H$ be regular. For any $Z$-twisted $(SL(r+1),\hbar)$-oper $(E,A,\mathcal{L}_{\bullet})$
  and any choice of the flag $\hat{\mathcal{L}}_{\bullet}$ preserved
  by the oper $\hbar$-connection $A$, the resulting Miura $(SL(r+1),\hbar)$-oper is
  $Z'$-twisted for a particular $Z' \in w \cdot Z$, where $w$ is an element of the Weyl group of $SL(r+1)$. 
\end{Prop}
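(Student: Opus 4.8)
The plan is to start from a $Z$-twisted $(SL(r+1),\hbar)$-oper $(E,A,\mathcal{L}_\bullet)$ and translate the gauge-equivalence condition \eqref{Ag} into a statement about a preserved flag. By definition there exists $g(z)\in SL(r+1)(z)$ with $A(z)=g(\hbar z)Z g(z)^{-1}$. Since $Z\in H$ is regular, it preserves the standard flag associated to $B_+$: writing $\mathcal{F}_\bullet$ for the full flag stabilized by $B_+$, we have $Z\cdot\mathcal{F}_\bullet=\mathcal{F}_\bullet$. I would then push this flag forward by $g$, defining a candidate flag $\hat{\mathcal{L}}^{(0)}_\bullet=g(z)\cdot\mathcal{F}_\bullet$; the relation \eqref{Ag} shows directly that $A$ preserves $\hat{\mathcal{L}}^{(0)}_\bullet$, since $A(z)g(z)\mathcal{F}_\bullet=g(\hbar z)Z\mathcal{F}_\bullet=g(\hbar z)\mathcal{F}_\bullet=\hat{\mathcal{L}}^{(0),\hbar}_\bullet$. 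This produces one distinguished $A$-preserved flag, and for it the twist is exactly $Z$.

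The next step is to account for the fact that the Proposition allows an \emph{arbitrary} choice of preserved flag $\hat{\mathcal{L}}_\bullet$, not just the distinguished one. The key structural input is the observation already recorded in the excerpt: a $Z$-twisted oper is simultaneously $Z'$-twisted for every $Z'$ in the Weyl orbit of $Z$, because conjugating $Z$ by a representative $\dot w$ of a Weyl element $w$ sends the constant $\hbar$-connection $Z$ to $\dot w Z\dot w^{-1}$, and the longest-element ambiguity of the Borel reductions corresponds precisely to the $S_{r+1}$-action. So I would argue that any full flag $\hat{\mathcal{L}}_\bullet$ preserved by $A$ arises, after applying the gauge $g^{-1}$, as a constant flag $\mathcal{F}'_\bullet$ preserved by $Z$; since $Z$ is regular, the flags it preserves are exactly the $S_{r+1}$-translates $\dot w\cdot\mathcal{F}_\bullet$ of the standard one (the eigenflags of $Z$ are indexed by orderings of its distinct eigenvalues). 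Pulling $\mathcal{F}'_\bullet=\dot w\cdot\mathcal{F}_\bullet$ back through $g$ and using \eqref{Ag} again shows that the resulting Miura oper is gauge-equivalent, via $v(z)=g(z)\dot w\in SL(r+1)(z)$, to the constant element $\dot w^{-1}Z\dot w=w^{-1}\cdot Z$, which lies in the Weyl orbit $w\cdot Z$ as claimed.

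The final step is to check that the gauge element realizing the twist actually lies in $B_+(z)$, as required for a \emph{Miura} $Z$-twisted oper in \eqref{gaugeA}, rather than merely in $SL(r+1)(z)$. Here I would invoke the earlier observation that on a Zariski-dense subset the two flags $\mathcal{L}_\bullet$ and $\hat{\mathcal{L}}_\bullet$ are in generic (longest-element $w_0$) relative position, together with the structural Theorem \ref{gen elt}, which expresses any Miura oper in the Borel form $\prod_i g_i^{\check\alpha_i}e^{(\phi_i/g_i)e_i}$. Combined with the diagonal Cartan connection $A^H(z)=\prod_i g_i^{\check\alpha_i}$, this forces the conjugating element into $B_+(z)$ and pins down the specific $Z'=w^{-1}\cdot Z$ in the orbit.

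I expect the main obstacle to be the third step: verifying that the gauge transformation realizing the twist can be chosen inside $B_+(z)$ rather than all of $SL(r+1)(z)$, and that this choice is compatible with the meromorphic (rational) nature of the flags so that no poles are introduced on $U\cap M_\hbar^{-1}(U)$. Controlling the relative position of the two flags and showing it is generically $w_0$—so that the $B_+$-reduction is well-defined and unique—is where the regularity of $Z$ and the isomorphism condition (ii) of Definition \ref{qopflag} must be used carefully, and this is precisely the content that upgrades the abstract $SL(r+1)(z)$-equivalence to the refined Borel statement fixing a single element of the Weyl orbit.
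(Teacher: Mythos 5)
Your argument fails at the second step, and the failure is substantive. You assert that, after applying the gauge $g(z)^{-1}$, an arbitrary $A$-preserved flag $\hat{\mathcal{L}}_\bullet$ becomes a \emph{constant} flag, and then classify the constant flags fixed by $Z$ as the Weyl translates $\dot w\cdot\mathcal{F}_\bullet$. But "preserved by the constant $\hbar$-connection $Z$" is a difference equation on a rational family of flags, namely $Z\cdot\mathcal{F}'_\bullet(z)=\mathcal{F}'_\bullet(\hbar z)$, not the pointwise fixed-point condition $Z\cdot\mathcal{F}'_\bullet=\mathcal{F}'_\bullet$; nothing in the hypotheses forces such a family to be constant. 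Concretely, for $SL(2)$ take $Z=\mathrm{diag}(\xi_1,\xi_2)$ with $\xi_2=\hbar\xi_1$ (regular, since $\hbar\neq 1$): the non-constant line bundle $\mathcal{F}'_1(z)=\mathrm{span}(e_1+z\,e_2)$ satisfies $Z(e_1+z\,e_2)=\xi_1(e_1+\hbar z\,e_2)$, hence is preserved by $Z$. In general a preserved flag can mix eigenvectors $e_i,e_j$ whenever $\xi_i/\xi_j\in\hbar^{\mathbb{Z}}$; excluding this is a non-resonance condition strictly stronger than regularity (and not supplied by \eqref{assume} either, which constrains only consecutive ratios). So your proof, as written, only covers the non-resonant case, while the Proposition assumes merely that $Z$ is regular.

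The statement is nevertheless true, and the repair (this is essentially the argument in \cite{Frenkel:2020}, which the paper quotes without proof) goes through the fixed points of $M_\hbar$ on $\mathbb{P}^1$ rather than through constancy of the flag. In the gauge where the connection equals $Z$, the preserved flag is a rational map to the flag variety, hence, by projectivity, a morphism $\mathcal{F}'\colon\mathbb{P}^1\to SL(r+1)/B_+$ satisfying $\mathcal{F}'(\hbar z)=Z\cdot\mathcal{F}'(z)$. Evaluating at $z=0$ gives $Z\cdot\mathcal{F}'(0)=\mathcal{F}'(0)$, and \emph{here} is where regularity is used: the $Z$-fixed flags are exactly the Weyl points, so $\mathcal{F}'(0)=\dot w\cdot\mathcal{F}_\bullet$ for some $w$. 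On a Zariski neighborhood of $0$ one writes $\mathcal{F}'(z)=n(z)\dot w\cdot\mathcal{F}_\bullet$ with $n(z)\in(\dot wN_-\dot w^{-1})(z)$, $n(0)=1$; since $\dot w^{-1}Z\dot w\in H\subset B_+$ and $H$ normalizes $\dot wN_-\dot w^{-1}$, the equivariance together with uniqueness of these cell coordinates forces $n(\hbar z)=Z\,n(z)\,Z^{-1}$. Gauging by $(n(z)\dot w)^{-1}$ then makes the connection the constant $Z'=\dot w^{-1}Z\dot w$ while simultaneously making the Miura flag standard, which is precisely the content of Definition \ref{Ztwoper}: any two trivializations in which $\hat{\mathcal{L}}_\bullet$ is the standard flag differ by an element of $B_+(z)$, so the $B_+(z)$-equivalence \eqref{gaugeA} to $Z'$ is automatic. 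This last remark also shows that your step 3 is misplaced: once the twist is produced this way, no appeal to the generic ($w_0$) relative position of $\mathcal{L}_\bullet$ and $\hat{\mathcal{L}}_\bullet$ or to Theorem \ref{gen elt} is needed — that genericity enters the nondegeneracy analysis elsewhere in the theory, not the identification of $Z'$.
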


Now, if our Miura $\hbar$-oper is $Z$-twisted (see Definition
\ref{Ztwoper}), then we also have $A(z)=v(\hbar z)Z v(z)^{-1}$, where
$v(z)\in B_+(z)$.  Since $v(z)$ can be written as
\begin{equation}    \label{vz}
v(z)=
\prod_i y_i(z)^{\check{\alpha}_i} n(z), \qquad n(z)\in N_+(z), \quad
y_i(z) \in \C(z)^\times,
\end{equation}
the Cartan $\hbar$-connection $A^H(z)$ has the form
\begin{equation}    \label{AH1}
A^H(z)=\prod_i
y_i(\hbar z)^{\check{\alpha}_i} \; Z \; \prod_i y_i(z)^{-\check{\alpha}_i}
\end{equation}
and hence we will refer to $A^H(z)$ as $Z$-{\em twisted Cartan
  $\hbar$-connection}. This formula shows that $A^H(z)$ is completely
determined by $Z$ and the rational functions $y_i(z)$, namely:
\begin{equation}    \label{giyi}
g_i(z)=\zeta_i\frac{y_i(\hbar z)}{y_i(z)}\,.
\end{equation}

We note that $A^H(z)$ determines the $y_i(z)$'s uniquely
up to scalar.  

\subsection{Miura $\hbar$-opers with regular singularities and nondegeneracy} 
Let $\{ \Lambda_i(z) \}_{i=1,\ldots,r}$ be a collection of
non-constant polynomials.

\begin{Def}    \label{d:regsing}
  A $(SL(r+1),\hbar)$-{\em oper with regular singularities determined by $\{
    \Lambda_i(z) \}_{i=1,\ldots,r}$} is a $\hbar$-oper on $\P^1$ whose
  $\hbar$-connection \eqref{qop1} may be written in the form
\begin{equation}    \label{Lambda}
A(z)=
\begin{pmatrix}
\star & \Lambda_1(z) & 0 & 0 & \dots &0& 0\\
\star& \star & \Lambda_2(z)  & 0 & \dots & 0&0\\
\star & \star &  \star & \Lambda_3(z)  & \dots & 0& 0\\
\vdots& \vdots & \cdots & \ddots & \ddots & \vdots & \vdots \\
\vdots& \vdots & \cdots & \dots & \ddots & \Lambda_{r-1}(z) & 0 \\
\star & \star & \star & \dots &\dots &\star &\Lambda_r(z)\\
\star& \star & \star & \dots &\dots&\star &\star
\end{pmatrix}
\end{equation}

  {\em A Miura $(SL(r+1),\hbar)$-oper with regular singularities determined by
polynomials $\{ \Lambda_i(z) \}_{i=1,\ldots,r}$} is a Miura
  $(SL(r+1),\hbar)$-oper such that the underlying $\hbar$-oper has
regular singularities determined by $\{ \Lambda_i(z)
\}_{i=1,\ldots,r}$.
\end{Def}

The following theorem follows from Theorem \ref{gen elt} and gives an explicit parameterization of generic elements from the space of Miura opers.

\begin{Thm}    \label{Miura form}
For every Miura $(SL(r+1),\hbar)$-oper with regular singularities determined by
the polynomials $\{ \Lambda_i(z) \}_{i=1,\ldots,r}$, the underlying
$\hbar$-connection can be written in the form 
\begin{equation}    \label{form of A}
A(z)=\prod_i
[g_i(z)]^{\check{\alpha}_i} \; e^{\frac{\Lambda_i(z)}{g_i(z)}e_i}, \qquad
g_i(z) \in \C(z)^\times.
\end{equation}
\end{Thm}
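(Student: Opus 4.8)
The plan is to obtain the statement as a direct specialization of Theorem~\ref{gen elt}. The key observation is that, by Definition~\ref{d:regsing}, a Miura $(SL(r+1),\hbar)$-oper \emph{with regular singularities determined by} $\{\Lambda_i(z)\}_{i=1,\ldots,r}$ is nothing but a Miura oper whose underlying $q$-connection \eqref{qop1} has its superdiagonal data equal to the prescribed polynomials, i.e.\ $\phi_i(z)=\Lambda_i(z)$. Thus the entire content of the theorem is to verify that the polynomials $\phi_i$ entering the canonical Miura parameterization \eqref{gicheck} are precisely these superdiagonal data, after which substitution produces \eqref{form of A}.

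First I would invoke Theorem~\ref{gen elt} to write the underlying $q$-connection of an arbitrary Miura oper in the form $\prod_i g_i^{\check{\alpha}_i}\,e^{\frac{\phi_i(z)}{g_i}e_i}$ with $g_i\in\C(z)$. Using $\check{\alpha}_i=E_{ii}-E_{i+1,i+1}$ and $e_i=E_{i,i+1}$, the torus factor $A^H(z)=\prod_i g_i^{\check{\alpha}_i}$ is diagonal and the remaining factor is unipotent upper-triangular, so $A(z)$ genuinely has the lower-Hessenberg shape \eqref{qop1}. I would then read off the oper symbol through the induced maps $\bar A:\mathcal{L}_i/\mathcal{L}_{i+1}\to \mathcal{L}^{\hbar}_{i-1}/\mathcal{L}^{\hbar}_{i}$ of Definition~\ref{qopflag}. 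In the oper frame these successive quotient maps are multiplication by the superdiagonal polynomials of \eqref{Lambda}, so the symbol equals $\Lambda_i$; on the other hand Theorem~\ref{gen elt} identifies this same symbol with the coefficient $\phi_i$ of the Miura normal form, whence $\phi_i(z)=\Lambda_i(z)$. Substituting into \eqref{gicheck} yields \eqref{form of A}.

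It remains to promote the coefficients from $\C(z)$ to $\C(z)^\times$. This is where nondegeneracy enters: condition (ii) of Definition~\ref{qopflag} requires each $\bar A$ to be an \emph{isomorphism} on $U\cap M_\hbar^{-1}(U)$, which forces the corresponding diagonal entries of $A^H(z)$, and hence each $g_i(z)$, to be a nonzero rational function, so $g_i\in\C(z)^\times$. The $SL$-normalization $\det A=1$ is automatic, since every $\check{\alpha}_i$ is traceless and therefore $\det\big(\prod_i g_i^{\check{\alpha}_i}\big)=1$ while the unipotent factor has unit determinant. Finally, since Theorem~\ref{gen elt} produces this normal form on the Zariski-dense locus where the two flags $\mathcal{L}_\bullet$ and $\hat{\mathcal{L}}_\bullet$ sit in generic relative position $w_0$, the parameterization \eqref{form of A} describes the generic stratum of the space of Miura opers, as asserted.

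The only delicate point—and the step I would treat most carefully—is the bookkeeping in the identification $\phi_i=\Lambda_i$. Because the torus factor conjugates the exponentials, the naive matrix entries on the superdiagonal of $\prod_i g_i^{\check{\alpha}_i}e^{\frac{\phi_i}{g_i}e_i}$ carry adjacent factors of the $g_i$, and one must confirm that it is precisely the invariantly defined symbol $\bar A$, computed through the quotient line bundles $\mathcal{L}_i/\mathcal{L}_{i+1}$, rather than an un-normalized matrix entry, that reproduces the polynomials $\Lambda_i$. Since the shape of the normal form is already supplied by Theorem~\ref{gen elt}, however, this reduces to a direct computation with the $E_{ii}$ and $E_{i,i+1}$ and introduces no genuinely new obstruction.
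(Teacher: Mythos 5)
Your proposal is correct and takes essentially the same route as the paper: the paper gives no argument beyond the remark that Theorem~\ref{Miura form} follows from Theorem~\ref{gen elt}, i.e.\ one specializes the structural normal form \eqref{gicheck} to the case where, by Definition~\ref{d:regsing}, the superdiagonal data $\phi_i(z)$ of \eqref{qop1} are the prescribed polynomials $\Lambda_i(z)$ --- which is exactly your argument. Your additional verifications (identifying $\phi_i$ with $\Lambda_i$ via the invariantly defined maps $\bar{A}_i$ rather than raw matrix entries, the nonvanishing of the $g_i$, the unit determinant, and the restriction to the generic stratum where the two flags are in relative position $w_0$) simply make explicit what the paper leaves implicit in that specialization.
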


Now we introduce the notion of nondegeneracy of Miura ($SL(r+1),\hbar)$-oper. We refer to \cite{Frenkel:2020} for a geometric interpretation of this condition and for its geometric origin.

\begin{Def}    \label{nondeg}
  A Miura $(SL(r+1),\hbar)$-oper $A(z)$ of the form \eqref{form of A} is called
  nondegenerate if the corresponding $(H,\hbar)$-connection
  $A^H(z)$ can be written in the form \eqref{AH1}, where 
\begin{itemize}
\item $y_i(z)$ are polynomial

\item 
  for all
  $i,j,k$ with $i\ne j$ and $a_{ik} \neq 0, a_{jk} \neq
    0$, the zeros and poles of $y_i(z)$ and $y_j(z)$ are
  $\hbar$-distinct from each other and from the zeros of
  $\Lambda_k(z)$.
  \end{itemize} 
\end{Def}

If we apply a $\hbar$-gauge transformation by an element $h(z)\in H[z]$ to
$A(z)$, we get a new $Z$-twisted Miura $(SL(r+1),\hbar)$-oper.
The following proposition shows that it is only nondegenerate
if $h(z)\in H$.  As a consequence, the $\Lambda_k$'s of a
nondegenerate $\hbar$-oper are determined up to scalar multiples.

\begin{Prop} \cite{Frenkel:2020} If $A(z)$ is a nondegenerate $Z$-twisted Miura  \\
$(SL(r+1),\hbar)$-oper and $h(z)\in H[z]$, then $h(\hbar z)A(z)h(z)^{-1}$ is
  nondegenerate if and only if $h(z)$ is a constant element of $H$.
\end{Prop}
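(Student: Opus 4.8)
The plan is to reduce the statement to an explicit computation of how the abelian gauge transformation acts on the defining data $(y_i,\Lambda_i)$ of the Miura oper, followed by an elementary degree count. First I would write the torus-valued gauge parameter in coweight coordinates, $h(z)=\prod_{i=1}^r h_i(z)^{\check{\alpha}_i}$ with polynomial $h_i(z)$ (and the convention $h_0\equiv h_{r+1}\equiv 1$), and track the effect of $A(z)\mapsto h(\hbar z)A(z)h(z)^{-1}$ on the normal form \eqref{form of A}. Since $h(z)\in H$ commutes with the Cartan factor and conjugates the unipotent factor through the simple-root characters $h(z)^{\alpha_i}=\prod_j h_j(z)^{\langle\alpha_i,\check{\alpha}_j\rangle}$, a direct computation on the diagonal and superdiagonal of \eqref{Lambda} yields the two rules I will need,
\[
y_i(z)\longmapsto h_i(z)\,y_i(z),\qquad \Lambda_i(z)\longmapsto \frac{h_i(\hbar z)\,h_i(z)}{h_{i-1}(\hbar z)\,h_{i+1}(z)}\,\Lambda_i(z).
\]
The first follows from $g_i(z)=\zeta_i\,y_i(\hbar z)/y_i(z)$ in \eqref{giyi} together with $g_i\mapsto g_i\,h_i(\hbar z)/h_i(z)$, and the second from the diagonal entries $h_i/h_{i-1}$ of $h$.

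The easy direction is then immediate. If $h(z)\in H$ is constant, the first rule rescales each $y_i$ by a scalar, so its zeros and poles—and hence the $\hbar$-distinctness condition of Definition \ref{nondeg}—are unchanged, while the second rule rescales each $\Lambda_i$ by a nonzero constant. Thus $h(\hbar z)A(z)h(z)^{-1}$ is again a nondegenerate Miura oper with regular singularities determined by $\{\Lambda_i\}$ up to scalar.

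For the converse I would argue as follows. Nondegeneracy is posed relative to the fixed regular singularities $\{\Lambda_i\}$, which are themselves determined only up to scalar; consequently, if $h(\hbar z)A(z)h(z)^{-1}$ is nondegenerate, its transformed superdiagonal must be proportional to the original, $\Lambda'_i=c_i\Lambda_i$ with $c_i\in\C^\times$. By the second transformation rule this forces the system of multiplicative difference relations
\[
h_i(\hbar z)\,h_i(z)=c_i\,h_{i-1}(\hbar z)\,h_{i+1}(z),\qquad i=1,\dots,r.
\]
Taking degrees, and using that $h_i(\hbar z)$ and $h_i(z)$ have equal degree, I obtain the discrete Laplace recursion $2\deg h_i=\deg h_{i-1}+\deg h_{i+1}$ subject to $\deg h_0=\deg h_{r+1}=0$. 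Its general solution is linear in $i$, and the two boundary conditions force $\deg h_i=0$ for every $i$; a polynomial $h_i$ of degree zero is a nonzero constant, so $h(z)\in H$.

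The main obstacle is the step extracting $\Lambda'_i=c_i\Lambda_i$ from nondegeneracy, that is, ruling out that a non-constant $h_i$ could conspire to keep every transformed superdiagonal a legitimate (polynomial) singularity datum while preserving nondegeneracy. This is precisely where the non-resonance clause of Definition \ref{nondeg} must be used: the factor $h_i(z)$ enters the denominator of $\Lambda'_{i-1}$ and $h_i(\hbar z)$ the denominator of $\Lambda'_{i+1}$, so a zero of $h_i$ would have to be cancelled by a zero of $\Lambda_{i\mp1}$ or of a neighboring $y_j$, and the $\hbar$-distinctness of those zeros from one another forbids such cancellation; combined with the polynomiality of $y'_i=h_i y_i$ this pins down $\Lambda_i$ as the minimal singularity and hence $\Lambda'_i\propto\Lambda_i$. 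I expect the careful bookkeeping of these zeros across the three neighboring nodes, rather than the final degree recursion, to be the delicate part of the argument.
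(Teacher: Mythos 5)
Your transformation rules are correct (they agree with conjugating the matrix form \eqref{eq:MiuraqConnection} by $h=\prod_i h_i(z)^{\check{\alpha}_i}$), the "if" direction is fine, and the degree recursion $2\deg h_i=\deg h_{i-1}+\deg h_{i+1}$ with $\deg h_0=\deg h_{r+1}=0$ does force $\deg h_i=0$ for all $i$. The genuine gap is exactly the step you flag: deducing $\Lambda'_i=c_i\Lambda_i$ from nondegeneracy. Your primary justification for it is circular: nondegeneracy of $h(\hbar z)A(z)h(z)^{-1}$ is a condition on its \emph{own} singularity data $\{\Lambda'_i\}$ (these must be polynomials, and the $q$-distinctness of Definition \ref{nondeg} must hold for the primed data); it is not "posed relative to the fixed $\{\Lambda_i\}$". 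Indeed, the paper states that the determination of the $\Lambda_k$'s up to scalar multiples is a \emph{consequence} of this very proposition, so it cannot be used as an input to its proof. (Note also that the paper itself gives no proof of this statement -- it only cites \cite{Frenkel:2020} -- so your argument has to stand entirely on its own.)

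Your backup cancellation sketch does not close this gap, for three reasons. (i) The $y_j$'s never enter the formula for any $\Lambda'_k$: the numerator of $\Lambda'_{i\mp1}$ is $\Lambda_{i\mp1}(z)h_{i\mp1}(z)h_{i\mp1}(\hbar z)$, so "cancellation by a zero of a neighboring $y_j$" is not an available case, while the case you omit -- cancellation against zeros of $h_{i\mp1}$ themselves -- is precisely the one that needs to be ruled out. (ii) The $q$-distinctness you invoke is that of the \emph{original} data, which constrains the zeros of the $y_j$'s and $\Lambda_k$'s but says nothing about the zeros of $h_i$; nothing at all forbids $h_i$ from sharing a root with $\Lambda_{i-1}$. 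Every constraint must instead be extracted from nondegeneracy of the \emph{transformed} oper, which is the hypothesis. (iii) The detour through proportionality is unnecessary: the argument of \cite{Frenkel:2020} kills nonconstant $h_i$ directly. If $w$ is a root of $h_i$, then $w$ is a zero of $y'_i=h_iy_i$ and $(z-w)$ divides the numerator $\Lambda_i(z)h_i(z)h_i(\hbar z)$ of $\Lambda'_i$. Either this factor survives, so that $w$ is a common zero of $y'_i$ and $\Lambda'_i$, violating $q$-distinctness for the primed data; or it is cancelled by the denominator $h_{i-1}(\hbar z)h_{i+1}(z)$, so that $\hbar w$ is a root of $h_{i-1}$ or $w$ is a root of $h_{i+1}$, and then the zeros of $y'_{i-1}$ (resp.\ $y'_{i+1}$) fail to be $q$-distinct from those of $y'_i$. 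Either way nondegeneracy of the transformed oper is contradicted, so every $h_i$ is constant; with this, the proportionality claim and the degree count become superfluous. (A further pedantic point: for $r=1$ the clause "$i\ne j$" in Definition \ref{nondeg} as written makes the condition vacuous, so both your argument and the one above need the nondegeneracy condition read so as to include $i=j=k$, as it is in \cite{Frenkel:2020}.)
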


\subsection{$QQ$-systems and Miura $\hbar$-opers}

In the previous section, we found the explicit structure of the $Z$-twisted non-degenerate  Miura $(SL(r+1),\hbar)$-oper   
with regular singularities defined by $\{\Lambda_i(z)\}_{i=1,\dots, r}$ and associated with regular element $Z=\prod_i\zeta_i^{\check{\alpha}_i}$.  The local expression, namely $A(z)$ can be expressed as follows:
\begin{equation}\label{form of A1}
A(z)=\prod_i
g_i(z)^{\check{\alpha}_i} \; e^{\frac{\Lambda_i(z)}{g_i(z)}e_i}, \qquad
g_i(z)=\zeta_i\frac{Q_i^+(\hbar z)}{Q_i^+(z)}\,.
\end{equation}
where $Q_i^{+}(z)$ are polynomials (here we changed the notation $y_i(z)\equiv Q_i^{+}(z)$). 
From now on, we will assume that $Z$ satisfies the following property:
\begin{equation}    \label{assume}
\prod_{i=1}^r \zeta_i^{a_{ij}}=\frac{\zeta_j^2}{\zeta_{j+1}\zeta_{j-1}} \notin \hbar^\Z, \qquad
\forall j=1,\ldots,r\,,
\end{equation}
where $a_{ij}$ are matrix elements of the Cartan matrix for $\mathfrak{sl}_{r+1}$.
Since $\prod_{i=1}^r \zeta_i^{a_{ij}}\ne 1$ is a special case of
\eqref{assume}, this implies that $Z$ is {\em regular semisimple}.

\subsection{The $SL(r+1)$ $QQ$-system}
In \cite{Frenkel:2020} (see also similar result in \cite{Mukhin_2005} for the XXX case) the following statement was proven (we specialize that result to the case of $SL(r+1)$):
\begin{Thm}    \label{inj}
  There is a one-to-one correspondence between the set of
  nondegenerate $Z$-twisted Miura $(SL(r+1),\hbar)$-opers and the set
  of nondegenerate polynomial solutions of the $QQ$-system 
\begin{equation}\label{eq:QQAtype}
\xi_{i} Q^+_i(\hbar z) Q^-_i(z) - \xi_{i+1} Q^+_i(z) Q^-_i(\hbar z) = \Lambda_i (z) Q^+_{i-1}(z)Q^+_{i+1}( \hbar z)\,, \qquad i = 1,\dots, r
\end{equation}
subject to the boundary conditions $Q^\pm_{0}(z)=Q^\pm_{r+1}(z)=1$ and $\xi_0=\xi_{r+2}=1$ so that  
$$
\xi_1=\zeta_1,\quad \xi_2= \frac{\zeta_2}{\zeta_1},\quad \dots \quad \xi_r=\frac{\zeta_{r}}{\zeta_{r-1}},\quad \xi_{r+1}= \frac{1}{\zeta_r}\,.
$$
\end{Thm}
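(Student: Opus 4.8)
The plan is to establish the bijection by unwinding the explicit Miura form and extracting the QQ-system as the integrability/consistency condition for the existence of the second preserved flag. By Theorem~\ref{Miura form}, a nondegenerate $Z$-twisted Miura $(SL(r+1),\hbar)$-oper with regular singularities determined by $\{\Lambda_i(z)\}$ has its $q$-connection written as $A(z)=\prod_i g_i(z)^{\check\alpha_i}\, e^{\frac{\Lambda_i(z)}{g_i(z)}e_i}$, with the Cartan part governed by polynomials $y_i(z)=Q_i^+(z)$ via $g_i(z)=\zeta_i\,Q_i^+(\hbar z)/Q_i^+(z)$ as in \eqref{form of A1}. So one direction of the correspondence is already transparent: the oper data directly produce the functions $Q_i^+$. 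The substance of the theorem is that nondegeneracy forces these $Q_i^+$ to be accompanied by a second family $Q_i^-$ satisfying \eqref{eq:QQAtype}, and conversely that any polynomial solution reconstructs a unique such oper.

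\emph{First I would} encode the second preserved flag $\hat{\mathcal L}_\bullet$ concretely. Because the oper is $Z$-twisted, $A(z)=v(\hbar z)Z v(z)^{-1}$ with $v(z)\in B_+(z)$, and the two full flags are in generic relative position $w_0$ on a Zariski-dense subset (as noted after Definition~\ref{Miuraflag}). I would pass to the associated $SL(r+1)$-picture where the flags become nested subbundles, and track how the $q$-connection $A(z)$ acts on the successive quotients of $\hat{\mathcal L}_\bullet$. The condition that $A$ preserves $\hat{\mathcal L}_\bullet$ and has the prescribed superdiagonal entries $\Lambda_i(z)$ translates, rank by rank, into a recursion for the sections defining the two flags. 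Introducing the ``dual'' polynomials $Q_i^-(z)$ as the sections cutting out $\hat{\mathcal L}_\bullet$ (the partners of the $Q_i^+$ cutting out $\mathcal L_\bullet$), the requirement that both flags are simultaneously compatible with a single connection of Miura form yields precisely the bilinear relation \eqref{eq:QQAtype}. The coefficients $\xi_i$ emerge as the ratios of the $\zeta_j$ dictated by the twist $Z=\prod_i\zeta_i^{\check\alpha_i}$, and the boundary conditions $Q^\pm_0=Q^\pm_{r+1}=1$, $\xi_0=\xi_{r+2}=1$ record that the flag terminates in a line bundle at one end and equals $E$ at the other.

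\emph{For the converse,} given a nondegenerate polynomial solution $(Q_i^+,Q_i^-)$ of the QQ-system, I would reconstruct $A(z)$ by setting $g_i(z)=\zeta_i Q_i^+(\hbar z)/Q_i^+(z)$ and defining the second flag through the $Q_i^-$; the QQ-relation \eqref{eq:QQAtype} is exactly what guarantees that this second flag is genuinely $A$-preserved, so that the quadruple is a bona fide Miura oper. Nondegeneracy (Definition~\ref{nondeg})---the $q$-distinctness of zeros and poles of the $y_i=Q_i^+$ from each other and from the zeros of $\Lambda_k$---ensures the construction is well defined and that the $\Lambda_i$ are recovered correctly, while the proposition following Definition~\ref{nondeg} supplies the rigidity (uniqueness up to constant $H$-gauge) that makes the map injective. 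Matching these two constructions as mutual inverses completes the bijection.

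\emph{The hard part will be} the bookkeeping that shows the preservation of the second flag $\hat{\mathcal L}_\bullet$ is \emph{equivalent} to---not merely implied by---the QQ-system: one must verify that the bilinear equations \eqref{eq:QQAtype} capture \emph{all} the constraints coming from $A\cdot\hat{\mathcal L}_i\subset\hat{\mathcal L}_{i-1}^{\hbar}$ and no spurious extra ones, and that the nondegeneracy hypotheses are exactly strong enough to rule out degenerate reconstructions yet weak enough to cover all generic opers. Since this is the $SL(r+1)$ specialization of a result already established in \cite{Frenkel:2020}, I would lean on that general theorem and devote the work to checking that the boundary conditions and the explicit $\xi_i$-assignment specialize correctly, rather than reproving the structural equivalence from scratch.
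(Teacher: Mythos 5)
Your overall strategy ends up in the same place as the paper: Section~2 of the paper is explicitly a review, and the paper offers no independent proof of Theorem~\ref{inj}, simply citing \cite{Frenkel:2020} and specializing to $SL(r+1)$ --- which is also where your final paragraph lands. The forward direction of your sketch (oper $\Rightarrow$ QQ data via Theorem~\ref{Miura form} and $g_i(z)=\zeta_i Q_i^+(\hbar z)/Q_i^+(z)$) is accurate.

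However, there is a genuine gap in your converse direction, and it is exactly the point the paper has to work around. You claim that given a nondegenerate solution of \eqref{eq:QQAtype} one can define the second flag ``through the $Q_i^-$'' and that the QQ-relations are ``exactly what guarantees that this second flag is genuinely $A$-preserved.'' This is not what the general theorem of \cite{Frenkel:2020} gives you. The unconditional bijection there is with $Z$-twisted Miura-Pl\"ucker $q$-opers, where the twist condition is imposed only on the rank-two pieces attached to each simple root; promoting a Miura-Pl\"ucker oper to a genuine $Z$-twisted Miura oper (a single $v(z)\in B_+(z)$ with $A(z)=v(\hbar z)Zv(z)^{-1}$, equivalently an honest $A$-preserved flag $\hat{\mathcal{L}}_\bullet$) requires in general an extra $w_0$-genericity hypothesis (Theorem~7.10 of that paper), which is absent from the statement you are proving. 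Closing this gap for $SL(r+1)$ is precisely the content of the paper's next subsection: one must first produce, from the basic solution $(Q_i^+,Q_i^-)$, the additional polynomials $Q^-_{i,\dots,j}$ solving the extended QQ-system \eqref{eq:QQALLFull} (Theorem~\ref{Th:BetheQQEquiv}), and only then can one write the explicit gauge transformation $v(z)$ of \eqref{eq:qGaugeGen}--\eqref{eq:vinverse} (Theorem~\ref{th:SLNqMiura}) whose existence certifies the Miura property. The pairs $(Q_i^+,Q_i^-)$ alone do not parametrize the second flag --- its components involve all the $Q^-_{i,\dots,j}$, as Proposition~\ref{Th:SectionsQQ} makes explicit --- and the existence and polynomiality of these higher entries is a theorem, not bookkeeping. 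So your ``hard part'' paragraph correctly locates the difficulty but misjudges its nature: what you defer as routine verification is the $SL(r+1)$-specific argument without which the cited general result does not yield the theorem as stated.
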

Note, that $\xi_i$ is the $i$th element on the diagonal of $Z$ from \eqref{Z}.

We will say that a polynomial solution $\{ Q_i^+(z),Q_i^-(z)
\}_{i=1,\ldots,r}$ of \eqref{eq:QQAtype} is {\em nondegenerate} if the following conditions are satisfied: relation \eqref{assume} holds;
for $i\neq j$ the zeros of $Q^+_i(z)$ and $Q^-_{j}(z)$ are
$\hbar$-distinct from each other and from the zeros of $\Lambda_{k}(z)$ for $|i-k|=1,\,|j-k|=1$.

For the convenience we will rewrite \eqref{eq:QQAtype} as follows:
\begin{equation}\label{eq:QQgamma}
\xi_{i} \phi_i(z)- \xi_{i+1} \phi_i( \hbar z) = \rho_i(z)\,,
\end{equation}
where 
\begin{equation}
\phi_i(z)=  \frac{Q^-_i(z)}{Q^+_i(z)} \,,\qquad \rho_i(z)= \Lambda_i (z)\frac{Q^+_{i-1}( \hbar z)Q^+_{i+1}(z)}{Q^+_i(z)Q^+_i( \hbar z)} \,.
\end{equation}

\subsection{Extended $QQ$-system and $Z$-twisted $(SL(r+1),\hbar)$-opers}
As it was demonstrated in \cite{Frenkel:2020} for a simply-connected simple Lie group $G$ the set of nondegenerate $Z$-twisted Miura-Pl\"ucker $(G,\hbar)$-opers includes as a subset the set of $Z$-twisted Miura $(G,\hbar)$-opers.
The opposite inclusion was possible provided that $Z$-twisted Miura-Pl\"ucker $\hbar$-opers are in addition $w_0$-generic.

In this section we shall demonstrate that when $G$ is a special linear group then we do not need this extra condition and that the corresponding  $Z$-twisted Miura-Pl\"ucker $(SL(r+1),\hbar)$-oper will be $Z$-twisted Miura $(SL(r+1),\hbar)$-oper, namely there exists $v(z) \in B_+(z)$, such that the q-connection $A(z)$ reduces to an element of the form \eqref{Z}, or, equivalently
\begin{equation}\label{eq:qGaugeTrSpecial}
v( \hbar z)^{-1}A(z)= Zv(z)^{-1}\,.
\end{equation}
Moreover, we will construct an explicit expression for $v(z)$.

The following statement is a generalization of the result of \cite{Mukhin_2005} to $Z$-twisted $\hbar$-opers.

\begin{Thm}\label{th:SLNqMiura}
Let $A(z)$ be as in \eqref{form of A1} and $Z$ as in \eqref{Z}. 
Suppose $Q^-_{i,i+1,\dots, j}(z)$ ( $i,j\in\mathbb{Z}$, $i<j$) are polynomials, satisfying equations:
\begin{align}\label{eq:QQAll}
\xi_{i} \,\phi_i(z)-\xi_{i+1}\,  \phi_i(\hbar z)&= \rho_i(z)\,,\qquad \qquad &i=1,\dots,r\notag\\
\xi_{i} \,\phi_{i,i+1}(z)-\xi_{i+2}\,  \phi_{i, i+1}( \hbar z)&=\rho_{i+1}(z)\phi_i( \hbar z)\,,\qquad \qquad &i=1,\dots,r-1\notag\\
\dots&\dots\\
\xi_{i} \,\phi_{i,\dots,r-2+i}(z)-\xi_{r+i-1}\,  \phi_{i,\dots,r-2+i}( \hbar z)&=\rho_{r-1}(z)\phi_{i,\dots,r-3+i}( \hbar z)\,,\qquad \qquad &i=1,2\notag\\
\xi_{1} \phi_{1,\dots,r}(z)-\xi_{r+1}  \phi_{1,\dots,r}( \hbar z)&= \rho_r(z)\phi_{1,\dots,r-1}( \hbar z)\,,\qquad \qquad &\notag
\end{align}
where for all $j>i$
\begin{equation}\label{eq:phimdef}
\phi_{i,\dots,j}(z)=\frac{Q^-_{i,\dots,j}(z)}{Q^+_j(z)}.
\end{equation}
Then there exist $v(z)\in B_+(z)$ such that \eqref{eq:qGaugeTrSpecial} holds and is given by
\begin{equation}\label{eq:qGaugeGen}
v(z)= \prod\limits_{i=1}^r Q^+_i(z)^{\check{\alpha}_i} \cdot \prod\limits_{i=1}^r V_i(z)\,,
\end{equation}
where 
\begin{equation}
V_i(z)= \prod\limits_{j=i}^r \exp\left(-\phi_{i,\dots,j}(z)\, e_{i,\dots,j}\right)\,, \quad e_{i,\dots,j}=[\dots[[e_i,e_{i+1}],e_{i+2}]\dots e_j]\,.
\end{equation}
\end{Thm}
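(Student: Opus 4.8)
The plan is to verify the gauge relation \eqref{eq:qGaugeTrSpecial} in its equivalent multiplicative form $A(z)\,v(z)=v(\hbar z)\,Z$, splitting the proposed $v(z)$ into a torus factor and a unipotent factor. Writing $v(z)=Y(z)\,U(z)$ with $Y(z)=\prod_{i=1}^{r}Q_i^+(z)^{\check{\alpha}_i}\in H(z)$ and $U(z)=\prod_{i=1}^{r}V_i(z)\in N_+(z)$, I would first dispose of the Cartan content. Since $Y(z)$ and $Z$ both lie in the torus and therefore commute, one gets $Y(\hbar z)\,Z\,Y(z)^{-1}=\prod_i\big(\zeta_i\,Q_i^+(\hbar z)/Q_i^+(z)\big)^{\check{\alpha}_i}=\prod_i g_i(z)^{\check{\alpha}_i}=A^H(z)$ by \eqref{giyi} and \eqref{form of A1}. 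Hence the diagonal part of the claimed identity holds for free, and the entire problem is the matching of the off-diagonal, unipotent content.

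For the unipotent matching I would work in the defining representation, in which the nested commutator collapses to a single matrix unit, $e_{i,\dots,j}=E_{i,j+1}$. Because all the generators occurring in a fixed $V_i(z)$ sit in a single row and hence commute, $V_i(z)=I-\sum_{j=i}^{r}\phi_{i,\dots,j}(z)\,E_{i,j+1}$ exactly, which makes $U(z)$ a fully explicit upper unitriangular matrix. Moreover $Z$ conjugates $e_{i,\dots,j}$ to $(\xi_i/\xi_{j+1})\,e_{i,\dots,j}$, where $\xi_i$ is the $i$-th diagonal entry of $Z$. The strategy is then to compare the $(i,j+1)$ matrix entries of the two sides of $A(z)\,v(z)=v(\hbar z)\,Z$: the $Z$-scaling together with the shift $z\mapsto\hbar z$ built into $v(\hbar z)$ produces precisely the combination $\xi_i\,\phi_{i,\dots,j}(z)-\xi_{j+1}\,\phi_{i,\dots,j}(\hbar z)$, while the superdiagonal entries of $A(z)$ --- equivalently the functions $\rho_k(z)$ --- feed the remaining cross terms.

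I would organize this entry-by-entry comparison as an induction on the commutator length $\ell=j-i$. The base case $\ell=0$ is the entry immediately above the diagonal and reproduces the basic QQ-system \eqref{eq:QQgamma}, namely the first line of \eqref{eq:QQAll}, under the convention that the empty object $\phi_{i,\dots,i-1}$ equals $1$. For the inductive step, assuming the identities for all shorter commutators have been established, I would expand the $(i,j+1)$ entry of $A(z)\,v(z)$ and use the already-verified lower relations to cancel every cross term except one surviving contribution $\rho_j(z)\,\phi_{i,\dots,j-1}(\hbar z)$, which is exactly the right-hand side of the $\phi_{i,\dots,j}$ line of the extended system \eqref{eq:QQAll}. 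This is the trigonometric, $Z$-twisted counterpart of the construction of \cite{Mukhin_2005} cited before the statement.

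The step I expect to be the main obstacle is the combinatorial bookkeeping of these cross terms. Since $A(z)$ is not bidiagonal --- the product $\prod_i e^{(\Lambda_i/g_i)e_i}$ populates all superdiagonals --- and since the factors $V_i(z)$ do not commute across different rows, one must track carefully how the interleaved Cartan conjugations rescale each $E_{i,j+1}$ and verify that the specific nested-commutator normalization in the definition of $V_i(z)$ is exactly what makes the length-$\ell$ equation close using only the length-$(\ell-1)$ data. Confirming that the scalar $\xi_i/\xi_{j+1}$ and the shift pattern align term-by-term with each successive line of \eqref{eq:QQAll} is the crux; once that alignment is checked for all $(i,j)$, the matrix $v(z)$ of \eqref{eq:qGaugeGen} satisfies \eqref{eq:qGaugeTrSpecial} and lies in $B_+(z)$ as required.
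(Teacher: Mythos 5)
Your overall strategy is sound, and it is essentially the verification that the paper itself leaves implicit: the paper states this theorem as a generalization of \cite{Mukhin_2005}, writes out no proof, and instead records the two ingredients from which the check follows, namely the explicit inverse \eqref{eq:vinverse} and the bidiagonal form of the connection (Lemma \ref{Th:BCHAConnection}). Your reduction of the Cartan part via the unique factorization of $B_+(z)$ into torus and unipotent factors together with \eqref{giyi}, the identification $e_{i,\dots,j}=E_{i,j+1}$ in the defining representation, the closed form $V_i(z)=I-\sum_{j\geq i}\phi_{i,\dots,j}(z)E_{i,j+1}$, and the $Z$-weights $\xi_i/\xi_{j+1}$ are all correct, and the entry-by-entry comparison does reproduce, line by line, the extended system \eqref{eq:QQAll} under the convention $\phi_{i,\dots,i-1}=1$.

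The one genuine problem is the premise on which you build the ``crux'' of your argument: that $A(z)$ is not bidiagonal because $\prod_i e^{(\Lambda_i/g_i)e_i}$ populates all superdiagonals. Under that reading of \eqref{form of A1} the identity you set out to prove is actually false, not merely harder: already for $SL(3)$ the ascending-ordered product has $(1,3)$ entry $\Lambda_1(z)\Lambda_2(z)\neq 0$, whereas a direct computation of $v(\hbar z)Zv(z)^{-1}$ shows that its $(1,3)$ entry vanishes, because the two cross terms cancel against each other by the first and second lines of \eqref{eq:QQAll} --- which is precisely the cancellation your inductive step performs. The product ordering for which the theorem holds is the one in Lemma \ref{Th:BCHAConnection}: $A(z)$ is bidiagonal, with diagonal $g_1,\,g_2/g_1,\dots,1/g_r$ and superdiagonal $\Lambda_1,\dots,\Lambda_r$ as in \eqref{eq:MiuraqConnection}. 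So your induction does succeed, but what it proves is that $v(\hbar z)Zv(z)^{-1}$ equals that bidiagonal matrix; if you insist on the all-superdiagonal form of $A(z)$ you will find the entries do not match. Once the convention is fixed, the cleanest execution --- the one the paper's presentation points to --- dispenses with the induction and the cross-term bookkeeping altogether: verify the equivalent form $v(\hbar z)^{-1}A(z)=Zv(z)^{-1}$ using \eqref{eq:vinverse}, whose $(i,j+1)$ entry is the single ratio $Q^-_{i,\dots,j}(z)/Q^+_{j+1}(z)$ rather than the polynomial in the $\phi$'s that appears in $v(z)$ itself. Since $A(z)$ is bidiagonal, the $(i,j+1)$ entry of $v(\hbar z)^{-1}A(z)$ consists of exactly two terms,
\[
\bigl[v(\hbar z)^{-1}\bigr]_{i,j}\,\Lambda_j(z)+\bigl[v(\hbar z)^{-1}\bigr]_{i,j+1}\,\frac{g_{j+1}(z)}{g_j(z)}\,,
\]
and equating this to $\xi_i\,Q^-_{i,\dots,j}(z)/Q^+_{j+1}(z)$ and clearing denominators is verbatim the $\phi_{i,\dots,j}$ line of \eqref{eq:QQAll}, while the diagonal entries match automatically by \eqref{giyi}. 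I recommend you restructure the write-up around this form of the check.
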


Notice that although the expression for $v(z)$ in \eqref{eq:qGaugeGen} is rather complicated, the inverse $v(z)^{-1}$ can be succinctly presented as
\begin{equation}\label{eq:vinverse}
v(z)^{-1}=\displaystyle\begin{pmatrix}
\frac{1}{Q_1^+(z)} & \frac{Q^-_1(z)}{Q_2^+(z)} & \frac{Q^-_{12}(z)}{Q_3^+(z)} & \dots  & \frac{Q^-_{1,\dots,r-1}(z)}{Q_r^+(z)}  & Q^-_{1,\dots,r}(z) \\
0 & \frac{Q^+_1(z)}{Q_2^+(z)} & \frac{Q^-_2(z)}{Q_3^+(z)} & \dots & \frac{Q^-_{2,\dots,r-1}(z)}{Q_r^+(z)}  & Q^-_{2,\dots,r}(z)\\
0 & 0 & \frac{Q^+_2(z)}{Q_3^+(z)}  & \dots & \frac{Q^-_{3,\dots,r-1}(z)}{Q_r^+(z)} & Q^-_{3,\dots,r}(z)\\
\vdots & \vdots & \vdots & \ddots & \vdots  & \vdots\\
0 & \dots & \dots& \dots &  \frac{Q^+_{r-1}(z)}{Q_r^+(z)} & Q^-_r(z) \\
0 & \dots & \dots& \dots &  0 & Q^+_r(z)
\end{pmatrix}\,.
\end{equation}

Using this Lemma we can rewrite the q-connection \eqref{form of A1} such that the roots of $SL(r+1)$ are placed in the decreasing order. 

\begin{Lem}\label{Th:BCHAConnection}
Let
\begin{equation}
\rho_i(z)=\Lambda_i(z)\frac{Q_{i-1}(\hbar z)Q_{i+1}(z)}{Q_{i}(\hbar z)Q_{i}(z)}\,.
\end{equation}
Then the $(SL(r+1),\hbar)$-oper reads 
\begin{equation}\label{new form of A1}
A(z)=\prod_{i=r}^1 Q^+_i(\hbar z)^{\check{\alpha}_i} \cdot \prod\limits_{i=r}^1 e^{\frac{\zeta_{i}}{\zeta_{i+1}}\rho_i(z)e_i} \cdot \prod_{i=r}^1 \zeta_i^{\check{\alpha}}Q^+_i(z)^{-\check{\alpha}_i}\,,
\end{equation}
or as a matrix
\begin{equation}\label{eq:MiuraqConnection}
A(z)=
\begin{pmatrix}
g_1(z) & \Lambda_1(z) & 0 & 0 & \dots &0& 0\\
0& \frac{g_2(z)}{g_1(z)} & \Lambda_2(z)  & 0 & \dots & 0&0\\
0& 0&  \frac{g_3(z)}{g_2(z)} & \Lambda_3(z)  & \dots & 0& 0\\
\vdots& \vdots & \cdots & \ddots & \ddots & \vdots & \vdots \\
\vdots& \vdots & \cdots & \dots & \ddots & \Lambda_{r-1}(z) & 0 \\
0& 0& 0& \dots &\dots &\frac{g_r(z)}{g_{r-1}(z)} &\Lambda_r(z)\\
0& 0& 0& \dots &\dots&0 &\frac{1}{g_r(z)}
\end{pmatrix}
\end{equation}
\end{Lem}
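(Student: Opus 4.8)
The plan is to read off both presentations of $A(z)$ from the explicit gauge transformation produced in Theorem~\ref{th:SLNqMiura}, rather than re-deriving them from scratch. Equation \eqref{eq:qGaugeTrSpecial} gives the identity $v(\hbar z)^{-1}A(z)=Z\,v(z)^{-1}$, and Theorem~\ref{th:SLNqMiura} furnishes $v(z)^{-1}$ explicitly as the upper-triangular matrix \eqref{eq:vinverse}, while $Z=\prod_i\zeta_i^{\check{\alpha}_i}$ is the diagonal matrix $\diag(\xi_1,\dots,\xi_{r+1})$ with $\xi_i=\zeta_i/\zeta_{i-1}$ as in Theorem~\ref{inj}. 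Since the three matrices $v(\hbar z)^{-1}$, $Z$, and $v(z)^{-1}$ are thus all explicit, I would avoid inverting \eqref{eq:vinverse} and instead verify the claimed bidiagonal matrix \eqref{eq:MiuraqConnection} by checking the identity $v(\hbar z)^{-1}A(z)=Z\,v(z)^{-1}$ entry by entry, with $A(z)$ taken to be the right-hand side of \eqref{eq:MiuraqConnection}.

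First I would record the two normalizations that drive every cancellation: $g_i(z)=\zeta_i\,Q_i^+(\hbar z)/Q_i^+(z)$ from \eqref{giyi}, and $\xi_i=\zeta_i/\zeta_{i-1}$. For the diagonal entries the check is immediate, since $v(\hbar z)^{-1}$ is upper-triangular: the $(m,m)$ entry reduces to $\frac{Q_{m-1}^+(\hbar z)}{Q_m^+(\hbar z)}\cdot\frac{g_m(z)}{g_{m-1}(z)}=\xi_m\frac{Q_{m-1}^+(z)}{Q_m^+(z)}$, which is exactly the $(m,m)$ entry of $Z\,v(z)^{-1}$. Because $A(z)$ is bidiagonal, each off-diagonal entry $(m,n)$ with $m<n$ of the product $v(\hbar z)^{-1}A(z)$ collects only two terms, one from the diagonal entry $g_n/g_{n-1}$ and one from the superdiagonal entry $\Lambda_{n-1}$ of $A(z)$; setting this equal to $\xi_m\,Q^-_{m,\dots,n-1}(z)/Q_n^+(z)$ and clearing denominators turns the identity into precisely one line of the extended $QQ$-system \eqref{eq:QQAll}, namely the relation for $\phi_{m,\dots,n-1}$. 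The superdiagonal case $n=m+1$ uses the first line of \eqref{eq:QQAll} (the ordinary $QQ$-relation), and the strictly-upper entries $n-m\ge 2$ use the higher lines carrying the multi-index polynomials $Q^-_{m,\dots,n-1}$. This establishes \eqref{eq:MiuraqConnection}.

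It remains to match the factored expression \eqref{new form of A1} with the same bidiagonal matrix. Here I would multiply the middle product out in the defining representation, where $e_i=E_{i,i+1}$ and hence $e^{\frac{\zeta_i}{\zeta_{i+1}}\rho_i(z)e_i}=I+\frac{\zeta_i}{\zeta_{i+1}}\rho_i(z)E_{i,i+1}$. The point of writing the product in \emph{decreasing} root order is that all cross terms vanish: for $i>j$ one has $E_{i,i+1}E_{j,j+1}=0$, so $\prod_{i=r}^1 e^{\frac{\zeta_i}{\zeta_{i+1}}\rho_i e_i}=I+\sum_{i=1}^r\frac{\zeta_i}{\zeta_{i+1}}\rho_i(z)E_{i,i+1}$ is already bidiagonal. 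Conjugating by the outer Cartan factors, $\prod_i Q_i^+(\hbar z)^{\check{\alpha}_i}$ on the left and $Z\prod_i Q_i^+(z)^{-\check{\alpha}_i}$ on the right, reproduces on the diagonal the Cartan connection $A^H(z)$ with entries $g_m(z)/g_{m-1}(z)$, while the $(i,i+1)$ entry is rescaled by $\frac{Q_i^+(\hbar z)}{Q_{i-1}^+(\hbar z)}\cdot\xi_{i+1}\frac{Q_i^+(z)}{Q_{i+1}^+(z)}$; using $\frac{\zeta_i}{\zeta_{i+1}}\,\xi_{i+1}=1$ and the definition of $\rho_i(z)$ this collapses to exactly $\Lambda_i(z)$. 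Hence \eqref{new form of A1} equals \eqref{eq:MiuraqConnection}, and both equal $A(z)$.

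I expect the main obstacle to be the bookkeeping in the strictly-upper-triangular entries of the first step: matching the denominator $Q_n^+$ appearing in \eqref{eq:vinverse} against the denominator $Q_{n-1}^+$ carried by $\phi_{m,\dots,n-1}$ in \eqref{eq:QQAll} forces one to multiply the relevant $QQ$-relation through by $Q_{n-1}^+(z)/Q_n^+(z)$ and to use the explicit form of $\rho_{n-1}(z)$ to cancel the spurious $Q^+$-factors. Keeping the multi-index polynomials $Q^-_{m,\dots,n-1}$ and their $\hbar$-shifts straight throughout is the only genuinely delicate part, the rest being the two elementary normalizations above.
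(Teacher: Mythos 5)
Your proposal is correct and takes essentially the route the paper intends: the Lemma is stated there without proof as an immediate consequence of Theorem \ref{th:SLNqMiura}, i.e.\ of the identity $v(\hbar z)^{-1}A(z)=Z\,v(z)^{-1}$ with $v(z)^{-1}$ given explicitly by \eqref{eq:vinverse}, and your entry-by-entry verification (each entry reducing to one line of the extended QQ-system \eqref{eq:QQALLFull}) together with the direct expansion of the decreasing-order product is exactly the computation the paper suppresses. Both halves check out — in particular the observation that $E_{i,i+1}E_{j,j+1}=0$ for $i>j$ makes the middle product bidiagonal, and the cancellation $\frac{\zeta_i}{\zeta_{i+1}}\xi_{i+1}=1$ together with the definition of $\rho_i(z)$ recovers $\Lambda_i(z)$ on the superdiagonal, so nothing is missing.
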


The first line of \eqref{eq:QQAll} is the $SL(r+1)$ $QQ$-system \eqref{eq:QQgamma}. 
Let us rewrite the above equations in terms of the Q-polynomials:
\begin{align}\label{eq:QQALLFull}
\xi_{i} Q^+_i(\hbar z) Q^-_i(z) - \xi_{i+1} Q^+_i(z) Q^-_i(\hbar z) &=\Lambda_i (z) Q^+_{i-1}(\hbar z)Q^+_{i+1}(z)\,,\notag\\
\xi_{i} Q^+_{i+1}(\hbar  z) Q^-_{i,i+1}(z) - \xi_{i+2} Q^+_{i+1}(z) Q^-_{i,i+1}(\hbar z) &= \Lambda_{i+1} (z) Q^-_{i}(\hbar z)Q^+_{i+2}(z)\,,\notag\\
\dots&\dots\\
\xi_i \,Q^+_{r-2+i}(\hbar z)Q^-_{i,\dots,r-2+i}(z)-\xi_{r-1+i}\,  Q^+_{r-2+i}(z)Q^-_{i,\dots,r-2+i}(\hbar z)&=\Lambda_{r-1+i}(z)Q^-_{i,\dots,r-1+i}(\hbar z)Q^+_{r+i}(z)\,,\notag\\
\xi_{1} Q^+_r(\hbar z)Q^-_{1,\dots,r}(z)-\xi_{r+1}  Q^+_r(z)Q^-_{1,\dots,r}(\hbar z)&= \Lambda_r(z) Q^-_{1,\dots,r-1}(\hbar z)\,.\notag
\end{align}
We shall refer to all equations of \eqref{eq:QQALLFull} as the \textit{extended $QQ$-system} for $SL(r+1)$. We call its solution {\it nondegenerate}, if the resulting solution of the original $QQ$-system is nondegenerate.

Let us now show that starting from the solution of the nondegenerate $QQ$-system, we obtain solutions to the extended $QQ$-system as well. To do that we need the result (which is true for other simply laced groups) of \cite{Frenkel:2020}: 

\begin{Thm}
The solutions of the nondegenerate $SL(r+1)$ $QQ$-system are in one-to-one correspondence to the solutions of the Bethe Ansatz equations for $\mathfrak{sl}(r+1)$ XXZ spin chain:
\begin{equation}    \label{eq:bethe}
\frac{Q^+_{i}(\hbar s_{i,k})}{Q^+_{i}(\hbar^{-1}w^k_i)} \frac{\xi_i}{\xi_{i+1}}=
- \frac{\Lambda_i(s_{i,k}) Q^{+}_{i+1}(\hbar s_{i,k})Q^{+}_{i-1}(s_{i,k})}{\Lambda_i(\hbar^{-1}s_{i,k})Q^{+}_{i+1}(s_{i,k})Q^{+}_{i-1}(\hbar^{-1}s_{i,k})},
\end{equation}
where $Q_i=\prod_{k=1}^{m_i}(z-s_{i,k}),\, \Lambda_i(z)=\prod_{k=1}^{l_i}(z-a_{i,k}), \,i=1,\ldots,r$.
\end{Thm}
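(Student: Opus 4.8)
The plan is to prove the bijection by exhibiting the two maps explicitly and checking they are mutually inverse. The direction from a nondegenerate QQ-solution to a Bethe solution is elementary and proceeds by evaluating the relation \eqref{eq:QQAtype} at the zeros $s_{i,k}$ of $Q_i^+$; the reverse direction, reconstructing the second $Q$-functions $Q_i^-$ out of a collection of Bethe roots, carries the actual content and rests on solving a first-order $\hbar$-difference equation within the class of rational functions.

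For the forward direction I would fix $i$ and a zero $s_{i,k}$ of $Q_i^+(z)$. Evaluating the $i$-th relation of \eqref{eq:QQAtype} first at $z=s_{i,k}$ and then at $z=\hbar^{-1}s_{i,k}$, exactly one of the two left-hand terms survives in each case, since $Q_i^+(s_{i,k})=0$, yielding
\[
\xi_i\,Q_i^+(\hbar s_{i,k})\,Q_i^-(s_{i,k})=\Lambda_i(s_{i,k})\,Q_{i-1}^+(s_{i,k})\,Q_{i+1}^+(\hbar s_{i,k}),
\]
\[
-\xi_{i+1}\,Q_i^+(\hbar^{-1}s_{i,k})\,Q_i^-(s_{i,k})=\Lambda_i(\hbar^{-1}s_{i,k})\,Q_{i-1}^+(\hbar^{-1}s_{i,k})\,Q_{i+1}^+(s_{i,k}).
\]
Dividing the two relations, the common factor $Q_i^-(s_{i,k})$ cancels — it is nonzero by the nondegeneracy hypothesis — and what remains is precisely the Bethe equation \eqref{eq:bethe}.

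For the converse I would start from roots $\{s_{i,k}\}$ solving \eqref{eq:bethe}, set $Q_i^+(z)=\prod_k(z-s_{i,k})$, and seek $Q_i^-$ as a polynomial. Passing to $\phi_i=Q_i^-/Q_i^+$ recasts \eqref{eq:QQAtype} as the scalar inhomogeneous $\hbar$-difference equation $\xi_i\phi_i(z)-\xi_{i+1}\phi_i(\hbar z)=\rho_i(z)$ of \eqref{eq:QQgamma}, whose inhomogeneity $\rho_i$ has simple poles only at the $s_{i,k}$ and at the shifted points $\hbar^{-1}s_{i,k}$. I would then solve this equation inside the space of rational functions whose only poles are simple and located at the $s_{i,k}$; the demand that no pole be generated at $\hbar^{-1}s_{i,k}$ imposes, for each $k$, one scalar compatibility condition relating $\Res_{z=s_{i,k}}\rho_i$ to $\Res_{z=\hbar^{-1}s_{i,k}}\rho_i$. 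A direct residue computation, using $\Res_{z=\hbar^{-1}s_{i,k}}\phi_i(\hbar z)=\hbar^{-1}\,\Res_{z=s_{i,k}}\phi_i$, shows this compatibility condition to be identical with \eqref{eq:bethe}, so it holds by assumption; hence $\phi_i$ can be chosen with poles supported on $\{s_{i,k}\}$, and clearing denominators recovers a polynomial $Q_i^-$, whose nondegeneracy is inherited from the $\hbar$-distinctness built into the Bethe data.

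The main obstacle is the existence-and-uniqueness analysis of this difference equation, i.e. showing that matching the finite residues pins down $\phi_i$ as a genuine rational function of the correct degree, with neither spurious poles nor a free polynomial ambiguity. Uniqueness reduces to the fact that the homogeneous equation $\xi_i\,\psi(z)=\xi_{i+1}\,\psi(\hbar z)$ admits no nonzero rational solution, which is guaranteed by the regularity assumption \eqref{assume} that $\xi_i/\xi_{i+1}=\zeta_i^2/(\zeta_{i-1}\zeta_{i+1})\notin\hbar^{\Z}$; existence and the correct degree of $Q_i^-$ then follow by comparing the leading behaviour at $z=\infty$ on both sides of \eqref{eq:QQAtype}, again invoking that $\xi_i$ and $\xi_{i+1}$ are not related by an integral power of $\hbar$. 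This is exactly where the genericity of $Z$ enters, and it is the $\hbar$-difference analogue of the Mukhin–Varchenko argument \cite{Mukhin_2005}; the reduction itself specializes to $SL(r+1)$ the correspondence established in \cite{Frenkel:2020}.
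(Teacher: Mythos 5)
The paper itself contains no proof of this theorem: it is imported verbatim from \cite{Frenkel:2020} (``To do that we need the result \dots of [FKSZ]''), so the only meaningful comparison is against the argument of that reference, which your proposal reproduces essentially exactly. Your proof is correct: the forward direction by evaluating \eqref{eq:QQAtype} at $z=s_{i,k}$ and $z=\hbar^{-1}s_{i,k}$ and dividing, and the converse by partial-fraction/residue analysis of the first-order $\hbar$-difference equation \eqref{eq:QQgamma}, where the compatibility of the residues at $s_{i,k}$ and $\hbar^{-1}s_{i,k}$ is precisely \eqref{eq:bethe}, and where \eqref{assume} (i.e.\ $\xi_i/\xi_{i+1}\notin\hbar^{\mathbb{Z}}$) both kills the homogeneous rational solutions and lets the polynomial part be solved coefficient by coefficient. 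The one step you assert rather than prove, $Q_i^-(s_{i,k})\neq 0$, does follow from nondegeneracy and deserves a line: if $Q_i^+$ and $Q_i^-$ shared the zero $s_{i,k}$, then \eqref{eq:QQAtype} evaluated at $z=s_{i,k}$ would force $\Lambda_i(s_{i,k})\,Q_{i-1}^+(s_{i,k})\,Q_{i+1}^+(\hbar s_{i,k})=0$, a coincidence of zeros that the nondegeneracy conditions exclude.
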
 

This Theorem can be extended as follows.

\begin{Thm}\label{Th:BetheQQEquiv}
There is a one-to-one correspondence between the set of nondegenerate solutions of the extended $QQ$-system \eqref{eq:QQALLFull}, the set of nondegenerate solutions of the $QQ$-system \eqref{eq:QQAtype}, and the set of  solutions of Bethe Ansatz equations \eqref{eq:bethe}.
\label{Th:LemmaNondeg}
\end{Thm}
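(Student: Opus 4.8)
The plan is to treat the asserted three-way bijection in two stages. The equivalence between nondegenerate solutions of the QQ-system \eqref{eq:QQAtype} and solutions of the Bethe Ansatz equations \eqref{eq:bethe} is exactly the preceding theorem (the specialization of \cite{Frenkel:2020} to $SL(r+1)$), so it suffices to establish a bijection between nondegenerate solutions of the \emph{extended} QQ-system \eqref{eq:QQALLFull} and nondegenerate solutions of the original QQ-system \eqref{eq:QQAtype}. One direction is immediate: the first line of \eqref{eq:QQALLFull} is precisely \eqref{eq:QQAtype}, and nondegeneracy of an extended solution is \emph{defined} to mean nondegeneracy of this underlying original solution, so forgetting the higher polynomials $Q^-_{i,\dots,j}$ (for $j>i$) is a well-defined map from extended to original solutions. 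The content is to show that this restriction map is a bijection, i.e. that every nondegenerate solution of \eqref{eq:QQAtype} extends, uniquely, to a solution of \eqref{eq:QQALLFull}.

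First I would exploit the triangular structure of \eqref{eq:QQALLFull} in the length $m=j-i+1$ of the multi-index. At level $m$ the unknown $Q^-_{i,\dots,j}$ with $j=i+m-1$ is governed by a single first-order inhomogeneous $\hbar$-difference equation of the shape
\[
\xi_i\,Q^+_j(\hbar z)\,Q^-_{i,\dots,j}(z)-\xi_{j+1}\,Q^+_j(z)\,Q^-_{i,\dots,j}(\hbar z)=\Lambda_j(z)\,Q^+_{j+1}(z)\,Q^-_{i,\dots,j-1}(\hbar z),
\]
whose right-hand side is a \emph{known} polynomial assembled from the fixed data $\{Q^+_i\}$, $\{\Lambda_i\}$ and the level-$(m-1)$ polynomial $Q^-_{i,\dots,j-1}$. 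Thus I would solve these equations recursively in $m$, taking the $Q^-_i$ of the given nondegenerate solution of \eqref{eq:QQAtype} as the base case $m=1$, and at each step produce a polynomial $Q^-_{i,\dots,j}$.

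The existence and uniqueness of a polynomial solution at each step is the crux. Uniqueness is the easy half: the associated homogeneous equation $\xi_i Q^+_j(\hbar z)X(z)=\xi_{j+1}Q^+_j(z)X(\hbar z)$ has no nonzero polynomial solution, since matching top-degree coefficients forces $\xi_i/\xi_{j+1}\in\hbar^{\Z}$, contradicting the non-resonance $\xi_i/\xi_{j+1}\notin\hbar^{\Z}$ (which follows from \eqref{assume} for generic regular $Z$). For existence I would pass to $\phi_{i,\dots,j}(z)=Q^-_{i,\dots,j}(z)/Q^+_j(z)$ and invert the operator $\phi\mapsto \xi_i\phi(z)-\xi_{j+1}\phi(\hbar z)$, writing the equation as $\xi_i\phi_{i,\dots,j}(z)-\xi_{j+1}\phi_{i,\dots,j}(\hbar z)=\rho_j(z)\,\phi_{i,\dots,j-1}(\hbar z)$ in the style of \eqref{eq:QQgamma}. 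After the factor $Q^+_{j-1}(\hbar z)$ cancels between $\rho_j$ and $\phi_{i,\dots,j-1}(\hbar z)$, the right-hand side is rational with poles only along the zeros of $Q^+_j(z)$ and of $Q^+_j(\hbar z)$; the solution regular at infinity is then unique, and its pole locations are constrained by the residue recursion
\[
\xi_i\,\Res_{z=w}\phi_{i,\dots,j}-\hbar^{-1}\xi_{j+1}\,\Res_{z=\hbar w}\phi_{i,\dots,j}=\Res_{z=w}\big(\rho_j(z)\,\phi_{i,\dots,j-1}(\hbar z)\big).
\]

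The main obstacle I anticipate is precisely the polynomiality in this last step, namely ruling out ``spurious'' poles of $\phi_{i,\dots,j}$ at the $\hbar^{-n}$-shifts (with $n\ge 1$) of the zeros of $Q^+_j$. This is exactly where the $q$-distinctness clauses of nondegeneracy are indispensable: because the zeros of the various $Q^+_i$ and of the $\Lambda_i$ are $\hbar$-generic relative to one another, the residue recursion forces the residues at all shifted points to vanish, leaving poles only at the zeros of $Q^+_j(z)$, so that $Q^-_{i,\dots,j}=\phi_{i,\dots,j}\,Q^+_j$ is genuinely a polynomial; this mirrors the polynomiality argument for the original QQ-system in \cite{Frenkel:2020}. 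Conceptually, the whole recursion expresses that the Miura--Pl\"ucker oper attached to a nondegenerate $SL(r+1)$ QQ-solution is automatically a genuine Miura $(SL(r+1),\hbar)$-oper: the polynomials $Q^-_{i,\dots,j}$ are the matrix entries of the gauge transformation $v(z)^{-1}$ displayed in \eqref{eq:vinverse}, whence Theorem \ref{th:SLNqMiura} supplies the $v(z)\in B_+(z)$ conjugating $A(z)$ to $Z$, and no $w_0$-genericity is needed in type A. Assembling existence and uniqueness shows the restriction map is a bijection, which together with the preceding theorem yields the claimed one-to-one correspondence among all three sets.
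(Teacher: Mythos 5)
Your two-stage reduction (quoting the previous theorem for the QQ--Bethe correspondence, then showing that the restriction map from extended to ordinary QQ-solutions is a bijection via a level-by-level recursion) is a reasonable architecture, and your uniqueness argument is fine granting the strengthened non-resonance $\xi_i/\xi_{j+1}\notin\hbar^{\Z}$ for all $i\le j$ (which, as you note, does not follow from \eqref{assume} alone). The genuine gap is in the existence step. For a first-order inhomogeneous $\hbar$-difference equation $\xi_i\phi(z)-\xi_{j+1}\phi(\hbar z)=\psi(z)$ with rational $\psi$, a \emph{rational} solution need not exist at all. Run your own residue recursion $\xi_i\Res_{z=w}\phi-\hbar^{-1}\xi_{j+1}\Res_{z=\hbar w}\phi=\Res_{z=w}\psi$ along the $\hbar$-string through a zero $s$ of $Q^+_j$: finiteness of the pole set of $\phi$ does force $\Res_{z=\hbar^n s}\phi=0$ for all $n\neq 0$ (this is the part genericity gives you), but then the recursion at $w=s$ and at $w=\hbar^{-1}s$ produces \emph{two} independent determinations of the remaining residue, $\Res_{z=s}\phi=\xi_i^{-1}\Res_{z=s}\psi$ and $\Res_{z=s}\phi=-\hbar\,\xi_{j+1}^{-1}\Res_{z=\hbar^{-1}s}\psi$. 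Hence a rational solution exists only if the compatibility identity
\begin{equation*}
\xi_{j+1}\,\Res_{z=s}\psi+\hbar\,\xi_i\,\Res_{z=\hbar^{-1}s}\psi=0
\end{equation*}
holds at every zero $s$ of $Q^+_j$; otherwise the recursion is inconsistent and there is no rational $\phi$, regular at infinity or not. This identity is not a consequence of $\hbar$-genericity of the zeros; it is an algebraic identity that must be extracted from the lower levels of the system. Concretely, with $\psi(z)=\rho_j(z)\phi_{i,\dots,j-1}(\hbar z)$ one needs the node-$j$ equation of \eqref{eq:QQAtype} evaluated at $z=s$ and $z=\hbar^{-1}s$ (where its right-hand side simplifies because $Q^+_j$ vanishes), which reduces the compatibility to $\xi_i\phi_{i,\dots,j-1}(s)=\xi_j\phi_{i,\dots,j-1}(\hbar s)$, and this in turn follows from the level-$(m-1)$ equation of \eqref{eq:QQAll} evaluated at $z=s$, whose right-hand side vanishes since $\rho_{j-1}(s)$ contains the factor $Q^+_j(s)=0$ (here the $q$-distinctness clauses are used to ensure nothing else blows up at $s$). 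Without carrying out this verification your induction assumes exactly what it must prove.

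Note also that appealing to Theorem \ref{th:SLNqMiura} cannot patch this: that theorem \emph{presupposes} polynomial solutions of the extended system and then builds $v(z)$, so it gives the implication in the direction opposite to the one you need. The paper itself sidesteps the residue analysis entirely: the extension is obtained from the $q$-Wronskian construction of Section 2.8, where the polynomials $\mathscr{D}^-_{i,\dots,j}$ (equivalently the $Q^-_{i,\dots,j}$, via Lemma \ref{Th:PropQQtilde}) are produced as determinants of minors of the Wronskian matrix built from the oper section (Proposition \ref{eq:qWronskiansShift2}, with existence and uniqueness of the underlying polynomials supplied by Lemma \ref{Th:existencePoly}), so that the extended relations hold as Pl\"ucker-type identities among minors rather than as solvability statements for difference equations. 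Your route can be completed — the compatibility identities above are true and provable by the two-step evaluation just described, which is essentially the Mukhin--Varchenko argument behind the cited result — but as written the crux of the existence proof is missing.
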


\begin{Rem}
We note here that the equations (\ref{eq:bethe}) emerge as a critical condition on a Yang-Yang function:
\begin{equation}
\exp 2\pi \frac{\partial Y}{\partial \sigma_{i,k}}=1\,,
\end{equation}
where
\begin{align}
\label{eq:YAYangYang1}
Y(\{\sigma_{i,k}\},\{\mathrm{a}_i\}, \{x_i\}, \epsilon) &= \sum_{i=1}^r \left(\sum_{a=1}^{m_i} \sum_{b=1}^{l_i} \ell (\sigma_{i,a} - \mathrm{a}_{i,b}+\epsilon)+ \ell (-\mathrm{a}_{i,b}+\sigma_{i,a}+\epsilon)\right)\notag\\
&+\sum_{i=1}^r\left(\sum_{a=1}^{m_i}\sum_{b=1}^{m_{i+1}}\ell(\sigma_{i,a} - \sigma_{i+1,b}+\epsilon)+\ell(-\sigma_{i,a} + \sigma_{i+1,b}+\epsilon)\right)\notag\\
&+\sum_{i=1}^{r}\left( \sum_{a,b=1}^{m_i} \ell(\sigma_{i,a}-\sigma_{i,b} +\epsilon)+\ell(-\sigma_{i,a}+\sigma_{i,b} +\epsilon)+(x_{i+1}-x_{i}) \sum_{a=1}^k \sigma_{i,a}\right)\,,
\end{align}
where deg $\Lambda_i(z)=l_i$,
$$
s_{i,b} = e^{2\pi \sigma_{i,b}}\,,\quad a_{i,b} = e^{2\pi \mathrm{a}_{i,b}}\,,\quad \hbar= e^{2\pi \epsilon}\,,\quad \xi_i = e^{2\pi x_i}\,,
$$
and $\ell(u)$ be a multi-valued function, which can be written in terms of quantum dilogarithm Li$_2$ (see  \cite{Gaiotto:2013bwa}), such that 
$$
\exp 2\pi \frac{\partial \ell(u)}{\partial u} = 2\sinh \pi u\,.
$$

\end{Rem}

\vskip.1in

In \cite{Frenkel:2020} B\"acklund transformations were introduced for Miura $\hbar$-opers (see Proposition 7.1) and
were associated to the $i$-th simple reflection from the Weyl group:
\begin{Prop}    \label{fiter}
  Consider the $\hbar$-gauge transformation of the $q$-connection given by \eqref{form of A1}
  \begin{eqnarray}
A \mapsto A^{(i)}=e^{\mu_i(\hbar z)f_i}A(z)e^{-\mu_i(z)f_i},
\quad \operatorname{where} \quad \mu_i(z)=\frac{Q^+_{i-1}(z)Q^+_{i+1}(z)}{Q^i_{+}(z)Q^i_{-}(z)}\,.
\label{eq:PropDef}
\end{eqnarray}
Then $A^{(i)}(z)$ can be obtained from $A(z)$ by
substituting in formula \eqref{form of A1}
\begin{align}
Q^j_+(z) &\mapsto Q^j_+(z), \qquad j \neq i, \\
Q^i_+(z) &\mapsto Q^i_-(z), \quad \zeta_i\mapsto \displaystyle\frac{\zeta_{i-1}\zeta_{i+1}}{\zeta_i}\,\quad \,.
\label{eq:Aconnswapped}
\end{align}
\end{Prop}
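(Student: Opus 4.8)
The plan is to verify the identity by a direct computation in the defining representation, working with the bidiagonal matrix form \eqref{eq:MiuraqConnection} of $A(z)$ supplied by Lemma \ref{Th:BCHAConnection} rather than with the abstract product \eqref{form of A1}. Since $f_i$ is nilpotent with $f_i^2=0$ in this representation, one has $e^{\pm\mu_i f_i}=1\pm\mu_i f_i$, so the $q$-gauge transformation \eqref{eq:PropDef} is an elementary row-and-column operation: left multiplication by $1+\mu_i(\hbar z)f_i$ adds $\mu_i(\hbar z)$ times the $i$-th row to the $(i+1)$-st row, while right multiplication by $1-\mu_i(z)f_i$ subtracts $\mu_i(z)$ times the $(i+1)$-st column from the $i$-th column. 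Because \eqref{eq:MiuraqConnection} is bidiagonal, these operations leave every superdiagonal entry $\Lambda_j$ and every diagonal entry $g_j/g_{j-1}$ with $j\neq i,i+1$ untouched; the only entries that can change are $(i,i)$, $(i+1,i+1)$, and the newly created subdiagonal entry $(i+1,i)$.

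First I would record the three affected entries in closed form. Writing $D_j=(A)_{jj}$ and abbreviating $m=\mu_i(z)$, $m_{\hbar}=\mu_i(\hbar z)$, one finds $(A^{(i)})_{i,i}=D_i-m\Lambda_i$, $(A^{(i)})_{i+1,i+1}=D_{i+1}+m_\hbar\Lambda_i$, and $(A^{(i)})_{i+1,i}=m_\hbar(D_i-m\Lambda_i)-m\,D_{i+1}$. The core of the proof is that each of these collapses under a single instance of the QQ-relation \eqref{eq:QQgamma} at node $i$, which in cleared form reads $\xi_i Q^+_i(\hbar z)Q^-_i(z)-\xi_{i+1}Q^+_i(z)Q^-_i(\hbar z)=\rho_i(z)Q^+_i(z)Q^+_i(\hbar z)$. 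Substituting $g_j=\zeta_j Q^+_j(\hbar z)/Q^+_j(z)$ and $\mu_i=Q^+_{i-1}Q^+_{i+1}/(Q^+_iQ^-_i)$ and clearing denominators, the combination $\xi_iQ^+_i(\hbar z)Q^-_i(z)-\xi_{i+1}Q^+_i(z)Q^-_i(\hbar z)$ appears as the sole obstruction in all three entries; replacing it by the right-hand side of the QQ-relation shows at once that $(A^{(i)})_{i+1,i}=0$, so $A^{(i)}$ is again bidiagonal and hence of oper form, and simultaneously that $(A^{(i)})_{i,i}$ and $(A^{(i)})_{i+1,i+1}$ equal the diagonal entries of the connection \eqref{eq:MiuraqConnection} in which $Q^+_i$ has been replaced by $Q^-_i$ and $\zeta_i$ by $\zeta_{i-1}\zeta_{i+1}/\zeta_i$.

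To package the conclusion I would observe that, since $\Lambda_j$ and $g_j/g_{j-1}$ for $j\neq i,i+1$ are unchanged, the transformed connection is literally \eqref{eq:MiuraqConnection} evaluated on the data $\{Q^+_j\}_{j\neq i}\cup\{Q^-_i\}$ together with $\zeta_i\mapsto\zeta_{i-1}\zeta_{i+1}/\zeta_i$, which is exactly the substitution \eqref{eq:Aconnswapped}. I would also include the conceptual sanity check that, in terms of the diagonal entries $\xi_i=\zeta_i/\zeta_{i-1}$ of $Z$, this replacement swaps $\xi_i\leftrightarrow\xi_{i+1}$; that is, it realizes the simple reflection $s_i$ on $Z$, as befits a B\"acklund transformation attached to the $i$-th node and consistent with $e^{\mu_i f_i}\in N_-$ changing the relative position of the Miura flag. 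The boundary cases $i=1$ and $i=r$ need no separate treatment once one sets $Q^\pm_0=Q^\pm_{r+1}=1$ and $\zeta_0=\zeta_{r+2}=1$.

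The main obstacle is purely bookkeeping: keeping straight which $Q$-factors carry the shift $z\mapsto\hbar z$ and which do not, so that the three affected entries genuinely reduce to one and the same instance of the QQ-equation. The subtle point worth emphasizing is that the off-diagonal vanishing $(A^{(i)})_{i+1,i}=0$, which is the statement that we return to oper form, and the two diagonal replacements are not three independent identities but three avatars of the node-$i$ QQ-relation; isolating the common factor $\xi_iQ^+_i(\hbar z)Q^-_i(z)-\xi_{i+1}Q^+_i(z)Q^-_i(\hbar z)$ in each is what makes the verification transparent rather than a brute-force matrix computation.
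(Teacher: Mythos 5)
Your computation is correct: I checked that, writing $D_j$ for the diagonal entries of \eqref{eq:MiuraqConnection} and using the node-$i$ QQ-relation in the cleared form $\xi_iQ^+_i(\hbar z)Q^-_i(z)-\xi_{i+1}Q^+_i(z)Q^-_i(\hbar z)=\Lambda_i(z)Q^+_{i-1}(\hbar z)Q^+_{i+1}(z)$, one gets $D_i-\mu_i(z)\Lambda_i(z)=\xi_{i+1}\,Q^-_i(\hbar z)Q^+_{i-1}(z)\big/\big(Q^-_i(z)Q^+_{i-1}(\hbar z)\big)$ and $D_{i+1}+\mu_i(\hbar z)\Lambda_i(z)=\xi_i\,Q^+_{i+1}(\hbar z)Q^-_i(z)\big/\big(Q^+_{i+1}(z)Q^-_i(\hbar z)\big)$, after which $\mu_i(\hbar z)\bigl(D_i-\mu_i(z)\Lambda_i(z)\bigr)=\mu_i(z)D_{i+1}$ holds identically, so the subdiagonal entry vanishes and the result is exactly \eqref{eq:MiuraqConnection} with $Q^+_i\mapsto Q^-_i$ and $\xi_i\leftrightarrow\xi_{i+1}$, i.e. $\zeta_i\mapsto\zeta_{i-1}\zeta_{i+1}/\zeta_i$. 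Be aware, however, that the paper itself contains no proof of Proposition \ref{fiter}: it is imported from \cite{Frenkel:2020} (Proposition 7.1 there), where the statement is proved for an arbitrary simply-connected simple group $G$ by working with the abstract product \eqref{form of A1} and the commutation relations $[f_i,e_j]=0$ ($j\neq i$) and $h^{\check\alpha_j}f_ih^{-\check\alpha_j}=h^{-a_{ji}}f_i$, which localize the gauge transformation to a rank-one $SL(2)$ identity. Your route is genuinely different: it is specific to type A (defining representation, $f_i^2=0$, bidiagonal structure), and what it buys is transparency — the preservation of oper form and the two diagonal replacements are visibly one and the same instance of the QQ-relation — at the cost of the uniformity across Dynkin types that the Lie-theoretic argument provides.

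Two caveats are worth making explicit in your write-up. First, your reduction to the bidiagonal matrix leans on Lemma \ref{Th:BCHAConnection}: the product \eqref{form of A1} taken in the increasing order of $i$ is upper-triangular but \emph{not} bidiagonal (entries such as $\Lambda_i\Lambda_{i+1}$ appear above the superdiagonal, and the gauge transformation then disturbs more entries), so you should state that you work with the decreasing-order/bidiagonal representative that the Lemma supplies. Second, the paper records the QQ-system in two inequivalent conventions — \eqref{eq:QQAtype} carries $Q^+_{i-1}(z)Q^+_{i+1}(\hbar z)$ on the right, while \eqref{eq:QQgamma} and \eqref{eq:QQALLFull} carry $Q^+_{i-1}(\hbar z)Q^+_{i+1}(z)$ — and your cancellations go through only with the latter placement of $\hbar$-shifts. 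You implicitly chose correctly by citing \eqref{eq:QQgamma}, but a reader applying \eqref{eq:QQAtype} verbatim would see the computation fail, so this deserves a sentence.
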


It is possible that after the transformation the resulting operator gives rise to the nondegenerate $QQ$-system. 
Denoting the the  $QQ$-system after the B\"acklund transformation as $\{\widetilde{Q}^{\pm}_i\}_{i=1,\dots, r}$, we obtain:
\begin{align} \label{qqm2}
  \{ \wt{Q}^+_j \}_{j=1,\ldots,r} &= \{ Q_1^{+}, \dots, 
 Q_{i-1}^+,Q_i^-,Q_{i+1}^+ \dots , Q^r_{+} \}; \\ \notag
  \{ \wt{Q}^-_j \}_{j=1,\ldots,r} &=  \left\{ Q^-_1, \dots,
   {Q^*}_{i-1}^-,-Q_i^+,Q_{i,i+1}^- \dots , Q^-_{r} \right\}\,\\
  \{ \wt{\zeta}_j \}_{j=1,\ldots,r} &= \left\{
  \zeta_1,\dots,\zeta_{i-1},\frac{\zeta_{i-1}\zeta_{i+1}}{\zeta_i}
,\dots,\zeta_r\right\} \notag
\end{align}

The last line can be also rewritten in terms of $\xi$ variables as follows: 
$$
\{ \wt{\xi}_j \}_{j=1,\ldots,r} = \left\{ \xi_1,\dots,\xi_{i-1},\xi_{i+1}, \xi_i, \xi_{i+2}
,\dots,\xi_{r+1}\right\} 
$$

Here we placed $Q^-_{i,i+1}$ in the position $Q^-_{i+1}$, since the equation this new polynomial satisfies, is the second one from the extended $QQ$-system. At the same time, the new polynomial ${Q^*}^-_{i-1}(z)$ does not belong to what we called the extended $QQ$-system.

\subsection{Line bundles and Wronskians}

In this subsection we describe Z-twisted Miura $\hbar$-opers with regular singularities, following \cite{KSZ}.  
Namely, we have a complete flag of subbundles $\cL_\bullet$ such that $\hbar$-connection $A$ maps $\cL_{i}$ 
  into $\cL_{i-1}^\hbar$ and the induced maps
  $\bar{A}_i:\cL_{i}/\cL_{i+1}\to \cL^\hbar_{i-1}/\cL^\hbar_{i}$ are
  isomorphisms for $i=1,\dots,r$ on  $U \cap M_\hbar^{-1}(U)$, where $U$ is the Zariski open dense subset.
Explicitly, considering the determinants  
\begin{equation}\label{altqW} 
 \left(\Big(\prod_{j=0}^{i-2}(A(\hbar^{i-2-j}z)\Big)s(z)\wedge\dots\wedge A(\hbar^{i-2}z) s(\hbar^{i-2}z)\wedge s(\hbar^{i-1}z)
  \right)\bigg|_{\Lambda^i\cL_{r-i+2}^{\hbar^{i-1}}}
\end{equation}
for $i=1,\dots, r+1$,  
where $s$ is a local section of $\cL_{r+1}$. From the definition we know that  
$(E,A,\cL_\bullet)$ is an $(SL(r+1),\hbar)$-oper if and only if at every point of $U \cap M_\hbar^{-1}(U)$, there 
exists local section for which each  such determinant is nonzero (see \cite{KSZ}).   
When we encounter the case of regular singularities, each  
$\bar{A}_i$ is an isomorphism except at zeroes of $\Lambda_i$ and thus we require the determinants to vanish at zeroes of the following polynomial $W_k(s)$: 
\begin{eqnarray}\label{eq:WPDefs}
W_k(s)=P_1(z) \cdot P_2(\hbar^2z)\cdots P_{k}(\hbar^{k-1}z),  \qquad 
P_i(z)=\Lambda_{r}\Lambda_{r-1}\cdots\Lambda_{r-i+1}(z)\,.
\end{eqnarray}  

Recall that the Miura condition implies that there exists a flag $\hcL_\bullet$ which is preserved by the $\hbar$-connection $A$. The $Z$-twisted condition implies that in the gauge when $A$ is given by fixed semisimple diagonal element $Z\in H$ such flag is formed by the standard basis $e_1, \dots, e_{r+1}$. 

The relative position between two flags is generic on $U \cap M_\hbar^{-1}(U)$. The regular singularity condition implies that {\it quantum Wronskians}, namely  determinants
\begin{equation}\label{qD}
\mathcal{D}_k(s)=e_1\wedge\dots\wedge{e_{r+1-k}}\wedge
Z^{k-1}s(z)\wedge Z^{k-2} s(\hbar z)\wedge\dots\wedge Z s(\hbar^{k-2})\wedge s(\hbar^{k-1}z)\,
\end{equation}
have a subset of zeroes, which coincide with those of
$\cW_k(s)$.  To be more explicit, for
$k=1,\dots,r+1$, we have nonzero constants $\alpha_k$ and polynomials
\begin{equation} \mathcal{V}_k(z) = \prod_{a=1}^{r_k}(z-v_{k,a})\,,
\label{eq:BaxterRho}
\end{equation}
for which 
\begin{equation}\label{eq:MiuraQOperCond}
\det\begin{pmatrix} \,     1 & \dots & 0 & \xi_1^{k-1}s_{1}(z) & \cdots & \xi_{1} s_{1}(\hbar^{k-2}z)  &  s_{1}(\hbar^{k-1}z) \\ 
 \vdots & \ddots & \vdots& \vdots & \vdots & \ddots & \vdots \\  
0 & \dots & 1&\xi_{k}^{k-1}s_{r+1-k}(z) &\dots & \xi_{k} s_{r+1-k}(\hbar^{k-2}z) &   s_{k}(\hbar^{k-1}z)  \\  
0 & \dots & 0&\xi^{k-1}_{k+1}s_{r+1-k+1}(z) & \dots & \xi_{r+1-k+1} s_{k+1}(\hbar^{k-2}z)  &  s_{k+1}(\hbar^{k-1}z)  \\
\vdots & \ddots & \vdots&\vdots & \vdots & \ddots & \vdots \\
0 & \dots & 0&\xi_{r+1}^{k-1}s_{r+1}(z) & \dots &\xi_{r+1} s_{r+1}(\hbar^{k-2}z)  & s_{r+1}(\hbar^{k-1}z)  \, \end{pmatrix} =\alpha_{k} W_{k}
\cV_{k} \,; 
\end{equation}
Since $\cD_{r+1}(s)=W_{r+1}(s)$, we have
$\cV_{r+1}=1$.  We also set $\cV_0=1$; this is consistent with the fact
that \eqref{qD} also makes sense for $k=0$, giving
$\cD_0=e_1\wedge\dots\wedge e_{r+1}$.

We can also rewrite \eqref{eq:MiuraQOperCond} as
\begin{equation}
  \underset{i,j}{\det} \left[\xi_{r+1-k+i}^{k-j} s_{r+1-k+i}(\hbar^{j-1}z)\right] = \alpha_{k} W_{k} \mathcal{V}_{k}\,,
\label{eq:MiuraDetForm}
\end{equation}
where $i,j = 1,\dots,k$. 

\begin{Thm}[\cite{KSZ}]\label{qWthKSZ}
Polynomials $\{\mathcal{V}_k(z)\}_{k=1,\dots, r}$ give the solution to the $QQ$-system \ref{eq:QQAtype} 
so that $Q^+_j(z)=\mathcal{V}_j(z)$ under the nondegeneracy condition that for all $i,j,k$ with $i \neq j$ and $a_{ik} \neq  0, a_{jk} \neq 0$, the zeros of $\mathcal{V}_i(z)$ and $\mathcal{V}_j(z)$ are
$q$-distinct from each other and from the zeros of $\Lambda_k(z)$.
\end{Thm}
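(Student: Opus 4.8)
The plan is to read the QQ-system \eqref{eq:QQAtype} off directly from the Pl\"ucker relations satisfied by the quantum Wronskians \eqref{qD}. First I would reduce the exterior-algebra expression for $\mathcal{D}_k(s)$ to the concrete $k\times k$ determinant \eqref{eq:MiuraDetForm}: since $A=Z$ in the $Z$-twisted trivialization and the preserved flag $\hat{\mathcal{L}}_\bullet$ is the standard flag, expanding $s(z)=\sum_m s_m(z)e_m$ in the $Z$-eigenbasis turns $\mathcal{D}_k(s)$ into the minor on the bottom $k$ components, the wedge with $e_1\wedge\dots\wedge e_{r+1-k}$ killing the rest. Together with the regular-singularity structure this records $\mathcal{V}_k(z)=\mathcal{D}_k(s)/(\alpha_k W_k)$ as a normalized $\hbar$-Wronskian of the last $k$ coordinates of $s$, with the boundary values $\mathcal{V}_0=\mathcal{V}_{r+1}=1$ already noted in the text. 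Setting $Q^+_j:=\mathcal{V}_j$ is then the content to be verified.

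Next I would produce the companion family $Q^-_i$ that is needed even to state \eqref{eq:QQAtype}. The natural candidate is the Wronskian minor of the same size $i$ built on the \emph{neighbouring} block of components (the one obtained by shifting the row-set up by one), i.e. the ``second solution'' of the underlying second-order $\hbar$-difference structure of the flag. With this choice the left-hand side $\xi_i Q^+_i(\hbar z)Q^-_i(z)-\xi_{i+1}Q^+_i(z)Q^-_i(\hbar z)$ is precisely the $\xi$-weighted bilinear $\hbar$-Wronskian of $Q^+_i$ and $Q^-_i$, which is exactly the combination appearing on the left of a Desnanot--Jacobi (Dodgson condensation) identity, while the neighbouring principal minors $\mathcal{V}_{i-1}$ and $\mathcal{V}_{i+1}$ supply the right-hand side.

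The heart of the argument is therefore the three-term bilinear identity relating $\mathcal{D}_{i-1}$, $\mathcal{D}_i$, $\mathcal{D}_{i+1}$ and the companion minor: this is the standard Pl\"ucker/Desnanot--Jacobi relation for a determinant whose columns are successive $\hbar$-shifts $z,\hbar z,\dots,\hbar^i z$, and it holds identically in the exterior algebra. Applying it to the $(i+1)\times(i+1)$ determinant computing $\mathcal{V}_{i+1}$, deleting the extreme rows and columns in the four ways, expresses $\mathcal{V}_{i+1}\cdot(\text{a size-}(i-1)\text{ minor})$ as the weighted Wronskian of $\mathcal{V}_i$ and its neighbour. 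After substituting $\mathcal{D}_k=\alpha_k W_k \mathcal{V}_k$ and cancelling the constants $\alpha_k$, the only surviving normalization is a ratio of the form $W_{i-1}W_{i+1}/W_i^2$ (with the appropriate $\hbar$-shifts), which by the product structure \eqref{eq:WPDefs} telescopes to a single factor $\Lambda_i(z)$, up to the index relabelling $k\leftrightarrow r+1-k$ that relates the Wronskian size to the QQ-index. This reproduces \eqref{eq:QQAtype} with $Q^+_j=\mathcal{V}_j$.

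Finally I would check that $\{\mathcal{V}_k,Q^-_k\}$ is a \emph{nondegenerate polynomial} solution: the $\mathcal{V}_k$ are polynomials because the regular-singularity condition forces $\mathcal{D}_k(s)$ to vanish exactly on the zeros of $W_k$, which is the defining property \eqref{eq:MiuraQOperCond}--\eqref{eq:BaxterRho}, and the hypothesis that the zeros of $\mathcal{V}_i$ and $\mathcal{V}_j$ are $\hbar$-distinct from one another and from the zeros of $\Lambda_k$ is precisely the nondegeneracy of the resulting QQ-solution; one may then place it in the correspondence of Theorem~\ref{inj} and Theorem~\ref{Th:BetheQQEquiv}. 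I expect the main obstacle to be the bookkeeping in the third step: verifying that the companion minor $Q^-_i$ is genuinely polynomial and carries the correct $\xi$-weights, and confirming that the $\hbar$-shifted ratio of the $W_k$'s collapses to exactly the single $\Lambda_i(z)$ on the right of \eqref{eq:QQAtype} --- the sign and shift conventions in the Desnanot--Jacobi identity for $\hbar$-Wronskians are easy to mismatch and must be pinned down carefully. An alternative that sidesteps much of this is to identify $\mathcal{V}_k$ with the Cartan data $y_k=Q^+_k$ of the underlying $Z$-twisted Miura oper via \eqref{AH1}--\eqref{giyi} and then quote Theorem~\ref{inj} directly.
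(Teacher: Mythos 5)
Your overall strategy --- realize the $\mathcal{V}_k$ as the principal $\hbar$-Wronskian minors \eqref{eq:MiuraDetForm}, manufacture companion minors to play the role of $Q^-_i$, and extract \eqref{eq:QQAtype} from a bilinear determinant identity with the $W_k$-normalizations telescoping to $\Lambda_i$ --- is exactly the route of \cite{KSZ}; note that the present paper does not prove this theorem at all (it imports it), but its Lemma \ref{Th:PropQQtilde} and Proposition \ref{eq:qWronskiansShift2} record precisely the identifications that the proof needs. However, there is a genuine error in your central step, and with your literal choices the identity you invoke does not produce the QQ-system. You define $Q^-_i$ as the minor on the row set ``shifted up by one,'' i.e.\ on rows $\{r+1-i,\dots,r\}$, and you apply corner Desnanot--Jacobi (deleting the \emph{extreme} rows and columns) to the $(i+1)\times(i+1)$ matrix on rows $\{r+1-i,\dots,r+1\}$. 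Corner condensation deletes the top and the bottom row, so its four size-$i$ minors are the principal block $\{r+2-i,\dots,r+1\}$ (at $z$ and at $\hbar z$) and your shifted block $\{r+1-i,\dots,r\}$ (at $z$ and at $\hbar z$), while the size-$(i-1)$ factor multiplying $\mathcal{V}_{i+1}$ sits on rows $\{r+2-i,\dots,r\}$: that is the \emph{shifted} block of size $i-1$, not $\mathcal{V}_{i-1}$. The resulting identity mixes the principal family with the shifted family on both sides and never closes on the nested minors $\mathcal{V}_{i-1},\mathcal{V}_i,\mathcal{V}_{i+1}$; the right-hand side $\Lambda_i\,Q^+_{i-1}Q^+_{i+1}$ of \eqref{eq:QQAtype} simply does not appear. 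This is not a sign or shift convention issue but a wrong choice of bilinear identity and of companion minor.

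The repair is forced by the structure of the problem: since all the $\mathcal{V}_j$ share the \emph{bottom} rows and differ at the top, one must use the generalized Desnanot--Jacobi/Pl\"ucker identity in which the two deleted rows are the two \emph{adjacent top} rows $r+1-i$ and $r+2-i$ (the deleted columns are still the first and last). Then the size-$(i-1)$ factor lands on rows $\{r+3-i,\dots,r+1\}$ at argument $\hbar z$, i.e.\ exactly the principal minor giving $\mathcal{V}_{i-1}(\hbar z)$, and the companion minor forced on you is the skip-one-row minor on rows $\{r+1-i,r+3-i,\dots,r+1\}$ --- precisely $\mathscr{D}^-_i$ of \eqref{eq:MM0Inda}, which coincides with your shifted block only when $i=1$. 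With that companion the identity reproduces the first line of the $\mathscr{D}\mathscr{D}$-system \eqref{eq:QQAll1}, and $Q^-_i$ is then extracted by dividing out the $F_i$-factor as in \eqref{eq:Dkdef}; polynomiality of this quotient (your other flagged worry) is exactly where the regular-singularity and nondegeneracy hypotheses enter, via Lemma \ref{Th:existencePoly}. Finally, your proposed shortcut of quoting Theorem \ref{inj} does not sidestep this work: to apply it one must first show that the Wronskian minors $\mathcal{V}_k$ coincide with the Miura--Cartan data $y_k$ of \eqref{AH1}--\eqref{giyi}, which is essentially the same computation repackaged (Proposition \ref{Th:SectionsQQ} together with Proposition \ref{eq:qWronskiansShift2}).
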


The following theorem allows to relate the section $s(z)$, generating the line bundle $\mathcal{L}_{r+1}$ with the elements of the extended $QQ$-system  using the transformation \eqref{eq:qGaugeTrSpecial}.

\begin{Prop}\label{Th:SectionsQQ}
Let $v(z)$ be the  gauge transformation from \eqref{eq:qGaugeTrSpecial} and $s(z)$ be the section generating $\mathcal{L}_{r+1}$ in the definition of the $(SL(r+1),\hbar)$-oper. Then the components of $s(z)$ in the 
gauge when $(SL(r+1,\hbar)$-oper connection is equal to $Z$ is given by:
\begin{equation}\label{eq:comptssection}
s_{r+1}(z)=Q^+_{r}(z)\,,\qquad s_r(z)=Q^-_r(z)\,,\qquad s_k(z)=Q^-_{k,\dots,r}(z)\,,
\end{equation}
for $k=1,\dots, r-1$.
\end{Prop}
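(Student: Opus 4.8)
The plan is to obtain \eqref{eq:comptssection} by reading off a single column of the explicit matrix $v(z)^{-1}$ recorded in \eqref{eq:vinverse}. First I would fix the trivialization chosen just after Definition~\ref{qopflag}, in which the line subbundle $\mathcal{L}_{r+1}$ is generated by the constant vector $(0,\dots,0,1)^T$. In this oper gauge the generating section $s(z)$ of $\mathcal{L}_{r+1}$ therefore has components $s(z)=(0,\dots,0,1)^T$.

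Next I would pin down the change of trivialization carrying the oper connection $A(z)$ to the constant element $Z$. Rewriting the defining relation \eqref{eq:qGaugeTrSpecial} as $Z=v(\hbar z)^{-1}A(z)\,v(z)$ exhibits $g(z)=v(z)^{-1}$ as the $\hbar$-gauge transformation intertwining $A(z)$ with $Z$. Since the components of a section transform covariantly under a gauge transformation, i.e. $s\mapsto g(z)s$, the section acquires components
\begin{equation*}
s^{Z}(z)=v(z)^{-1}\,(0,\dots,0,1)^{T}
\end{equation*}
in the gauge where the connection equals $Z$.

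The final step is purely computational. Multiplying $v(z)^{-1}$ by the last standard basis vector simply selects the last column of the matrix \eqref{eq:vinverse} furnished by Theorem~\ref{th:SLNqMiura}. Reading that column from top to bottom yields $s_k(z)=Q^-_{k,\dots,r}(z)$ for $k=1,\dots,r-1$, together with $s_r(z)=Q^-_r(z)$ and $s_{r+1}(z)=Q^+_r(z)$, which is exactly \eqref{eq:comptssection}.

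The only point demanding genuine care is the setup rather than the computation: one must verify that the explicit $v(z)$ produced in Theorem~\ref{th:SLNqMiura} is precisely the transformation appearing in \eqref{eq:qGaugeTrSpecial}, and confirm both the direction of the gauge action (so that $s\mapsto v(z)^{-1}s$ rather than $s\mapsto v(z)s$) and the normalization of the generating section in the oper gauge. Once these conventions are fixed, the proposition follows immediately from the shape of the last column of \eqref{eq:vinverse}; there is no essential obstacle beyond this bookkeeping.
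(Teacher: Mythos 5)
Your proposal is correct and follows exactly the paper's own argument: the paper likewise obtains \eqref{eq:comptssection} by acting with $v(z)^{-1}$ from \eqref{eq:vinverse} on the basis vector $e_{r+1}=(0,\dots,0,1)$, i.e. by reading off the last column of \eqref{eq:vinverse}. Your write-up merely makes explicit the bookkeeping (the trivialization of $\mathcal{L}_{r+1}$ and the direction of the gauge action) that the paper leaves implicit.
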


It follows from the direct application of \eqref{eq:vinverse}
Starting from \eqref{eq:vinverse} the Proposition follows after acting with $v(z)^{-1}$ on the basis vector $e_{r+1}=(0,0,\dots, 0, 1)$. 

One can show the extended $QQ$-system can be obtained from various minors in q-Wronskian matrices. 
First, we will rewrite the extended $QQ$-system in a more convenient way to relate it to the minors in the $\hbar$-Wronskian matrix. Namely, we multiply $Q$-terms by certain polynomials to get rid of the $\Lambda$-polynomials on the right-hand side. This is done in the following Lemma.
\begin{Lem}\label{Th:PropQQtilde}
The system of equations \eqref{eq:QQAll} is equivalent to the following set of equations
\begin{align}\label{eq:QQAll1}
\xi_{i} \,\mathscr{D}^+_{i}(\hbar z)\mathscr{D}^-_{i}(z)-\xi_{i+1}\, \mathscr{D}^+_{i} (z)\mathscr{D}^-_{i}( \hbar z)&= (\xi_{i}-\xi_{i+1})\,\mathscr{D}^+_{i-1} (\hbar z)\mathscr{D}^+_{i+1}(z)\,,\notag\\
\xi_{i} \,\mathscr{D}^+_{i+1}(\hbar z)\mathscr{D}^-_{i,i+1}(z)-\xi_{i+2}\, \mathscr{D}^+_{i+1} (z)\mathscr{D}^-_{i,i+1}(  \hbar z)&= (\xi_{i}-\xi_{i+2})\,\mathscr{D}^-_{i} ( \hbar z)\mathscr{D}^+_{i+2}(z)\,,\notag\\
\dots&\dots\\
\xi_i \,\mathscr{D}^+_{r+i-2}(\hbar z)\mathscr{D}^-_{i,\dots,r-1+i}(z)-\xi_{r+i-1},  \mathscr{D}^+_{r+i-2}(z)\mathscr{D}^-_{i,\dots,r-1+i}(\hbar  z)&=(\xi_i-\xi_{r+i-1})\mathscr{D}^-_{i,\dots,r-2+i}( \hbar z)\mathscr{D}^+_{r+i-1}(z)\,,\notag\\
\xi_{1} \mathscr{D}^+_{r}(\hbar z)\mathscr{D}^-_{1,\dots,r}(z)-\xi_{r+1} \mathscr{D}^+_{r}(z)\mathscr{D}^-_{1,\dots,r}(\hbar z)&= (\xi_{1}-\xi_{r+1})\mathscr{D}^-_{1,\dots,r-1}( \hbar z)\,.\notag
\end{align}
where index $i$ ranges between the same values as in the corresponding equations in \eqref{eq:QQAll}, for the polynomials 
\begin{equation}\label{eq:Dkdef}
\mathscr{D}^+_k={Q^+_k}{F_k}\,,\quad \mathscr{D}^-_k= Q^-_k {F_k}\hbar^{\half}_k,\,\qquad \mathscr{D}^-_{l,\dots, k}= Q^-_{l,\dots, k} {F_k}\hbar^{\half}_{l,\dots, k}\,.
\end{equation}
where 
$$
F_i(z)=  W_{r-i}(\hbar^{r-i}z)\,,\qquad \hbar^{\half}_{l,\dots, i} = \prod_{a=0}^{i-l}(\xi_l-\xi_{l+a+1})\,.
$$
\end{Lem}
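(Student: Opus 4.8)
The plan is to prove the equivalence by substituting the definitions \eqref{eq:Dkdef} directly into \eqref{eq:QQALLFull} (which is just \eqref{eq:QQAll} rewritten in terms of the $Q$-polynomials) and checking that, equation by equation, each relation of \eqref{eq:QQAll1} is obtained from the corresponding relation of \eqref{eq:QQALLFull} by multiplication through by a single \emph{nonzero} factor. Since $F_k$ is a product of the non-constant singularity polynomials $\Lambda_m$ and the scalars $\hbar^{\half}_{l,\dots,k}$ are products of the differences $\xi_l-\xi_{l+a+1}$, which are nonzero by the regularity assumption \eqref{assume}, multiplying an equation $E=0$ by such a factor is a reversible operation. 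Hence it suffices to verify that the two sides transform consistently, and the two systems then cut out the same locus under \eqref{eq:Dkdef}.

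First I would treat the left-hand sides. In the $n$-th equation of \eqref{eq:QQALLFull} both left terms are products $Q^+_{j}\cdot Q^-_{i,\dots,j}$ with the \emph{same} top index $j=i+n-1$, the arguments $z$ and $\hbar z$ being distributed across the two terms. Replacing $Q^+_j\mapsto\mathscr{D}^+_j=Q^+_jF_j$ and $Q^-_{i,\dots,j}\mapsto\mathscr{D}^-_{i,\dots,j}=Q^-_{i,\dots,j}F_j\hbar^{\half}_{i,\dots,j}$ therefore introduces in both terms the common factor $F_j(z)F_j(\hbar z)\hbar^{\half}_{i,\dots,j}$, which pulls out cleanly. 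Thus the left-hand side of each equation in \eqref{eq:QQAll1} equals this common factor times the left-hand side of \eqref{eq:QQALLFull}, which by the original relation equals that factor times $\Lambda_j(z)$ times the residual product of $Q$'s on its right-hand side.

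The crux is matching this against the right-hand side of \eqref{eq:QQAll1}, which splits into two independent checks. The scalar check is that the ratio of the $\hbar^{\half}_{\dots}$ factors on the two sides telescopes to the single difference appearing in the equation: from $\hbar^{\half}_{l,\dots,i}=\prod_{a=0}^{i-l}(\xi_l-\xi_{l+a+1})$ one gets $\hbar^{\half}_{i,\dots,j}/\hbar^{\half}_{i,\dots,j-1}=\xi_i-\xi_{j+1}$, which is exactly the coefficient occurring on the right of the corresponding line. The polynomial check is the uniform identity
\[
F_j(z)\,F_j(\hbar z)\,\Lambda_j(z)=F_{j-1}(\hbar z)\,F_{j+1}(z),
\]
which absorbs the stray $\Lambda_j(z)$ into the $F$-factors so that the $\mathscr{D}$-products on the right reassemble correctly. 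Writing $F_i(z)=W_{r-i}(\hbar^{r-i}z)$ and using the two elementary recursions implicit in \eqref{eq:WPDefs}, namely $W_{k+1}(z)=W_k(z)\,P_{k+1}(\hbar^{k}z)$ and $P_{m+1}(z)=P_m(z)\,\Lambda_{r-m}(z)$, this reduces to a telescoping cancellation of the same $\Lambda$-factors shared by consecutive $F$'s. The boundary line, where $F_r=W_0=1$ and no $Q^+_{r+1}$ appears, is handled separately in the same way, with the convention $F_{r+1}=1$.

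The main obstacle is purely the bookkeeping of the $\hbar$-shifts: each of $F_i$, $W_k$, $P_j$ and $\Lambda_m$ carries its own power-of-$\hbar$ shift in the argument, and these must be tracked consistently through the whole tower of equations so that the factors in the polynomial identity align argument-by-argument, not merely up to an overall $\hbar$-dilation. Once the shift conventions of \eqref{eq:WPDefs} are pinned down, the scalar telescoping and the $F$-identity are finite mechanical cancellations, and the equivalence of \eqref{eq:QQAll} and \eqref{eq:QQAll1} follows.
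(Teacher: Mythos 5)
Your proposal is correct, and it is essentially the computation the paper intends: the paper states this lemma without proof, introducing it only as the observation that multiplying the $Q$'s by suitable products of shifted $\Lambda$'s removes the $\Lambda$'s from the right-hand sides, which is exactly what you carry out. Your factorization of the left-hand sides, the scalar telescoping $\hbar^{\half}_{i,\dots,j}/\hbar^{\half}_{i,\dots,j-1}=\xi_i-\xi_{j+1}$ (with empty products equal to $1$ covering the first line), and the reversibility argument (the $F$'s are nonzero polynomials, the scalars are nonzero since $Z$ is regular) are all right.

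One substantive remark, which confirms that the ``bookkeeping obstacle'' you flag at the end is real: with the paper's formulas read literally, your key identity
\[
F_j(z)\,F_j(\hbar z)\,\Lambda_j(z)=F_{j-1}(\hbar z)\,F_{j+1}(z)
\]
fails. Reading the pattern in \eqref{eq:WPDefs} as $W_k(z)=\prod_{m=1}^{k}P_m(\hbar^{m-1}z)$ and taking $F_i(z)=W_{r-i}(\hbar^{r-i}z)$ as in \eqref{eq:Dkdef}, the last line of the system (where $F_r=W_0=1$ and $Q^+_{r+1}=1$) would require $\Lambda_r(z)=F_{r-1}(\hbar z)=\Lambda_r(\hbar^{2}z)$, which is false for non-constant $\Lambda_r$. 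In fact, downward induction on $j$ shows that the identity together with $F_r=1$ determines the $F$'s uniquely, and the answer is the paper's formula with the shift inverted: $F_i(z)=W_{r-i}(\hbar^{-(r-i)}z)$, equivalently $F_{r-k}(z)=\prod_{m=1}^{k}P_m(\hbar^{m-1-k}z)$. With this choice $F_{r-1}(\hbar z)=P_1(z)=\Lambda_r(z)$, and in the general case the two sides of the identity differ by exactly the factor $P_{k+1}(z)/P_k(z)=\Lambda_{r-k}(z)$, which is precisely your telescoping cancellation. So your argument is sound; the delicate point you isolate is a typo in the shift conventions of \eqref{eq:WPDefs}--\eqref{eq:Dkdef} rather than a gap in your reasoning, and once the convention is corrected the proof closes exactly as you describe.
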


For the future, we shall refer to \eqref{eq:QQAll1} as the \textit{extended} $\mathscr{D}\mathscr{D}$-system for $SL(r+1)$ and to its first line specifically as merely the $\mathscr{D}\mathscr{D}$-system.

As we shall see below, one can express the solutions of the $QQ$- and $\mathscr{D}\mathscr{D}$-systems in terms of section $s(z)$ of subbundle  $\mathcal{L}_{r+1}$.
Following the discussion of \cite{KSZ} (Section 4) we consider the following matrices: 
\begin{equation}
M_{i_1,\dots, i_j}=
\begin{pmatrix} \,  
\xi_{i_1}^{j-1}s_{i_1}(z)  & \cdots  & \xi_{i_1}s_{i_1}(\hbar^{j-2}z)&  s_{i_1}(\hbar^{j-1}z) \\ \vdots & \ddots & \vdots & \vdots \\   \xi_{i_j}^{j-1}s_{i_j}(z)  & \cdots& \xi_{i_j} s_{i_j}(\hbar^{j-2}z)  & s_{i_j}(\hbar^{j-1}z) \, 
\end{pmatrix}\\, \quad V_{i_1,\dots, i_j}=
\begin{pmatrix} 
\,   \xi_{i_1}^{j-1}&\cdots & \xi_{i_1} &  1  \\ \vdots & \ddots & \vdots & \vdots \\  \xi_{i_j}^{j-1} & \cdots & \xi_{i_j}   & 1  \, 
\end{pmatrix}\\,
\label{eq:MiuraQOperCond1q}
\end{equation}
where $s_i$ are polynomials and $V_{i_1,\dots,i_j}$ is the Vandermonde-like matrix whose determinant is
\begin{equation}
\text{det} V_{i_1,\dots, i_j} = \prod_{i<j}(\xi_i-\xi_j)\,.
\end{equation}

The extended $\mathscr{D}\mathscr{D}$-system can be recovered in the following way.

\begin{Prop}\label{eq:qWronskiansShift2}
There exist unique polynomials
$s_1,\dots,s_{r+1}$ such that 
the polynomials $\mathscr{D}^-_{i,\dots i+k}$ from \eqref{eq:QQAll1} read
\begin{equation}\label{eq:Dmfomrulae}
\mathscr{D}^-_{i,\dots,i+k} = \frac{\det\, M_{i,i+k+2\dots,r+1}}{\det\, V_{i,i+k+2\dots,r+1}}\,,\qquad i=1,\dots, r\,,\quad k=0,\dots, r-i\,.
\end{equation}
In particular,
\begin{equation}
\mathscr{D}^+_i (z)= \frac{\det\, M_{r+2-i,\dots,r+1}(z)}{\det\,
  V_{r+2-i,\dots,r+1}}\,\qquad\text{ and } \qquad
\mathscr{D}^-_i(z) = \frac{\det\, M_{r+1-i,r+3-i,\dots,r+1}(z)}{\det\, V_{r+1-i,r+3-i,\dots,r+1}}\,,
\label{eq:MM0Inda}
\end{equation}
where matrix $M$, $V$ are given in \eqref{eq:MiuraQOperCond1q}.
\end{Prop}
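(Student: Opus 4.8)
The plan is to take $s_1,\dots,s_{r+1}$ to be the components of the section $s(z)$ that generates $\mathcal{L}_{r+1}$ in the gauge where the oper connection is the constant $Z$, and then to verify \eqref{eq:Dmfomrulae} as an identity among the $q$-Wronskian minors $\det M_{i_1,\dots,i_j}$ of \eqref{eq:MiuraQOperCond1q}. By Proposition \ref{Th:SectionsQQ} these components are necessarily $s_{r+1}=Q^+_r$, $s_r=Q^-_r$, and $s_k=Q^-_{k,\dots,r}$ for $k=1,\dots,r-1$, so existence is immediate once one has a nondegenerate solution of the extended QQ-system (Theorem \ref{Th:BetheQQEquiv}). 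The real content is to show that the prescribed determinant ratios reproduce the polynomials $\mathscr{D}^{\pm}$ of \eqref{eq:Dkdef}, and that this requirement fixes the $s_k$ uniquely.

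For the ``diagonal'' case I would first record that, since $Z=\diag(\xi_1,\dots,\xi_{r+1})$, the $m$-th column $Z^{\,i-m}s(\hbar^{m-1}z)$ of the Wronskian defining $\mathcal{D}_i(s)$ has $p$-th entry $\xi_p^{\,i-m}s_p(\hbar^{m-1}z)$; wedging \eqref{qD} against $e_1\wedge\cdots\wedge e_{r+1-i}$ therefore collapses $\mathcal{D}_i(s)$ onto the $i\times i$ block in rows $r+2-i,\dots,r+1$, i.e.\ $\det M_{r+2-i,\dots,r+1}$. The factorization \eqref{eq:MiuraQOperCond}, $\mathcal{D}_i(s)=\alpha_i W_i\mathcal{V}_i$, together with $\mathcal{V}_i=Q^+_i$ from Theorem \ref{qWthKSZ}, then yields the first formula of \eqref{eq:MM0Inda} after dividing by $\det V_{r+2-i,\dots,r+1}$ and absorbing the regular-singularity data into the normalization $F_i$ built into $\mathscr{D}^+_i$ in \eqref{eq:Dkdef}.

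The genuinely new part is the off-diagonal polynomials $\mathscr{D}^-_{i,\dots,i+k}$, for which one uses the ``skipped-row'' minors $M_{i,i+k+2,\dots,r+1}$. Here the cleanest route avoids re-deriving each normalization: I would instead show directly that the ratios $\det M_{i_1,\dots,i_j}/\det V_{i_1,\dots,i_j}$ satisfy the extended $\mathscr{D}\mathscr{D}$-system \eqref{eq:QQAll1} and then invoke uniqueness. Indeed, each line of \eqref{eq:QQAll1} is precisely a three-term Desnanot--Jacobi/Pl\"ucker relation among consecutive such minors, the Vandermonde normalization being exactly what turns the determinantal coefficients into the constants $\xi_i-\xi_{i+1}$ and, more generally, the factors $\hbar^{\half}_{l,\dots,k}=\prod_{a=0}^{k-l}(\xi_l-\xi_{l+a+1})$ recorded in Lemma \ref{Th:PropQQtilde}. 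An induction on the number of omitted rows, anchored by the diagonal case of the previous paragraph, then identifies $\det M_{i,i+k+2,\dots,r+1}/\det V_{i,i+k+2,\dots,r+1}$ with the unique solution $\mathscr{D}^-_{i,\dots,i+k}$ of \eqref{eq:QQAll1} carrying the boundary data \eqref{eq:comptssection}. Uniqueness of the $s_k$ follows from the same triangular structure: the $1\times 1$ minor $M_{r+1}$ fixes $s_{r+1}$, and each subsequent relation is a first-order $\hbar$-difference equation with a unique polynomial solution under the nondegeneracy hypothesis.

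The step I expect to be the main obstacle is the bookkeeping in the skipped-row induction: one must check that the Desnanot--Jacobi relations among the $M$-minors assemble into \emph{exactly} the extended system \eqref{eq:QQAll1}, with the correct index ranges, rather than a reshuffled variant, and that the accumulated prefactors --- the constants $\alpha_k$, the Vandermonde values $\det V$, the shift factors $\hbar^{\half}_{l,\dots,k}$, and the regular-singularity normalizations $F_i(z)=W_{r-i}(\hbar^{r-i}z)$ --- cancel consistently and simultaneously for every admissible pair $(i,k)$. Everything else reduces to Proposition \ref{Th:SectionsQQ} and the Wronskian factorization of \cite{KSZ}.
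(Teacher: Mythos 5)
Your proposal is correct in outline and is actually more detailed than the paper's own argument, but it rests on different pillars. The paper's proof is essentially a citation: it invokes Lemma 4.4 of \cite{KSZ}, reproduced as Lemma \ref{Th:existencePoly}, which supplies in one stroke the existence and uniqueness of polynomials $s_1,\dots,s_{r+1}$ realizing the $\mathscr{D}^{\pm}$'s as quantum Wronskians (solving the determinantal equations one row at a time), with the off-diagonal formulas inherited from the $q$-Wronskian framework of \cite{KSZ}. You bypass Lemma \ref{Th:existencePoly}: existence of candidates comes from the gauge-theoretic Proposition \ref{Th:SectionsQQ}, the diagonal cases from \eqref{eq:MiuraQOperCond} plus Theorem \ref{qWthKSZ}, the off-diagonal cases from Desnanot--Jacobi identities combined with the uniqueness in Theorem \ref{Th:BetheQQEquiv}, and uniqueness of the $s_i$ from the triangular structure of the minors. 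The unspoken core --- that the minor identities assemble into exactly \eqref{eq:QQAll1} with the right prefactors --- is common to both routes; you flag it honestly, and the paper likewise defers it to \cite{KSZ}, so your sketch is no less complete than the paper's. Two minor repairs: (i) Proposition \ref{Th:SectionsQQ} gives the $s_i$ only up to the constants appearing in \eqref{eq:Dkdef}, so the correct candidates are $s_i=\hbar^{\half}_{i,\dots,r}\,Q^-_{i,\dots,r}$ rather than $Q^-_{i,\dots,r}$ on the nose; (ii) in your uniqueness recursion the diagonal $k\times k$ minors are order-$(k-1)$, not first-order, difference equations in the new unknown --- but this is harmless, because the $k=r-i$ cases of \eqref{eq:Dmfomrulae} are $1\times 1$ minors forcing $s_i=\mathscr{D}^-_{i,\dots,r}$ outright for $i\le r$, after which $s_{r+1}$ is fixed either by $\mathscr{D}^+_1=s_{r+1}$ or by a genuinely first-order two-row relation. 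As for what each route buys: the paper's lemma gives a self-contained difference-equation construction that is reused later in the proof of the 3d mirror theorem, while your route makes the geometric origin of the $s_i$ transparent and anchors the off-diagonal cases on statements already established in the paper.
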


The proof of Proposition \ref{eq:qWronskiansShift2} partly based on Lemma 4.4 from \cite{KSZ} which ensures that polynomials $\mathscr{D}^\pm_i$ and $Q^\pm_i$ can be written as quantum Wronskians. Later in the paper we shall use this Lemma in the proof of 3d mirror symmetry for $X_{k,l}$.

\begin{Lem}\label{Th:existencePoly}
Suppose that $\gamma_1,\dots,\gamma_{k-1}$ are nonzero complex numbers
such that $\gamma_j\notin
  q^{\mathbb{N}_0}\gamma_{k}$ for $j<k$.  Let $f_1,\dots,f_{k-1}$ be
  polynomials that do not vanish at $0$,
  and let $g$ be an arbitrary polynomial.
   Then there exist unique polynomials  $f_1,\dots,f_k$ satisfying 
\begin{equation}\label{Fmatrix}
g=\det\begin{pmatrix} f_1 & \gamma_{1} f_1^{(1)} & \cdots &
  \gamma_{1}^{k-1} f_1^{(k-1)} \\ \vdots & \vdots & \ddots & \vdots \\
  f_k  & \gamma_{k} f_k^{(1)} & \cdots & \gamma_k^{k-1} f_k^{(k-1)} \,
\end{pmatrix}.  
\end{equation}
\end{Lem}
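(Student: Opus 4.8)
The plan is to expand the determinant along its last row, which turns the equation into a single linear $q$-difference equation for the one genuine unknown $f_k$ (the rows $f_1,\dots,f_{k-1}$ being given), and then to solve that equation recursively by tracking the order of vanishing at $z=0$. Throughout write $f^{(m)}(z)=f(q^{m}z)$, set $d_i=\deg f_i$ and $D=\sum_{i=1}^{k-1}d_i$, and let $a_i$ be the leading coefficient of $f_i$. Cofactor expansion along row $k$ gives
\begin{equation*}
\det(\cdots)=\sum_{j=1}^{k}(-1)^{k+j}\gamma_k^{\,j-1}A_j(z)\,f_k(q^{j-1}z)=:L[f_k](z),
\end{equation*}
where $A_j(z)$ is the $(k-1)\times(k-1)$ minor obtained by deleting row $k$ and column $j$; these minors are polynomials depending only on $f_1,\dots,f_{k-1}$, so $L$ is a linear $q$-difference operator of order $k-1$ with known polynomial coefficients (its top coefficient $A_k$ is exactly the $(k-1)$-fold $q$-Wronskian of $f_1,\dots,f_{k-1}$). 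Solving the determinant equation is thus equivalent to solving $L[f_k]=g$.

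The key observation is the value of $L$ on monomials. Taking $f_k=z^{n}$ turns the last row of the matrix into $z^{n}$ times the constant row $(1,\gamma_k q^{n},\dots,(\gamma_k q^{n})^{k-1})$, so
\begin{equation*}
L[z^{n}](z)=z^{n}\,G(\gamma_k q^{n},z),\qquad
G(x,z)=\det\begin{pmatrix} f_1(z) & \gamma_1 f_1(qz) & \cdots & \gamma_1^{\,k-1}f_1(q^{k-1}z)\\ \vdots & \vdots & \ddots & \vdots\\ f_{k-1}(z) & \gamma_{k-1} f_{k-1}(qz) & \cdots & \gamma_{k-1}^{\,k-1}f_{k-1}(q^{k-1}z)\\ 1 & x & \cdots & x^{\,k-1}\end{pmatrix}.
\end{equation*}
Evaluating at $z=0$ and factoring $f_i(0)$ out of each of the first $k-1$ rows makes $G(x,0)$ a Vandermonde determinant in the nodes $\gamma_1,\dots,\gamma_{k-1},x$, whence
\begin{equation*}
G(\gamma_k q^{n},0)=\Big(\prod_{i=1}^{k-1}f_i(0)\Big)\prod_{i<i'\le k-1}(\gamma_{i'}-\gamma_i)\prod_{i=1}^{k-1}(\gamma_k q^{n}-\gamma_i).
\end{equation*}
By hypothesis $f_i(0)\neq0$ and $\gamma_j\notin q^{\mathbb{N}_0}\gamma_k$, so the last product never vanishes for $n\ge0$, and the relevant Vandermonde factors are nonzero; hence $G(\gamma_k q^{n},0)\neq0$ and $L[z^{n}]$ has order of vanishing at $z=0$ exactly $n$. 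Thus $L$ is triangular, with invertible diagonal, for the filtration by order of vanishing at the origin: writing $f_k=\sum_{n\ge0}b_n z^{n}$ and $g=\sum_N g_N z^{N}$, the coefficient of $z^{N}$ in $L[f_k]=g$ reads $b_N\,G(\gamma_k q^{N},0)+(\text{terms in }b_0,\dots,b_{N-1})=g_N$, which determines $b_N$ uniquely once the earlier coefficients are known. Solving from $N=0$ upward produces a unique formal power-series solution, and this already yields the uniqueness half of the statement.

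It remains to show the solution is a genuine polynomial, which I expect to be the main obstacle. For this I would analyze the top-degree end: taking leading coefficients of the first $k-1$ rows shows that $[z^{\,n+D}]L[z^{n}]$ equals $\big(\prod_i a_i\big)\prod_{i<i'\le k-1}(\gamma_{i'}q^{d_{i'}}-\gamma_i q^{d_i})\prod_{i=1}^{k-1}(\gamma_k q^{n}-\gamma_i q^{d_i})$, again a Vandermonde-type product which is nonzero for all but finitely many $n$ by the non-resonance hypothesis. Consequently $L$ raises degree by exactly $D$ on monomials of large degree, so comparing top degrees in $L[f_k]=g$ pins down $\deg f_k=\deg g-D$ and forces the upward recursion to terminate, i.e.\ $b_n=0$ for $n>\deg g-D$. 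The delicate point, and the place where both the non-vanishing of $f_i$ at $0$ and the non-resonance $\gamma_j\notin q^{\mathbb{N}_0}\gamma_k$ are simultaneously essential, is precisely this reconciliation of the bottom-end (order-of-vanishing) recursion with the top-end (degree) control, which guarantees the formal solution truncates to the unique polynomial $f_k$.
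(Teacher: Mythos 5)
First, a point of reference: the paper itself gives no proof of Lemma \ref{Th:existencePoly} --- it is quoted from \cite{KSZ} (Lemma 4.4 there) and used as a black box in Proposition \ref{eq:qWronskiansShift2} --- so your attempt can only be measured against the statement itself. The first two thirds of your argument are correct: the cofactor expansion defining $L$, the identity $L[z^n](z)=z^nG(\gamma_k q^n,z)$, the Vandermonde evaluation of $G(x,0)$, and the lower-triangular recursion $b_N\,G(\gamma_k q^N,0)+(\text{terms in }b_0,\dots,b_{N-1})=g_N$ are all right, and they do yield the uniqueness half. One caveat even there: your assertion that ``the relevant Vandermonde factors are nonzero'' is not implied by the stated hypothesis, which only excludes resonances between each $\gamma_j$ and $\gamma_k$. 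If, say, $\gamma_1=\gamma_2$, then $G(x,0)\equiv 0$ and your recursion never starts; indeed the lemma itself then fails, since the determinant vanishes at $z=0$ for every $f_k$ while $g(0)$ may be nonzero. Pairwise distinctness of $\gamma_1,\dots,\gamma_{k-1}$ is a silent extra hypothesis --- harmless in the paper's applications because of \eqref{assume}, but it must be added for your diagonal terms to be invertible.

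The genuine gap is the existence half, precisely where you flag ``the delicate point'' and then assert rather than prove it. ``Comparing top degrees in $L[f_k]=g$'' is circular: a formal power series has no top degree, so that comparison only shows that \emph{if} a polynomial solution exists then $\deg f_k=\deg g-D$; it cannot force your upward recursion to terminate. In fact nothing can, because for arbitrary $g$ the statement is false. Take $k=2$, $f_1(z)=1+z$, $\gamma_1=1$, $\gamma_2=3$, $q=2$, $g=1$: the hypotheses hold ($1\notin 3\cdot 2^{\mathbb{N}_0}$, $f_1(0)\neq 0$), but for any polynomial $f_2$ of degree $d$ with leading coefficient $c\neq 0$ the determinant $3(1+z)f_2(2z)-(1+2z)f_2(z)$ has $z^{d+1}$-coefficient $c\,(3\cdot 2^d-2)\neq 0$, hence degree $d+1\geq 1$, and can never equal $1$. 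Concretely, your recursion produces $b_0=\tfrac12$ and $b_N=-\frac{3\cdot 2^{N-1}-2}{3\cdot 2^{N}-1}\,b_{N-1}$ for $N\geq 1$, an honest infinite series: the formal solution does not truncate. So existence can only hold for $g$ lying in the image of $L$ on polynomials (a subspace of codimension $D=\sum_i\deg f_i$ in each degree window) --- which is automatic in the paper's use of the lemma, where $g$ arises as an actual $q$-Wronskian minor of an oper section --- and any correct proof must either restrict $g$ by such compatibility conditions or weaken the conclusion to formal power series; the lemma cannot be established for ``an arbitrary polynomial $g$'' as written.
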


\section{$A_r$-quivers, $QQ$-systems and magnetic frame}\label{Sec:QQA}

\subsection{Quiver data and Miura $\hbar$-opers.}
For us, the framed $A_r$ quiver means the graph of $A_r$ type with vertices labeled by natural numbers 
To each vertex, we may attach a box labeled by a natural number as well. We will refer to the boxes as the framing of $A_n$ graph.
\begin{figure}[!h]
\includegraphics[scale=0.45]{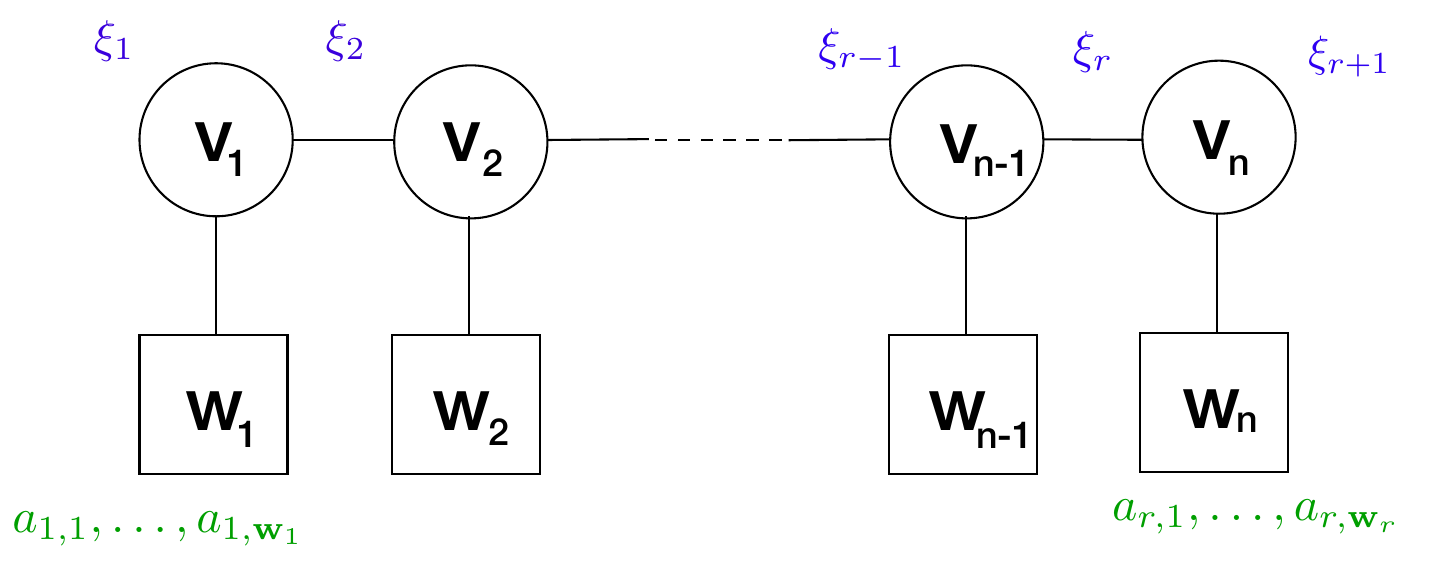}
\caption{Generic $A_r$ quiver variety}
\label{fig:GenAQuiv}
\end{figure}

Thus we see that such quiver is entirely defined by two vectors with the components ${\bf v}=({\bf v}_1, \dots, {\bf v}_r)$ and ${\bf w}=({\bf w}_1, \dots, {\bf w}_r)$, so that $\{\bf v_i\}, \{\bf w_i\}\in \mathbb{N}$, $i=1,\dots, r$. 
In the following, we will refer to quiver with this data as $Y_{{\bf v}, {\bf w}}$ and to ${\bf w_i}$ as the rank of framing of the $i$-th vertex. 
We associate a $QQ$-system to such quiver in the following way. We assign to each vertex $i$ with the label ${\bf v}_i$ (we count vertices from left to right)  the $Q^+_i(z)$ -polynomial of degree ${\bf v}_i$. At the same time, we associate the polynomial $\Lambda_i(z)$ of degree ${\bf w}_i$ to each vertex with the framing of rank ${\bf w}_i$.  

We will refer to the resulting space of $Z$-twisted nondegenerate Miura $(SL(r+1),\hbar)$-opers, associated with such $QQ$-systems and thus entirely defined by quiver as 
$\hbar{\rm{Op}}(Y_{{\bf v}, {\bf w}})$. Such opers are defined by the position of regular singularities, i.e. roots $\{a_{k,j}\}_{j=1,\dots,r, ~k=1, \dots, {\rm deg}(\Lambda_j)}$ of 
${\Lambda_j(z)}_{j=1,\dots, r}$, monic polynomials $\{Q^+_j(z)\}_{j=1,\dots, r}$ defined by their Bethe roots $\{s_{k,j}\}_{j=1,\dots,r}^{k=1, \dots, {\rm deg}(Q^+_j)}$, and $\{\xi_k\}_{k=1, \dots, r+1}$ parametrizing the $Z$-twist. 

We will refer to the following algebra as the algebra of functions on the space $\hbar{\rm{Op}}(Y_{{\bf v}, {\bf w}})$:
\begin{equation}
{\rm Fun}(\hbar{\rm{Op}})(Y_{{\bf v}, {\bf w}}):= \frac{S(\{a_i\},\{\xi_k\}, \hbar)(\{s_{i,k}\})}{\rm{(Bethe~ equations)}}\,,
\end{equation}
i.e. rational functions (with coefficients being rational functions of $\{a_i\},\{\xi_k\}, \hbar$) of the elementary symmetric functions of Bethe root variables (with symmetrization is over index $k$ for all $i$) with the relations on variables are given by Bethe equations from (\ref{eq:bethe}).

\begin{Rem}\label{Rem:nonlocal} 
From a perspective of the integrable XXZ spin chains, this is known as Bethe algebra represented in terms of nonlocal Hamiltonians (coefficients of expansion of the Baxter Q-operators, with coefficients being elementary symmetric functions of Bethe roots), considered in the weight sector determined by the data of the quiver $Y_{\mathbf{v}, \mathbf{w}}$. Later we will see other expressions for Bethe algebra using another set of Hamiltonians.
\end{Rem}
The following Proposition is a crucial property of quantum B\"acklund transformations, which directly follows from Proposition \ref{fiter}:
\begin{Prop}
Conisider the quiver $Y_{{\bf v}, {\bf w}}$ and the related $Z$-twisted Miura $(SL(r+1), \hbar)$-oper. Consider the quiver $Y_{{\bf v'}, {\bf w}}$ which corresponds to the $Z$-twisted Miura $(SL(r+1), \hbar)$-oper after the series of B\"acklund transformations.
Then ${\rm Fun}(\hbar{\rm{Op}})(Y_{{\bf v}, {\bf w}})\cong{\rm Fun}(\hbar{\rm{Op}})(Y_{{\bf v'}, {\bf w}})$, where the isomorphism is given by the transformations of the $QQ$-system are given in the Proposition \ref{fiter}.
\end{Prop}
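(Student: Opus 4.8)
The plan is to reduce the whole statement to a single elementary B\"acklund transformation and then compose. It suffices to prove the isomorphism for one reflection, say the transformation of Proposition~\ref{fiter} at vertex $i$, since a series of such transformations yields a composition of isomorphisms, which is again an isomorphism. So fix $i$ and let $\{Q^\pm_j,\zeta_j\}$ be the QQ-data attached to $Y_{{\bf v},{\bf w}}$. The first task is to identify the new dimension vector. Comparing the top-degree terms on both sides of the QQ-relation \eqref{eq:QQAtype} at vertex $i$ gives $\deg Q^-_i = \deg\Lambda_i+\deg Q^+_{i-1}+\deg Q^+_{i+1}-\deg Q^+_i$, i.e.
\begin{equation}
{\bf v}'_i = {\bf w}_i + {\bf v}_{i-1}+{\bf v}_{i+1}-{\bf v}_i, \qquad {\bf v}'_j={\bf v}_j \quad (j\neq i),
\end{equation}
which is exactly the simple reflection $s_i$ acting on the dimension vector. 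Here the nonresonance hypothesis \eqref{assume}, which says precisely that $\xi_i/\xi_{i+1}=\zeta_i^2/(\zeta_{i-1}\zeta_{i+1})\notin\hbar^{\Z}$, guarantees that the two leading terms on the left-hand side do not cancel, so the degree count is exact. The framing ${\bf w}$ is untouched because the singularity polynomials $\{\Lambda_j\}$ are left invariant by the substitution \eqref{eq:Aconnswapped}. Thus the transformed Miura oper is precisely the datum attached to the quiver $Y_{{\bf v}',{\bf w}}$.

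Next I would promote the substitution \eqref{qqm2} to a bijection of nondegenerate solution spaces. By Theorem~\ref{inj} together with Theorem~\ref{Th:BetheQQEquiv}, nondegenerate $Z$-twisted Miura $(SL(r+1),\hbar)$-opers, nondegenerate solutions of the QQ-system, and solutions of the Bethe equations \eqref{eq:bethe} are in mutual bijection; consequently ${\rm Fun}(\hbar{\rm{Op}})(Y_{{\bf v},{\bf w}})$ is the coordinate ring of this common solution space. The transformation of Proposition~\ref{fiter} is an involution at vertex $i$: applying it twice sends $Q^+_i\mapsto Q^-_i\mapsto -Q^+_i$ and $\zeta_i\mapsto \zeta_{i-1}\zeta_{i+1}/\zeta_i\mapsto\zeta_i$, which returns the original data up to an overall sign that is immaterial after monic normalization. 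Hence the substitution is invertible and induces a bijection between the nondegenerate solution spaces for $Y_{{\bf v},{\bf w}}$ and $Y_{{\bf v}',{\bf w}}$.

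It then remains to check that this bijection is realized by an algebra isomorphism. The coordinate ring is generated by the elementary symmetric functions of the Bethe roots $\{s_{j,k}\}$, i.e.\ of the roots of the $Q^+_j$; under the transformation the new Bethe roots at vertex $i$ are the zeros of $Q^-_i$, so one must verify that the substitution carries the Bethe ideal of $Y_{{\bf v},{\bf w}}$ onto that of $Y_{{\bf v}',{\bf w}}$ and therefore descends to the quotients. I expect this last well-definedness check to be the main obstacle, because the roots of $Q^-_i$ are determined only implicitly by the QQ-relation and are not polynomial functions of the original roots. The cleanest way around it is to argue intrinsically rather than on roots: the B\"acklund step of Proposition~\ref{fiter} is a $q$-gauge transformation of the oper connection, so it leaves the underlying $(SL(r+1),\hbar)$-oper — and in particular its singularity data — unchanged, and merely reflects the Weyl chamber of $Z$ (by $\xi_i\leftrightarrow\xi_{i+1}$) and the choice of Miura flag. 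The two function algebras are therefore coordinate rings of one and the same space of nondegenerate $Z$-twisted Miura opers, written in two coordinate systems, and the explicit substitution \eqref{qqm2} is precisely the change of coordinates implementing the isomorphism. Composing over the prescribed series of reflections then yields ${\rm Fun}(\hbar{\rm{Op}})(Y_{{\bf v},{\bf w}})\cong{\rm Fun}(\hbar{\rm{Op}})(Y_{{\bf v}',{\bf w}})$.
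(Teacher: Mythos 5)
Your proposal is correct and follows essentially the same route as the paper, which simply asserts that the Proposition ``directly follows from Proposition~\ref{fiter}'': you make that derivation explicit via the degree count ${\bf v}'_i={\bf w}_i+{\bf v}_{i-1}+{\bf v}_{i+1}-{\bf v}_i$ (using nonresonance \eqref{assume}), the involutivity of the reflection, and the observation that the B\"acklund step is a $q$-gauge transformation fixing the underlying $Z$-twisted oper, so both algebras are functions on the same oper space in two coordinate systems. Your filled-in details, in particular identifying \eqref{qqm2} as the change of coordinates and reducing the series of transformations to a composition of single reflections, are consistent with the paper's intent and with Proposition~\ref{Z prime}.
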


At the same time, there is another presentation of the Miura $(SL(r+1),\hbar)$-opers, namely using the section 
$s(z)=(s_1(z), s_2(z), \dots, s_{r+1}(z))$, where  $s_i(z)=\tilde{c}_i\prod_k(z-p^k_i)$, where we labeled the roots of $s_i(z)$ 
as $p_i^k$. As we know from the definition of $s_i$, their degrees are determined from the data on the quiver.  

Let us introduce the algebra 
$$
{\rm Wr}(Y_{{\bf v}, {\bf w}}):=\frac{S(\{a_i\},\{\xi_k\}, \hbar)(\{p_i^k\})}{\rm{(Wronskian~ relations)}}\,,
$$ 
where $S(\{a_i\},\{\xi_k\}, \hbar)(\{p_i^k\})$ stand for the rational functions of the elementary symmetric functions of $\{p_i^k\}$ (with symmetrization over index $k$) so that the coefficients are rational functions of $\{a_i\},\{\xi_k\}, \hbar$. 
The Wronskian relations are given in \eqref{eq:MiuraQOperCond}. 
We will refer to the variables $\{p_i^k\}$ and $\{\xi^k_i\equiv \xi_i\}$ as {\it momenta} and {\it coordinates} correspondingly. 

Thus we have a theorem, which is a consequence of the results in Section 2.

\begin{Thm}\label{Wronskian}
There is an isomorphism of algebras:
$${\rm Fun}(\hbar{\rm{Op}})(Y_{{\bf v}, {\bf w}})={\rm Wr}(Y_{{\bf v}, {\bf w}}).$$
\end{Thm}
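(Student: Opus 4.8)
The plan is to exhibit both sides as the coordinate ring of one and the same scheme, the space of $Z$-twisted nondegenerate Miura $(SL(r+1),\hbar)$-opers $\hbar{\rm{Op}}(Y_{{\bf v}, {\bf w}})$, presented in two different systems of coordinates: the \emph{Bethe frame}, where a point is recorded by the roots $\{s_{i,k}\}$ of the polynomials $Q^+_i$, and the \emph{magnetic (Wronskian) frame}, where it is recorded by the roots $\{p_i^k\}$ of the components $s_i(z)$ of the section generating $\mathcal{L}_{r+1}$. Concretely I would construct two algebra homomorphisms over the base ring $S(\{a_i\},\{\xi_k\},\hbar)$ and show that they are mutually inverse. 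All the analytic content is already packaged in the correspondences recalled above; the work is to check that the dictionary is rational in both directions and carries one set of defining relations onto the other.

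First, the map ${\rm Wr}(Y_{{\bf v}, {\bf w}})\to{\rm Fun}(\hbar{\rm{Op}})(Y_{{\bf v}, {\bf w}})$. The generators of ${\rm Wr}$ are the elementary symmetric functions of $\{p_i^k\}$, i.e. the coefficients of the $s_i(z)$. Given these, form the quantum Wronskian determinants of \eqref{eq:MiuraQOperCond}; by Theorem \ref{qWthKSZ} the resulting quotients $\mathcal{V}_k(z)=\det M/(\alpha_k W_k)$ are exactly the polynomials $Q^+_k(z)$ of a QQ-system solution. Since $W_k$ is a fixed polynomial in the $a_i$'s, the coefficients of each $\mathcal{V}_k$ are rational functions over the base ring of the coefficients of the $s_i$'s, hence land in ${\rm Wr}$; this defines the homomorphism on generators. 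Theorem \ref{qWthKSZ} (via the equivalence of Theorem \ref{Th:BetheQQEquiv}) guarantees that these $Q^+_k$ satisfy \eqref{eq:QQAtype}, equivalently the Bethe equations \eqref{eq:bethe}, so the defining relations of ${\rm Fun}(\hbar{\rm{Op}})$ hold and the map descends to the quotient.

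Conversely, for ${\rm Fun}(\hbar{\rm{Op}})\to{\rm Wr}$ I would use Proposition \ref{Th:SectionsQQ}: the section components are $s_{r+1}=Q^+_r$, $s_r=Q^-_r$, and $s_k=Q^-_{k,\dots,r}$ for $k\le r-1$, i.e. they are precisely the polynomials of the extended QQ-system \eqref{eq:QQALLFull}. By Theorem \ref{Th:BetheQQEquiv} a nondegenerate solution of the ordinary QQ-system determines, uniquely, a solution of the extended system, so the coefficients of each $s_k$ --- hence the generators of ${\rm Wr}$ --- are rational functions over the base of the coefficients of the $Q^+_i$, and thus lie in ${\rm Fun}(\hbar{\rm{Op}})$. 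Proposition \ref{eq:qWronskiansShift2} (equivalently the determinantal existence Lemma \ref{Th:existencePoly}) then shows that these $s_k$ satisfy the Wronskian relations \eqref{eq:MiuraQOperCond}, so this map likewise descends to the quotient.

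Finally I would check that the two homomorphisms are mutually inverse. On generators this is immediate from the fact that the passages $Q^+_i\leftrightarrow\{s_k\}$ of Proposition \ref{Th:SectionsQQ} and $\{s_i\}\mapsto\mathcal{V}_k=Q^+_k$ of Proposition \ref{eq:qWronskiansShift2} are the two directions of the same bijection established in \cite{KSZ}: reconstructing the section from a QQ-solution and then extracting $\mathcal{V}_k$ by the quantum Wronskian returns the original $Q^+_k$, and vice versa. The main obstacle I anticipate is not the construction of the maps but verifying that they are isomorphisms at the level of the quotient \emph{rings} rather than merely bijections of closed points --- one must confirm that, under the change of coordinates, the ideal generated by the Bethe equations and the ideal generated by the Wronskian relations coincide, and that the nondegeneracy hypotheses (the $q$-distinctness of zeros, \eqref{assume}) make all the denominators $W_k$, $\det V$, and $Q^+_i$ invertible so that no spurious components are introduced or lost. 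This is exactly where the uniqueness statements of Theorems \ref{inj} and \ref{Th:BetheQQEquiv} and the nonvanishing built into the nondegenerate Wronskian setup must be invoked to pin down the scheme structure.
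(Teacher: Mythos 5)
Your proposal is correct and takes essentially the same route as the paper, which states Theorem \ref{Wronskian} as a direct consequence of the Section 2 results (Theorem \ref{qWthKSZ}, Proposition \ref{Th:SectionsQQ}, Proposition \ref{eq:qWronskiansShift2}, Theorem \ref{Th:BetheQQEquiv} and the nondegeneracy statements) --- precisely the dictionary you assemble into a pair of mutually inverse homomorphisms over $S(\{a_i\},\{\xi_k\},\hbar)$. The only slip is cosmetic: in each paragraph you label the ring homomorphism in the direction opposite to the one you actually construct (expressing the coefficients of the $Q^+_k$ as rational functions of the coefficients of the $s_i$ specifies the images of the generators of ${\rm Fun}(\hbar{\rm{Op}})(Y_{{\bf v}, {\bf w}})$, hence defines a homomorphism ${\rm Fun}(\hbar{\rm{Op}})(Y_{{\bf v}, {\bf w}})\to{\rm Wr}(Y_{{\bf v}, {\bf w}})$, and symmetrically for the other map), but both maps and the mutual-inverse check are present, so the argument stands.
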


Altogether, we call such represenation of ${\rm Fun}(\hbar {\rm Op})(Y_{{\bf v}, {\bf w}})$ as {\it oper magnetic frame}.

\vskip.1in

Let us count the degrees of the components of the section of the $\hbar$-oper line bundle section $s(z)$ from \eqref{eq:comptssection}, which describes quiver in \figref{fig:GenAQuiv} using the extended $QQ$-system and B\"acklund transformations.

\begin{Lem}\label{Th:DegreesQ}
Let the degree of $s_i(z)$ in \eqref{eq:comptssection} be equal to $\rho_i$ for $i=1,\dots, r$. Then 
\begin{equation}\label{eq:weights0}
\rho_{i}-\rho_{i+1} = {\bf w}_i + {\bf v}_{i-1} + {\bf v}_{i+1} - 2 {\bf v}_i\,,\quad i=1,\dots,r-1
\end{equation}
with $\rho_0=\rho_{r+2}=0$ and $\rho_{r+1}={\bf v}_r$.
\end{Lem}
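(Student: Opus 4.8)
The plan is to read off each degree $\rho_i=\deg s_i$ from the identification \eqref{eq:comptssection} of the section components with the polynomials of the extended QQ-system, and to compute those degrees recursively from \eqref{eq:QQALLFull}. The top component is immediate: $s_{r+1}(z)=Q^+_r(z)$ has degree ${\bf v}_r$, which is the asserted value $\rho_{r+1}={\bf v}_r$, while $\rho_0=\rho_{r+2}=0$ are boundary conventions consistent with $Q^+_0=Q^+_{r+1}=1$ (so that ${\bf v}_0={\bf v}_{r+1}=0$). It remains to compute $\deg Q^-_r=\rho_r$ and $\deg Q^-_{k,\dots,r}=\rho_k$ for $k=1,\dots,r-1$.

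First I would extract $\deg Q^-_i$ from the first line of \eqref{eq:QQALLFull}, i.e. the QQ-system \eqref{eq:QQAtype}. Its right-hand side $\Lambda_i(z)Q^+_{i-1}(\hbar z)Q^+_{i+1}(z)$ has degree ${\bf w}_i+{\bf v}_{i-1}+{\bf v}_{i+1}$, whereas both summands on the left have degree ${\bf v}_i+\deg Q^-_i$. The essential point is that these two leading terms cannot cancel: their leading coefficients are proportional to $\xi_i\hbar^{{\bf v}_i}$ and $\xi_{i+1}\hbar^{\deg Q^-_i}$, which are unequal because $\xi_i/\xi_{i+1}=\zeta_i^2/(\zeta_{i-1}\zeta_{i+1})\notin\hbar^{\Z}$ by the nondegeneracy hypothesis \eqref{assume}. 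Hence ${\bf v}_i+\deg Q^-_i={\bf w}_i+{\bf v}_{i-1}+{\bf v}_{i+1}$, so that $\deg Q^-_i={\bf w}_i+{\bf v}_{i-1}+{\bf v}_{i+1}-{\bf v}_i$; taking $i=r$ with ${\bf v}_{r+1}=0$ gives $\rho_r={\bf w}_r+{\bf v}_{r-1}-{\bf v}_r$.

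Next I would run the same comparison on the generic line of \eqref{eq:QQALLFull}, which after clearing denominators relates consecutive polynomials (left endpoint $i$ fixed) as
\[
\xi_i\,Q^+_j(\hbar z)\,Q^-_{i,\dots,j}(z)-\xi_{j+1}\,Q^+_j(z)\,Q^-_{i,\dots,j}(\hbar z)=\Lambda_j(z)\,Q^+_{j+1}(z)\,Q^-_{i,\dots,j-1}(\hbar z).
\]
The identical noncancellation argument, now requiring the twist ratio $\xi_i/\xi_{j+1}$ to avoid $\hbar^{\Z}$, yields ${\bf v}_j+\deg Q^-_{i,\dots,j}={\bf w}_j+{\bf v}_{j+1}+\deg Q^-_{i,\dots,j-1}$, i.e. the recursion in the right endpoint
\[
\deg Q^-_{i,\dots,j}-\deg Q^-_{i,\dots,j-1}={\bf w}_j+{\bf v}_{j+1}-{\bf v}_j.
\]
Telescoping this from the base value $\deg Q^-_k$ up to $j=r$ (with the left endpoint fixed at $k$), the interior ${\bf v}$'s cancel in pairs and leave the closed form $\rho_k=\deg Q^-_{k,\dots,r}={\bf v}_{k-1}-{\bf v}_k+\sum_{m=k}^{r}{\bf w}_m$, which also reproduces $\rho_r$ and so makes the endpoint $i=r-1$ consistent.

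Finally, subtracting the closed forms for $\rho_i$ and $\rho_{i+1}$ collapses the common tail of the ${\bf w}$-sum and leaves exactly $\rho_i-\rho_{i+1}={\bf w}_i+{\bf v}_{i-1}+{\bf v}_{i+1}-2{\bf v}_i$, as claimed. The only step beyond bookkeeping is the noncancellation of leading terms in each line of the extended QQ-system, and this is where I expect the genuine content to lie: it must be secured uniformly from the twist hypothesis \eqref{assume} together with the nondegeneracy of the solution (which guarantees that the leading coefficients of the $Q$-polynomials are nonzero), so that every degree identity holds as an equality rather than merely an inequality.
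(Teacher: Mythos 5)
Your proposal is correct and follows essentially the same route as the paper's own proof: an iterative degree count through the lines of the extended QQ-system \eqref{eq:QQALLFull}, starting from $\deg s_{r+1}=\deg Q^+_r={\bf v}_r$, extracting $\deg Q^-_i$ from the first line and then propagating along $Q^-_{i,\dots,j}$ in the right endpoint; your telescoped closed form $\rho_k={\bf v}_{k-1}-{\bf v}_k+\sum_{m=k}^{r}{\bf w}_m$ is exactly the formula the paper records before carrying out its diagrammatic induction. The one place where you are more explicit than the paper is the noncancellation of leading terms, which the paper buries under the word ``nondegenerate''; note, however, that for the generic line your argument needs $\xi_i/\xi_{j+1}\notin\hbar^{\Z}$ with $j>i$ nonconsecutive, which is strictly stronger than the consecutive-ratio hypothesis \eqref{assume}, so this genericity must indeed be folded into the nondegeneracy assumption on the extended system --- a gap shared by (and no worse than) the paper's own treatment, and one you honestly flag rather than resolve.
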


\begin{proof}
The proof can be performed using explicit formulae for the $\mathscr{D}^{\pm}$ and $Q^\pm$ polynomials \eqref{eq:Dmfomrulae} to show that 
\begin{equation}
\rho_i = {\bf v}_{i-1}-{\bf v}_{i} + \sum_{a=i}^{r} {\bf w}_{a}\,, \qquad i =1,\dots, r+1\,.
\end{equation}
However, let us proceed combinatorially instead. Since the extended $QQ$-system \eqref{eq:QQALLFull} is nondegenerate, in order to calculate the degrees of the components of the section \eqref{eq:comptssection}, we simply calculate them from the $QQ$-system in an iterative manner.

Let deg $Q^+_i={\bf v}_i$ and deg $\Lambda_i={\bf w}_i$. Recall that $Q^+_r=s_{r+1}$ and $\rho_{r+1}={\bf v}_r$.
Starting from the extended $QQ$-system \eqref{eq:QQALLFull} we perform B\"acklund transformation on the $r$th node
$$
\xi_{r} Q^+_r(\hbar z) Q^-_r(z) - \xi_{r+1} Q^+_r(z) Q^-_r(\hbar z) =\Lambda_r (z) Q^+_{r-1}(\hbar z)\,.
$$
In order to calculate the degree of $Q^-_r=s_{r}$ which is equal to $\rho_r = {\bf w}_r + {\bf v}_{r-1}  -  {\bf v}_r$. This is consistent with \eqref{eq:weights0}. 

Then we need to calculate the degrees of $Q^-_{r-1,r}$ and others and verify that they satisfy \eqref{eq:weights0}. This is, however, a daunting combinatorial process and there is plethora of cancellations of degrees. In order to navigate this process consider the diagram from \figref{fig:CombProof}.

\begin{figure}[!h]\label{fig:CombProof}
\includegraphics[scale=.4]{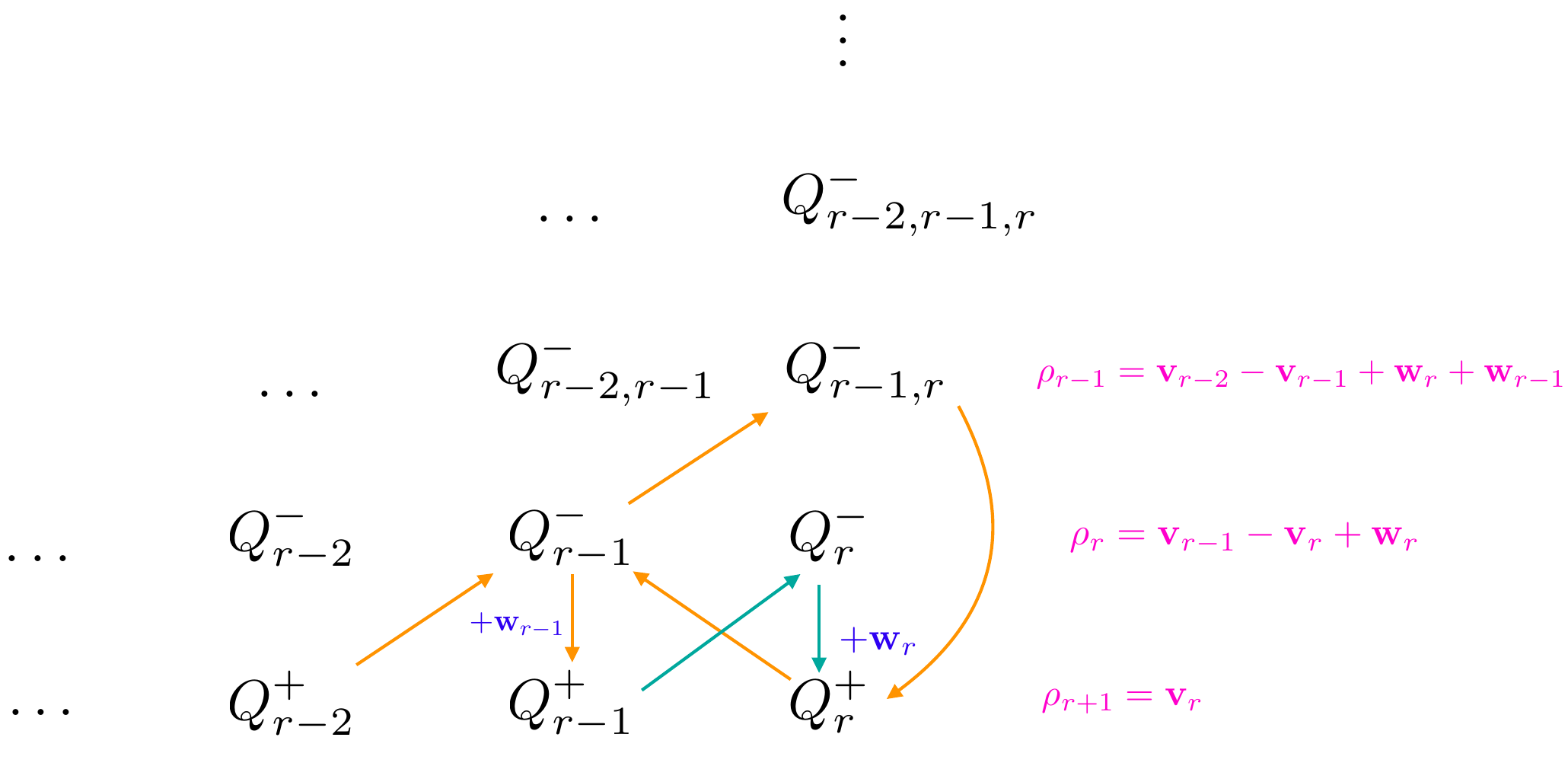}
\caption{Combinatorial proof of Lemma \ref{Th:DegreesQ}.}
\end{figure}

In this figure for each Q-polynomials there are incoming and outgoing arrows which respectively designate adding or subtracting degrees of the polynomials from/to which these arrows connect. For every down arrow we add the degree of the framing ${\bf w}_i$.
Note that there is a closed loop with orange arrows which suggests that the corresponding degrees will cancel. Indeed, from the corresponding $QQ$-relation 
$$
\xi_{r-1} Q^+_{r}(\hbar z) Q^-_{r-1,r}(z) - \xi_{r+1} Q^+_{r}(z) Q^-_{r-1,r}(\hbar z) =\Lambda_r (z) Q^-_{r-1}(\hbar z)\,.
$$
we get
$$
\rho_{r-1}=\text{deg}\, Q^-_{r-1,r} = \text{deg} \,Q^-_{r-1} +  {\bf w}_r - {\bf v}_r={\bf v}_{r-2}+ {\bf v}_{r} - {\bf v}_{r-1} + {\bf w}_{r-1} +  {\bf w}_r - {\bf v}_r
$$
$$
= {\bf v}_{r-2} - {\bf v}_{r-1}+{\bf w}_{r-1} +  {\bf w}_r\,.
$$

We can proceed by induction to derive the remaining degrees $\rho_i$ which satisfy \eqref{eq:weights0}.

\end{proof}

\subsection{Specific families of quivers and their $QQ$-systems}

In this section, we will describe families of quivers which will be the main examples in this paper. 

First, we give an example of what we call a complete flag quiver family. It has only one framing on the last vertex and it is labeled as shown in the picture:
\begin{figure}[!h]
\includegraphics[scale=0.36]{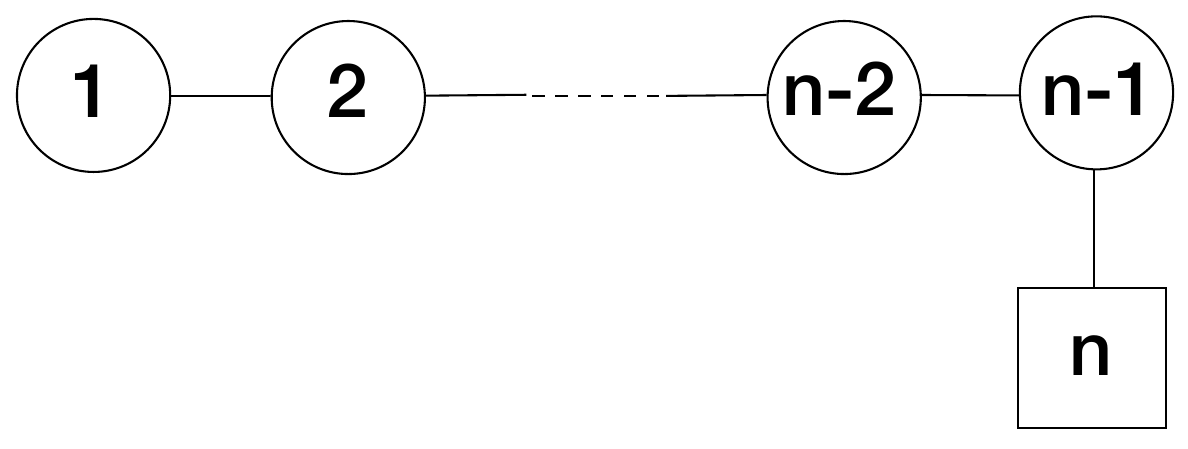}
\end{figure}

We will denote the regular singularities for the corresponding quiver, i.e. the roots of the $\Lambda_L$ polynomial as $a_1, \dots, a_L$.

In \cite{Koroteev:2017aa} (see also \cite{KSZ}) we proved that in this case the polynomials $s_i(z)$ are of degree 1.

Below we will give a more involved family of $A_r-quivers$, which we refer to as $X_{k,l}$, so that $r=l+k-1$ with the same property. 

\begin{figure}[!h]
\includegraphics[scale=0.47]{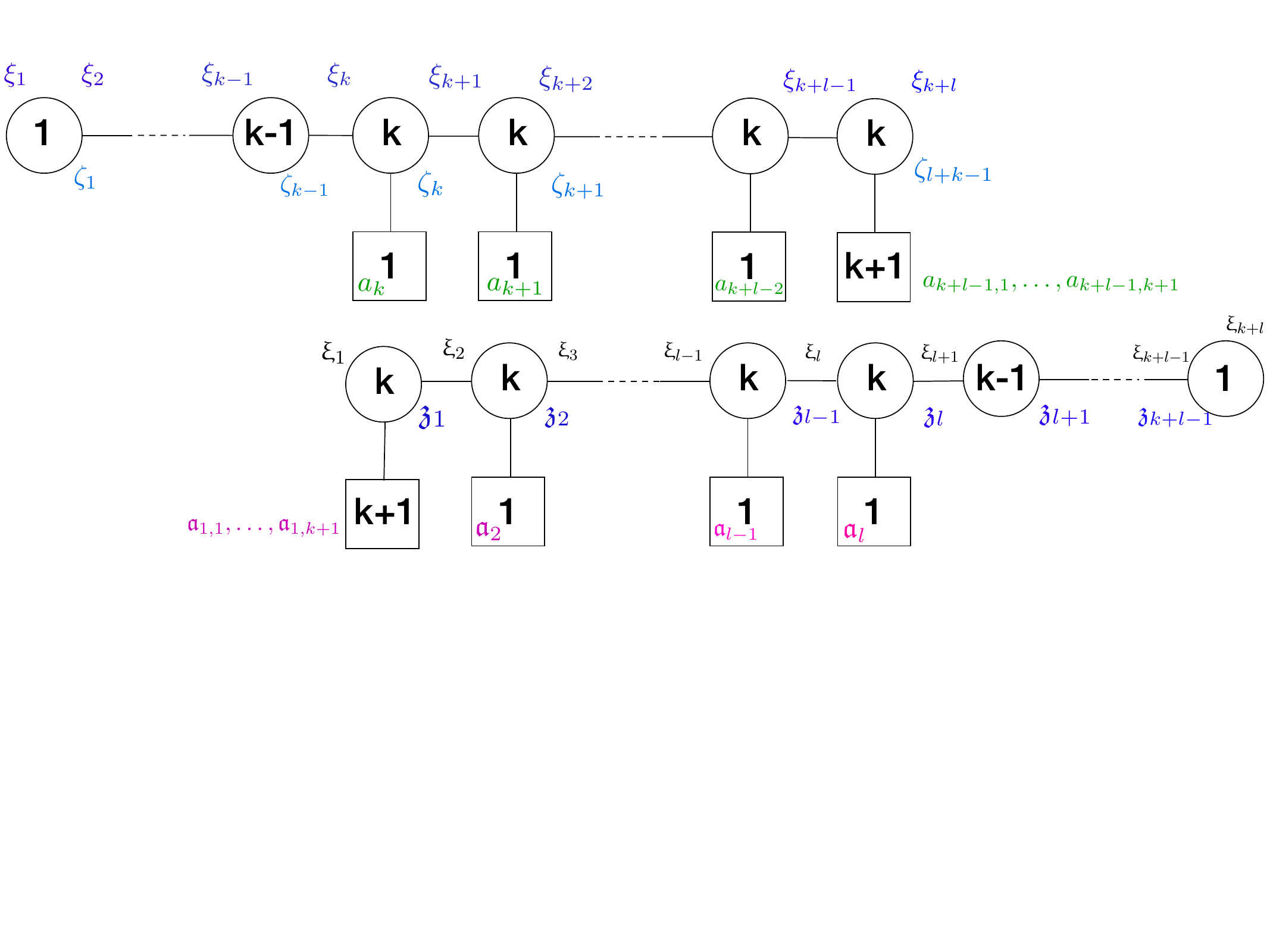}
\caption{Self-mirror $A_{k+l-1}$ quiver variety.}
\label{fig:QuiverABNew}
\end{figure}

The regular singularity parameters $a_1, \dots, a_{k+l}$ are labeled as it is shown in the picture. 

Notice, that in the $l\rightarrow \infty$ limit, we obtain the $A_{\infty}$ quiver (see Fig. \ref{fig:QuiverAinf}). From this standpoint, one can treat the quiver $X_{k,l}$ as the "regularized" version of it.


\begin{figure}[!h]
\includegraphics[scale=0.3]{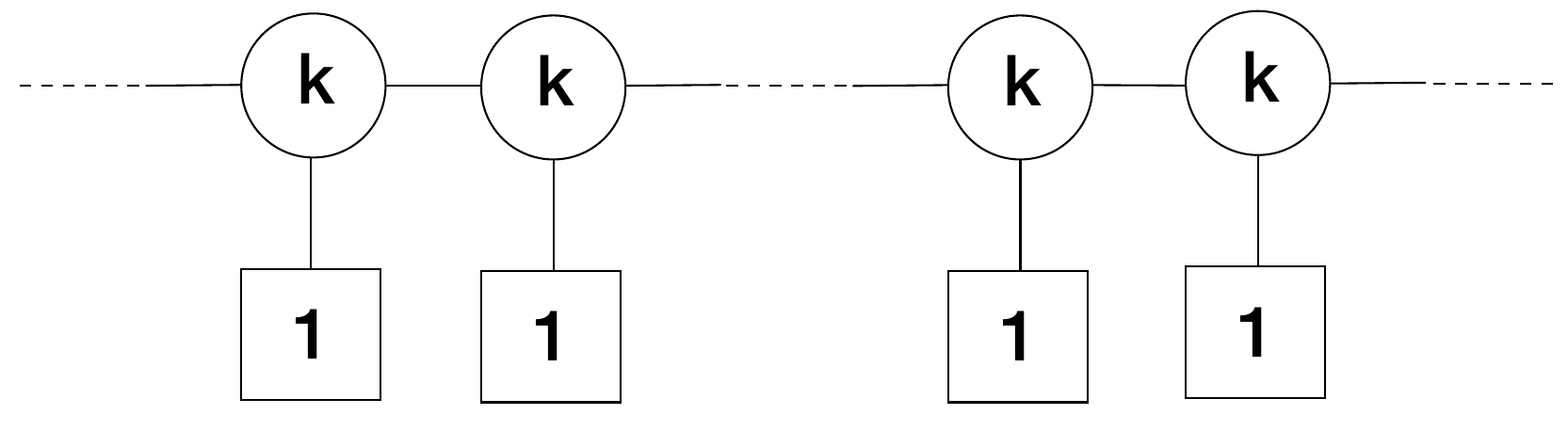}
\qquad\qquad  
\label{fig:QuiverAinf}
\end{figure}




\subsection{ $X_{k,l}$, $F\mathbb{F}l_L$  and the Ruijsennars-Schneider Hamiltonians}

Let us consider quiver $X_{k,l}$ (see bottom of \figref{fig:mirroranew}) of rank and study the corresponding extended $QQ$-system in detail. It is described by the following polynomials
\begin{align}
Q^+_1(z)&=\prod_{a=1}^k(z-s_{1,a}),\,\dots, \,Q^+_r(z)=\prod_{a=1}^k(z-s_{l,a}),\, \notag\\
Q^+_{l+1}(z)&=\prod_{a=1}^{k-1}(z-s_{l+1,a}),\,\dots,\, Q^+_{l+k-1}(z)=z-s_{l+k-1}\,,
\end{align}
and framing polynomials
\begin{equation}
\Lambda_1(z)=\prod_{b=1}^{k+1}(z-a_{1,b}),\, \qquad \Lambda_c(z)=z-a_c,\,\qquad c =1,\dots,l\,,
\end{equation}
All other polynomials are trivial.

The corresponding extended $QQ$-system  reads:
\begin{align}\label{eq:QQALLFull2}
\xi_{i} Q^+_i(\hbar z) Q^-_i(z) - \xi_{i+1} Q^+_i(z) Q^-_i(\hbar z) &=\Lambda_i (z) Q^+_{i-1}(\hbar z)Q^+_{i+1}(z)\,,\notag\\
\xi_{i} Q^+_{i+1}(\hbar z) Q^-_{i,i+1}(z) - \xi_{i+2} Q^+_{i+1}(z) Q^-_{i,i+1}(\hbar z) &= \Lambda_{i+1} (z) Q^-_{i}(\hbar z)Q^+_{i+2}(z)\,,\notag\\
\dots&\dots\\
\xi_i \,Q^+_{k+l-3+i}(\hbar z)Q^-_{i,\dots,k+l-3+i}(z)-&\xi_{k+l-2+i}\,  Q^+_{k+l-3+i}(z)Q^-_{i,\dots,k+r-3+i}(\hbar z)\,,\notag\\
&=\Lambda_{k+l-3+i}(z)Q^-_{i,\dots,k+l-2+i}(\hbar z)Q^+_{k+l-1+i}(z)\,,\notag\\
\xi_{1} Q^+_{k+l-1}(\hbar z)Q^-_{1,\dots,k+l-1}(z)-\xi_{k+l}  Q^+_{k+l-1}(z)Q^-_{1,\dots,k+l-1}(\hbar z)&= Q^-_{1,\dots,k+l-2}(\hbar z)\,.\notag
\end{align}

Let $s(z)$ be a section of line bundle $\mathcal{L}_{r+k}$ which in a vector with $r+k$ components as in \eqref{eq:MiuraDetForm}. As we have seen, the extended $QQ$-system contains the components of $s(z)$:
\begin{eqnarray}\label{eq:ComptsQQSection}
&&s_1(z)=Q^-_{1,2,\dots,k+l-1}(z)\,,\quad s_2(z)=  Q^-_{2,3,\dots, k+l-1}(z),\dots,\nonumber \\
&& s_{k+l-1}(z) = Q^-_{k+l-1}(z),\quad s_{k+l+1}(z)=Q^+_{k+l}(z).
\end{eqnarray}

The following Lemma characterizes the structure of the components $s_i(z)$ and their roots for $F\mathbb{F}l_L$.

\begin{Lem}
Given \eqref{eq:ComptsQQSection} we have
\begin{equation}
s_1(z)= z - p_1,\,\quad,\dots,\quad s_{k+r}(z)=z-p_{k+l}\,,
\end{equation}
where 
\begin{equation}\label{eq:piQQ0}
p_{k+l+1-p} = -\frac{Q^+_{p}(0)}{Q^+_{p-1}(0)}\,.
\end{equation}
\end{Lem}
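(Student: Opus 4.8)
The plan is to prove the two assertions separately: that every section component $s_i(z)$ is linear, and then to pin down its single root.

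First I would verify $\deg s_i=1$ for all $i$. This is immediate from Lemma~\ref{Th:DegreesQ}: the polynomials in \eqref{eq:ComptsQQSection} are precisely the components $s_i$ whose degrees $\rho_i$ are computed there, and for the full-flag data $F\mathbb{F}l_L$ with $L=k+l$ the quiver is balanced at every internal node, so each difference $\rho_i-\rho_{i+1}={\bf w}_i+{\bf v}_{i-1}+{\bf v}_{i+1}-2{\bf v}_i$ vanishes, while $\rho_{r+1}={\bf v}_r=1$. Hence $\rho_i=1$ for all $i$, and after normalizing each line generator to be monic we may write $s_i(z)=z-p_i$, so that recovering $p_i$ amounts to computing the constant term $s_i(0)$.

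Next I would extract these constant terms by evaluating the extended QQ-system \eqref{eq:QQALLFull2} at the fixed point $z=0$ of the dilation $z\mapsto\hbar z$. Since $\hbar\cdot 0=0$, every shifted argument collapses and each difference equation degenerates to an algebraic relation among the constant terms $Q^{\pm}_{i,\dots,j}(0)$; for example the top line becomes $(\xi_i-\xi_{i+1})\,Q^+_i(0)\,Q^-_i(0)=\Lambda_i(0)\,Q^+_{i-1}(0)\,Q^+_{i+1}(0)$, and the $m$-th line expresses $Q^-_{i,\dots,i+m-1}(0)$ through constant terms of strictly smaller depth. Combined with the identifications $s_{r+1}=Q^+_r$, $s_r=Q^-_r$ and $s_k=Q^-_{k,\dots,r}$ from \eqref{eq:comptssection} (equivalently, reading off the last column of $v(z)^{-1}$ in \eqref{eq:vinverse} at $z=0$), these give a triangular system, seeded by $s_{r+1}(0)=Q^+_r(0)$, that determines every $s_i(0)$ in turn.

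I would then solve this system recursively, in the combinatorial spirit of the proof of Lemma~\ref{Th:DegreesQ} and reading the dependencies off the analogous arrow diagram. Each descent step contributes one further factor $Q^+_p(0)/Q^+_{p-1}(0)$ to $s_i(0)$, while the corresponding leading coefficient of the same non-monic component --- supplied by the top-degree analysis already performed for the degree count --- carries the compensating twist factors. Forming $p_i=-s_i(0)/(\text{leading coefficient})$, the twist differences $(\xi_a-\xi_b)$ and the residual framing data cancel telescopically across the chain, leaving exactly $p_{k+l+1-p}=-Q^+_p(0)/Q^+_{p-1}(0)$.

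The main obstacle is precisely this telescoping bookkeeping. No individual relation is hard, but one must check, uniformly in $p$ and across the nested chain of $r$ equations, that every twist factor $\xi_a$ and every section-normalization constant pairs up and cancels, so that only the two constant terms $Q^+_p(0)$ and $Q^+_{p-1}(0)$ survive; organizing this cancellation, rather than any single algebraic computation, is where the care is needed.
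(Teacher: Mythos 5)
Your first step coincides with the paper's: linearity of all the $s_i$ is quoted there from Lemma~\ref{Th:DegreesQ}, exactly as you do. For the root formula, however, the paper never touches the QQ-system. It evaluates the quantum Wronskian identity \eqref{eq:MiuraDetForm} at $z=0$: since $z=0$ is the fixed point of $z\mapsto \hbar z$, all $q$-shifted columns of $M_p(z)$ collapse, so that $M_p(0)=-\mathrm{diag}(p_{k+l+1-p},\dots,p_{k+l+1})\cdot V_p$, hence $\det M_p(0)/\det V_p=(-1)^p\,p_{k+l+1-p}\cdots p_{k+l+1}$; comparing this with $\alpha_p W_p(0) Q^+_p(0)$ for consecutive values of $p$, with the normalization constants fixed as $\alpha_p=\det V_p/W_p(0)$, yields \eqref{eq:piQQ0} at once.

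Your alternative route through the extended QQ-system contains a genuine gap: the telescoping cancellation you assert at the end is precisely what fails, because you work with the $Q$-form of the system, whose right-hand sides carry the framing polynomials $\Lambda_m$. Carrying out your recursion at $z=0$ gives
\begin{equation*}
s_i(0)=Q^-_{i,\dots,r}(0)=\frac{Q^+_{i-1}(0)}{Q^+_i(0)}\,\prod_{m=i}^{r}\frac{\Lambda_m(0)}{\xi_i-\xi_{m+1}}\,,
\end{equation*}
while the leading coefficients of the $Q^-_{i,\dots,j}$, being determined by the top-degree terms of the same equations, are rational functions of the $\xi$'s and $\hbar$ only. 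Consequently the root $-s_i(0)/(\text{leading coefficient})$ retains the factor $\prod_m\Lambda_m(0)$ times a power of $\hbar$, and there is nothing for it to cancel against. Already for $SL(2)$ (monic $Q^+=z-s$, $\Lambda=(z-a_1)(z-a_2)$, $Q^\pm_2=1$) the root of $s_1=Q^-_1$ equals $\hbar a_1a_2/s$, whereas $-Q^+_2(0)/Q^+_1(0)=1/s$; so under the QQ-side conventions (monic $Q$'s, $Q^\pm_{r+1}=1$) the identity your telescoping is supposed to produce is literally false, and the discrepancy $\hbar\Lambda(0)$ is exactly the Wronskian normalization $W_p(0)$ that the paper's constants $\alpha_p$ absorb. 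The repair would be to run your argument on the $\mathscr{D}\mathscr{D}$-system \eqref{eq:QQAll1} instead, whose right-hand sides carry $(\xi_i-\xi_j)$ rather than $\Lambda$'s, so that at $z=0$ everything does cancel; but by Proposition~\ref{eq:qWronskiansShift2} the $\mathscr{D}$'s are nothing other than the Wronskian minors divided by Vandermonde determinants, so at that point you have reconstructed the paper's determinant argument rather than found an independent proof.
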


\begin{proof}
From Lemma \ref{Th:DegreesQ} we conclude that all components of $s_p(z)$ are of the first order. Now we need to prove \eqref{eq:piQQ0}. Let us denote the matrix inside the brackets on the left-hand side of the expression \eqref{eq:MiuraDetForm} by $M_p(z)$ and substitute $z=0$. The following immediately follows
\begin{equation}
M_p(0)=-\text{diag}(p_{k+l+1-p},\dots,p_{k+l+1})\cdot  V_p\,,
\end{equation}
since $M_p(0)_{i,j}=-\xi^{p-j}_{k+l+1-p+i}p_{k+l+1-p+i}$ and $(V_p)_{i,j}=\xi^{p-j}_{k+l+1-p+i}$. Therefore 
\begin{equation}
-\text{det}\left[M_p(0)\cdot V_p^{-1}\right]=(-1)^pp_{k+l+1-p}\cdots p_{k+l+1}  \,.
\end{equation}
Thus \eqref{eq:piQQ0} follows if we define $\alpha_p=\frac{\text{det}V_p}{W_{p}(0)}$.
\end{proof}

For the $X_{k,l}$ the components of the section have the following degrees. This Lemma will be used in later sections in proving 3d mirror symmetry for $X_{k,l}$.

\begin{Cor}\label{Th:LemmaDegrees}
Given \eqref{eq:ComptsQQSection} quiver $X_{k,l}$ has the following degrees
\begin{equation}\label{eq:DegreesSectionXkl}
\text{deg}\, s_{l+1}= \dots = deg\, s_{l+k}=1,\qquad deg\, s_{l-i+1}(z)=i, \qquad i = 1,\dots,l\,.
\end{equation}

\end{Cor}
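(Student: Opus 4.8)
The plan is to derive the Corollary directly from Lemma \ref{Th:DegreesQ}: the closed form established there,
$$
\rho_i = {\bf v}_{i-1}-{\bf v}_{i} + \sum_{a=i}^{r} {\bf w}_{a}, \qquad i=1,\dots,r+1,
$$
with the boundary convention ${\bf v}_0={\bf v}_{r+1}=0$ and with $\rho_i=\deg s_i$, already contains all the information we need; the statement is then a matter of feeding in the explicit data of $X_{k,l}$ and simplifying.

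First I would read off the dimension and framing vectors from the defining polynomials. The degrees of the $Q^+_i$ give ${\bf v}_1=\cdots={\bf v}_l=k$ and ${\bf v}_{l+j}=k-j$ for $j=1,\dots,k-1$, with $r=k+l-1$ and ${\bf v}_r=1$, while the $\Lambda_i$ give ${\bf w}_1=k+1$, ${\bf w}_2=\cdots={\bf w}_l=1$, and ${\bf w}_a=0$ for $a>l$. As a consistency check, the total framing weight is $\sum_{a=1}^r {\bf w}_a=k+l$, matching the $k+l$ singularity parameters $a_1,\dots,a_{k+l}$. The tail sums are then
$$
\sum_{a=i}^r {\bf w}_a=\begin{cases} k+l, & i=1,\\ l-i+1, & 2\le i\le l,\\ 0, & i>l.\end{cases}
$$

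Next I would substitute into the closed form, splitting into the two natural blocks of the quiver. On the constant block $1\le i\le l$ one has ${\bf v}_{i-1}-{\bf v}_i=0$ for $2\le i\le l$ and $=-k$ at $i=1$, so that $\rho_i=l-i+1$ throughout, i.e. $\rho_{l-i+1}=i$ for $i=1,\dots,l$; this is the descending family $\deg s_{l-i+1}=i$ in \eqref{eq:DegreesSectionXkl}. On the descending block $l<i\le r$ the framing tail vanishes while ${\bf v}_{i-1}-{\bf v}_i=1$, giving $\rho_i=1$; together with the boundary value $\rho_{r+1}={\bf v}_r=1$ this yields $\rho_{l+1}=\cdots=\rho_{l+k}=1$, the constant family in \eqref{eq:DegreesSectionXkl}. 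Comparing with the labeling of the section components in \eqref{eq:ComptsQQSection} then completes the argument.

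Since every step is forced by the closed form, there is no genuine obstacle here; the only point demanding care is the index bookkeeping, namely aligning the $\rho_i$ indexing inherited from Lemma \ref{Th:DegreesQ} with the relabeling of the $s_i$ in \eqref{eq:ComptsQQSection} and correctly handling the two boundary values $\rho_0=0$ and $\rho_{r+1}={\bf v}_r$. As a redundant cross-check I would also confirm the recursion \eqref{eq:weights0}: the unit framings on the first $l$ vertices force $\rho_i-\rho_{i+1}=1$ there, so that $\rho$ descends from $l$ down to $1$, while on the remaining vertices the vanishing framing together with the constant unit slope of ${\bf v}$ makes all the successive differences vanish, keeping $\rho$ equal to $1$.
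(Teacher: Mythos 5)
Your proof is correct and takes essentially the same route as the paper: the paper's own (much terser) proof of this Corollary also just invokes Lemma \ref{Th:DegreesQ}, observing that the $F\mathbb{F}l_k$-type tail forces the last $k$ components to have degree one while the degrees on the remaining vertices grow linearly, and your explicit substitution of the $X_{k,l}$ data into the closed formula $\rho_i={\bf v}_{i-1}-{\bf v}_i+\sum_{a\geq i}{\bf w}_a$ simply fills in those details. One cosmetic slip in your redundant cross-check: at the seam $i=l$ the recursion \eqref{eq:weights0} gives $\rho_l-\rho_{l+1}={\bf w}_l+{\bf v}_{l-1}+{\bf v}_{l+1}-2{\bf v}_l=1+k+(k-1)-2k=0$, not $1$ (which is exactly why $\rho_l=\rho_{l+1}=1$), so the unit-step descent only holds for $i=1,\dots,l-1$; this does not affect your main argument.
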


\begin{proof}
The statement directly follows from Lemma \ref{Th:DegreesQ}.

One can see from \figref{fig:QuiverABNew} that quiver $X_{k,l}$ contains a `tail' of $F\mathbb{F}l_k$. Using the previous Lemma one can verify that the last $k$ components of the section $s(z)$ of the oper bundle \eqref{eq:DegreesSectionXkl} are degree one polynomials. Moreoever, since the the rest of the quiver has different data the similarity with the complete flag ends and degrees of other components of the section grow linearly according to \eqref{eq:QQALLFull2}.
\end{proof}

Let us remind the reader that we gave a realization of the algebra ${\rm Fun}(\hbar{\rm{Op}}(Y_{{\bf v}, {\bf w}})$ as ${\rm Wr}(Y_{{\bf v}, ~{\bf w}})$. It turns out that in the case of $Y_{{\bf v}, {\bf w}}={F\mathbb{F}l}_L, X_{k,l}$ there is a more explicit version of that algebra in terms of the variables $\{p_i^k\}$.\\

\begin{Thm}\label{Th:qOpMag}
i) There is an isomorphism of algebras:
\begin{eqnarray}
{\rm Fun}(\hbar{\rm Op})({F\mathbb{F}l}_L))\cong \frac{\mathbb{C}(\{\xi_i\}, \{a_i\}, \{p_i\}, \hbar)}{\{H_i(\{p_j\}, \{\xi_j\}, \hbar)=e_i(a_1,\dots, a_{L}\})_{i=1,\dots, L}},
\end{eqnarray}
where 
\begin{equation}\label{eq:tRSRelationsEl}
H_k=\sum_{\substack{\mathcal{I}\subset\{1,\dots,L\} \\ |\mathcal{I}|=k}}\prod_{\substack{i\in\mathcal{I} \\ j\notin\mathcal{I}}}\frac{\xi_i - \hbar\,\xi_j }{\xi_i-\xi_j}\prod\limits_{m\in\mathcal{I}}p_m \,,
\end{equation}
and $\{e_i\}$ are the elementary symmetric functions of variables $\{a_i\}_{i=1,\dots, L}$.\\

\vskip.1in
ii) There is an isomorphism of algebras:
\begin{eqnarray}
&&{\rm Fun}(\hbar{\rm Op})(X_{k,l})\cong\\\nonumber
&&\frac{S(\{\xi_i\}, \{a_i\}, \hbar)(\{p_i\})}{
\left((B_i(\{p_j\}, \{\xi_j\}, \hbar)=\ell_i(a_1,\dots, a_{k+l}))_{i=1,\dots, k+l}, ~ {\rm low.~ order~ Wronsk. ~relations}\right)}\,, 
\end{eqnarray}
where $B_i$ are coefficients of the characteristic polynomial of the left-hand side of \eqref{eq:MiuraDetForm}
and $\ell_i$ are elementary symmetric polynomials of the following $k + l(l-1)/2$ variables:
\begin{align}\label{eq:rootsWXkl}
&a_{1,1},\dots, a_{1,k} \cr
&a_2, \cr
&a_3, \hbar a_3, \cr
&a_4, \hbar a_4, \hbar^2 a_4 \cr
&\dots\dots\\
&a_{l-1},\hbar a_{l-1}, \hbar^2 a_{l-1},\dots, \hbar^{l-2}a_{l-1}\,.\notag
\end{align}
For insatnce
\begin{eqnarray}
\label{eq:tRSXkrMagnetic}
&&\ell_{k+l}(a_1,\dots, a_{k+l})= \\\nonumber
&&\sum_{a=1}^{k+1}a_{1,a}+ a_2+(1+\hbar)a_3 + (1+\hbar+\hbar^2)a_4+\dots+\sum_{l=0}^{l-1}\hbar^{l-1} a_l\,.
\end{eqnarray}
\end{Thm}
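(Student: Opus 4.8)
The plan is to deduce both isomorphisms from the oper magnetic frame of Theorem~\ref{Wronskian}, which already identifies $\mathrm{Fun}(\hbar\mathrm{Op})(Y_{\mathbf{v},\mathbf{w}})$ with the Wronskian algebra $\mathrm{Wr}(Y_{\mathbf{v},\mathbf{w}})$, i.e. the algebra of symmetric functions of the roots $\{p_i\}$ of the section components $s_i(z)$ subject to the Wronskian relations \eqref{eq:MiuraQOperCond}. Since the right-hand sides of (i) and (ii) are also quotients of the polynomial algebra $S(\{\xi_i\},\{a_i\},\hbar)(\{p_i\})$ by the same generators, the entire content of the theorem is that the Wronskian relations coincide, as an ideal, with the stated Hamiltonian relations. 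I would therefore spend the proof rewriting \eqref{eq:MiuraQOperCond} in explicit form, separately for the two quivers.

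For part (i) I would first invoke the result of \cite{Koroteev:2017aa,KSZ} that for the complete flag $F\mathbb{F}l_L$ every section component is linear, so $s_i(z)=z-p_i$ for $i=1,\dots,L$. Substituting this into the top Wronskian relation \eqref{eq:MiuraDetForm} at level $r+1=L$, where $\mathcal{V}_L=1$, produces the polynomial identity in $z$
\[
  \det_{1\le i,j\le L}\!\left[\xi_i^{\,L-j}\bigl(\hbar^{\,j-1}z-p_i\bigr)\right]=\alpha_L\,W_L(z).
\]
The left-hand matrix factors as $zV\Delta-D_pV$, with $V_{ij}=\xi_i^{\,L-j}$ the Vandermonde matrix, $\Delta=\diag(1,\hbar,\dots,\hbar^{L-1})$ acting on columns, and $D_p=\diag(p_1,\dots,p_L)$; hence the determinant equals $\det V\cdot\det\bigl(z\,V\Delta V^{-1}-D_p\bigr)$, exhibiting it as a genuine characteristic polynomial. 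The computational heart is a Cauchy-type evaluation showing that $(V\Delta V^{-1})^{-1}D_p$ is conjugate to the tRS Lax matrix $T$, whose characteristic polynomial has as $z^{L-k}$-coefficients exactly the Hamiltonians $H_k$ of \eqref{eq:tRSRelationsEl}. On the other side, $\alpha_LW_L(z)$ reduces for the complete flag essentially to $\prod_{i=1}^L(z-a_i)$ up to an overall constant, so its coefficients are the elementary symmetric functions $e_k(a_1,\dots,a_L)$; matching coefficients of $z^{L-k}$ gives $H_k=e_k(a)$. Because all section components are linear, the intermediate relations \eqref{eq:MiuraDetForm} only re-express the Bethe roots through the momenta and impose no further constraint, so the top relations suffice and (i) follows.

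For part (ii) I would run the same argument for $X_{k,l}$, the new feature being that, by Corollary~\ref{Th:LemmaDegrees}, the section components are no longer all linear: $\deg s_{l+1}=\dots=\deg s_{l+k}=1$ while $\deg s_{l-i+1}=i$ for $i=1,\dots,l$. Taking the top relation \eqref{eq:MiuraDetForm} at level $r+1=k+l$, where again $\mathcal{V}_{k+l}=1$, and normalizing the left-hand determinant by the Vandermonde factor, I read off its $z$-coefficients as the generalized Hamiltonians $B_i$. The right-hand side is $\alpha_{k+l}W_{k+l}(z)$, which I would evaluate directly from \eqref{eq:WPDefs}: since the only nontrivial framings are $\Lambda_1$ (degree $k+1$) and $\Lambda_2,\dots,\Lambda_l$ (degree one), the defining product telescopes into a polynomial whose root multiset is precisely the $\hbar$-shifted collection \eqref{eq:rootsWXkl}, so its coefficients are the $\ell_i$, yielding $B_i=\ell_i(a)$. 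The extra constraints labelled ``lower order Wronskian relations'' I would obtain from the relations \eqref{eq:MiuraDetForm} at levels below $k+l$, where now $\mathcal{V}_k\neq1$ records exactly the higher-degree components $s_{l-i+1}$; these must be adjoined because, unlike in the complete-flag case, they are no longer consequences of the top relation.

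The hardest part in both cases is the explicit determinant evaluation: proving that the normalized top Wronskian determinant has, as a polynomial in $z$, coefficients equal to the tRS Hamiltonians $H_k$ in (i) and to their degenerate analogues $B_i$ in (ii). This is a Vandermonde/Cauchy factorization whose bookkeeping is delicate, and in part (ii) the non-uniform degrees of the $s_i(z)$ obstruct a clean characteristic-polynomial interpretation, forcing one to track the auxiliary polynomials $\mathcal{V}_k$ through the lower-order relations and to verify the exact $\hbar$-shift structure of the root list \eqref{eq:rootsWXkl}. The concluding step requiring care is to confirm that the full collection of top and lower Wronskian relations generates the same ideal as the Bethe equations \eqref{eq:bethe}, so that no relation is lost or spuriously introduced in passing to the Hamiltonian presentation.
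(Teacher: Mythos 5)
Your proposal is correct and takes essentially the same route as the paper's (much terser) proof: part (i) is precisely the observation that the standard tRS relations follow from the oper condition \eqref{eq:MiuraDetForm} once the linear sections $s_i(z)=z-p_i$ are substituted and the Vandermonde factor is stripped off, and part (ii) is the same coefficient-matching of the normalized determinant against the explicitly computed polynomial $W_{k+l}$, whose root multiset is the $\hbar$-shifted list \eqref{eq:rootsWXkl}, supplemented by the lower-order Wronskian relations. You merely spell out details the paper leaves implicit (the characteristic-polynomial factorization and the role of the auxiliary polynomials $\mathcal{V}_k$), so there is no substantive difference in method.
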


\begin{proof}
Relations in i) are the standard tRS equations that follow from the oper condition \eqref{eq:MiuraDetForm}.

The proof of ii) follows from calculating the coefficients in front of powers of $z$ in the right hand side of \eqref{eq:MiuraDetForm}
applied to ${\rm Fun}(\hbar{\rm Op})(X_{k,l})$. The polynomial $W_{k+l}$ has the following form
\begin{equation}
W_{k+l}=\prod_{i=1}^k (z-a_{1,i}) \cdot (z-a_2)\cdot (z-a_3)(z-\hbar a_3)\cdot (z-a_4)(z-\hbar a_4)(z-\hbar^2 a_4)\cdots \prod_{j=0}^{l-1}(z-\hbar^j a_l)
\end{equation} 

\end{proof}

In the next section, we will explain in full detail the full meaning of the rational functions $H_k$. 
Here let us make an important remark concerning the geometric interpretation of the Theorem above.\\

Consider a symplectic space with coordinates 
$\{\chi_i\}, \{p_i\}$ with the symplectic form 
$\omega=\sum_i\frac{d\chi_i}{{ \chi}_i}\wedge \frac{dp_i}{p_i}$.
One can consider two Lagrangian subspaces. One is the natural one, which sends the coordinate variables to a fixed value $\{\chi_i=\xi_i\}$. The other one is defined by setting the functions $H_k$ to a fixed value, which in the theorem above is given by $\ell_i(\{a_i\}, \hbar)$. The Lagrangian property is justified by the fact that with respect to the Poisson bracket based on $\omega$, thus giving rise to an integrable system known as {\it trigonometric Ruijsenaars-Schneider (tRS) integrable system}.

\section{tRS system and  $QQ$-system in electric frame}\label{Sec:tRSElectric}
Let us describe the algebraic construction (see \cite{Oblomkov2004}) which is very important for this paper, which gives an algebraic formulation of the tRS system and the Lagrangian subvariety we introduced in the previous section.
\begin{Def}
Let V be an $N$-dimensional vector space over $\mathbb{C}$. Let $\mathcal{M}'$ be the subset of $GL(V)\times GL(V)\times V\times V^\ast$ consisting of elements $(M,T, u,v)$ such that
\begin{equation}\label{eq:FlatConNew}
\hbar M T -  T M = u\otimes v^T\,.
\end{equation}
The group $GL(N;\mathbb{C})=GL(V)$ acts on $\mathcal{M}'$ by conjugation
\begin{equation}\label{eq:SimilarityTRansf}
(X,Y, u,v)\mapsto (gMg^{-1},gTg^{-1}, gu,vg^{-1})\,,\qquad g\in GL(V)\,.
\end{equation}
The quotient of $\mathcal{M}'$ by the action of $GL(V)$ is called Calogero-Moser space $\mathcal{M}$.
\end{Def}

Note that when $\hbar$ is not a root of unity the $GL(V)$ action above is free so $\mathcal{M}'$ and $\mathcal{M}$ are nonsingular.

The following Lemma is true.
\begin{Lem}
Let $M$ and $T$ satisfy \eqref{eq:FlatConNew}. In the basis where $M$ is a diagonal matrix with eigenvalues $\chi_1,\dots, \chi_N$ the components of matrix $T$ are given by the following expression:
\begin{equation} \label{eq:lax1}
T_{ij} = \frac{u_i v_j}{\hbar \chi_i - \chi_j}\,.
\end{equation}
\end{Lem}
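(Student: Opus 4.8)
The plan is to work directly in the eigenbasis of $M$ and read off the defining relation entry by entry; the statement is essentially immediate once $M$ is diagonalized. First I would fix the basis in which $M=\diag(\chi_1,\dots,\chi_N)$, so that $M_{ab}=\chi_a\delta_{ab}$. Taking the $(i,j)$ matrix element of the defining relation \eqref{eq:FlatConNew}, the left-hand side collapses because $M$ is diagonal: one has $(\hbar M T)_{ij}=\hbar\sum_k M_{ik}T_{kj}=\hbar\chi_i T_{ij}$ and $(TM)_{ij}=\sum_k T_{ik}M_{kj}=\chi_j T_{ij}$, whereas the right-hand side is simply $(u\otimes v^T)_{ij}=u_i v_j$. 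Combining these gives the scalar equation $(\hbar\chi_i-\chi_j)\,T_{ij}=u_i v_j$ for every pair $(i,j)$.

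Solving for $T_{ij}$ then yields \eqref{eq:lax1}, provided the scalar factor $\hbar\chi_i-\chi_j$ is invertible. For the diagonal entries this factor equals $(\hbar-1)\chi_i$, which is nonzero because $\hbar$ is not a root of unity (so $\hbar\neq 1$) and $\chi_i\neq 0$, the latter since $M\in GL(V)$ is invertible and hence has nonvanishing eigenvalues. For the off-diagonal entries I would invoke the genericity inherent in the Calogero-Moser space: on the locus where $\hbar\chi_i\neq\chi_j$ for all $i\neq j$ the expression is well defined, and this is precisely the open set on which $T$ is reconstructed uniquely from the data $(M,u,v)$ via the above scalar equations.

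There is essentially no obstacle here, since the lemma is a one-line consequence of the diagonality of $M$ rather than a result requiring genuine technique. The only point deserving a remark is the non-vanishing of the denominators, which I would dispose of as above; note in particular that in the exceptional case $\hbar\chi_i=\chi_j$ the same scalar equation $(\hbar\chi_i-\chi_j)T_{ij}=u_i v_j$ forces the consistency condition $u_i v_j=0$, so that \eqref{eq:lax1} is the generic solution and the Lax form of $T$ is recovered on the whole of the relevant stratum of $\mathcal{M}$.
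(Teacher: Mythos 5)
Your proof is correct and is exactly the intended argument: the paper states this Lemma without proof (it is a standard fact about the Calogero--Moser space, cf.\ Oblomkov), and the entry-by-entry computation $(\hbar\chi_i-\chi_j)T_{ij}=u_iv_j$ in the eigenbasis of $M$ is the only step needed. Your additional care with the denominators --- $(\hbar-1)\chi_i\neq 0$ on the diagonal since $\hbar$ is not a root of unity and $M\in GL(V)$, and genericity $\hbar\chi_i\neq\chi_j$ off the diagonal --- is consistent with the nondegeneracy assumptions the paper uses elsewhere (e.g.\ $a_i\neq\hbar^{\mathbb{Z}}a_j$) and goes slightly beyond what the paper records.
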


One can define the {\it tRS momenta} $p_i,\,i=1,\dots,N$  as follows:
\begin{equation}\label{eq:lax2}
p_i = - u_i v_i \frac{\prod\limits_{k \neq i} (\chi_i - \chi_k)}{\prod\limits_{k}\left(\chi_i -\chi_k \hbar\right)}\,.
\end{equation}

Using the above formula we can represent the components of matrix $T$ \eqref{eq:lax1} by properly scaling vectors $u$ and $v$:
\begin{equation}\label{eq:LaxFullFormula}
T_{ji}(\{p_i\}, \{\chi_i\})= \frac{\chi_j(1 - \hbar)}{\chi_j - \chi_i \hbar}\prod\limits_{k \neq j} \frac{\chi_j-\chi_k \hbar}{\chi_j - \chi_k}\, p_j 
=\frac{\prod\limits_{k \neq i}(\chi_j-\chi_k \hbar)}{\prod\limits_{k \neq j}(\chi_j - \chi_k)}\, p_j \,.
\end{equation}
Matrix $T$ \eqref{eq:LaxFullFormula} is known as the Lax matrix of the tRS model \cite{MR1329481} (see also \cite{Gorsky:1993dq,Fock:1999ae} for more details, where a slightly different gauge is used).

The coefficients of the characteristic polynomial of the Lax matrix are the tRS Hamiltonians
\begin{equation}
 \text{det}\left(u\cdot 1 -  T(\chi_i, p_i,\hbar) \right) = \sum_{k=0}^L (-1)^lH_k(\chi_i, p_i,\hbar) u^{n-k}\,,
\label{eq:tRSLaxDecomp}
\end{equation}

The corresponding {\it tRS energy relations} (or integrals of motion) can be obtained by equating the above characteristic polynomials to $\prod_i (u-\xi_i)$ 
\begin{equation}
\sum_{\substack{\mathcal{I}\subset\{1,\dots,L\} \\ |\mathcal{I}|=k}}\prod_{\substack{i\in\mathcal{I} \\ j\notin\mathcal{I}}}\frac{\hbar\,\chi_i - \chi_j }{\chi_i-\chi_j}\prod\limits_{m\in\mathcal{I}}p_m = e_k (\xi_i)\,,
\end{equation}
where $e_l$ is the $l$-th elementary symmetric function.

\begin{Rem}
The tRS model can be regarded as relativistic deformation of the Calogero-Moser model with a finite speed of light. 
\end{Rem}

\subsection{Partial flag quiver $X^{\boldsymbol{\lambda}}$ in the Electric Frame.}

In this section, we consider a generalization of complete flag quiver, namely partial flag quiver  $X^{\boldsymbol{\lambda}}$ defined by vector $\boldsymbol{\lambda}=\{\textbf{v}_{1}, \textbf{v}_{2}-\textbf{v}_{1},\dots,\textbf{v}_{r}-\textbf{v}_{r-1}, L-\textbf{v}_{r}\}$:

\begin{figure}[!h]
\includegraphics[scale=0.4]{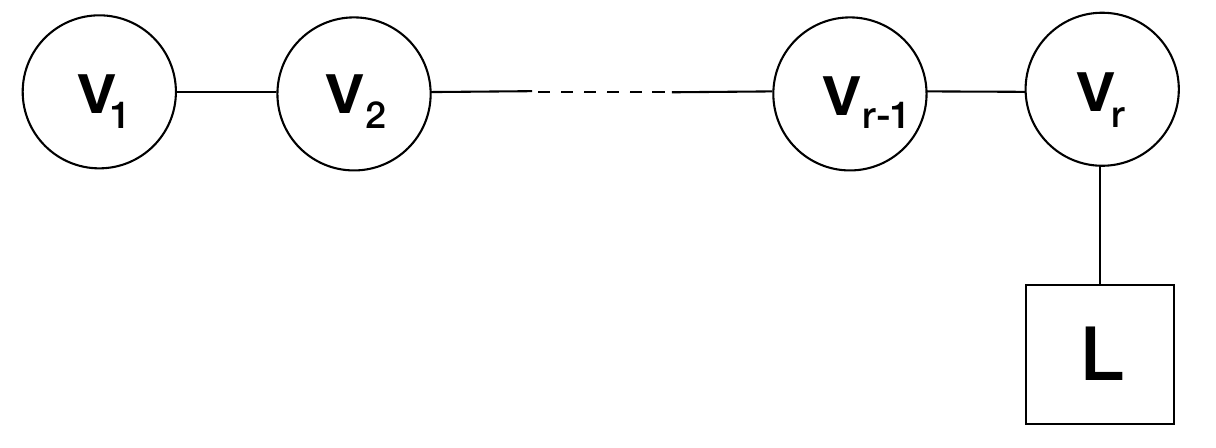}
\caption{Partial flag quiver $X^{\boldsymbol{\lambda}}$.}
\label{fig:PartialFlag}
\end{figure}

Below we will show that the algebra ${\rm Fun}({\hbar\rm Op})(X^{\boldsymbol{\lambda}})$ can be again described as an algebra of functions on the intersection of Lagrangian subvarieties tRS Hamiltonians. However, the choice of these subvarieties is different. Namely, we set the first as $\chi_i=a_i$, where $\{a_i\}$ define regular singularities for the Miura $(SL(r+1),\hbar)$-oper. The second one is again defined via a commuting family of tRS Hamiltonians. We will show that below directly by looking at the spectrum of the tRS Lax matrix \eqref{eq:LaxFullFormula}. We will refer to such description of ${\rm Fun}({\hbar\rm Op})(X^{\boldsymbol{\lambda}})$ as {\it electric frame}.

First, let us give an explicit answer, to how the electric frame tRS coordinates and momenta variables are related to the parameters of regular singularities and Bethe roots:  
\begin{eqnarray}\label{eq:deftRSel}
&&\chi_i=a_i\,,\qquad i=1,\dots, L\,, \nonumber \\
&&p_i = \xi_{r+1}\hbar^{-\textbf{v}_r} \prod_{j=1}^{\textbf{v}_r} \frac{\hbar a_i - s_{r,j}}{s_{r,j} - a_i}=(-\hbar)^{-\textbf{v}_r}\xi_{r+1}\frac{Q_r^+(\hbar a_i)}{Q_r^+(a_i)}\,,\qquad i=1,\dots, L\,,
\end{eqnarray}

\begin{Rem}
One may notice that they emerge from the  XXZ Yang-Yang function as follows: $p_i= \exp\partial_{\log a_i} Y$.
\end{Rem}

Let us have a look at the simplest nontrivial example to see how the construction works.

\vskip.1in

\noindent {\bf Example}: {$X^{\boldsymbol{\lambda}}$, $L=2$ and $\textbf{v}_r=1$}.\\
In this case the Lax matrix \eqref{eq:LaxFullFormula} reads
\begin{equation}
T = \left(
\begin{array}{cc}
 \displaystyle\frac{a_1-a_2 \hbar }{a_1-a_2} p_1&
    \displaystyle\frac{a_1(\hbar -1)}{a_1-a_2}p_1 \\
  \displaystyle\frac{a_2  (\hbar -1)}{a_2-a_1} p_2&
    \displaystyle\frac{a_2-a_1 \hbar}{a_2-a_1}p_2
   \\
\end{array}
\right)
\end{equation}
We can immediately find its eigenvectors and eigenvalues, however, we are looking for a convenient parameterization from which the relationship with Bethe equations will become transparent. Let
\begin{equation}
p_i = -\frac{\xi_2}{\hbar}\frac{\hbar a_i-s }{a_i-s}\,,\qquad i =1,2
\end{equation}
according to \eqref{eq:deftRSel}, then $T$ has the following eigenvalues
$$
\xi_2\,,\qquad \frac{\xi _2}{\hbar} \frac{\left(s -a_1 \hbar\right) \left(s -a_2 \hbar \right)}{\left(s-a_1\right) \left(s -a_2\right)}\,.
$$
If the denote the second eigenvalue $\xi_1/\hbar$ we get the following Bethe equation for $T^*\mathbb{P}^1$
\begin{equation}
\frac{\xi _2}{\xi_1} \frac{\left(s -a_1 \hbar\right) \left(s -a_2 \hbar \right)}{\left(s-a_1\right) \left(s -a_2\right)}=1\,.
\end{equation}
Thus the above equation is equivalent to the fact that $\xi_1$ and $\xi_2$ are the eigenvalues of $T$, or to the two underlying tRS equations
\begin{equation}
 \frac{a_1-a_2 \hbar}{a_1-a_2} p_1 +  \frac{a_2-a_1 \hbar}{a_2-a_1}p_2 = \xi_1+\xi_2\,,\qquad p_1 p_2 = \xi_1 \xi_2\,.
\end{equation}

\vskip.1in

Now let us consider a more general case of $X^{\boldsymbol{\lambda}}$ and formulate the main theorem.  It is equivalent to the main statement of \cite{Zabrodin:}. We shall give a different proof momentarily, which is relevant for the following. 

\begin{Thm}\label{Th:tRSXXZEl}
There is an isomorphism of algebras:
\begin{eqnarray}\label{eq:FunAlgFl}
{\rm Fun}({\hbar\rm Op})(X^{\boldsymbol{\lambda}})\cong \frac{\mathbb{C}(\{\xi_i\}, \{a_i\}, \hbar)(\{p_i\})}{\left(\det\left(u-T(\{p_i\}, \{a_i\}, \hbar)\right)-f(u,\{\xi_i\},\hbar)\right)},
\end{eqnarray}
where 
\begin{equation}\label{eq:Frhs}
f(u,\{\xi_i\},\hbar)=\prod_{i=1}^r\prod_{l=0}^{\textbf{v}_{i+1}-\textbf{v}_{i}}\left(u-\xi_i \hbar^{\textbf{v}_{i+1}-\textbf{v}_{i}-1-l}\right)
\end{equation}
so that $u$ is a formal variable and $\textbf{v}_{r+1}=L$.
\end{Thm}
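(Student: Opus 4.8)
The plan is to realize the stated isomorphism through the explicit electric-frame substitution \eqref{eq:deftRSel} and to show that under it the Bethe ideal cutting out ${\rm Fun}({\hbar\rm Op})(X^{\boldsymbol{\lambda}})$ is carried precisely onto the ideal generated by the coefficients of $\det(u-T)-f(u)$ in the powers of $u$. Concretely I would set $\chi_i=a_i$ and define the momenta by the last-node Baxter ratio $p_i=(-\hbar)^{-\textbf{v}_r}\xi_{r+1}Q^+_r(\hbar a_i)/Q^+_r(a_i)$, and then argue that the whole content of the theorem is the quantum/classical duality statement: the spectrum of the tRS Lax matrix $T$ of \eqref{eq:LaxFullFormula} is forced to be the multiset of roots of $f$ in \eqref{eq:Frhs} exactly when the nested Bethe equations \eqref{eq:bethe} hold. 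The complete-flag instance is the bispectral dual of Theorem \ref{Th:qOpMag}(i), so the substance here is to descend from it to the partial flag.

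The heart of the argument is a direct spectral analysis of $T$, generalizing the rank-one computation preceding the theorem. First I would isolate a distinguished eigenspace on which $T$ acts by the boundary twist $\xi_{r+1}$ with multiplicity governed by the last flag part $L-\textbf{v}_r$ (the $\xi_2$-eigenvalue in the $L=2$ example), produced by eigenvectors built from the evaluations of the oper section $s_i(a_j)$ and hence of the $Q$-polynomials. On the complementary space the eigenvalues are controlled by the ratio $Q^+_r(\hbar a_i)/Q^+_r(a_i)$, so that demanding an eigenvalue of the form $\xi_r\hbar^{m}$ reproduces exactly the $r$-th node equation of \eqref{eq:bethe}. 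Iterating this---peeling off one node at a time and feeding the resulting cluster of $\hbar$-spaced eigenvalues into the next level---converts the higher-rank (nested) Bethe system into the factorized spectrum recorded by $f(u)$, with the flag jumps $\textbf{v}_{i+1}-\textbf{v}_i$ fixing the length of each geometric cluster $\{\xi_i\hbar^{\textbf{v}_{i+1}-\textbf{v}_i-1-l}\}$.

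To finish I would verify that the substitution is a bijection of the underlying algebras of symmetric functions: injectivity follows because $p_i$ is recovered from $Q^+_r$ by \eqref{eq:deftRSel} and the nondegeneracy hypothesis makes the $a_i$ pairwise $\hbar$-distinct, so the ratio map is invertible; surjectivity follows by matching the coefficients of the characteristic polynomial \eqref{eq:tRSLaxDecomp} against the elementary symmetric functions of the roots of $f$. Equivalently, one can organize everything through the recursive degeneration announced at the start of the section, beginning from the electric frame of $F\mathbb{F}l_L$ and collapsing adjacent equivariant parameters; the coincidences imposed by the partial flag are precisely what turn the $L$ distinct eigenvalues $\xi_1,\dots,\xi_L$ of the complete-flag Lax matrix into the $\hbar$-dressed clusters of \eqref{eq:Frhs}.

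The main obstacle I anticipate is controlling this collapse rigorously: showing that $\det(u-T)$ factorizes \emph{exactly} as $f(u)$, with the correct $\hbar$-shifted multiplicities and no spurious cross terms, amounts to proving that the intermediate-node Bethe equations are equivalent to the statement that the Lax spectrum clusters into geometric $\hbar$-progressions of the prescribed lengths. Matching these multiplicities---rather than merely the traces $H_k$ as in the complete-flag case---is where the combinatorics of the nested Bethe ansatz must be handled carefully, and it is the step most sensitive to the normalization and sign conventions already visible in the $L=2$ example.
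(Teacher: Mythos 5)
Your overall strategy is the same as the paper's: substitute the electric momenta \eqref{eq:deftRSel} with $\chi_i=a_i$, then recursively peel off geometric $\hbar$-clusters from the spectrum of $T$, one flag step at a time, so that the clustering condition becomes the nested Bethe system \eqref{eq:BetheXlambda}. However, you defer exactly the step that constitutes the content of the theorem. You write that the main obstacle is ``showing that $\det(u-T)$ factorizes exactly as $f(u)$, with the correct $\hbar$-shifted multiplicities and no spurious cross terms,'' and that this ``must be handled carefully'' --- but that statement \emph{is} the theorem, so a proposal that leaves it open has not proved anything beyond the rank-one example already worked out in the text. In particular, your suggestion that the distinguished eigenvectors are ``built from the evaluations of the oper section $s_i(a_j)$'' is an unsubstantiated guess, and your spectral peeling is asserted, not constructed.

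The paper fills this gap with an explicit construction you do not supply. It writes the block-diagonalizing gauge transformation as $g=(g_1,\dots,g_{L-\textbf{v}_r},g')$ and, in Lemma \ref{Th:LemmaDiagonalization}, gives a closed-form rational ansatz $g'_{ai}=\delta_a b_i/(a_a-s_{r,i})$ together with explicit formulas \eqref{eq:pmupsigmasub} for \emph{both} the original momenta $p_a$ and the reduced momenta $p^s_i$ in terms of the Bethe roots $s_{r,j}$; the intertwining relation \eqref{eq:RedT} then reduces to a verifiable rational-function identity \eqref{eq:rankreduction}, and the reduced block $T'$ is again a canonical $\textbf{v}_r$-particle Lax matrix. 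Crucially, the $r$-th node Bethe equations \eqref{eq:BetheEqProof} do not arise by ``demanding an eigenvalue of the form $\xi_r\hbar^m$'' as you propose, but by equating the two independent expressions for $p^s_i$: the one forced by the diagonalization lemma and the one \eqref{eq:pschange} defining the next recursion level. Finally, your surjectivity argument (``matching the coefficients of the characteristic polynomial against the elementary symmetric functions of the roots of $f$'') is too weak: the paper's converse direction is a separate degrees-of-freedom count (carried out for $r=1$, ${\bf v}_1=k$, $L=n$ via the expansion \eqref{eq:ElRelationsGrass} and the specialization $\xi_2=\zeta\xi_1$), showing the number of independent $T_i$'s matches the number of momenta so that \eqref{eq:pschange} is a faithful change of variables; injectivity of the ratio map alone does not give this.
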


In other words, Theorem \ref{Th:tRSXXZEl} states that for the tRS Lax matrix $M$ \eqref{eq:LaxFullFormula} to have eigenvalues
\begin{equation}\label{eq:degenerateeigenvalues}
\xi_{r+1},\dots,\xi_{r+1}\hbar^{L-\textbf{v}_r-1}, \xi_{r},\dots,\xi_{r+1}\hbar^{\textbf{v}_{r-1}-\textbf{v}_r-1},\dots, \xi_1\,\dots,\xi_1\hbar^{\textbf{v}_1-1}
\end{equation}
the following Bethe Ansatz equations for $X^{\boldsymbol{\lambda}}$ must be satisfied
\begin{equation}\label{eq:BetheXlambda}
\hbar^{2\textbf{v}_I-\textbf{v}_{I-1}-\textbf{v}_{I+1}}\frac{\xi_{I+1}}{\xi_I}\prod_{j=1}^{\textbf{v}_{I-1}} \frac{s_{I-1,j} - s_{I,i}}{s_{I,i} - \hbar s_{I-1,j}}\cdot\prod_{k\neq i}^{\textbf{v}_I}\frac{\hbar s_{I,i} -  s_{I,k}}{s_{I,i} - \hbar s_{I,k}}\cdot\prod_{b=1}^{\textbf{v}_{I+1}}\frac{s_{I,i}- \hbar s_{I+1,b}}{ s_{I,i}-s_{I+1,b}}=1\,,
\end{equation}
where $i=1,\dots,\textbf{v}_I$ and $I=1,\dots,r$, provided that tRS momenta in $T$ are given in \eqref{eq:deftRSel}.

\begin{proof}

Let $g\in GL(L)$ be a matrix such that the first $L-\textbf{v}_r$ eigenvalues of $T$ take values $\xi_{r+1} \hbar^{i}$ for $i=0,\dots,L-\textbf{v}_r-1$. Such matrix $g$ can put $T$ in a block-diagonal form 
\begin{equation}\label{eq:GaugeTransf1}
g^{-1} T g=  
\begin{pmatrix}
\xi_{r+1} &0  & \dots &0 &0\cr 
0 & \xi_{r+1}\hbar &  \dots &0 &0\cr
\vdots & \vdots & \ddots & \vdots &\vdots\cr
0 & 0& \dots & \xi_{r+1} \hbar^{L-\textbf{v}_r-1} &0 \cr
0 &0& \dots &0 &   T'
\end{pmatrix}\,,
\end{equation}
with $T'$ coinciding with the Lax matrix for $\textbf{v}_r$ particles which are parameterized by Bethe roots $s_{r,1},\dots, s_{r,\textbf{v}_r}$. 

After that, we can use another gauge transformation in order to reduce $T'$ to a block-diagonal form $(\textbf{v}_r-\textbf{v}_{r-1},\textbf{v}_{r-1})$ similar to the above and proceed by induction until we reach the left end of the quiver $X^{\boldsymbol{\lambda}}$.

Consider gauge transformation from \eqref{eq:GaugeTransf1}. Matrix $g$ can be presented as $(g_1,\dots, g_{L-\textbf{v}_r},g')$ where $g_1,\dots, g_{L-\textbf{v}_r}$ are columns and $g'$ is a $L\times(L-\textbf{v}_r)$ matrix.

We have the linear equations for $g$ coming from \eqref{eq:GaugeTransf1}
\begin{equation}
(T g_1,\dots, Tg_{L-\textbf{v}_r}, Tg') = (\xi_{r+1}g_1,\dots, \xi_{r+1} \hbar^{L-\textbf{v}_r-1}g_{L-\textbf{v}_r},g'T')
\label{eq:MTdecomp}
\end{equation}
From here we discover that $g_1,\dots, g_{L-\textbf{v}_r}$ are eigenvectors of $T$, with eigenvalues $\xi_{r+1} ,\dots,\xi_{r+1} \hbar^{L-\textbf{v}_r-1}$ and 
\begin{equation}\label{eq:RedT}
\sum\limits_{b=1}^L T_{ab} \cdot g'_{bi} = \sum\limits_{j=1}^{\textbf{v}_r}g'_{aj}\cdot T'_{ji}\,,
\end{equation}
where $T'$ is given by \eqref{eq:LaxFullFormula} with $a_i$'s replaced by $s_{r,i}$'s and $p_a$s replaced by $p^s_i$.

The following Lemma provides an explicit formula for the above similarity transformation in terms of Bethe roots. The relation \eqref{eq:MTdecomp} is satisfied provided that tRS momenta can also be expressed in terms of Bethe roots.

\begin{Lem}\label{Th:LemmaDiagonalization}
Let 
\begin{equation}\label{eq:gtranfsub}
g'_{ai} = \frac{\delta_{a}b_i }{a_a - s_{r,i}}\,, \qquad \delta_{a} =\sum\limits_{k=1}^{\textbf{v}_r}\gamma_k \frac{a_a(a_a-s_{r,k})}{a_a-s_{r,k}}\,,
\end{equation}
where $a=1,\dots,L\,, i=1,\dots, \textbf{v}_r$ and $b_i$ and $\gamma_k$ are some nonzero constants. Let
\begin{equation}\label{eq:pmupsigmasub}
p_a = \frac{\xi_{r+1}}{\hbar^{\textbf{v}_r}}\prod_{k=1}^{\textbf{v}_r}\frac{\hbar a_a - s_{r,k}}{a_a - s_{r,k}}\,, \qquad p^s_i = \frac{\xi_{r+1}}{\hbar^{L-2\textbf{v}_r}}\prod_{b=1}^L\frac{s_{r,i}- \hbar a_b}{ s_{r,i}- a_b}\prod_{k\neq i}^{\textbf{v}_r}\frac{\hbar s_{r,i} - s_{r,k}}{s_{r,i} - \hbar s_{r,k}}\,,
\end{equation}
for some nonzero $s_{r,1},\dots,s_{r,{\bf v}_r}$. Assume $s_{r,i}\neq \hbar^{\mathbb{Z}}s_{r,j}$ and $a_i\neq \hbar^{\mathbb{Z}} a_j$ for $i\neq j$. Then \eqref{eq:MTdecomp} is satisfied.
\end{Lem}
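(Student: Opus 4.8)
The plan is to exploit the Calogero--Moser defining relation \eqref{eq:FlatConNew} together with the Cauchy-matrix form of $g'$, reducing the target identity \eqref{eq:MTdecomp} to a Sylvester equation whose invertibility forces the momenta into the stated shape. Write $M=\diag(a_1,\dots,a_L)$ and $M'=\diag(s_{r,1},\dots,s_{r,n})$ with $n=\textbf{v}_r$, and let $T$ be the tRS Lax matrix \eqref{eq:LaxFullFormula} with coordinates $a_a$ and momenta $p_a$. By \eqref{eq:lax1}--\eqref{eq:lax2} this $T$ obeys $\hbar MT-TM=u\otimes v^{T}$ for vectors $u,v$ read off from \eqref{eq:lax1}, which is exactly the defining relation of $\mathcal M$. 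The block form \eqref{eq:GaugeTransf1} is part of the inductive setup, so the substantive content of \eqref{eq:MTdecomp} is its last block, namely the intertwining relation \eqref{eq:RedT}, $Tg'=g'T'$; it exhibits $\mathrm{col}(g')$ as a $T$-invariant subspace on which $T$ acts as the reduced tRS Lax matrix, the remaining columns being the eigenvectors for the geometric progression $\xi_{r+1}\hbar^{i}$ supplied by \eqref{eq:GaugeTransf1}.

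First I would record the elementary Cauchy identity. Since $g'_{ai}=\delta_a b_i/(a_a-s_{r,i})$ and $a_a\cdot(a_a-s_{r,i})^{-1}-(a_a-s_{r,i})^{-1}s_{r,i}=1$, one gets at once
\[
Mg'-g'M'=\delta\otimes b^{T},\qquad \delta=(\delta_a),\ b=(b_i),
\]
a rank-one defect, independent of the precise form of $\delta_a$. Next I would set $\Delta=Tg'-g'T'$ and substitute the three relations --- the Calogero--Moser relation for $T$, the analogous one for the tRS matrix $T'$ (valid for coordinates $s_{r,i}$ with some $u',v'$), and the Cauchy identity --- to obtain the Sylvester equation
\[
\hbar M\Delta-\Delta M'=T\delta\, b^{T}-\hbar\,\delta\,(b^{T}T')+u\,(v^{T}g')-(g'u')\,(v')^{T}=:R,
\]
an explicit matrix of rank at most four in which $\Delta$ no longer appears. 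Under the nondegeneracy hypotheses $a_i\neq\hbar^{\mathbb{Z}}a_j$ and $s_{r,i}\neq\hbar^{\mathbb{Z}}s_{r,j}$ (which in particular give $\hbar a_a\neq s_{r,i}$), the spectra of $\hbar M$ and $M'$ are disjoint, so the operator $X\mapsto \hbar MX-XM'$ is invertible; hence $\Delta=0$ is equivalent to the residual identity $R=0$.

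The core of the proof is then to verify $R=0$, and this is precisely what the explicit momenta \eqref{eq:pmupsigmasub} are engineered to achieve. Reading off $u,v$ from \eqref{eq:lax1}--\eqref{eq:lax2} for the large system, the cancellation of the four rank-one pieces of $R$ determines the defect vectors $u',v'$ of the reduced system and forces $T'$ to be the tRS Lax matrix with coordinates $s_{r,i}$; matching the two sides term by term reproduces exactly the formula for $p^{s}_i$ in \eqref{eq:pmupsigmasub}, while the first formula for $p_a$ is what places the geometric progression $\xi_{r+1}\hbar^{i}$ in the spectrum of $T$. Concretely, $R=0$ can equivalently be checked head-on: for fixed $a$ both sides of \eqref{eq:RedT} are sums of residues of a single rational kernel with simple poles at $\{a_b\}\cup\{s_{r,j}\}$, and $\sum\Res=0$ yields the identity once the residues at $s_{r,i}$ and at infinity vanish.

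I expect the main obstacle to be this last step: controlling the ``diagonal'' contributions --- the $b=a$ and $j=i$ terms together with the value at infinity --- since these are what fix the $\hbar$-power prefactors $\hbar^{-n}$ and $\hbar^{-(L-2n)}$ in \eqref{eq:pmupsigmasub}, and a stray sign or power there is easy to introduce. A secondary technical point is to confirm that the normalization constants $\gamma_k$ and $b_i$ entering $\delta_a$ and $g'$ in \eqref{eq:gtranfsub} can be chosen nonzero consistently; this is where the $q$-distinctness of the $a_a$ and of the $s_{r,i}$ is used, guaranteeing that the Cauchy matrix is nondegenerate and the reduction is well defined.
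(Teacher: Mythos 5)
Your Sylvester reduction is correctly derived and is a genuinely different organization from the paper's argument: you use the Calogero--Moser relation \eqref{eq:FlatConNew} for both $T$ and $T'$, together with the Cauchy identity $Mg'-g'M'=\delta\otimes b^{T}$, to show that $\Delta=Tg'-g'T'$ satisfies $\hbar M\Delta-\Delta M'=R$ with $R$ a sum of four rank-one matrices, whereas the paper simply substitutes \eqref{eq:gtranfsub} and \eqref{eq:pmupsigmasub} into \eqref{eq:RedT} component by component and checks the resulting rational identity \eqref{eq:rankreduction}. However, there are two genuine gaps. First, your parenthetical claim that the hypotheses ``in particular give $\hbar a_a\neq s_{r,i}$'' is false: the stated nondegeneracy conditions constrain only the $a$'s among themselves and the $s_{r,i}$'s among themselves, and say nothing about cross relations between the two families (indeed, the resonance conditions \eqref{eq:res1} studied later impose exactly $s_{r,1}=\hbar a_1$ without touching the $s$--$s$ condition). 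Invertibility of $X\mapsto \hbar MX-XM'$ requires the spectra $\{\hbar a_a\}$ and $\{s_{r,i}\}$ to be disjoint, which is an additional genericity assumption. This is repairable --- the assertion $\Delta=0$ is an identity of rational functions of $(a_a,s_{r,i},\hbar,\xi_{r+1})$, so it suffices to prove it on the Zariski-dense locus where $\hbar a_a\neq s_{r,i}$ --- but as written this step of your argument does not follow from the hypotheses you invoke.

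Second, and more seriously, the decisive step is missing: you never verify $R=0$. The entire content of the lemma is that the \emph{specific} momenta \eqref{eq:pmupsigmasub}, and no others, make the four rank-one terms $T\delta\,b^{T}$, $\hbar\,\delta\,(b^{T}T')$, $u\,(v^{T}g')$, $(g'u')(v')^{T}$ cancel; asserting that the formulas are ``engineered to achieve'' this is circular, since that is precisely what has to be proved. Moreover, computing $T\delta$, $b^{T}T'$, $v^{T}g'$ and $g'u'$ involves exactly the same Cauchy-kernel summations that appear in the direct check of \eqref{eq:rankreduction}, so the Sylvester reformulation repackages the computation rather than bypassing it. Your fallback --- treating each side of \eqref{eq:RedT} as a sum of residues of a rational kernel with simple poles at $\{a_b\}\cup\{s_{r,j}\}$ --- is the right mechanism and is essentially how \eqref{eq:rankreduction} is verified, but it must actually be carried out: the cancellations at the poles $s_{r,j}$ and the matching at infinity are precisely where the prefactors $\hbar^{-\textbf{v}_r}$ and $\hbar^{-(L-2\textbf{v}_r)}$ and the explicit form of $\delta_a$ enter, and you explicitly defer this as ``the main obstacle.'' As it stands, your argument establishes only that \eqref{eq:MTdecomp} is equivalent to an unverified rank-four identity, not the lemma itself.
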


\begin{proof}
Using \eqref{eq:pmupsigmasub} and \eqref{eq:gtranfsub} we can rewrite \eqref{eq:RedT} as follows
\begin{align}\label{eq:rankreduction}
&\sum\limits_{b=1}^L p_a a_a \frac{1 - \hbar}{a_a - a_b \hbar}\prod_{c \neq a}^L \frac{a_a-a_c \hbar}{a_a - a_c} \cdot \frac{\delta_{b}b_i}{a_b - s_{r,i}} \notag \\
&= \sum\limits_{j=1}^{\textbf{v}_r} \frac{\delta_{a} b_j}{a_a - s_{r,j}} \cdot p^s_j s_{r,j} \frac{1 - \hbar}{s_{r,i} - s_{r,j} \hbar}\prod\limits_{k \neq j}^{\textbf{v}_r} \frac{s_{r,j} -s_{r,k} \hbar}{s_{r,j} - s_{r,k}}\,.
\end{align}
One can explicitly check that the above relations holds provided that $\delta_{a}$ is given by the second relation in \eqref{eq:gtranfsub} and tRS momenta are as in \eqref{eq:pmupsigmasub}. 
\end{proof}

At the next step of the recursion, we introduce the new set of Bethe roots $s_{r-1,1},\dots,s_{r-1,{\bf v}_{r-1}}$ and define the collection of momenta $p^s_i$ analogously to the first relation in \eqref{eq:pmupsigmasub} (with $\xi_{r+1}$ replaced by $\xi_r$, $a_i$'s replaced by $s_{r,i}$ and $s_{r,i}$'s replaced by $s_{r-1,i}$s)
\begin{equation}\label{eq:pschange}
p^s_i = \frac{\xi_r}{\hbar^{\textbf{v}_{r-1}}} \prod_{j=1}^{\textbf{v}_{r-1}} \frac{s_{r,i} - \hbar s_{r-1,j}}{s_{r-1,j} - s_{r,i}}\,.
\end{equation}
This way the Lax matrix $T'$ has the canonical form \eqref{eq:LaxFullFormula}. We thus can get the first set of Bethe equations from \eqref{eq:BetheXlambda} for the $s_{r,i}$ roots by comparing the above expression with the second relation in 
\eqref{eq:pmupsigmasub}
\begin{equation}\label{eq:BetheEqProof}
\hbar^{2\textbf{v}_r-L-\textbf{v}_{r-1}}\frac{\xi_{r+1}}{\xi_r}\cdot\prod_{j=1}^{\textbf{v}_{r-1}} \frac{s_{r-1,j} - s_{r,i}}{ s_{r,i} - \hbar s_{r-1,j}}\cdot\prod_{k\neq i}^{\textbf{v}_r}\frac{\hbar s_{r,i} - s_{r,k}}{ s_{r,i} - \hbar s_{r,k}}\cdot\prod_{b=1}^L\frac{s_{r,i}- \hbar a_b}{ s_{r,i}- a_b}=1\,.
\end{equation}

The recursive application of this reasoning completes the proof in one direction. 

\vskip.1in

It remains to be shown that for every solution of tRS equations with given degeneracy of the right hand side there exists a solutions of Bethe equations \eqref{eq:BetheXlambda}. For simplicity consider $X^{\boldsymbol{\lambda}}$ in the case $r=1$, ${\bf v_1}=k$, $L=n$. A generalization to other partitions $\boldsymbol{\lambda}$ is straightforward. The eigenvalues read
\begin{equation}
\xi_{2},\dots,\xi_{2}\hbar^{n-k-1}, \xi_{1},\dots,\xi_1\hbar^{k-1}
\end{equation}
Consider relations in \eqref{eq:FunAlgFl} expanded in $z$
\begin{equation}
\label{eq:ElRelationsGrass}
\sum_{i=0}^{n}(-1)^i z^{n-i} T_{i} = \left(\sum_{l=0}^k (-1)^l z^{l-l} e_{l}(\xi_{1},\dots,\xi_1\hbar^{k-1})\right)\left(\sum_{m=0}^{n-k} (-1)^m z^{n-k-m} e_{m}(\xi_{2},\dots,\xi_2\hbar^{n-k-1})\right)\,,
\end{equation}
where $T_0=1$, $T_1,\dots T_n$ are the tRS Hamiltonians and $e_l$ are elementary symmetric functions of their arguments. We can impose $\xi_2=\zeta\xi_1$. Rescaling all momenta by $\xi_1$ give the RHS of each of the equaltions for $T_i$in (\ref{eq:ElRelationsGrass}) expressed in terms of polynomials of $\zeta$ degree $k$. That leaves at most $k$ independent degrees of freedom for $T_i$ and thus for $p_i$, since transformation from $p_i$ to $T_i$ is generally non-degenerate. 
Thus formula \eqref{eq:pschange} provides a faithful change of variables.

\end{proof}

To summarize, the relations for ${\rm Fun}({\hbar\rm Op})(X^{\boldsymbol{\lambda}})$ are as follows:
\begin{equation}\label{eq:tRSRelXlambda}
\sum_{\substack{\mathcal{I}\subset\{1,\dots,L\} \\ |\mathcal{I}|=k}}\prod_{\substack{i\in\mathcal{I} \\ j\notin\mathcal{I}}}\frac{{a}_i - \hbar\,{a}_j }{{a}_i-{a}_j}\prod\limits_{m\in\mathcal{I}}{p}_m = \ell_k (\xi_i)\,,
\end{equation}
where symmetric functions $\ell_k$ were introduced in Theorem \ref{Th:qOpMag}. We will refer to the the set of variables $\{p_i\}$, $\{a_i\}$ as {\it electric frame} momenta and coordinates correspondingly.

\begin{Rem} 
It may seem that electric momenta $p_i$ are disconnected from the $\hbar$-oper formalism we described in the previous section. However, this is not the case. For the $(SL(r+1),\hbar)$-oper the natural coordinate system is provided by the $\hbar$-Miura transformation, which is given by the $\hbar$-gauge transformation from $B_-(z)\subset SL(r+1)(z)$ which moves diagonal elements 
to the bottom row in the $\hbar$-connection matrix. The elements of the bottom row are known to coincide with the eigenvalues of the transfer matrices of XXZ spin chain which correspond to fundamental representations. The  $\hbar$-Miura transformation gives an expression of the eigenvalues of the transfer matrices via eigenvalues of Baxter Q-operators. The transfer matrix corresponding to the defining representation of $SL(r+1)$ allows a polar decomposition with respect to $\{a_i\}$. The coefficients in that expansion are known as (fundamental) Hamiltonians (cf. nonlocal Hamiltonians, se remark \ref{Rem:nonlocal}) of XXZ model. They coincide with the electric frame momenta (that can be established from the formula \eqref{eq:pschange}) and thus serve as natural coordinates for $(SL(r+1),\hbar)$-opers.
\end{Rem}


\subsection{Resonance Conditions and degeneration of $\hbar$-opers.}\label{Sec:FlagElDegen}

So far we considered only nondegenerate $\hbar$-opers. We claim that special degenerations which involve relations between Bethe roots and regular singularities, which we will refer to as we refer to as {\it resonance conditions}, will lead to interesting correspondences between opers associated to various quivers.  

\subsubsection{Recursive Procedure for Bethe Ansatz}
In this section we will show that the ring ${\rm Fun }(\hbar{\rm Op})(X_{k,l})$ (top of \figref{fig:mirroranew}) can be obtained from 
${\rm Fun }(\hbar{\rm Op})(X^{\boldsymbol{\lambda}})$ for special $\boldsymbol{\lambda}$ by imposing additional relations. We will define them in the recursive way, by chipping the framing in the quiver $X_{\lambda}$ and carrying it to the neighboring vertices as it is indicated on figure \figref{fig:PartialFlag2}. We begin with the Bethe equations \eqref{eq:BetheXlambda} for the last node of quiver $X^{\boldsymbol{\lambda}}$ in \figref{fig:PartialFlag} 
\begin{equation}\label{eq:BetheXlastQQ}
\hbar^{2\textbf{v}_r-\textbf{v}_{r-1}-L}\frac{\xi_{r+1}}{\xi_r}\prod_{j=1}^{\textbf{v}_{r-1}} \frac{s_{r-1,j} - s_{r,i}}{s_{r,i} - \hbar s_{r-1,j}}\cdot\prod_{k\neq i}^{\textbf{v}_r}\frac{\hbar s_{r,i} -  s_{r,k}}{s_{r,i} - \hbar s_{r,k}}\cdot\prod_{b=1}^{L}\frac{s_{r,i}- \hbar a_{b}}{ s_{r,i}-a_{b}}=1\,,
\end{equation}
which can be written as
\begin{align}\label{eq:BetheXlastnew}
\hbar^{2\textbf{v}_r-\textbf{v}_{r-1}-(L-1)}\frac{\xi_{r+1}}{\xi_r}\cdot\prod_{j=1}^{\textbf{v}_{r-1}} \frac{s_{r-1,j} - s_{r,i}}{s_{r,i} - \hbar s_{r-1,j}}&\cdot\prod_{\substack{k=2\\ k\neq i}}^{\textbf{v}_r}\frac{\hbar s_{r,i} -  s_{r,k}}{s_{r,i} - \hbar s_{r,k}}
\cr
&
\cdot \frac{1}{\hbar}\frac{\hbar s_{r,i} -  s_{r,1}}{s_{r,i} - \hbar s_{r,1}}
\cdot \frac{ s_{r,i}-\hbar a_2}{ s_{r,i}-a_2} \cdot \frac{s_{r,i}- \hbar a_1}{ s_{r,i}- a_1}
\cr
&
\cdot\prod_{b=3}^{L}\frac{s_{r,i}- \hbar a_{b}}{ s_{r,i}-a_{b}}=1\,.
\end{align}
If we impose the resonance conditions 
\begin{equation}\label{eq:res1}
a_1 \hbar=s_{r,1}= a_2
\end{equation}
we can observe that the middle line of the above equation cancels off
$$
\frac{1}{\hbar}\frac{\hbar s_{r,i} -  s_{r,1}}{s_{r,i} - \hbar a_2}
\cdot \frac{ s_{r,i}-\hbar a_2}{ s_{r,i}-a_2} \cdot \frac{s_{r,i}- a_2}{ s_{r,i}- a_1}=
\frac{1}{\hbar}\frac{\hbar s_{r,i} -  s_{r,1}}{ s_{r,i}- a_1}=1\,.
$$
Meanwhile the Bethe equation for the $r-1$st node 
\begin{equation}\label{eq:BetheXlastQQ2nd}
\hbar^{2\textbf{v}_{r-1}-\textbf{v}_{r-2}-\textbf{v}_{r}}\frac{\xi_{r}}{\xi_{r-1}}\prod_{j=1}^{\textbf{v}_{r-2}} \frac{s_{r-2,j} - s_{r-1,i}}{s_{r-1,i} - \hbar s_{r-2,j}}\cdot\prod_{k\neq i}^{\textbf{v}_{r-1}}\frac{\hbar s_{r-1,i} -  s_{r-1,k}}{s_{r-1,i} - \hbar s_{r-1,k}}\cdot\prod_{b=1}^{\textbf{v}_{r}}\frac{s_{r-1,i}- \hbar s_{r,b}}{ s_{r-1,i}-s_{r,b}}=1\,,
\end{equation}
becomes
\begin{align}\label{eq:BetheXlast2}
\hbar^{2\textbf{v}_{r-1}-\textbf{v}_{r-2}-\textbf{v}_{r}}&\frac{\xi_{r}}{\xi_{r-1}}\prod_{j=1}^{\textbf{v}_{r-2}} \frac{s_{r-2,j} - s_{r-1,i}}{s_{r-1,i} - \hbar s_{r-2,j}}\cdot\prod_{k\neq i}^{\textbf{v}_{r-1}}\frac{\hbar s_{r-1,i} -  s_{r-1,k}}{s_{r-1,i} - \hbar s_{r-1,k}}\cr
&\cdot\prod_{b=2}^{\textbf{v}_{r}}\frac{s_{r-1,i}- \hbar s_{r,b}}{ s_{r-1,i}- s_{r,b}}\cdot \frac{ s_{r-1,i}-\hbar^2 a_1}{s_{r-1,i}-\hbar a_1}=1\,,
\end{align}
where the last term in the left hand side corresponds to the framing on the $r-1$st node with equivariant parameter $\hbar a_1$. 
Thus the rank of the framing bundle on the $r$th node decreases by two while the rank of the framing on the $r-1$st node becomes one. Let us call the resulting quiver variety $X^{\boldsymbol{\lambda}}_{\boldsymbol{\lambda'}}$.

\begin{figure}[!h]
\includegraphics[scale=0.3]{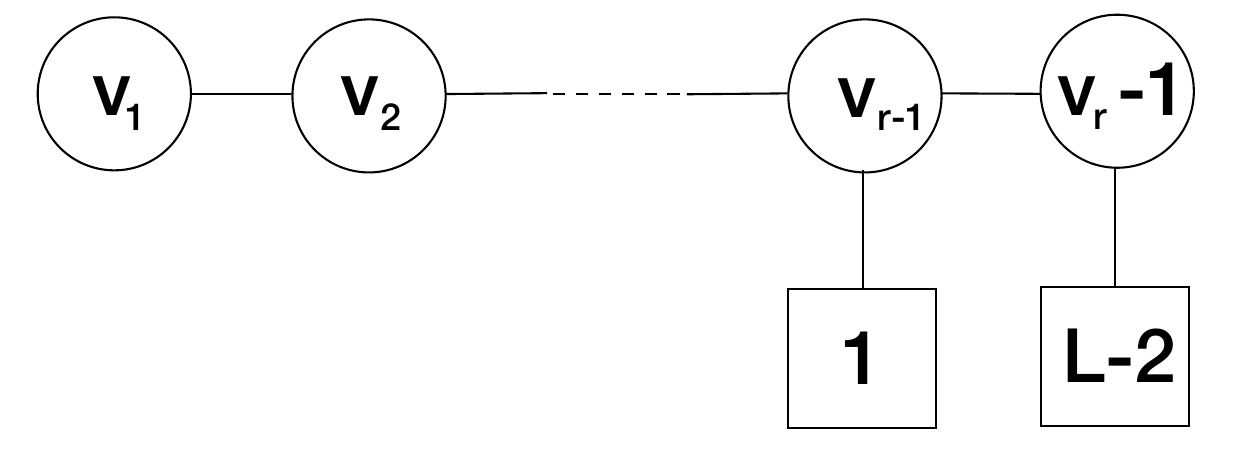}\quad \includegraphics[scale=0.3]{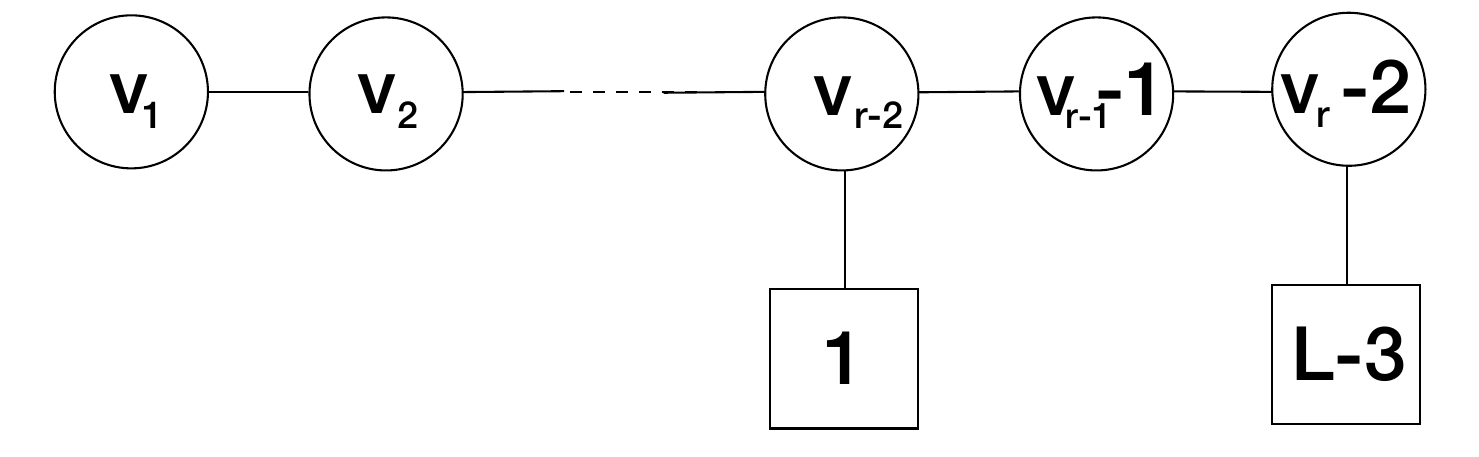}
\caption{Quivers $X^{\boldsymbol{\lambda}}_{\boldsymbol{\lambda'}}$ and $X^{\boldsymbol{\lambda}}_{\boldsymbol{\lambda''}}$.}
\label{fig:PartialFlag2}
\end{figure}

\subsubsection{Recursive Procedure for the tRS Model}

Lemma \ref{Th:LemmaDiagonalization} was proven under the nondegeneracy assumptions, namely, $s_{r,i}\neq \hbar^{\mathbb{Z}}s_{r,j}$ and $a_i\neq \hbar^{\mathbb{Z}} a_j$ for $i\neq j$. Once we impose \eqref{eq:res1} and similar relations these assumptions are no longer valid so we need to modify the proof which will lead to certain changes is the formulae for the tRS momenta in terms of Bethe roots \eqref{eq:pmupsigmasub}.

Let us see how implementing resonance conditions affects the formulae for $p_a$. Consider \eqref{eq:rankreduction} for $a=i=1$ 
\begin{align}\label{eq:rankreduction2}
\sum\limits_{b=1}^L p_1 a_1 \frac{1 - \hbar}{a_1 - a_b \hbar} \prod_{c \neq 1}^L \frac{a_1-a_c \hbar}{a_1 - a_c} \cdot \frac{\delta_{b}b_i}{a_b - s_{r,1}}
&= \sum\limits_{j=1}^{\textbf{v}_r} \frac{\delta_{1} b_j}{a_1 - s_{r,j}} \cdot p^s_j s_{r,j} \frac{1 - \hbar}{s_{r,1} - s_{r,j} \hbar}\prod\limits_{k \neq j}^{\textbf{v}_r} \frac{s_{r,j} -s_{r,k} \hbar}{s_{r,j} - s_{r,k}}\,.
\end{align}
and for $a=2,\,i=1$ 
\begin{align}\label{eq:rankreduction3}
\sum\limits_{b=1}^L p_2 a_2 \frac{1 - \hbar}{a_2 - a_b \hbar}\frac{a_2-a_1 \hbar}{a_2 - a_1} \prod_{c \neq 1,2}^L \frac{a_2-a_c \hbar}{a_2 - a_c} \cdot \frac{\delta_{b}b_i}{a_b - s_{r,1}}
&= \sum\limits_{j=1}^{\textbf{v}_r} \frac{\delta_{2} b_j}{a_2 - s_{r,j}} \cdot p^s_j s_{r,j} \frac{1 - \hbar}{s_{r,1} - s_{r,j} \hbar}\prod\limits_{k \neq j}^{\textbf{v}_r} \frac{s_{r,j} -s_{r,k} \hbar}{s_{r,j} - s_{r,k}}\,.
\end{align}
In order to consolidate poles in the left and right hand sides we define
\begin{equation}\label{eq:pmupsigmasub2}
p_1 = \frac{\xi_{r+1}}{\hbar^{\textbf{v}_r}}\prod_{k=1}^{\textbf{v}_r}\frac{\hbar a_1 - s_{r,k}}{a_1 - s_{r,k}}\cdot \frac{a_1- a_2}{\hbar a_1-a_2}\,, \qquad  p_2 = \frac{\xi_{r+1}}{\hbar^{\textbf{v}_r}}\prod_{k=1}^{\textbf{v}_r}\frac{\hbar a_2 - s_{r,k}}{a_2 - s_{r,k}}\cdot \frac{\hbar a_1-a_2}{a_1- a_2}\,,
\end{equation}
and 
\begin{equation}
p^s_1 = \frac{\xi_{r+1}}{\hbar^{L-2\textbf{v}_r}}\prod_{b=1}^L\frac{s_{r,1}- \hbar a_b}{ s_{r,1}- a_b}\prod_{k\neq i}^{\textbf{v}_r}\frac{\hbar s_{r,1} - s_{r,k}}{s_{r,1} - \hbar s_{r,k}}\cdot\frac{s_{r,1}-a_2}{ s_{r,1}- \hbar a_2}\,.
\end{equation}
Now if we take simultaneous limits $s_{r,1}\to a_2$ and $a_2\to a_1 \hbar$ all the above expressions remain finite and become
\begin{equation}\label{eq:pmupsigmasub3}
p_1 = \frac{\xi_{r+1}}{\hbar^{\textbf{v}_r}}\prod_{k=2}^{\textbf{v}_r}\frac{\hbar a_1 - s_{r,k}}{a_1 - s_{r,k}}\,, \qquad  p_2 = -\frac{\xi_{r+1}}{\hbar^{\textbf{v}_r-1}}\prod_{k=2}^{\textbf{v}_r}\frac{\hbar a_2 - s_{r,k}}{a_2 - s_{r,k}}\,,
\end{equation}
as well as
\begin{equation}
p^s_1 = \frac{\xi_{r+1}}{\hbar^{L-2\textbf{v}_r}}\prod_{b=1}^{L-1}\frac{a_2- \hbar a_b}{a_2- a_b}\prod_{k\neq i}^{\textbf{v}_r}\frac{\hbar a_2 - s_{r,k}}{a_2 - \hbar s_{r,k}}\,.
\end{equation}
Effectively this amounts to rescaling the tRS momenta for the unconstrained parameters $a_i$ from the proof of Lemma \ref{Th:LemmaDiagonalization} by
\begin{equation}\label{eq:rescalingp1}
p_1 \mapsto p_1 \frac{\hbar a_1-a_2}{a_1- a_2}\,,\qquad p_2\mapsto p_2 \frac{a_1- a_2}{\hbar a_1-a_2}
\end{equation}

Proceeding along the lines of the proof of Lemma \ref{Th:LemmaDiagonalization} and using \eqref{eq:pschange} with $s_{r,1}=a_2$ we shall arrive at XXZ Bethe equations \eqref{eq:BetheXlastnew} for the last node and to \eqref{eq:BetheXlast2} for the $r-1$st node with other equations unchanged. In other words, we have realized in the language of the tRS models in the electric frame how to perform the reduction from $X^{\boldsymbol{\lambda}}$ to quiver $X^{\boldsymbol{\lambda}}_{\boldsymbol{\lambda'}}$ shown in \figref{fig:PartialFlag2} on the left.

Thus the following Lemma follows from the above calculations and from Theorem \ref{Th:tRSXXZEl}.

\begin{Lem}\label{Th:HiggsingLemma}
The algebra ${\rm Fun }(\hbar{\rm Op})(X^{\boldsymbol{\lambda}}_{\boldsymbol{\lambda'}})$ is given by the following quotient
\begin{equation}
{\rm Fun }(\hbar{\rm Op})(X^{\boldsymbol{\lambda}}_{\boldsymbol{\lambda'}})\cong 
\frac{\mathbb{C}(\{\xi_i\}, \{a_i\},\{p_i\}, \hbar)}{\left(\det\left(u-T(\{p_i\}, \{a_i\}_{i\neq 2}, \hbar)\right)-f(u,\{\xi_i\},\hbar)\right)\vert_{a_2=\hbar a_1}}
\end{equation}
\end{Lem}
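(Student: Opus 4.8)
The plan is to deduce this degenerate statement from the nondegenerate Theorem \ref{Th:tRSXXZEl} by carefully taking the resonance limit. By that theorem, the algebra ${\rm Fun}(\hbar{\rm Op})(X^{\boldsymbol{\lambda}})$ for the generic quiver is presented as the quotient \eqref{eq:FunAlgFl}, where the characteristic polynomial of the tRS Lax matrix $T(\{p_i\},\{a_i\},\hbar)$ is set equal to $f(u,\{\xi_i\},\hbar)$ from \eqref{eq:Frhs}; the isomorphism is realized through the electric-frame substitution \eqref{eq:deftRSel} identifying the tRS momenta with ratios of $Q$-polynomials evaluated at the singularities $a_i$. The quiver $X^{\boldsymbol{\lambda}}_{\boldsymbol{\lambda'}}$ is, by the construction of the preceding subsection, exactly the variety whose Bethe equations are \eqref{eq:BetheXlastnew} and \eqref{eq:BetheXlast2} (with the remaining nodes untouched), obtained from the $X^{\boldsymbol{\lambda}}$ equations \eqref{eq:BetheXlambda} by imposing the resonance conditions \eqref{eq:res1}. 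Hence it suffices to show that specializing the relation in \eqref{eq:FunAlgFl} at $a_2=\hbar a_1$ reproduces precisely these reduced Bethe equations.

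First I would import the momentum rescaling established in the tRS recursion above. Imposing $a_2=\hbar a_1$ together with $s_{r,1}=a_2$ violates the nondegeneracy hypotheses $a_i\neq\hbar^{\mathbb{Z}}a_j$ of Lemma \ref{Th:LemmaDiagonalization}, so the momenta must first be modified as in \eqref{eq:pmupsigmasub2}. The content of the calculation is that, after the simultaneous limits $s_{r,1}\to a_2$ and $a_2\to\hbar a_1$, every such expression stays finite and collapses to \eqref{eq:pmupsigmasub3}; in terms of the unconstrained coordinates $a_i$ this is exactly the rescaling \eqref{eq:rescalingp1} of $p_1,p_2$. I would then feed these rescaled momenta into the characteristic-polynomial relation. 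Because the colliding eigenvector direction decouples in the limit, the Lax matrix reduces to the one built on the coordinates $\{a_i\}_{i\neq 2}$, while the parameter $a_1$ is retained and now governs the rank-one framing $\hbar a_1$ appearing in the last factor of \eqref{eq:BetheXlast2}. This yields precisely the right-hand side $\det(u-T(\{p_i\},\{a_i\}_{i\neq 2},\hbar))-f(u,\{\xi_i\},\hbar)\big|_{a_2=\hbar a_1}$ of the claimed quotient.

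To close the loop I would rerun the block-diagonalization of Lemma \ref{Th:LemmaDiagonalization}, now with the resonance-modified momenta, to verify that equating this specialized characteristic polynomial to $f$ is equivalent to the reduced Bethe system \eqref{eq:BetheXlastnew}--\eqref{eq:BetheXlast2}; the same recursion over the nodes as in the proof of Theorem \ref{Th:tRSXXZEl} handles the untouched equations verbatim. The hard part will be precisely the loss of nondegeneracy: one cannot invoke Theorem \ref{Th:tRSXXZEl} as a black box, and must check by hand that the singular limit of the electric-frame change of variables is well defined and that no relations are lost or spuriously created. Once the finiteness in \eqref{eq:pmupsigmasub3} and the rescaling \eqref{eq:rescalingp1} are in place, the identification of the two algebras follows, since the change of variables from $\{p_i\}$ to the tRS Hamiltonians remains faithful off the degenerate locus, exactly as argued at the end of the proof of Theorem \ref{Th:tRSXXZEl}.
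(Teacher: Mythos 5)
Your proposal follows the paper's own route essentially verbatim: the paper derives this Lemma from Theorem \ref{Th:tRSXXZEl} together with the preceding resonance calculations — the pole-consolidating momentum redefinitions \eqref{eq:pmupsigmasub2}, the finiteness of the simultaneous limits $s_{r,1}\to a_2$, $a_2\to\hbar a_1$ yielding \eqref{eq:pmupsigmasub3} (equivalently the rescaling \eqref{eq:rescalingp1}), and a rerun of the block-diagonalization of Lemma \ref{Th:LemmaDiagonalization} with \eqref{eq:pschange} at $s_{r,1}=a_2$ to recover the reduced Bethe equations \eqref{eq:BetheXlastnew}--\eqref{eq:BetheXlast2}. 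Your plan identifies all of these same steps, including the key point that the nondegeneracy hypotheses fail and the electric-frame change of variables must be checked by hand in the limit, so it is correct and matches the paper's argument.
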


\vskip.1in

If one imposes further resonance conditions in the form
\begin{equation}\label{eq:res2}
\hbar^2 a_1 = \hbar s_{r-1,1} = s_{r,2} =  a_3
\end{equation}
a similar transition in Bethe equations happens and these equations now correspond to a quiver variety with rank-one framing  on the $(r-2)$-nd node and the framing of rank $L-3$ on the last node (right picture in \figref{fig:PartialFlag2}). 

\begin{Lem}\label{Th:HiggsingLemma2}
The algebra ${\rm Fun }(\hbar{\rm Op})(X^{\boldsymbol{\lambda}}_{\boldsymbol{\lambda''}})$ is given by the following quotient
\begin{equation}
{\rm Fun }(\hbar{\rm Op})(X^{\boldsymbol{\lambda}}_{\boldsymbol{\lambda''}})\cong
\frac{\mathbb{C}(\{\xi_i\}, \{a_i\}_{i\neq 2,3},\{p_i\}, \hbar)}{\left(\det\left(u-T(\{p_i\}, \{a_i\}, \hbar)\right)-f(u,\{\xi_i\},\hbar)\right)\vert_{a_2=\hbar a_1, ~a_3=\hbar^2 a_1}}
\end{equation}
\end{Lem}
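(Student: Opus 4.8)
The plan is to iterate the reduction that produced Lemma \ref{Th:HiggsingLemma}, applying the same mechanism one level down, now at the $(r-1)$-st node. Recall that Lemma \ref{Th:HiggsingLemma} was obtained from the single resonance \eqref{eq:res1}, under which the middle line of the last-node Bethe equation \eqref{eq:BetheXlastnew} telescoped to unity while a rank-one framing with equivariant parameter $\hbar a_1$ emerged at the $(r-1)$-st node, as recorded in \eqref{eq:BetheXlast2}. The present statement should follow by treating the second resonance \eqref{eq:res2}, namely $\hbar^2 a_1 = \hbar s_{r-1,1} = s_{r,2} = a_3$, by precisely the same telescoping-and-regularization argument.

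First I would start from the modified Bethe system for $X^{\boldsymbol{\lambda}}_{\boldsymbol{\lambda'}}$, in particular from the $(r-1)$-st node equation \eqref{eq:BetheXlast2}, which already carries the framing $\hbar a_1$, together with the last-node equation in the root $s_{r,2}$ and singularity $a_3$. Isolating the factors attached to $s_{r-1,1}$, $s_{r,2}$, $a_3$ and the newly created framing, I would verify that under \eqref{eq:res2} they cancel in exactly the telescoping pattern exhibited in the middle line of \eqref{eq:BetheXlastnew}. The net effect is that $s_{r-1,1}$, $s_{r,2}$ and $a_3$ drop out of the nondegenerate data, the rank of the last-node framing decreases by a further unit (to $L-3$), and a rank-one framing with equivariant parameter $\hbar^2 a_1$ is installed on the $(r-2)$-nd node, reproducing the right-hand quiver of \figref{fig:PartialFlag2}.

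On the tRS side I would re-examine the recursive diagonalization of Lemma \ref{Th:LemmaDiagonalization}. Since \eqref{eq:res2} again violates the nondegeneracy hypotheses $s_{r,i}\neq\hbar^{\mathbb{Z}}s_{r,j}$ and $a_i\neq\hbar^{\mathbb{Z}}a_j$, I would regularize exactly as in the passage to \eqref{eq:pmupsigmasub3}: introduce the corrected analogues of the momenta \eqref{eq:pmupsigmasub}, consolidate the poles on both sides of the rank-reduction relation \eqref{eq:rankreduction}, and then take the simultaneous limits $s_{r,2}\to a_3$, $a_3\to\hbar^2 a_1$ and $s_{r-1,1}\to\hbar a_1$. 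One checks that all the relevant momenta remain finite and that the degeneration amounts to a finite rescaling of the unconstrained $p_a$ of the type \eqref{eq:rescalingp1}, so that the Lax matrix retains its canonical form \eqref{eq:LaxFullFormula} on the reduced coordinate set $\{a_i\}_{i\neq 2,3}$ and the factorization of its characteristic polynomial against $f(u,\{\xi_i\},\hbar)$ is preserved. Feeding the reduced quiver into Theorem \ref{Th:tRSXXZEl} then yields precisely the quotient in the statement, with $a_2=\hbar a_1$ and $a_3=\hbar^2 a_1$ substituted in the determinant relation.

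The main obstacle is that, unlike \eqref{eq:res1}, the resonance \eqref{eq:res2} couples Bethe roots from two distinct nodes ($s_{r,2}$ and $s_{r-1,1}$) together with the singularity $a_3$, so the cancellation is now spread across the $r$-th and $(r-1)$-st node equations rather than confined to a single one. The delicate point will be to confirm that the three limits can be taken simultaneously and consistently, that no spurious poles arise in the tRS momenta during the consolidation, and that the controlled breaking of nondegeneracy is exactly compensated by a rescaling of the form \eqref{eq:rescalingp1}, so that Lemma \ref{Th:LemmaDiagonalization} continues to govern the diagonalization on the reduced data and the identification with ${\rm Fun }(\hbar{\rm Op})(X^{\boldsymbol{\lambda}}_{\boldsymbol{\lambda''}})$ goes through.
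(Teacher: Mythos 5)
Your proposal is correct and follows essentially the same route as the paper: the paper also obtains Lemma \ref{Th:HiggsingLemma2} by iterating the resonance mechanism of Lemma \ref{Th:HiggsingLemma} — a telescoping cancellation in the Bethe equations under \eqref{eq:res2} (now spread across the $r$-th and $(r-1)$-st nodes), combined with a rescaling of the affected tRS momenta of the type \eqref{eq:rescalingp1} so that the diagonalization argument of Lemma \ref{Th:LemmaDiagonalization} survives the degeneration, after which Theorem \ref{Th:tRSXXZEl} gives the stated quotient. Your write-up is in fact somewhat more explicit than the paper's, which merely asserts that "a similar transition happens" and notes the rescaling of $p_1$ and $p_3$.
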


The corresponding momenta variables $p_1$ and $p_3$ will acquire rescaling factors similar to \eqref{eq:rescalingp1} such that the resulting rational functions of Bethe roots will be nonsingular given by \eqref{eq:res2}.

\vskip.1in
\begin{Rem}
This process of imposing resonance conditions on the equivariant parameters will be later illustrated in \figref{fig:hwmoves} using moves of D-branes.
\end{Rem}

\subsubsection{Quiver $X_{k,l}$ in Electric Frame and Degenerate Miura $(SL(r+1),\hbar)$-Opers.} 

We are now ready to describe the set of relations needed for the $X_{k,l}$ family \figref{fig:QuiverABNew}. Let us consider the specific class of $X^{\boldsymbol{\lambda}}$ in \figref{fig:PartialFlag} which depends on $k, l\in \mathbb{N}$ and $r=k+l-1$
\begin{eqnarray}\label{xlambdadata}
&&\textbf{v}_i = i,\quad i = 1,\dots, k,\qquad \textbf{v}_{k+m} = k+\frac{m(m+1)}{2}\,,~ m=1, \dots, l-1\\
&&L = k+\frac{l(l-1)}{2}\,.\nonumber
\end{eqnarray}
We denote the twist parameters $\upxi_1,\dots,\upxi_{k+l}$ and $L$ coordinates parametrizing regular singularities are relabeled 
as follows:
$$
a_{k,1},\dots,a_{k,k+1}, 
$$
$$
a_{k+1,1},\dots, a_{k+1,l-1},
$$
$$
a_{k+2,1},\dots, a_{k+2,l-2},
$$
$$
\dots\dots
$$
$$
a_{k+l-2,1},a_{k+l-2,2},
$$
$$
a_{k+l-1}\,.
$$

In order to describe the transition from ${\rm Fun }(\hbar{\rm Op})(X^{\boldsymbol{\lambda}})$ to ${\rm Fun }(\hbar{\rm Op})(X_{k,l})$ we need impose a family of resonance conditions. Let us consider the following constraints imposed on the variables $\{a_{i,j}\}$, thereby introducing new parameters $\{a_{j}\}$ for $j=k+1,\dots,k+l-2$
\begin{align}\label{eq:HiggsingConditions}
a_{k+1}:=a_{k+1,1}&=\hbar a_{k+1,2}=\hbar^2 a_{k+1,3}= \dots=\hbar^{r-2}a_{k+1,l-1}, \notag\\
a_{k+2}:=a_{k+2,1}&=\hbar a_{k+2,2}=\hbar^2 a_{k+2,3}= \dots=\hbar^{l-3}a_{k+2,l-3}, \notag\\
&\dots\notag\\
a_{k+l-2}:=a_{k+l-2,1}&=\hbar a_{k+l-2,2}.
\end{align}

Additionally, according to the results of the previous section, one has electric momenta $\{ p_{i,j}\}$ to each of the above $\{a_{i,j}\}$ coordinates.

\vskip.1in
\textbf{Remark.}
The reader may have noticed the similarity between the structures of \eqref{xlambdadata} and \eqref{eq:rootsWXkl} from the previous section. This observation will play a role in proving self-mirror duality for $X_{k,l}$ later.

\vskip.1in

We denote the algebra ${\rm Fun }(\hbar{\rm Op})(X^{\boldsymbol{\lambda}})$ with the resonance conditions imposed on regular singularities as ${\rm{Fun}}_{k,l}^{res}({\hbar\rm Op})(X^{\boldsymbol{\lambda}})$. We obtain the following statement.

\begin{Thm}Given $X^{\boldsymbol{\lambda}}$ with $\boldsymbol{\lambda}$ described by ${\bf v}_i$ in (\ref{xlambdadata}). 
Then algebra ${\rm Fun }(\hbar{\rm Op})(X_{k,l})$ is the following quotient
\begin{eqnarray}\label{eq:FunQuot}
{\rm Fun }(\hbar{\rm Op})(X_{k,l})\cong {\rm{Fun}}_{k,l}^{res}(\hbar{\rm Op})(X^{\boldsymbol{\lambda}}),
\end{eqnarray}
where the correspondence between the equivariant parameters of $X_{k,l}$ and $X^{\boldsymbol{\lambda}}$ reads
$$
a_{k+j} = a_{k+j,l-j}\,,\qquad j=1,\dots, l-2\,.
$$
\end{Thm}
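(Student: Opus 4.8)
The plan is to realize ${\rm Fun}(\hbar{\rm Op})(X_{k,l})$ as the outcome of iterating the elementary Higgsing reductions of Lemmas \ref{Th:HiggsingLemma} and \ref{Th:HiggsingLemma2}, starting from the electric-frame presentation of ${\rm Fun}(\hbar{\rm Op})(X^{\boldsymbol{\lambda}})$ supplied by Theorem \ref{Th:tRSXXZEl}. Concretely, I would begin by writing out the Bethe equations \eqref{eq:BetheXlambda} for $X^{\boldsymbol{\lambda}}$ with the dimension data \eqref{xlambdadata}, together with the tRS relations \eqref{eq:tRSRelXlambda} that present the algebra as functions on the intersection of the two Lagrangian subvarieties $\chi_i=a_i$ and $\det(u-T)=f(u,\{\xi_i\},\hbar)$. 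The resonance conditions \eqref{eq:HiggsingConditions} are organized into chains, the $(k+j)$-th chain being $a_{k+j}=a_{k+j,1}=\hbar a_{k+j,2}=\dots$; each chain is precisely an iterated version of the single constraint \eqref{eq:res1} followed by the paired constraint \eqref{eq:res2}, so the whole reduction is a bookkept concatenation of moves already justified in the excerpt.

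The core is an induction on the chains, modeled verbatim on the passage from \eqref{eq:BetheXlastQQ} to \eqref{eq:BetheXlastnew}. Imposing the first constraint of a chain produces the telescoping cancellation in the middle line of \eqref{eq:BetheXlastnew}, which removes a pair of factors from the Bethe equation of the node carrying the large framing and deposits a single rank-one framing factor on the neighboring node, exactly as in Lemma \ref{Th:HiggsingLemma}. Iterating down a chain (as in Lemma \ref{Th:HiggsingLemma2} and its evident continuation), the block of regular singularities $a_{k+j,1},\dots,a_{k+j,l-j}$ collapses to a single equivariant parameter feeding a rank-one framing on the $(k+j)$-th node, which I relabel $a_{k+j}=a_{k+j,l-j}$ as in the statement. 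On the tRS side I would track the momenta through the rescalings \eqref{eq:rescalingp1}, checking via the continuation of Lemma \ref{Th:LemmaDiagonalization} that every intermediate expression \eqref{eq:pmupsigmasub2}--\eqref{eq:pmupsigmasub3} remains finite under the simultaneous limits dictated by the chain, so that the specialization defining ${\rm Fun}_{k,l}^{res}(\hbar{\rm Op})(X^{\boldsymbol{\lambda}})$ is well defined and the change of variables stays faithful.

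Finally I would identify the endpoint quiver with $X_{k,l}$. After imposing all the constraints \eqref{eq:HiggsingConditions}, the large framing on the terminal node of $X^{\boldsymbol{\lambda}}$ has been chipped down, depositing rank-one framings along the intermediate nodes and leaving the untouched block $a_{k,1},\dots,a_{k,k+1}$ of the relabelling preceding the theorem, which supplies the degree-$(k+1)$ framing polynomial $\Lambda_1$ of \eqref{eq:QQALLFull2}, while the collapsed blocks supply the degree-one framings $\Lambda_c$; the gauge dimensions $\textbf{v}_i$ of \eqref{xlambdadata} are untouched by the reductions and match the degrees of the $Q^+_i$ in \eqref{eq:QQALLFull2}. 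Comparing the specialized Bethe equations term by term with those of the extended QQ-system \eqref{eq:QQALLFull2}, using the nondegeneracy of the latter, yields the isomorphism \eqref{eq:FunQuot}. The main obstacle is exactly this combinatorial bookkeeping in the inductive step: one must verify that the cancellations of \eqref{eq:BetheXlastnew} propagate consistently when all chains are imposed simultaneously---in particular that chipping a box from chain $k+j$ does not disturb the factors already arranged for chain $k+j+1$---and that the accumulated $\hbar$-shifts along each chain reproduce precisely the geometric-progression structure of the symmetric functions $\ell_k$ in \eqref{eq:rootsWXkl}. It is here that the parallel between \eqref{xlambdadata} and \eqref{eq:rootsWXkl} noted in the preceding remark does the decisive work.
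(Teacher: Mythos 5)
Your overall route is the same as the paper's: you present ${\rm Fun}(\hbar{\rm Op})(X^{\boldsymbol{\lambda}})$ in the electric frame via Theorem \ref{Th:tRSXXZEl}, impose the resonance chains \eqref{eq:HiggsingConditions} one equality at a time using Lemmas \ref{Th:HiggsingLemma} and \ref{Th:HiggsingLemma2}, rescale the tRS momenta as in \eqref{eq:rescalingp1} so that the simultaneous limits stay finite, and induct chain by chain; this is exactly how the paper proceeds, passing through intermediate quivers $X^{\boldsymbol{\lambda}}_{\boldsymbol{\lambda}^{(1)}}, X^{\boldsymbol{\lambda}}_{\boldsymbol{\lambda}^{(2)}},\dots$ until $X_{k,l}$ is reached.

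However, your endpoint identification rests on a false claim: that ``the gauge dimensions ${\bf v}_i$ of \eqref{xlambdadata} are untouched by the reductions and match the degrees of the $Q^+_i$ in \eqref{eq:QQALLFull2}.'' Neither half is true, and the theorem would be impossible if it were: by \eqref{xlambdadata} the nodes of $X^{\boldsymbol{\lambda}}$ carry dimensions ${\bf v}_{k+m}=k+\tfrac{m(m+1)}{2}$, whereas the corresponding $Q^+$-polynomials of $X_{k,l}$ in \eqref{eq:QQALLFull2} all have degree $k$, so some mechanism must lower the gauge ranks along the way. That mechanism is the resonance conditions themselves: each elementary condition such as $\hbar a_1=s_{r,1}=a_2$ in \eqref{eq:res1} \emph{freezes} a Bethe root at an equivariant parameter, removing it as a dynamical variable, so the node carrying it loses one unit of gauge dimension per frozen root. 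This is visible in \eqref{eq:BetheXlastnew}, where the self-interaction product at the last node now starts at $k=2$, and in the brane example of \figref{fig:hwmoves}, where the move takes ${\bf v}=(2,3,4),\ {\bf w}=(0,0,6)$ to ${\bf v}=(2,3,3),\ {\bf w}=(0,1,4)$. The paper's proof keeps precisely this bookkeeping: the first chain of \eqref{eq:HiggsingConditions}, unpacked through the intermediate nodes as in \eqref{eq:resonance1}, freezes $1+2+\dots+(l-2)$ roots and yields an intermediate quiver with ${\bf v}'_{k+j}={\bf v}_{k+j}-(j-1)$, and iterating over the remaining chains drives every large node down to dimension $k$. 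Without tracking this decrease your term-by-term comparison of Bethe equations with \eqref{eq:QQALLFull2} cannot close --- the number of Bethe roots, hence of equations, per node would simply disagree on the two sides. To repair the argument you must add to your induction, at each application of Lemma \ref{Th:HiggsingLemma}, the count of (i) equivariant parameters consumed, (ii) rank-one framings deposited on the neighbouring node, and (iii) Bethe roots frozen, and verify that the accumulated dimension vector at the end is that of $X_{k,l}$.
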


\begin{proof}
The statement can be verified directly by iteratively applying Lemma \ref{Th:HiggsingLemma} to two selected equivariant parameters for each equality sign in \eqref{eq:HiggsingConditions} and the corresponding Bethe roots. 

In the tRS Lax matrix \eqref{eq:tRSRelXlambda} one needs to rescale the momenta before imposing each resonance condition from \eqref{eq:HiggsingConditions}. 
For instance, for the first line of \eqref{eq:HiggsingConditions} we have the following $l-2$ conditions 
\begin{align}\label{eq:resonance1}
\hbar a_{k+1,1} &= s_{k+l-1,1} =  a_{k+1,2}\,,\cr
\hbar^2 a_{k+1,1} &=  \hbar s_{k+l-2,1} = s_{k+l-1,2}= a_{k+1,3}\,,\cr
\dots & \dots\\
\hbar^{l-1} a_{k+1,1} &= \hbar^{l-2} s_{k+l,1} = \dots =  a_{k+1,l-1}\,.\notag
\end{align}
We then apply the following rescaling for the first two momenta
\begin{align}\label{eq:rescale3}
p'_{k+1,1} = p_{k+1,1} \frac{a_{k+1,1}-a_{k+1,2}}{\hbar a_{k+1,1}- a_{k+1,2}}\,,\qquad  p'_{k+1,2} = p_{k+1,2} \frac{\hbar a_{k+1,1}- a_{k+1,2}}{a_{k+1,1}-a_{k+1,2}}\,
\end{align}
and take limits $s_{k+l-1,1}\to a_{k+1,2}$ and $a_{k+1,2}\to\hbar a_{k+1,1}$. Then repeat this procedure for the other variables in \eqref{eq:resonance1}. After imposing these relations the space ${\rm Fun }(\hbar{\rm Op})(X_{k,l})$ becomes isomorphic to ${\rm Fun }(\hbar{\rm Op})(X^{\boldsymbol{\lambda}}_{\boldsymbol{\lambda}^{(1)}})$ where  $X^{\boldsymbol{\lambda}}_{\boldsymbol{\lambda}^{(1)}}$ is shown in \figref{fig:AmodelNewX1}. The Bethe equations transform accordingly.
\begin{figure}[!h]
\includegraphics[scale=0.33]{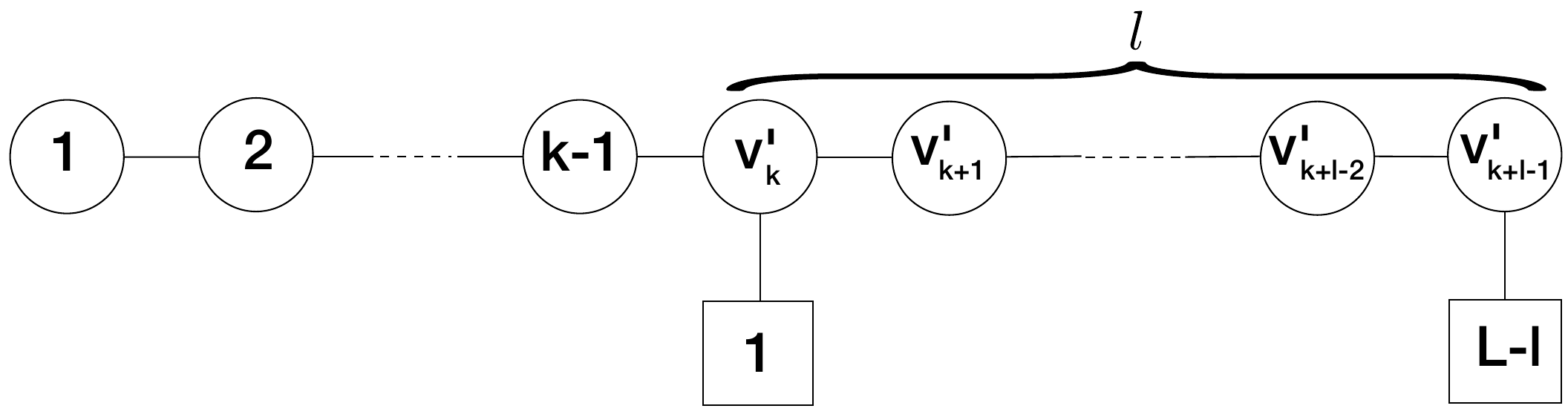}
\caption{Quiver $X^{\boldsymbol{\lambda}}_{\boldsymbol{\lambda}^{(1)}}$.}
\label{fig:AmodelNewX1}
\end{figure}
Here ${\bf v}'_{k+j} = {\bf v}_{k+j}-(j-1)$ for $j=1,\dots, l-1$. 

On the next step we impose conditions analogous to \eqref{eq:resonance1} which enable the isomorphism between ${\rm Fun }(\hbar{\rm Op})(X^{\boldsymbol{\lambda}}_{\boldsymbol{\lambda}^{(1)}})$ and ${\rm Fun }(\hbar{\rm Op})(X^{\boldsymbol{\lambda}}_{\boldsymbol{\lambda}^{(2)}})$ where the latter quiver variety is shown in \figref{fig:AmodelNewX1}.
\begin{figure}[!h]
\includegraphics[scale=0.33]{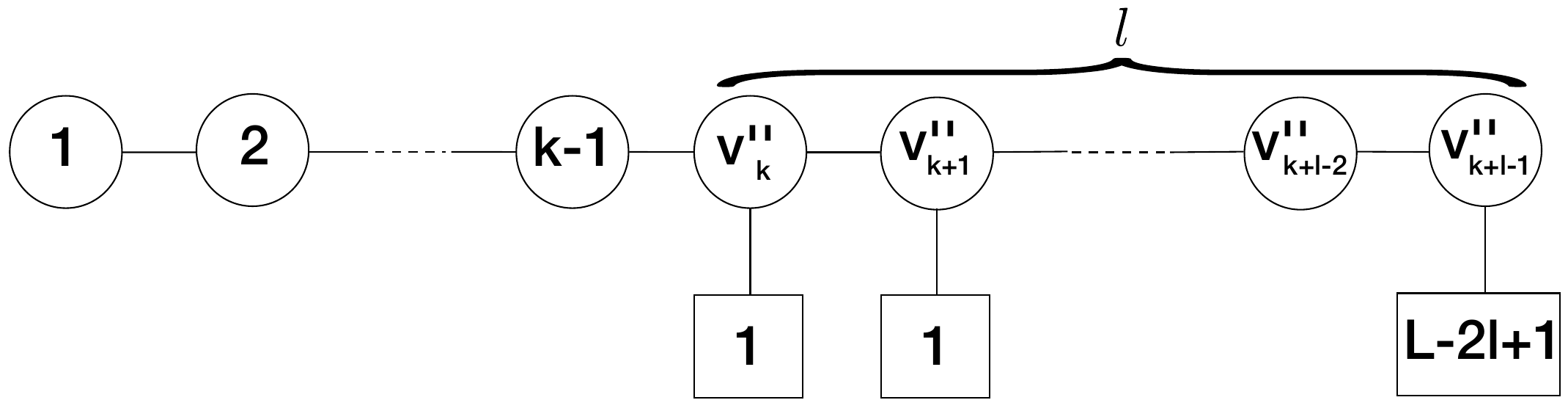}
\caption{Quiver $X^{\boldsymbol{\lambda}}_{\boldsymbol{\lambda}^{(2)}}$.}
\label{fig:AmodelNewX2}
\end{figure}

Proceeding by induction we arrive at ${\rm Fun }(\hbar{\rm Op})(X_{k,l})$ and the theorem follows.
\end{proof}

\vskip.1in

\subsection{Example}
It is instructive to show explicitly how the tRS Lax matrix in the electric frame looks like in the presence of the resonance conditions. Consider $r=4$, so that we deal with the five-particle model, and impose 
\begin{equation}\label{eq:degeneration5}
a_3=\hbar a_2= a_1 \hbar^2\,,\qquad a_5=\hbar a_4. 
\end{equation}
Then matrix $T$ has the following $3\times 2$ block form:
{\small
\begin{equation}\label{eq:TRSSlodowy25}
\hspace*{-3.1cm}  \left(
\begin{array}{ccccc}
 \frac{p_1 \left(\hbar ^2+\hbar +1\right) \left(a_1-a_4 \hbar
   ^2\right)}{\left(a_1-a_4\right) \hbar ^2} & -\frac{p_1 \left(\hbar
   ^2+\hbar +1\right) \left(a_4 \hbar -a_1\right) \left(a_4 \hbar
   ^2-a_1\right)}{\left(a_1-a_4\right) \hbar ^4 \left(a_1 \hbar -a_4\right)} &
   \frac{p_1 \left(a_1-a_4 \hbar \right) \left(a_1-a_4 \hbar
   ^2\right)}{\hbar ^4 \left(a_1 \hbar -a_4\right) \left(a_1 \hbar
   ^2-a_4\right)} & \frac{a_1^3 p_1 (\hbar -1)^2 (\hbar +1) \left(\hbar
   ^2+\hbar +1\right) \left(a_4 \hbar ^2-a_1\right)}{\left(a_1-a_4\right) a_4
   \hbar ^2 \left(a_1 \hbar -a_4\right) \left(a_1 \hbar ^2-a_4\right)} &
   \frac{a_1^3 p_1 (\hbar -1)^2 (\hbar +1) \left(\hbar ^2+\hbar
   +1\right)}{\left(a_1-a_4\right) a_4 \hbar ^4 \left(a_1 \hbar -a_4\right)} \\
 p_2 \hbar ^2 & 0 & 0 & 0 & 0 \\
 0 & p_3 \hbar ^2 & 0 & 0 & 0 \\
 \frac{a_4^2 p_4 \left(a_1 \hbar ^2-a_4\right) \left(a_1 \hbar
   ^3-a_4\right)}{a_1^2 \left(a_1-a_4\right) \hbar ^2 \left(a_1-a_4 \hbar
   \right)} & -\frac{a_4^2 p_4 (\hbar +1) \left(a_1 \hbar
   ^3-a_4\right)}{a_1^2 \left(a_1-a_4\right) \hbar ^4} & \frac{a_4^2
   p_4}{a_1^2 \hbar ^4} & \frac{p_4 (\hbar +1) \left(a_1 \hbar
   ^3-a_4\right)}{\left(a_1-a_4\right) \hbar ^2} & \frac{p_4 \left(a_1
   \hbar ^2-a_4\right) \left(a_1 \hbar ^3-a_4\right)}{\left(a_1-a_4\right) \hbar
   ^4 \left(a_4 \hbar -a_1\right)} \\
 0 & 0 & 0 & p_5 \hbar ^2 & 0 \\
\end{array}
\right)
\end{equation}
}
\vskip.1in
Notice that this matrix depends only on coordinates $a_1$ and $a_4$ and all five momenta. Also the above matrix has the so-called Slodowy structure meaning that each diagonal block has the form $F+a$, where $F$ is the image of the lowering operator $f$ from the $\sl_2$ triple under embedding $\rho: \mathfrak{sl}_2 \hookrightarrow \mathfrak{sl}_5$, and $a$ is the highest weight vector (top row) (see \cite{Gaiotto:2008sa} where the opposite convention was used: $e$ instead of $f$).

The corresponding tRS Hamiltonians are thus the coefficients of the characteristic polynomial of this matrix and the tRS relations read
$$
\det(u-T) = \prod_{i=1}^5(u-\upxi_i)
$$
These tRS relations describe space Fun$(\hbar {\rm Op})(X_{\boldsymbol{\lambda}})$ where partition $\lambda = (3,2)$.
\begin{figure}
\includegraphics[scale=0.4]{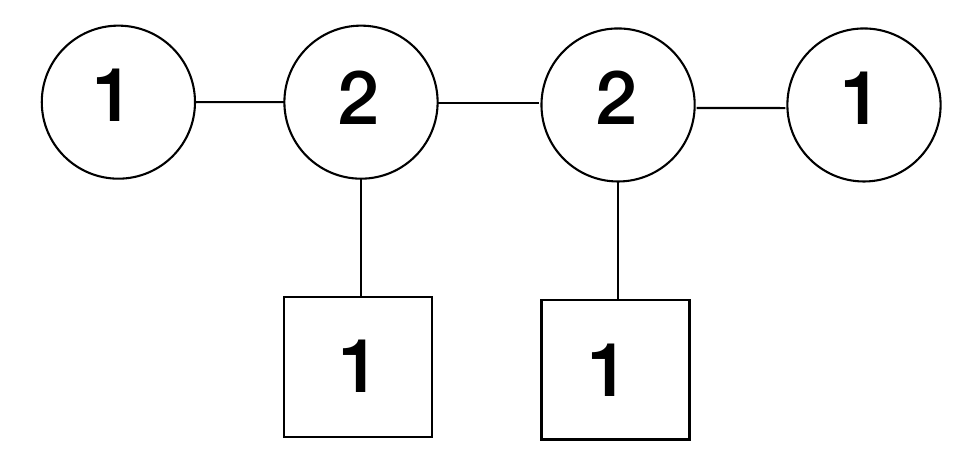}
\caption{Quiver $X_{(3,2)}$}
\end{figure}
In other words: 
$$
{\rm Fun}(\hbar {\rm Op})(X_{(3,2)})\simeq \frac{{\rm Fun}(\hbar {\rm Op})(F\mathbb{F}l_5)}{(\ref{eq:degeneration5})}\,.
$$

\section{$A_r$-Quiver Varieties, 3d Mirror Symmetry and Branes}\label{Sec:KThe}
The goal of this section is to formulate the 3d mirror symmetry concept for $A_n$ quiver varieties. 

First we explain what quiver varieties are and define their quantum equivariant K-theory rings. Next we formulate the concept of the 3d mirror symmetry as a pair of `mirror dual' quiver varieties with given properties and isomorphic quantum K-theory rings. 

The next subsection is devoted to the physical approach to 3d mirror symmetry and combinatorial calculations involving D-branes, which allow to compute a ``mirror'' for a given quiver variety. 

Our main goal is to show that quiver varieties associated to $X_{k,l}$ are self-dual. We will formulate what it means on the level of electric and magnetic frame tRS coordinates, which we introduced. We will prove self-duality in the next section.

\subsection{$A_r$ Quiver Varieties, Equivariant Quantum K-theory and 3d Mirror Symmetry} 

\subsubsection{Definition of $A_r$ quiver varieties}
Consider the quiver $Y_{{\bf v}, {\bf w}}$. A representation of $Y_{{\bf v}, {\bf w}}$ is a set of vector spaces $V_i,W_i$, where $V_i$ correspond to original vertices, and $W_i$ correspond to their copies, together with a set of morphisms between these vertices, corresponding to edges of the quiver. Let $R=\text{Rep}(\mathbf{v},\mathbf{w})$ denote the linear space of quiver representation with dimension vectors $\mathbf{v}$ and $\mathbf{w}$, where $\mathbf{v}_i=\text{dim}\ V_i$, $\mathbf{w}_i=\text{dim}\ W_i$. Then the group $G=\prod_i GL(V_i)$ acts on this space in an obvious way. As any cotangent bundle, $T^*R$ has a symplectic structure. This action of $G$ on this space is Hamiltonian with moment map $\mu :T^*R\to \text{Lie}{(G)}^{*}$. Let $L(\mathbf{v},\mathbf{w})=\mu^{-1}(0)$ be the zero locus of the moment map.

The Nakajima variety $X$ corresponding to the quiver is an algebraic symplectic reduction
$$
Y=L(\mathbf{v},\mathbf{w})/\!\! /_{\theta}G=L(\mathbf{v},\mathbf{w})_{ss}/G,
$$
depending on a choice of stability parameter $\theta\in {\mathbb{Z}}^I$ (see e.g. \cite{Ginzburg:} for a detailed definition). The group
$$\prod_i GL({W}_i)\times \mathbb{C}^{\times}_\hbar$$
acts as automorphisms of $X$, coming form its action on $\text{Rep}(\mathbf{v},\mathbf{w})$. Here $\mathbb{C}_{\hbar}$ scales cotangent directions with weight $\hbar$ and therefore symplectic form with weight $\hbar^{-1}$. Let us denote by $\mathsf{T}$ a maximal torus of this group. Choosing the set of characters, we describe the maximal torus of $GL({W}_i)$  as $\prod^{{\bf w_i}}_{k=1}\mathbb{C}^{\times}_{a_{i,k}}$.

A nontrivial example here is the partial flag quiver. The related quiver variety is the cotangent bundle to the partial flag variety (for details, see e.g. \cite{Koroteev:2017aa}).  

\subsubsection{Description of the quantum equivariant K-theory}

We now will use this data to produce the equivariant quantum K-theory ring. 
First, let us remind the construction of the  classical equivariant K-theory ring $K_{\mathsf{T}}(Y)$. For a Nakajima quiver variety $Y$, withe the set of vertices $I$ one can define a set of tautological bundles on it $\mathcal{V}_i, \mathcal{W}_i, i\in I$, as bundles constructed by applying the associated bundle construction to the $G$-representations $V_i$ and $W_i$. It follows from this construction, that  all bundles $W_i$ are topologically trivial. Tensorial polynomials of these bundles and their duals generate $K_{\mathsf{T}}(Y)$ according to Kirwan's surjectivity theorem, which is recently proven in \cite{McGerty:2016kir}. 
One can represent the corresponding K-theory classes as diagonal operators acting in the basis of fixed points in the localized equivariant K-theory $K^{loc}_{\mathsf{T}}(Y)$. 

The quantum equivariant K-theory is a deformation of this algebra by means of $r$ new variables, known as K\"ahler parameters. There are several ways of constructing this deformation using the tools of enumerative geometry. The first one, obtained following the traditional approach using the counts of holomorphic maps is due to Givental and Lee \cite{2001math8105G}, see also \cite{Givental:2015e,Givental:2015d,Givental:2015f,Givental:2015b,Givental:2015g,Givental:2015c,Givental:2015,Givental:2015a,Givental:2017b,Givental:2017a,Givental:2017} for recent developments in permutation invariant quantum K-theory. The second one is specific to the setup of quiver varieties and uses the counts of quasimaps \cite{Koroteev:2017aa}, \cite{Okounkov:2015aa}: this is the approach we address in this paper. 
 
In \cite{Koroteev:2017aa}  we defined quantum equivariant K-theory of $Y$ as the so-called Bethe algebra, using the symmetric functions of Bethe roots. Here we give an equivalent definition. 

\begin{Def}
The quantum equivariant K-theory $K^{q}_{\mathsf{T}}(Y_{\bf{v},\bf{w}})$ of quiver variety associated with quiver $Y_{\bf{v},\bf{w}}$ is the algebra ${\rm Fun}({\hbar\rm Op})(Y_{\bf{v},\bf{w}})$  with regular 
singularities described by $\{\Lambda_i(z)=\prod^{{\bf w}_i}_{k=1}(z-a_{i,k})\}_{i=1,\dots, r}$ so that $\{a_{i,k}\}$ are the characters of the maximal torus $T=\prod^r_{i=1}\prod^{\bf w_i}_{k=1}\mathbb{C}^{\times}_{a_{i,k}}\times \mathbb{C}_{\hbar}$ and $Z$-twist is related to the K\"ahler parameters $\xi_1, \dots , \xi_r$ as follows: 
$Z=\prod^r_{i=1}\xi_i^{\check{\alpha}_i}$.
\end{Def}

Here the Bethe root variables parametrizing the Miura $(SL(r+1), \hbar)$-oper correspond to the eigenvalues of the quantum tautological classes in the deformation of the basis of fixed points. 

\begin{Rem} We defined $K^{q}_{\mathsf{T}}(Y_{\bf{v},\bf{w}})$ as localized with respect to K\"ahler parameters $\zeta_i$. 
Without localization, in the limit $\{\zeta_i\rightarrow 0\}$ one obtains the classical equivariant K-theory $K_T(Y_{\bf{v},\bf{w}})$. 
\end{Rem}

Let us be more concrete about the relation between quantum K-theory generators and the $QQ$-systems. 
We have shown in \cite{Koroteev:2017aa,Pushkar:2016qvw} that the $Q^i_+(z)$-functions are actually the eigenvalues of the generating functions for the quantum K-theoretic classes corresponding to exterior powers of $\mathcal{V}^i$: $${\bf Q}^i(z)=\sum^{\bf v_i}_{k=1}z^k\Lambda^{n-k}{\mathcal{V}^i}^{(q)}$$
acting as operators in the space of $K_T^{loc}(Y)$. This way, elementary symmetric functions of Bethe roots of degree $k$ are the eigenvalues of the quantum K-theory classes $\Lambda^{k}\mathcal{V^i}^{(q)}$, thus identifying  ${\bf Q}^i(z)$ with the Baxter operators acting in the appropriate weight section of $SL(r+1)$ XXZ spin chain isomorphic to the vector space $K_T^{loc}(Y)$. 
At the same time, $\Lambda_i$-functions are the products of Drinfeld polynomials defining the Hilbert space of the $XXZ$ spin chain and serve as generating functions of the exterior powers of the trivial bundles $\mathcal{W}_i$, capturing the data of equivariant parameters $\{a_i\}$ associated to torus $T$.

\subsubsection{Quantum B\"acklund Transformations and 3d Mirror Symmetry}
We already noticed that there is $n!$ isomorphic algebras ${\rm Fun}(\hbar {\rm Op})(Y_{\bf{v},\bf{w}})$ corresponding to 
various Miura $\hbar$-opers for a given $\hbar$-oper. The isomorphism is given by B\"acklund transformations. 
One may wonder what that means on the level of quiver varieties. It turns out the following Proposition holds.

\begin{Prop}
Quivers $Y_{\bf{v},\bf{w}}$ $Y_{\bf{v}',\bf{w}}$, with parameters related by B\"acklund transformations produce isomorphic 
algebras $K^{q}_{\mathsf{T}}(Y_{\bf{v},\bf{w}})\cong K^{q}_{\mathsf{T}}(Y_{\bf{v}',\bf{w}})$. The corresponding quiver varieties 
can be produced from a single quiver by changing the stability parameter $\theta$. 
\end{Prop}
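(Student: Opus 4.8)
The plan is to treat the two assertions separately: the algebra isomorphism, which is essentially a repackaging of results already at hand, and the geometric statement about the stability parameter, which is where the actual work lies.

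First I would dispatch the algebra isomorphism directly from the Definition of $K^{q}_{\mathsf{T}}$. By that Definition one has $K^{q}_{\mathsf{T}}(Y_{{\bf v},{\bf w}}) = {\rm Fun}(\hbar{\rm Op})(Y_{{\bf v},{\bf w}})$, with the regular singularities $\Lambda_i$ fixed by the framing ${\bf w}$ (equivariant parameters) and the $Z$-twist fixed by the K\"ahler parameters. A B\"acklund transformation at node $i$, Proposition~\ref{fiter}, is the $q$-gauge transformation $A\mapsto e^{\mu_i(\hbar z)f_i}A(z)e^{-\mu_i(z)f_i}$; since $q$-gauge transformations do not alter the underlying $(SL(r+1),\hbar)$-oper but only the Miura flag $\hat{\cL}_\bullet$ (equivalently, the representative $Z'$ in the Weyl orbit of $Z$, cf.\ Proposition~\ref{Z prime}), the source and target Miura opers define the same point of the oper space, so their function algebras coincide. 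The explicit isomorphism is the relabeling of $Q$-polynomials in~\eqref{qqm2} together with the transposition $\xi_i\leftrightarrow\xi_{i+1}$ of K\"ahler parameters. This is exactly the content of the B\"acklund-invariance Proposition established just after Remark~\ref{Rem:nonlocal}, whence $K^{q}_{\mathsf{T}}(Y_{{\bf v},{\bf w}})\cong K^{q}_{\mathsf{T}}(Y_{{\bf v}',{\bf w}})$.

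Next I would pin down the relation between ${\bf v}$ and ${\bf v}'$ and recognize it as a Weyl reflection. Balancing leading terms on the two sides of the $i$-th QQ-relation~\eqref{eq:QQAtype}, using $\deg Q^+_j={\bf v}_j$ and $\deg\Lambda_j={\bf w}_j$, forces $\deg Q^+_i+\deg Q^-_i={\bf w}_i+{\bf v}_{i-1}+{\bf v}_{i+1}$, hence $\deg Q^-_i={\bf w}_i+{\bf v}_{i-1}+{\bf v}_{i+1}-{\bf v}_i$ (the $i=r$ instance of this is already used in the proof of Lemma~\ref{Th:DegreesQ}). Since the B\"acklund transformation swaps $Q^+_i$ for $Q^-_i$ and fixes the remaining $Q^+_j$, the new dimension vector is
\begin{equation}
{\bf v}'_i={\bf w}_i+{\bf v}_{i-1}+{\bf v}_{i+1}-{\bf v}_i,\qquad {\bf v}'_j={\bf v}_j\ (j\neq i).
\end{equation}
Writing the defect $d_i={\bf w}_i-2{\bf v}_i+{\bf v}_{i-1}+{\bf v}_{i+1}$, this is ${\bf v}'_i={\bf v}_i+d_i$, i.e.\ precisely the $i$-th simple reflection $s_i$ in the $\mathfrak{sl}(r+1)$ Weyl-group action on dimension vectors with framing ${\bf w}$ held fixed (recall $-a_{ij}=1$ for $|i-j|=1$ in type $A$). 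A composite B\"acklund transformation then realizes a general Weyl element $w$, acting by ${\bf v}\mapsto{\bf v}'=w\cdot{\bf v}$.

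Finally, for the geometric statement I would invoke Nakajima's reflection functors: for generic stability parameter $\theta$, crossing the wall attached to the $i$-th simple root yields an isomorphism of Nakajima varieties
\begin{equation}
Y_{\theta}({\bf v},{\bf w})\;\cong\;Y_{s_i\theta}(s_i\cdot{\bf v},{\bf w}),
\end{equation}
the geometric incarnation of the reflection functor (Lusztig, Maffei, Nakajima). Iterating over the simple reflections composing $w$ identifies $Y_{{\bf v},{\bf w}}$ and $Y_{{\bf v}',{\bf w}}$ as GIT quotients of the same quiver representation data for stability parameters $\theta$ and $w\theta$ in different chambers, which is exactly the claim that the two varieties come from a single quiver by changing $\theta$. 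The main obstacle will be this last paragraph: matching conventions so that the B\"acklund/Weyl reflection on the oper side corresponds to the wall-crossing $\theta\mapsto s_i\theta$, and confirming that the reflection functor gives a genuine isomorphism of varieties (needing genericity of $\theta$ and the full strength of the reflection-functor theorem) rather than only a birational or K-theoretic equivalence. One must further check that the Weyl action on the stability/FI data matches the transposition $\xi_i\leftrightarrow\xi_{i+1}$ on the K\"ahler parameters, so that the geometric isomorphism intertwines the \emph{quantum} deformations and not merely the classical rings; the quantum compatibility is already supplied by the first paragraph, but aligning it with the reflection functor's action on FI parameters is the delicate point.
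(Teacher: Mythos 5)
Your proposal is correct, and it actually supplies more than the paper does: in the paper this Proposition is stated without proof, resting implicitly on the earlier Proposition in Section~3 (that a series of B\"acklund transformations gives ${\rm Fun}(\hbar{\rm Op})(Y_{{\bf v},{\bf w}})\cong{\rm Fun}(\hbar{\rm Op})(Y_{{\bf v}',{\bf w}})$, which is itself a direct consequence of Proposition~\ref{fiter}) together with an unargued appeal to known facts about quiver varieties and stability chambers. Your first paragraph reproduces exactly that implicit reasoning, so there you coincide with the paper. Where you go beyond it is in the second and third paragraphs: the leading-term count in the QQ-relation giving $\deg Q^-_i={\bf w}_i+{\bf v}_{i-1}+{\bf v}_{i+1}-{\bf v}_i$ (valid thanks to the nondegeneracy assumption \eqref{assume}, which prevents cancellation of leading coefficients), hence the identification of the B\"acklund move with the simple reflection $s_i$ acting on dimension vectors with ${\bf w}$ fixed, and then the invocation of the Lusztig--Maffei--Nakajima reflection functors to realize $Y_{\theta}({\bf v},{\bf w})\cong Y_{s_i\theta}(s_i\cdot{\bf v},{\bf w})$. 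This is the standard and correct way to substantiate the second sentence of the Proposition, which the paper simply asserts; note that the paper's own Lemma~\ref{Th:DegreesQ} performs the same degree bookkeeping in a different context, so your computation is consistent with the paper's conventions. The caveats you flag at the end (chamber genericity, matching the Weyl action on FI parameters with the transposition $\xi_i\leftrightarrow\xi_{i+1}$) are real but are exactly the points the paper also leaves implicit, so your proof is as complete as one could reasonably make it within this framework.
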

\begin{Rem}
 A classic example of this statement is the case of $T^*Gr(k,n)$ corresponding to the quiver with 
one vertex carrying label $k$ and framing with label $n$. The  quantum B\"acklund transformation sends it to 
$T^*Gr(n-k,n)$. On the level of $QQ$-system that corresponds to the transformation $Q_-(z)\rightarrow Q_+(z)$ and $Z\rightarrow Z^{-1}$. Thus in the nonlocalized quantum K-theory, $$\lim_{Z\to 0}K^{q}_{\mathsf{T}}(T^*Gr(k,n))=K_{\mathsf{T}}(T^*Gr(k,n)), \quad \lim_{Z\to \infty}K^{q}_{\mathsf{T}}(T^*Gr(k,n))=K_{\mathsf{T}}(T^*Gr(n-k,n)),$$
where by limits we understand keeping the terms in the algebra which remain finite with respect to that limit. 
In general, one picture the ring  $K^{q}_{\mathsf{T}}(Y_{\bf{v},\bf{w}})$ as a polyhedron, symbolizing the moduli space of parameters $\zeta_i$. At the faces we put classical equivariant $K$-theories of quiver varieties corresponding to quantum B\"acklund transformations applied to the quiver $Y_{\bf{v},\bf{w}}$. These classical equivariant $K$-theories emerge by sending part of the K\"ahler parameters to $0$ and $\infty$ correspondingly. 
\end{Rem}

Now we are ready to formulate the statement of 3d mirror symmetry.  To do that, it is convenient to use $\xi_i$ variables for parametrization of $Z$-twist/K\"ahler parameters. We will call them modified K\"ahler parameters.

\begin{Def}\label{mirdef}
Two quiver varieties $Y$, associated to quiver $Y_{\bf{v},\bf{w}}$ and $Y^!$ associated to $Y^!_{\bf{v}^!,\bf{w}^!}$ form a 3d mirror pair if 
\begin{eqnarray}
K^{q}_{\mathsf{T}}(Y_{\bf{v},\bf{w}})\cong K^{q}_{\mathsf{T}}(Y^!_{\bf{v}^!,\bf{w}^!})
\end{eqnarray}
so that there is 1-to-1 correspondence between the sets of equivariant and modified K\"ahler parameters: 
\begin{eqnarray}
\{\xi_i\}_{i=1,\dots,r}\leftrightarrow \{a^!_{i,k}\}_{i=1,\dots, r^!; k=1,\dots, {\bf w}^!_i}; \nonumber \\
\{\xi^!_i\}_{i=1,\dots,r^!}\leftrightarrow \{a_{i,k}\}_{i=1,\dots, r; k=1,\dots, {\bf w}_i}, \quad \hbar\to \hbar^!.
\end{eqnarray}
\end{Def}

We remark here that as before, we did not specify the order of parameters $\xi_i$ in the correspondence, since we are talking about the classes of quiver varieties (each related to Miura $\hbar$-opers) corresponding to a given $\hbar$-oper. 

It is not a priori clear that a dual for such quiver variety exists. 3d-mirror symmetry (and related to it \textit{symplectic duality}) is a conjecture which claims that for every symplectic variety, there exists a dual variety. Note, however, that the dual variety does not have to be a Nakajima quiver variety (albeit in the current paper it always will be). See \cite{Aganagic:2016aa,Rimanyi:2019aa,Dinkins:2019pwj,Kamnitzer:2022aa} for more details and references.

Our main goal is to show that $X_{k,l}$ varieties are self-dual as full flag varieties are \cite{Koroteev:2017aa}. Notice that most of our methods, which have to do with the transition to tRS variables can be expanded further for more general $A_r$ quivers. That will be explained in the next section. In this section, we will outline the D-brane approach, which provides a useful tool for finding such mirror pairs.

\subsection{D-branes, Quivers, Linking Numbers and 3d mirrors}

\subsubsection{D-branes and quivers}

The notion of 3d mirror symmetry in $\mathcal{N}=4$ supersymmetric gauge theories appeared in physics literature \cite{Intriligator:1996ex} and can be universally understood as a manifestation of S-duality transformation in Type IIB string theory \cite{Hanany:1996ie}. Below we shall only review the necessary part of the material which is required for this paper.

We shall only use the formal prescription itself without reviewing its physical meaning: an interested reader is welcome to consult the original physics literature, in particular \cite{Gaiotto:2008ak,Gaiotto:2013bwa}. 

Let us introduce the main characters in the corresponding D-brane arithmetic, which describe the Nakajima varieties in type $A$ and their 3d duals.

Superstring theory contains strings and branes in ten spacetime dimensions. The latter are extended objects on which open strings and other branes may end. In order to engineer a quiver variety we will need:
\begin{itemize}
\item NS5 branes, drawn as vertical lines
\item D3 branes, drawn as horizontal line segments connecting NS5 branes
\item D5 branes expressed as red circles between NS5 branes
\end{itemize}
The resulting picture can be viewed as a projection on the two-dimensional space of this system of three and five-dimensional objects. 
Let us first describe how this system defines a framed quiver together with its enumerative data, namely equivariant parameters, K\"ahler parameters. 
\begin{itemize}
\item rank of quiver= number of NS5 branes +1. One can associate with the vertex of the quiver the space between NS5 branes.
\item integral label $\bf v_i$ = number of D3-branes between the $i$-th and $(i+1)$-st NS5 branes 
\item  integral label $\bf w_i$ for the framing for a given vertex = number of D5 branes between $i$-th and $(i+1)$-st NS5 branes
\item equivariant parameters are associated with the horizontal coordinate of D5 branes
\item K\"ahler parameters correspond to the `distance' parameters between pairs of NS5 branes  
\end{itemize}
Such diagrammatic prescription and the way how it defines quiver variety is due to Hanany-Witten \cite{Hanany:1996ie}. 
One can look at the first and the second picture in \figref{fig:QuiverBranes}) for the quiver $X_{2,4}$ with its brane interpretation. 

\begin{Rem}
As a matter of fact, 
all branes coincide with each other along two spatial directions and the time direction. Thus in the field theory limit, this brane construction engineers a supersymmetric 3-dimensional gauge theory with eight supercharges. The matter content of this theory, and therefore the data of the quiver variety (which serves as a description of its Higgs vacua) is described by the mutual orientation of branes in the other seven directions. For reference, D3 branes occupy directions 0123, D5 lie in 012456, while NS5 branes span 012789. In the two-dimensional figure, the third direction is horizontal, while the vertical direction is one of 789. D5 branes are thus points (red circles) on this two-dimensional plane.
\end{Rem}
The linear algebra data of a quiver variety is encoded via open strings which end on various branes. Open strings may not end on an NS5 brane, however, on other branes, like D3 branes, can. The dictionary between the quiver variety data and the branes works as follows:
\begin{center}
Hom$(V_i,W_i)$\qquad \includegraphics[trim=2cm 3cm 0 0, scale=0.38]{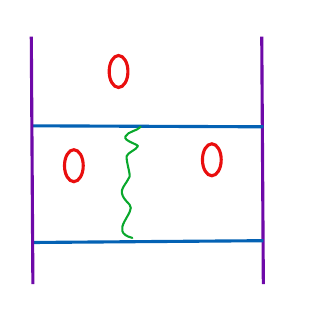} Hom$(V_i,V_{i+1})$\qquad   \includegraphics[trim=2cm 2.5cm 0 0, scale=0.4]{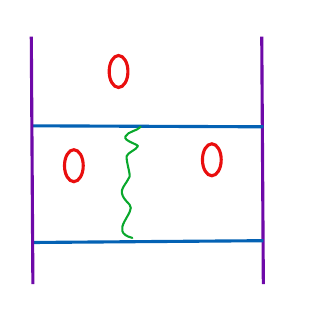} \qquad Hom$(V_i,V_{i})$\qquad  \includegraphics[trim=2cm 2.3cm 0 0, scale=0.45]{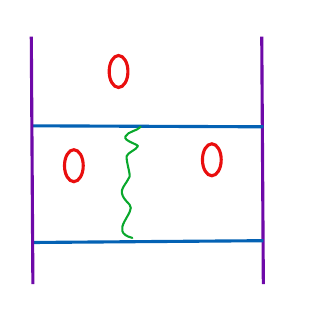}
\end{center} 

\vskip.4in

In other words, in order to describe $Hom(A,B)$ the left end of the string needs to be attached to a brane which corresponds to space $A$, while its right end to be attached to the brane which stands for space $B$. Since strings are oriented $Hom(A,B)$ is distinguished from $Hom(B,A)$.

\subsubsection{Linking numbers and 3d Mirror map}

In order to define the 3d mirror map we have to to introduce the following concept of linking numbers.

\begin{Def}
The linking number $r_i^!$ for an NS5 brane can be defined as 
the number of D3 branes ending from the right, plus the number of D5 branes on the left of it, minus the number of D3 branes emerging from the left
\begin{equation}
r_i^!=\# D3(R)-\# D3(L)+\# D5(L)\,.
\label{eq:linkNS5}
\end{equation}
The linking number $r_i$ for a D5 brane is defined as
\begin{equation}
r_i=\# D3(L)-\# D3(R)+\# NS5(R)\,.
\end{equation}
\end{Def}

The precise positions of the branes along the horizontal direction in \figref{fig:QuiverBranes} do not affect the quiver data, but the relative order of the NS5 branes is important. A D5 brane and a NS5 brane can be transported across each other 
only if the number of D3 branes in the system is changed appropriately, in order to keep constant the {\it linking numbers} of each NS5 brane.  The linking numbers can be realized through the movement of D5 branes to the to the right. When a D5 brane passes through an NS5 brane a D3 brane (horizontal segment) is created. One can then interpret $r_i^!$ as a linking number associated to an NS5 brane, counting the number of D3 branes that end on it on the right minus the number of D3 branes ending on it on the left.  
The number $r_i$ could be thought of as a linking number associated with a given D5 brane, counting the number of D3 branes attached to it upon moving this D5 brane to the very right. 

The two sets of linking numbers $\lambda^!$ of NS5 branes and $\lambda$ of D5 branes can be thus arranged as a set of natural numbers defining the Young tableaux.

\vskip.1in

\noindent {\bf Example.}  As an example consider again the family $X_{k,l}$ for $k=2$ and $r=4$.  When a D5 brane passes through an NS5 brane a D3 brane (horizontal segment) is created as in the bottom picture in \figref{fig:QuiverBranes}. This quiver gives rise to two sets of linking numbers for D5 branes $\lambda=(4,3,2,1,1,1)$ 
{\color{orange} {\tiny$\yng(1,2,3,6)$}}
 and for NS5 branes $\lambda^!=(4,3,2,1,1,1)$ {\color{green} {\tiny$\yng(1,2,3,6)$}}. For the self-dual family $X_{k,l}$ these sets are identical. For a general type A quiver variety there will be two distinct Young diagrams $\lambda$ and $\lambda^!$, which are interchanged under the 3d mirror symmetry, as discussed below.
\begin{figure}[!h]
\includegraphics[scale=0.5]{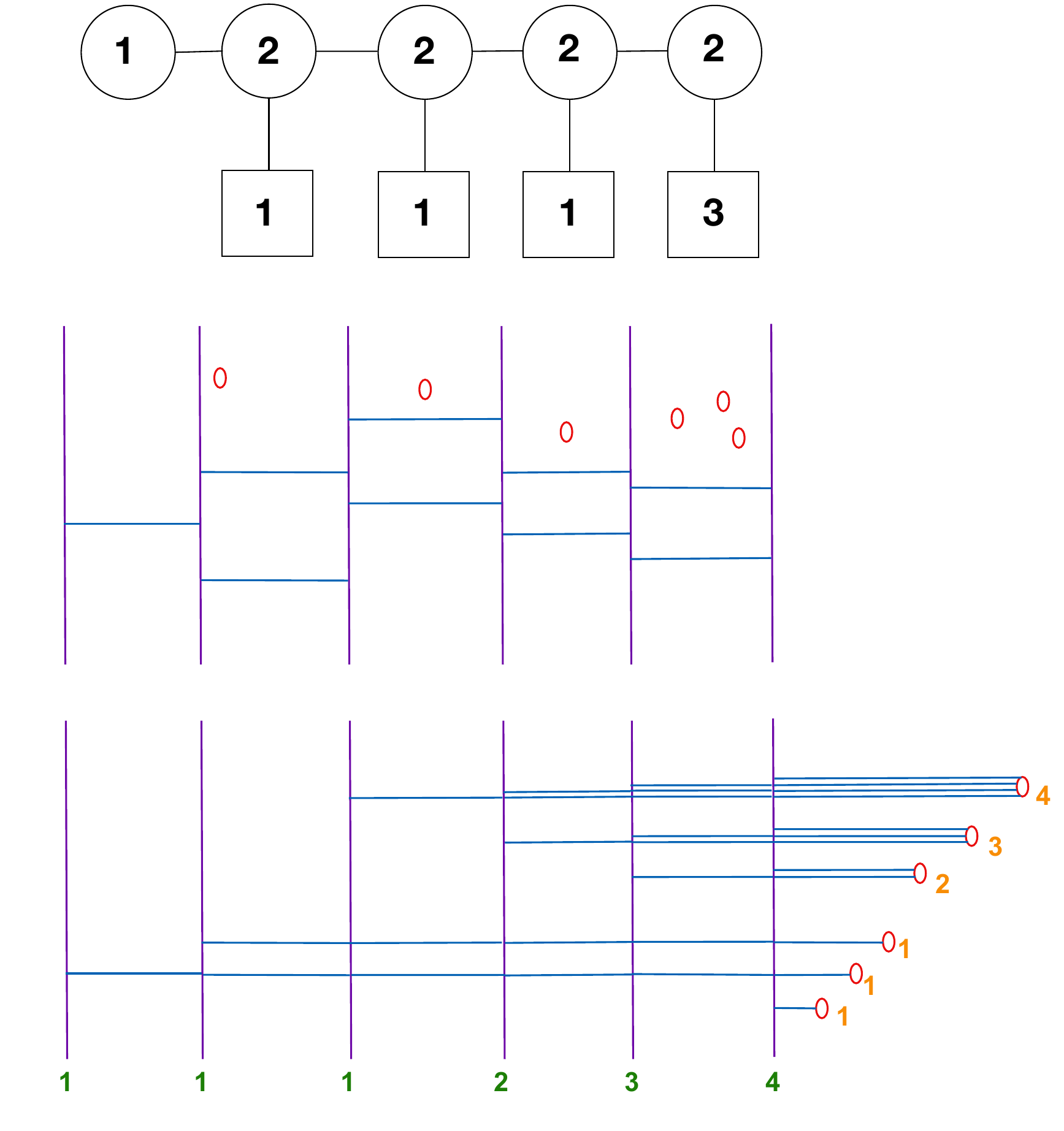}
\caption{Quiver $X_{2,4}$ (top) and its brane presentations, its Hanany-Witten brane description (middle), and its description in terms of 4d BPS boundary condition (bottom).}
\label{fig:QuiverBranes}
\end{figure}

\vskip.1in

Given the definition of linking numbers, we obtain that the sum of all NS5 linking numbers equals the sum of all D5 linking numbers. 
The linking numbers of the neighboring NS5 branes are related in the following way:
\begin{equation}\label{eq:weights}
r_{i}^!-r_{i+1}^! = {\bf w}_i + {\bf v}_{i-1} + {\bf v}_{i+1} - 2 {\bf v}_i\,,
\end{equation}
and are thus positive and nondecreasing to the left from $r_{L^!+1} ={\bf v}_{L^!}$ if the right-hand side of the above relation is nonnegative: this is automatically satisfied if the corresponding quiver variety exists. We also assume ${\bf v}_0={\bf w}_0={\bf v}_{L^!+1}={\bf w}_{L^!+1}=0$. In other words, by starting from the right-most NS5 brane we can calculate all NS5 linking numbers by iteratively applying \eqref{eq:weights}.


For a generic Type A quiver varieties the mirror map works as follows.
Let $X$ be a quiver variety of rank $L^!-1$ and $X^!$ have rank $L-1$ with labels
\begin{equation}\label{eq:XX1labels}
{\bf v}_1,\dots,{\bf v}_{L^!-1},\, {\bf w}_1,\dots,{\bf w}_{L^!-1}\,,\qquad
{\bf v}^!_1,\dots,{\bf v}^!_{L-1},\, {\bf w}^!_1,\dots,{\bf w}^!_{L-1}
\end{equation}
respectively. In order to express the dual labels via the original labels we first convert them into the linking numbers parameterized by two Young diagrams $\lambda$ and $\lambda^!$, which are then simply exchanged by the 3d mirror symmetry. 
%
The linking numbers for the D5 branes are given by $r_a =L^!-i$, where $a=1,\dots L^!$ and $i=1,\dots, L-1$ if the $a$-th D5 brane is positioned between $i$th and $(i+1)$st NS5 branes in the brane picture. We can order the labels $r_a$ so that they are non-decreasing so they form columns of a Young diagram.
The second set of linking numbers $r^!_i$ reads
\begin{equation}\label{eq:Xlabelsr}
r^!_i = {\bf v}_1\,,\quad r^!_i={\bf w}_i + {\bf v}_{i-1} + {\bf v}_{i+1} -2{\bf v}_{i} \,.
\end{equation}
In the mirror frame, we interchange the linking numbers
\begin{equation}\label{eq:X1labelsr1}
r_i = {\bf v}^!_a\,,\quad r_a={\bf w}^!_i + {\bf v}^!_{i-1} + {\bf v}^!_{i+1} -2{\bf v}^!_{i} \,,
\end{equation}
and $r^!_i = L -a$ if the $i$th D5 brane belongs to interval between $a$th and $(a+1)$st NS5 branes
which can be used to compute the labels of the dual quiver variety $\{{\bf v}^!,{\bf w}^!\}$.

\vskip.1in

At this moment the reader may find an obvious similarity of the NS5 branes linking numbers $r_i^!$ and degrees of the components $\rho_i$ of the section $s(z)$ in the oper magnetic frame \eqref{eq:weights0}.

%

\begin{Prop}
The linking numbers of the NS5 branes for quiver $X$ are equal to the degrees of the components of the section on the $\hbar$-oper bundle describing ${\rm Fun }(\hbar{\rm Op})(X)$.
\end{Prop}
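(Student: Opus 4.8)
The plan is to observe that both sequences in question are governed by \emph{the same} first-order difference equation, so that they are forced to coincide once a single boundary value is matched. On the oper side, Lemma~\ref{Th:DegreesQ} gives the degrees $\rho_i$ of the components $s_i(z)$ of the $\hbar$-oper section, subject to \eqref{eq:weights0}, namely $\rho_i-\rho_{i+1}={\bf w}_i+{\bf v}_{i-1}+{\bf v}_{i+1}-2{\bf v}_i$, together with the terminal datum $\rho_{r+1}={\bf v}_r$ (and the convention ${\bf v}_{r+1}=0$). On the brane side, the NS5 linking numbers satisfy the identical recurrence \eqref{eq:weights}, $r_i^!-r_{i+1}^!={\bf w}_i+{\bf v}_{i-1}+{\bf v}_{i+1}-2{\bf v}_i$, obtained from the Hanany--Witten counting, with the analogous terminal value fixed by the dimension at the last node. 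Since an inhomogeneous first-order recurrence has a unique solution once its value at one index is prescribed, I would conclude $\rho_i=r_i^!$ for all $i$ by downward induction starting from the rightmost node.

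Concretely, I would carry out the verification by solving both recurrences in closed form, which avoids any inductive bookkeeping. Telescoping \eqref{eq:weights} from index $i$ to the terminal node, and using that $\sum_{j=i}^{r}({\bf v}_{j-1}+{\bf v}_{j+1}-2{\bf v}_j)$ collapses to $({\bf v}_{i-1}-{\bf v}_i)-{\bf v}_r$, one obtains
\begin{equation}
r_i^! = {\bf v}_{i-1}-{\bf v}_i+\sum_{a=i}^{r}{\bf w}_a\,,
\end{equation}
which is exactly the closed form for $\rho_i$ already derived inside the proof of Lemma~\ref{Th:DegreesQ}. Matching these two explicit expressions makes the identification of the degrees with the linking numbers manifest and simultaneously confirms the terminal condition.

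The only genuine subtlety --- and the step I expect to demand the most care --- is reconciling the two indexing conventions: the oper-theoretic labeling attached to the $(SL(r+1),\hbar)$-oper, where the rank is $r$ and the flag has $r+1$ steps, versus the brane labeling, where the count of NS5 branes, and hence the admissible range of $i$ for the linking numbers, is stated with the shifted convention of the D-brane section. I would first fix the dictionary between these two labelings explicitly; in particular I would check that the rightmost NS5 linking number, computed from its defining count $\# D3(R)-\# D3(L)+\# D5(L)$, agrees with the terminal oper datum ${\bf v}_r$, and that the boundary assignments ${\bf v}_0={\bf w}_0=0$ used on the brane side coincide with the conventions ${\bf v}_{r+1}=0$ and $\rho_0=\rho_{r+2}=0$ imposed in Lemma~\ref{Th:DegreesQ}. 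Once the two terminal data are seen to agree, the coincidence of the recurrences \eqref{eq:weights0} and \eqref{eq:weights} does the rest, and everything past that point is the routine telescoping displayed above.
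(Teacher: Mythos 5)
Your proposal is correct and matches the paper's own (implicit) argument: the paper proves this Proposition precisely by observing that the recursion \eqref{eq:weights} for the linking numbers is identical to the recursion \eqref{eq:weights0} for the degrees $\rho_i$ from Lemma \ref{Th:DegreesQ}, with the same terminal datum $r^!_{r+1}=\rho_{r+1}={\bf v}_r$ and the same vanishing boundary conventions, so the two sequences coincide. Your telescoped closed form $r_i^!={\bf v}_{i-1}-{\bf v}_i+\sum_{a=i}^{r}{\bf w}_a$ is exactly the expression for $\rho_i$ recorded inside the proof of Lemma \ref{Th:DegreesQ}, so nothing beyond the bookkeeping you describe is needed.
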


\vskip.1in
\subsubsection{3d Mirror Symmetry for Quiver Varieties}

We are now ready to define the 3d mirror for quiver $X$ and formulate the main theorem about 3d mirror duals for a generic type A quiver variety $Y_{\bf v, \bf w}$.

\begin{Def}\label{eq:DefinitionMirror}
Let A-type quiver varieties $X$ and $X^!$ have labels as in \eqref{eq:XX1labels}. Then we say that  $X$ and $X^!$ are 3d mirror dual to each other if the labels satisfy \eqref{eq:X1labelsr1} and \eqref{eq:Xlabelsr}.
\end{Def}

The following Theorem will be proven in the next section.

\begin{Thm}
For all quiver varieties of type $A$ associated with quiver $Y_{\bf v, \bf w}$, which satisfy the condition ${\bf v}_{i-1}+{\bf v}_{i+1}+{\bf w}_{i}\geq 2{\bf v}_{i}$, quiver variety associated to quiver $Y^!_{\bf v, \bf w}$ is its 3d mirror dual in the sense  of the definition \ref{mirdef}.
\end{Thm}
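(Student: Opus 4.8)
The plan is to prove the isomorphism demanded by Definition~\ref{mirdef} by identifying the \emph{magnetic frame} of $Y_{\bf v,\bf w}$ with the \emph{electric frame} of $Y^!_{\bf v^!,\bf w^!}$, using the bispectral (3d mirror) self-duality of the trigonometric Ruijsenaars--Schneider system. Recall from Theorem~\ref{Wronskian} that $K^{q}_{\mathsf T}(Y_{\bf v,\bf w})={\rm Fun}(\hbar{\rm Op})(Y_{\bf v,\bf w})={\rm Wr}(Y_{\bf v,\bf w})$, the last written in the momenta $\{p_i^k\}$ (roots of the section components $s_i(z)$) and the coordinates $\{\xi_i\}$. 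By Lemma~\ref{Th:DegreesQ} the degrees $\rho_i$ of these components obey \eqref{eq:weights0}, which is \emph{identical} to the NS5 linking-number relation \eqref{eq:weights}; together with the boundary value $\rho_{r+1}={\bf v}_r$ this forces $\rho_i=r_i^!$, i.e.\ the section degrees of $Y$ are exactly the NS5 linking numbers (the Proposition preceding Definition~\ref{eq:DefinitionMirror}). This coincidence is the mechanism that makes the linking-number exchange of Definition~\ref{eq:DefinitionMirror} coincide with the magnetic$\leftrightarrow$electric frame exchange.

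First I would establish the statement for the partial flag quiver $X^{\boldsymbol\lambda}$, where both frames are explicit. On one side, Theorem~\ref{Th:qOpMag} realizes the magnetic frame as the intersection of two tRS Lagrangians, with coordinates $\chi_i=\xi_i$ and Hamiltonians $H_k$ set equal to symmetric functions of the equivariant parameters $\{a_i\}$; on the other side, Theorem~\ref{Th:tRSXXZEl} realizes the electric frame of the \emph{same} system with coordinates $\chi^!_i=a^!_i$ and energies equated to symmetric functions of $\{\xi^!_i\}$. Under the dictionary $a^!_i=\xi_i$, $\xi^!_i=a_i$ of Definition~\ref{mirdef}, with $\hbar\to\hbar^{-1}$ supplied by the bispectral duality, these two presentations become literally the same quotient of $\mathbb{C}(\{\xi\},\{a\},\{p\},\hbar)$: both impose that the tRS Lax matrix built on the coordinates $\{\xi_i\}$ have characteristic polynomial whose roots are the $\hbar$-strings generated by $\{a_i\}$. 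This yields the self-duality of $X^{\boldsymbol\lambda}$ exactly as in the full-flag case $T^*\mathbb{F}l_n$.

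Next I would propagate the result to a general $Y_{\bf v,\bf w}$ by the degeneration procedure of Section~\ref{Sec:FlagElDegen}. By the resonance/Higgsing mechanism of Lemmas~\ref{Th:HiggsingLemma} and \ref{Th:HiggsingLemma2} (whose prototype for the self-mirror family is \eqref{eq:FunQuot}), ${\rm Fun}(\hbar{\rm Op})(Y_{\bf v,\bf w})$ is obtained from ${\rm Fun}(\hbar{\rm Op})(X^{\boldsymbol\lambda})$ for a suitable $\boldsymbol\lambda$ by collapsing chosen equivariant parameters into $\hbar$-strings (the Slodowy-type blocks exemplified in \eqref{eq:TRSSlodowy25}). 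The crucial point is that such a resonance on the electric frame of $Y$ acts on the bispectral-dual magnetic frame precisely as the shift of ranks $\bf v^!$ and framings $\bf w^!$ dictated by \eqref{eq:X1labelsr1}: merging equivariant parameters into an $\hbar$-string of length $m$ removes boxes from one column of $\lambda$ and redistributes them, which under the Young-diagram exchange $\lambda\leftrightarrow\lambda^!$ is exactly the operation producing $Y^!$. Tracking this through the momenta rescalings of type \eqref{eq:rescalingp1}, the resonance-constrained tRS relations for $Y$ match term-by-term the Wronskian (magnetic) relations for $Y^!$, giving $K^{q}_{\mathsf T}(Y_{\bf v,\bf w})\cong K^{q}_{\mathsf T}(Y^!_{\bf v^!,\bf w^!})$ with the required parameter interchange.

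The main obstacle will be this third step: verifying that the resonance conditions imposed in the electric frame of $Y$ translate, under bispectral duality, into \emph{exactly} the rank data of $Y^!$ produced by the linking-number exchange, and that the rescalings keeping the tRS data finite as parameters collide introduce no spurious relations or degeneracies in the dual magnetic presentation. The bookkeeping is governed entirely by the identity $\rho_i=r_i^!$ and by the matching of \eqref{eq:weights0} with \eqref{eq:weights}; consequently the positivity hypothesis ${\bf v}_{i-1}+{\bf v}_{i+1}+{\bf w}_i\ge 2{\bf v}_i$ is precisely what guarantees that all linking numbers (hence all section degrees) are nonnegative and that the dual quiver variety $Y^!$ genuinely exists.
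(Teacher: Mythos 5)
Your overall architecture (tRS bispectral/electric--magnetic exchange, degeneration from flag quivers via resonance conditions, linking-number bookkeeping via the identity $\rho_i=r_i^!$) is the same as the paper's, and your last paragraph correctly identifies where the real work lies. However, your second step contains a genuine error that the rest of the argument is built on: the partial flag quiver $X^{\boldsymbol\lambda}$ is \emph{not} 3d mirror self-dual, and the two presentations you invoke do not become ``literally the same quotient'' under the dictionary $a^!_i=\xi_i$, $\xi^!_i=a_i$, $\hbar\to\hbar^{-1}$. The magnetic frame of a partial flag carries Slodowy-type constraints on the \emph{coordinates} (the $\xi$'s collapse into $\hbar$-strings, cf.\ \eqref{eq:MagneticGr} and \eqref{eq:TRSSlodowy25}) while its eigenvalues, the $a_i$'s, stay distinct; the electric frame \eqref{eq:ElectricGr} has unconstrained coordinates $a_i$ and eigenvalues forming $\hbar$-strings of $\xi$'s. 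Under the swap, each frame of $X^{\boldsymbol\lambda}$ is carried to a frame of the \emph{different} quiver $X_{\lambda}$ obtained by exchanging the Young diagrams $\lambda\leftrightarrow\lambda^!$ (the paper's Grassmannian example makes this explicit: the dual of $T^*Gr_{k,n}$ is the $A_{n-1}$ quiver of \figref{fig:GrMirr}, not $T^*Gr_{k,n}$ itself). You also misapply Theorem \ref{Th:qOpMag}: its part (i) is proved only for the complete flag $F\mathbb{F}l_L$ (part (ii) for $X_{k,l}$); for a general partial flag the oper magnetic frame and the true magnetic frame differ, and relating them requires the minor identification of Proposition \ref{Prop:opertrue}. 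The only legitimate base case is the self-duality of the full flag (the theorem following Lemma \ref{Th:LemmatRS}).

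Beyond the base case, your propagation step asserts that ``the resonance-constrained tRS relations for $Y$ match term-by-term the Wronskian (magnetic) relations for $Y^!$,'' but this matching is precisely the content of the paper's proof and cannot be taken as bookkeeping: after applying the mirror map to the degenerate electric frame of $Y$, one must (a) reconstruct section polynomials $s_k$ of a full-flag-type oper from the mirrored relations via the quantum Wronskian minors $\mathscr{D}_k$ and Lemma \ref{Th:existencePoly} (as in Proposition \ref{Prop:opertrue}), (b) show that the resonance conditions force these minors to be divisible by the appropriate factors of $W(z)=\prod_k\prod_{l_k}(z-a_k\hbar^{-l_k})$, so that the quotients yield a QQ-system whose regular singularities $\Lambda_i$ have \emph{distinct} roots --- this is what produces the framing data ${\bf w}^!$ of the dual quiver --- and then (c) count degrees of the $s_k$ to see they reproduce the linking numbers of $Y^!$. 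Your positivity remark about ${\bf v}_{i-1}+{\bf v}_{i+1}+{\bf w}_i\geq 2{\bf v}_i$ guaranteeing nonnegative linking numbers is correct, but without the divisibility mechanism in (b) the claim that the constrained relations ``are'' the magnetic relations of $Y^!$ is an assertion rather than a proof.
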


\vskip.1in

\begin{Rem}{\it Alternative Definition via Stable Envelopes. }
In recent literature, an alternative definition of the 3d mirror symmetry was given. 

We say that quiver varieties $X$ and $X^!$ are 3d mirror dual to each other if:
\begin{enumerate}
\item There is an isomorphism between K\"ahler and equivariant tori
\begin{equation}
A\xrightarrow{\sim} K^!\,,\qquad K\xrightarrow{\sim} A^!\,,\qquad \mathbb{C}^\times_{\hbar} \xrightarrow{\sim} \mathbb{C}^\times_{\hbar^!};
\end{equation}
\item There is a bijection between the fixed points
\begin{equation}
X^A\xrightarrow{\sim} \left(X^!\right)^{A^!}\,.
\end{equation}
\end{enumerate}

Additionally one may require the existence of the line bundle (Mother function) $\mathfrak{M}$ over scheme Ell$_{T\times T^!}(X\times X^!)$ which reduces to elliptic stable envelopes on both $X$ and $X^!$. We will not study elliptic stable envelopes here and refer the interested reader to \cite{Rimanyi:2019aa}.
\end{Rem}


\subsubsection{Resonance Conditions and Relations in $QK_T(X^{\boldsymbol{\lambda}})$ from Branes}
In this section, we discuss how the resonance conditions from Section 4, i.e. \eqref{eq:res1}, which we used to degenerate the quantum K-theory ring $X_{\lambda}$ to $X_{k,l}$, can be illustrated using branes using examples in \figref{fig:hwmoves, fig:hwmoves2}. 

Consider $X^{\boldsymbol{\lambda}^{!}}$ on the left of \figref{fig:hwmoves} in which case $\boldsymbol{\lambda}^{!}=(2,2,1,1)$. Let us impose $a_1 \hbar=s_{3,1}= a_2 $. According to \figref{fig:PartialFlag2} and the equations in that subsection quiver $X^{\boldsymbol{\lambda}^{!}}$ transforms into $X^{\boldsymbol{\lambda}^{!}}_{\boldsymbol{\lambda}}$ depicted on the right of \figref{fig:hwmoves} where $\boldsymbol{\lambda}=(2,1,1,1,1)$.

In terms of branes the following phenomenon is occurring (see \cite{Gaiotto:2013bwa} for more details). The two D5 branes and one D3 brane align in the same position in the vertical direction. Then the segment of the D3 branes which is stretched between these two D5 branes can be `pulled' out towards the reader in the direction perpendicular to the picture off to infinity. According to Type IIB string theory such an operation does not change the spectrum of open strings which end on these branes. After the intermediate segment of the D3 brane is removed one can move one D5 brane past the NS5 brane on the left and another D5 brane past the NS5 brane on the right. Due to the Hanany-Witten phenomena, the pieces of D3 branes connecting the above NS5 and D5 branes disappear and we obtain the brane picture on the right in \figref{fig:hwmoves}.

As we discussed earlier, this step can be repeated further if we impose the condition $\hbar^2 a_1 = \hbar s_{r-1,1} = s_{r,2} = a_3$, cf. \eqref{eq:res2} (see \figref{fig:hwmoves2}). In this case, we need to align two D3 branes and two D5 branes along the vertical direction and then `pull out' two segments of D3 branes stretched between one D5 and the NS5 brane and another D5 and the NS5 brane between them. The result of this operation is quiver $X^{\boldsymbol{\lambda}^{!}}_{\boldsymbol{\lambda''}}$ with $\boldsymbol{\lambda''}=(3,1,1,1)$.

\begin{figure}[!h]
\includegraphics[scale=0.48]{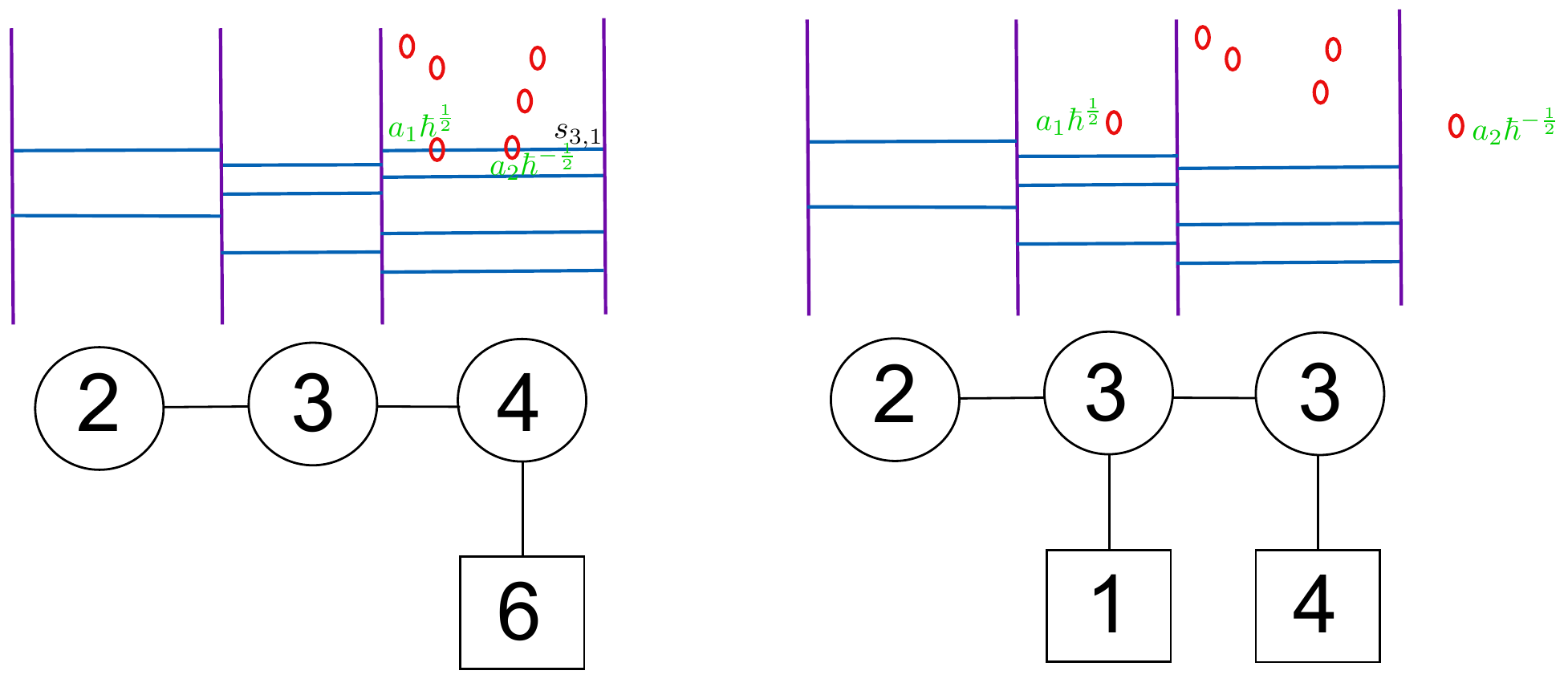}
\caption{Imposing resonance condition $a_1 \hbar=s_{3,1}= a_2 $ on quiver variety with labels ${\bf v}=(2,3,4)\,, {\bf w}=(0,0,6)$.}
\label{fig:hwmoves}
\end{figure}

\begin{figure}[!h]
\includegraphics[scale=0.6]{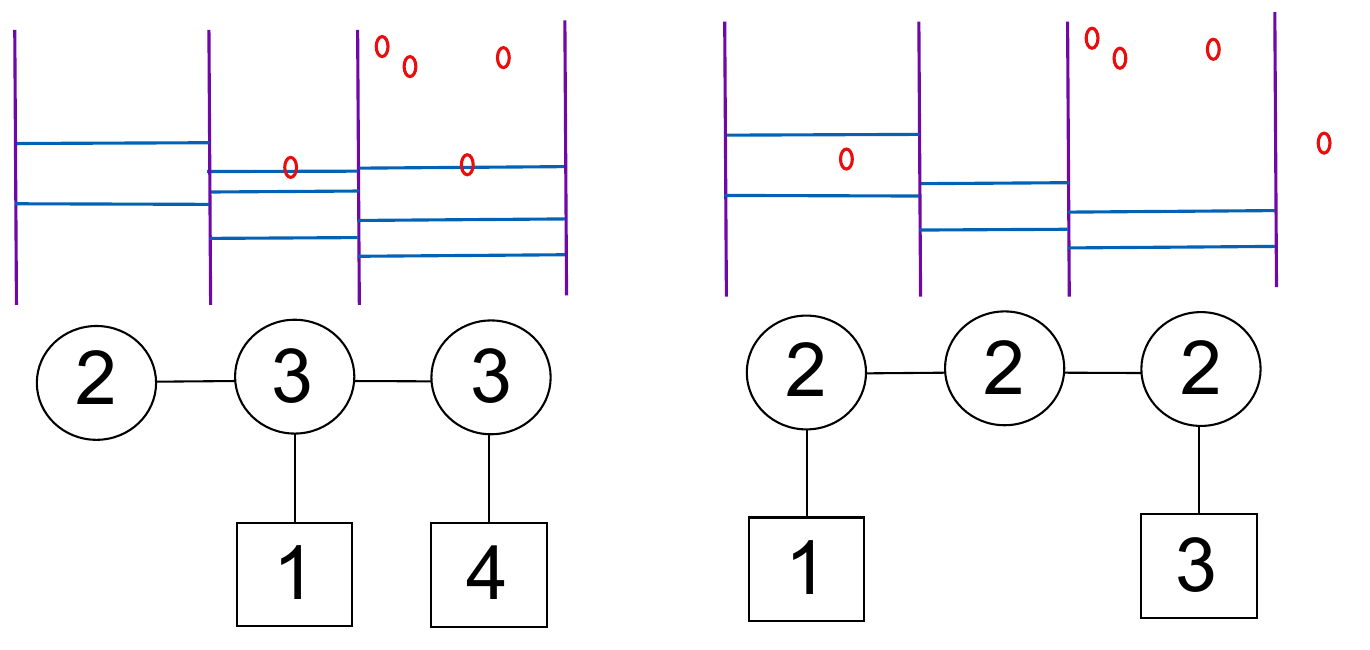}
\caption{Imposing resonance condition $\hbar^2 a_1 = \hbar s_{r-1,1} =s_{r,2} = \hbar a_3$ on quiver variety with labels ${\bf v}=(2,3,3)\,, {\bf w}=(0,1,4)$.}
\label{fig:hwmoves2}
\end{figure}


\section{Mirror Maps via Electric and Magnetic Frames}\label{Sec:MirrorElMag}

\subsection{tRS variables and $A_r$ quivers}

To summarize the results of the previous sections, now we produced two sets of variables for the quantum equivariant $K$-theory ring for Nakajima varieties. 

We called the first set the ``oper magnetic frame coordinates', they are related to the parametrization on Z-twisted Miura $(SL(r+1),\hbar)$-opers corresponding to a given quiver via the section of the line bundle. The roots of the polynomials $s_i(z)$, determining the line bundle, denoted by $\{p_i^k\}^{k=1,\dots, \deg(s_i)}_{i=1,\dots, r+1}$ determine a subvariety in the space of variables $p_i^k$, parametrized by $\{\xi_i\}$, $\{a_i\}$. The quantum equivariant K-theory algebra can be interpreted as the algebra of functions on this family of subvarieties. 

In a particular case of the full flag quiver (this is the case when $\deg(s_i)=1$), this family of subvarieties turns out to be a family of intersections of two Lagrangian subvarieties in the symplectic space with the coordinates $\{\chi_i,p^{\chi}_i\}$ with a canonical 2-form. One of them is given by setting $\chi_i=\xi_i$ and another is given by setting tRS Hamiltonians to be equal to the elementary symmetric functions of equivariant parameters. 

Clearly, one would like to construct a similar set of coordinates for a general quiver.  We will call such a set of coordinates  {\it a true magnetic frame}.  The key to constructing it is through the ``dual" system of coordinates which we called an electric frame, directly related to tRS model. 
 
For the electric frame, we started with a partial flag quiver. In this case, we established the isomorphism between the algebra of functions on the space of Miura $(SL(r+1),\hbar)$-opers and the algebra of functions on 
the intersection of two Lagrangian subvarieties in the symplectic space with coordinates $\{\rho_i, p^{\rho}_i\}_{i=1,\dots, L}$. where the first one is determined by 
$\{\rho_i=a_i\}_{1,\dots, L}$   and the 
second one is deterimined by the tRS Hamiltonians set to be equal to the elementary symmetric functions  of 
$\{\chi_i=\xi_{k_i}\hbar^{l_i}\}_{i=1,\dots,L}$, 
where $\{k_i, l_i\}$ are determined by partial flag labels. Of course, this is a degeneration of the algebra corresponding the full flag quiver, where we imposed the condition $\chi_i=\hbar^{k_{ij}}\chi_j$ for some integers $k_{ij}$. 

Remarkably, one can reproduce this tRS realization of the equivariant quantum K-theory by means of the recursive procedure of Section 4 by imposing degeneration conditions on the $\{a_i\}$ parameters. 

Given all this information, we will define a map from electric frame to a true magnetic frame using the explicit structure of tRS model (as was suggested in Sections 3 and 4 of \cite{Gaiotto:2013bwa}).

\subsection{tRS variables, the Mirror Map and the True Magnetic Frame}
As we know, the Lax matrix of the tRS model admits two different realizations -- it can be either matrix $M$  or matrix $T$ in \eqref{eq:FlatConNew}. This can be achieved by diagonalization of each of the matrices so we can pick a $g$ such that $M$ is diagonal with eigenvalues $\chi_1,\dots, \chi_N$ or $T$ diagonal with eigenvalues $\rho_1,\dots,\rho_N$. Assume for now that $\rho_i\neq \hbar^{\mathbb{Z}}\rho_j$ for $i\neq j$. As we discussed above, upon diagonalization of $M$, $T$ can be written in the form of Lax matrix: \eqref{eq:LaxFullFormula}. Notice that the same can be done for $M$ matrix, the only difference is that one has to exchange $\hbar\rightarrow \hbar^{-1}$. Let us reformulate this nontrivial relation in the following way.

Consider the symplectic spaces $\mathcal{M}^e$  with coordinates $(\rho_i, p^{\rho_i})$ and $\mathcal{M}^m$
with coordinates $ (\chi_i, p^{\chi_i})$. The tRS relation between $M$, $T$ matrices produces a Lagrangian subvariety $\mathcal{L}^e_{\chi}\subset\mathcal{M}$ described by setting the tRS Hamiltonians to be the symmetric functions of $\chi$ variables. On the other hand, it produces a Lagrangian subvariety $\mathcal{L}^m_{\rho}\subset\mathcal{M}$ with tRS Hamiltonians under the transformation $\hbar\to \hbar^{-1}$.

\begin{Lem}\label{Th:LemmatRS}
The transformation of the tRS system with 
$$
\det(u-T)=\prod^N_{i=1}(u-a_i)\,,\qquad \det(u-M)=\prod^N_{i=1}(u-\xi_i)\,,
$$ 
which maps
\begin{equation}\label{eq:tRSMirror}
i^{em}: M\to T\,,\qquad T\to M\,,\qquad \hbar\to \hbar^{-1}\,.
\end{equation}
corresponds to the following symplectic map 
$$
\mathcal{M}^e\to \mathcal{M}^m:\quad (\rho_i, p^{\rho}_i)\rightarrow (\chi_i, p^{\chi}_i)\,,
$$ 
which produces a one-to-one correspondence between the intersections of the pairs of Lagrangian subvarieties:
\begin{equation}
i^{em}: \{\rho_i=a_i\}\cap\mathcal{L}^e_{\xi}\to \{\chi_i=\xi_i\}\cap \mathcal{L}^m_{a}\,.
\end{equation}
\end{Lem}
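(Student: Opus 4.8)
The plan is to realize the map $i^{em}$ as the bispectral involution of the Calogero--Moser space $\mathcal{M}$ from the Definition preceding \eqref{eq:FlatConNew}, and to recognize the electric and magnetic frames as two systems of Darboux coordinates on $\mathcal{M}$ related by this involution. First I would record the elementary but crucial observation that the defining relation \eqref{eq:FlatConNew}, namely $\hbar M T - T M = u \otimes v^T$, is invariant under $(M,T,\hbar) \mapsto (T,M,\hbar^{-1})$ up to rescaling the rank-one datum: applying the substitution gives $\hbar^{-1} T M - M T = \tilde u \otimes \tilde v^T$, and multiplying by $-\hbar$ restores the original shape $\hbar M T - T M = -\hbar\, \tilde u \otimes \tilde v^T$. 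Hence the swap descends to a well-defined involution $i^{em}$ of the quotient $\mathcal{M}$, and tracking the rescaling of $u \otimes v^T$ fixes how the momenta \eqref{eq:lax2} are matched across the two frames.

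Next I would identify the two frames with two coordinate charts on $\mathcal{M}$. In the electric frame one conjugates so that $T = \diag(\rho_1,\dots,\rho_N)$; by the Lax-matrix formula \eqref{eq:LaxFullFormula} applied with $\hbar \to \hbar^{-1}$, the matrix $M$ is then the tRS Lax matrix in the variables $(\rho_i, p^{\rho}_i)$. In the magnetic frame one instead diagonalizes $M = \diag(\chi_1,\dots,\chi_N)$, so that $T$ is the tRS Lax matrix in $(\chi_i, p^{\chi}_i)$. With these identifications the involution $i^{em}$ is exactly the passage from the $T$-diagonal description of a point of $\mathcal{M}$ to its $M$-diagonal description, which is the asserted symplectic map $\mathcal{M}^e \to \mathcal{M}^m$, $(\rho_i, p^{\rho}_i) \mapsto (\chi_i, p^{\chi}_i)$.

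For the symplectic statement I would use that $\mathcal{M}$ carries a canonical holomorphic symplectic form arising from the reduction in the Definition, which in either chart takes the form $\omega = \sum_i \frac{d\rho_i}{\rho_i} \wedge \frac{dp^{\rho}_i}{p^{\rho}_i}$ (respectively with $\chi, p^{\chi}$). Because this form is intrinsic to $\mathcal{M}$ and $i^{em}$ is an algebraic automorphism of $\mathcal{M}$, it is automatically a symplectomorphism; the only thing to verify by hand is that the momentum normalization \eqref{eq:lax2}, read in the two gauges, is compatible with the rescaling from the first step. Finally I would match the Lagrangian intersections: in $\mathcal{M}^e$ the condition $\{\rho_i = a_i\}$ fixes $\det(u-T) = \prod_i(u-a_i)$ while $\mathcal{L}^e_{\xi}$ imposes $\det(u-M) = \prod_i(u-\xi_i)$; in $\mathcal{M}^m$ the condition $\{\chi_i = \xi_i\}$ fixes $\det(u-M)=\prod_i(u-\xi_i)$ while $\mathcal{L}^m_a$ imposes $\det(u-T)=\prod_i(u-a_i)$. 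Thus both intersections describe the identical locus in $\mathcal{M}$ cut out by prescribing the spectra of $M$ and $T$, and $i^{em}$ restricts to the tautological identification of this locus with itself, yielding the claimed bijection.

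The main obstacle I anticipate is not the set-theoretic matching of the two loci, which is essentially tautological once both frames are placed inside $\mathcal{M}$, but the verification that $i^{em}$ preserves the symplectic form together with the precise momentum normalization. Concretely, one must check that the momenta defined by \eqref{eq:lax2} in the $T$-diagonal gauge transform into those of the $M$-diagonal gauge under the $\hbar \to \hbar^{-1}$ swap, keeping careful track of the factor $-\hbar$ introduced when restoring \eqref{eq:FlatConNew}; this is where the bispectral duality is genuinely used and where the sign and $\hbar$-power bookkeeping can obscure an otherwise formal argument.
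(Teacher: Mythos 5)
Your proposal is correct and follows essentially the same route as the paper: the paper also treats the Lemma as a reformulation of the preceding observation that the Calogero--Moser relation $\hbar M T - TM = u\otimes v^T$ admits two diagonalization gauges ($M$-diagonal giving the Lax matrix $T$ in $(\chi_i,p^{\chi}_i)$, $T$-diagonal giving $M$ as the Lax matrix in $(\rho_i,p^{\rho}_i)$ with $\hbar\to\hbar^{-1}$), so that both Lagrangian intersections cut out the same locus of prescribed spectra in $\mathcal{M}$. Your explicit bookkeeping of the rank-one rescaling under the swap $(M,T,\hbar)\mapsto(T,M,\hbar^{-1})$ is a sound and slightly more detailed version of what the paper leaves implicit.
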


We refer to $ (\chi_i, p^{\chi}_i)$ as electric frame for tRS system and $ (\rho_i, p^{\rho}_i)$ as its magnetic frame we call the map $i^{em}$ the {\it electric-magnetic map}. 
Notice, that we already established the isomorphism of Lemma \ref{Th:LemmatRS} in a different manner, when we discussed the ring $K_T(F\mathbb{F}l_L)$ in its electric an oper magnetic frames formulation, which coincide with the electric and magnetic frames of tRS systems.  
As a consequence we obtain the following statement, previously discussed in \cite{Gaiotto:2013bwa,Koroteev:2017aa}:

\begin{Thm}
The contangent bundle to the full flag variety is 3d Mirror self-dual.
\end{Thm}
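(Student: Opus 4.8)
The plan is to read the self-duality directly off the two tRS frames for $K^q_{\mathsf{T}}(T^*\mathbb{F}l_L) = {\rm Fun}(\hbar{\rm Op})(F\mathbb{F}l_L)$ and to identify the parameter exchange demanded by Definition \ref{mirdef} with the electric-magnetic map of Lemma \ref{Th:LemmatRS}. First I would recall that, by definition, $K^q_{\mathsf{T}}(T^*\mathbb{F}l_L)$ is the algebra ${\rm Fun}(\hbar{\rm Op})(F\mathbb{F}l_L)$, with equivariant parameters $\{a_i\}_{i=1,\dots,L}$ entering through the regular singularities $\Lambda_i$ and K\"ahler parameters $\{\xi_i\}$ entering through the $Z$-twist. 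Because the full flag quiver has all section components $s_i(z)$ of degree one, the oper magnetic frame of Theorem \ref{Th:qOpMag}(i) presents this algebra as the intersection, inside the symplectic space with form $\omega=\sum_i d\chi_i/\chi_i\wedge dp_i/p_i$, of the Lagrangian $\{\chi_i=\xi_i\}$ with the Lagrangian $\mathcal{L}^m_a$ cut out by setting the tRS Hamiltonians $H_k(\{p_j\},\{\xi_j\},\hbar)$ equal to the elementary symmetric functions $e_k(a_1,\dots,a_L)$.

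Next, applying Theorem \ref{Th:tRSXXZEl} to the partial flag $X^{\boldsymbol{\lambda}}$ specialized to $\boldsymbol{\lambda}=(1,\dots,1)$, so that all degeneracy disappears and $f(u,\{\xi_i\},\hbar)=\prod_{i=1}^L(u-\xi_i)$, I would present the \emph{same} algebra in the electric frame: the intersection of the Lagrangian $\{\rho_i=a_i\}$ with the Lagrangian $\mathcal{L}^e_\xi$ cut out by $\det\!\big(u-T(\{p_i\},\{a_i\},\hbar)\big)=\prod_{i=1}^L(u-\xi_i)$. Thus ${\rm Fun}(\hbar{\rm Op})(F\mathbb{F}l_L)$ admits two descriptions as coordinate rings of intersections of Lagrangians, one with coordinates fixed to $\{a_i\}$ and Hamiltonians fixed by $\{\xi_i\}$, and one with the roles reversed.

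The crux is then Lemma \ref{Th:LemmatRS}: the electric-magnetic map $i^{em}$, which exchanges the two tRS Lax realizations $M\leftrightarrow T$ and inverts $\hbar$, induces a symplectomorphism $\mathcal{M}^e\to\mathcal{M}^m$ carrying $\{\rho_i=a_i\}\cap\mathcal{L}^e_\xi$ bijectively onto $\{\chi_i=\xi_i\}\cap\mathcal{L}^m_a$. On the level of parameters this is exactly the exchange $\{a_i\}\leftrightarrow\{\xi_i\}$ together with $\hbar\to\hbar^{-1}$ required by Definition \ref{mirdef}. Since both sides are presentations of the coordinate ring of the same Lagrangian intersection, $i^{em}$ upgrades to an algebra isomorphism
\[
K^q_{\mathsf{T}}(T^*\mathbb{F}l_L)\xrightarrow{\sim}K^q_{\mathsf{T}}(T^*\mathbb{F}l_L)
\]
intertwining equivariant and K\"ahler parameters and inverting the weight $\hbar$, which is precisely 3d mirror self-duality.

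The main obstacle I expect is bookkeeping rather than conceptual. One must check that the linking-number/mirror map of Definition \ref{eq:DefinitionMirror} sends the full flag quiver to itself (so that self-duality, not merely duality, holds), and that the $\boldsymbol{\lambda}=(1,\dots,1)$ specialization of the partial-flag electric frame genuinely reproduces the tRS Lax matrix with $\det(u-T)=\prod_i(u-\xi_i)$ and no residual degeneracy. The technical heart is verifying that $i^{em}$ preserves $\omega$ and that the two Lagrangian descriptions cut out the same subvariety scheme-theoretically, and not merely set-theoretically; this is supplied by Lemma \ref{Th:LemmatRS} together with the Oblomkov presentation \eqref{eq:FlatConNew} of the Calogero-Moser space, which treats $M$ and $T$ symmetrically up to the rescaling $\hbar\to\hbar^{-1}$.
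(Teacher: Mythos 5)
Your proposal is correct and follows essentially the same route as the paper: it presents $K^q_{\mathsf{T}}(T^*\mathbb{F}l_L)$ in the oper magnetic frame via Theorem \ref{Th:qOpMag}(i) and in the electric frame via Theorem \ref{Th:tRSXXZEl} (specialized to $\boldsymbol{\lambda}=(1,\dots,1)$), and then invokes the electric-magnetic map $i^{em}$ of Lemma \ref{Th:LemmatRS} to exchange $\{a_i\}\leftrightarrow\{\xi_i\}$ with $\hbar\to\hbar^{-1}$, which is exactly how the paper deduces self-duality. Your added remarks about checking the linking-number map and scheme-theoretic (not merely set-theoretic) agreement are sensible refinements of the same argument rather than a different approach.
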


\begin{Rem}
Consider the product of two $N$-body tRS model phase spaces $\mathcal{M}\times\mathcal{M}^!$. Recall that tRS momenta can be obtained from the XXZ Yang-Yang function for full flag variety: $Y=Y(\{s_{i,k}\},\{a_i\},\{\xi_i\},\hbar)$, which depends on Bethe variables, which provides the relation 
\begin{equation}
{p_i^\xi} = \exp \frac{\partial Y}{\partial{\xi_i}}\,,\qquad{p_i^a} = \exp \frac{\partial Y}{\partial{a_i}}\,.
\end{equation}
It turns out the Yang-Yang function serves as a generating function on Lagrangian subvariety $\mathcal{L}\subset\mathcal{M}\times\mathcal{M}^!$ which is specified by the choice of eigenvalues of tRS Lax matrices $T$ and $M$ above with the symplectic form 
\begin{equation}
\Omega = \sum_{i=1}^N \frac{d p^\xi_i}{p^\xi_i}\wedge\frac{d \xi_i}{\xi_i} - \frac{d p^a_i}{p^a_i}\wedge\frac{d a_i}{a_i} 
\end{equation}
vanishes.
From this geometric viewpoint transition from $\mathcal{M}$ to the dual phase space $\mathcal{M}^!$ is a canonical transformation of type I.
\end{Rem}

We will describe mirror maps for quiver varieties from this statement by applying degeneration constraints on K\"ahler and equivariant parameters. Previously we described the recursive procedure how to effectively degenerate the electric frame version of the 
$K^q_T(F\mathbb{F}l_L)$ to produce $K^q_T(Y_{\bf{v},\bf{w}})$. The first step in that procedure is to degenerate it to the partial flag $X^{\boldsymbol{ \lambda}}$ by imposing the relations on K\"ahler parameters $\xi_i$. One can use the map $i$ from Lemma \ref{Th:LemmatRS} to produce what we call a `true magnetic frame' for the partial flag. Namely, we have

\begin{Prop}
Consider the electric frame formulation of $K^q_T(X^{\boldsymbol{ \lambda}})$, i.e. using matrix $T$ as a Lax matrix. Let ${\rm Fun}^{\bf{\lambda}}(\hbar{\rm Op})(F\mathbb{F}l_L)$ be the space of $Z$-twisted Miura $\hbar$-opers corresponding to the quiver $F\mathbb{F}l_L$  with $Z$-twist components given by the eigenvalues of the tRS matrix $M$ and regular singularities given by the equivariant parameters of $X^{\boldsymbol{ \lambda}}$. Then we have the following isomorphism:
\begin{eqnarray}
K^q_T(X^{\boldsymbol{ \lambda}})\cong {\rm Fun}^{\boldsymbol{\lambda}}(\hbar{\rm Op})(F\mathbb{F}l_L).
\end{eqnarray}
\end{Prop}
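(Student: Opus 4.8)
The plan is to push $K^q_T(X^{\boldsymbol{\lambda}})$ through its electric tRS presentation, apply the electric--magnetic map, and recognize the output as a degenerate full-flag oper algebra. First I would use the definition of quantum K-theory to write $K^q_T(X^{\boldsymbol{\lambda}}) = {\rm Fun}(\hbar{\rm Op})(X^{\boldsymbol{\lambda}})$, and then invoke the electric-frame isomorphism of Theorem \ref{Th:tRSXXZEl}. This presents the ring as the algebra of functions on the intersection of the Lagrangian $\{M = \diag(a_1,\dots,a_L)\}$ --- the tRS coordinates fixed to the equivariant parameters of $X^{\boldsymbol{\lambda}}$ --- with the tRS Lagrangian cut out by $\det\bigl(u - T(\{p_i\},\{a_i\},\hbar)\bigr) = f(u,\{\xi_i\},\hbar)$, where the right-hand side \eqref{eq:Frhs} encodes the degenerate eigenvalue pattern \eqref{eq:degenerateeigenvalues} assembled from the K\"ahler parameters $\xi_i$.

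Next I would apply the electric--magnetic map $i^{em}$ of Lemma \ref{Th:LemmatRS}, which exchanges the roles of $M$ and $T$ and inverts $\hbar \mapsto \hbar^{-1}$. Under this symplectomorphism the locus above is carried bijectively to the reversed one, in which the diagonal matrix $M$ carries precisely the eigenvalues \eqref{eq:degenerateeigenvalues}, while $T$ becomes the tRS Lax matrix \eqref{eq:LaxFullFormula} whose characteristic polynomial is $\prod_{i=1}^L (u - a_i)$; equivalently, the tRS Hamiltonians are set equal to the elementary symmetric functions $e_k(a_1,\dots,a_L)$ of the equivariant parameters. Since $i^{em}$ is a one-to-one correspondence between the two intersections, it induces an isomorphism of their coordinate rings.

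Finally I would identify the target with ${\rm Fun}^{\boldsymbol{\lambda}}(\hbar{\rm Op})(F\mathbb{F}l_L)$. By the full-flag magnetic-frame description --- Theorem \ref{Th:qOpMag} i) combined with the oper/Wronskian identification of Theorem \ref{Wronskian} --- the algebra of functions on $\{M = \diag(\chi_i)\}$ intersected with the tRS Lagrangian $H_k = e_k(a)$ is exactly the oper algebra of the complete-flag quiver whose $Z$-twist components are the eigenvalues $\chi_i$ of $M$ and whose regular singularities are $\{a_i\}$. Specializing the twist to \eqref{eq:degenerateeigenvalues} and the singularities to the equivariant parameters of $X^{\boldsymbol{\lambda}}$ reproduces exactly the data defining ${\rm Fun}^{\boldsymbol{\lambda}}(\hbar{\rm Op})(F\mathbb{F}l_L)$, which yields the claimed isomorphism.

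The main obstacle will be the degeneracy. Both Lemma \ref{Th:LemmatRS} and the full-flag magnetic frame were derived under genuine nondegeneracy (distinct, non-$\hbar$-resonant twists satisfying \eqref{assume}), whereas the twist \eqref{eq:degenerateeigenvalues} clusters into $\hbar$-progressions, so the associated full-flag oper is degenerate in the sense of Section \ref{Sec:FlagElDegen}. I would therefore have to check that the electric--magnetic correspondence survives this specialization: because $\hbar$ is not a root of unity the individual entries of \eqref{eq:degenerateeigenvalues} stay pairwise distinct, so $M$ remains regular semisimple and the Lax parametrization \eqref{eq:LaxFullFormula} stays finite, and one must verify that the momentum rescalings forced by the resonance conditions (of the type \eqref{eq:rescalingp1} appearing in the recursive degeneration of Section \ref{Sec:FlagElDegen}) are precisely those that keep the diagonalizing gauge transformation nonsingular. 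Granting this, the bijection of intersection points promotes to the desired algebra isomorphism.
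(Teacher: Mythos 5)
Your proposal is correct and follows essentially the same route as the paper's own proof: present $K^q_T(X^{\boldsymbol{\lambda}})$ in the electric frame via Theorem \ref{Th:tRSXXZEl}, apply the electric--magnetic map $i^{em}$ of Lemma \ref{Th:LemmatRS} with the degenerate eigenvalue pattern, and identify the image with the full-flag oper algebra through the Wronskian/magnetic-frame realization (Theorems \ref{Wronskian} and \ref{Th:qOpMag}). Your final paragraph on verifying that the correspondence survives the $\hbar$-resonant twist is exactly the point the paper disposes of with its one-line remark that the tRS Hamiltonians develop no singularities under these degenerations, so you have merely spelled out the same argument in more detail.
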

\begin{proof}
Indeed, let us apply map $i$ in the case of $M$ with these degenerate eigenvalues. It still works, since tRS Hamiltonians do not produce any singularities upon the degenerations produced by the electric frame for partial flag. Then through Wronskian 
realization in the Theorem \ref{Wronskian} we obtain the corresponding oper space $\hbar{\rm Op}(F\mathbb{F}l_L)$.
\end{proof}

We call the roots of the monomials of section $s$ in the Wronskian formulation of $\hbar{\rm Op}^{\mathbf \lambda}(F\mathbb{F}l_L)$ as {\it true magnetic momenta}. Then one can interpret the space ${\rm Fun}^{\bf{\lambda}}(\hbar{\rm Op})(F\mathbb{F}l_L)$ as the space of functions on the intersection of two Lagrangian subvarieties in the space with coordinates $(p^{\chi}_i, \chi_i)_{i=1,\dots, L}$ by setting $\chi_i$ to be equal to the eigenvalues of matrix $M$ for the electric frame formulation of $K^q_T(X^{\boldsymbol{ \lambda}})$ and the second is given by the tRS Hamiltonians set to be equal to the symmetric functions of the parameters of regular singularities.

Certainly, there is a question about how the oper and true magnetic momenta for the partial flag are connected with each other.
Consider the Wronskian matrix for $\hbar{\rm Op}^{\mathbf \lambda}(F\mathbb{F}l_L)\cong {\rm Wr}^{\mathbf \lambda}(F\mathbb{F}l_L)$. The $Z$-matrix is diagonal with the following eigenvalues
\begin{eqnarray}
\xi_{r+1},\dots,\xi_{r+1}\hbar^{L-\textbf{v}_r-1}, \xi_{r},\dots,\xi_{r}\hbar^{\textbf{v}_{r-1}-\textbf{v}_r-1},\dots, \xi_1\,\dots,\xi_1\hbar^{\textbf{v}_1-1}\,.
\end{eqnarray}
We can immediately construct the minors $\mathscr{D}_i$ \eqref{eq:Dkdef}. For instance, $\mathscr{D}_{{\bf v}_r+{\bf v}_{r+1}}$ is the determinant of the following matrix whose components are degree-one polynomials 
{\tiny
\begin{equation}\label{eq:MiuraQOperCond2}
\hspace*{-1.4cm}\begin{pmatrix} \,      \xi_{r}^{{\bf v}_{r-1}+{\bf v}_{r}-1}\hbar^{{\bf v}_{r-1}({\bf v}_{r-1}+{\bf v}_{r}-1)}\widetilde{s}_{L-{\bf v}_{r-1}-{\bf v}_{r}}(z) & \cdots & \xi_{r} \hbar^{{\bf v}_r}\widetilde{s}_{L-{\bf v}_{r-1}-{\bf v}_{r}}(\hbar^{{\bf v}_{r-1}}z) &\vdots & \cdots & \vdots \\ 
 \vdots & \vdots & \vdots & \vdots & \ddots &\vdots \\  
\xi_{r}^{{\bf v}_{r-1}+{\bf v}_{r}-1}\hbar^{{\bf v}_{r-1}({\bf v}_{r-1}+{\bf v}_{r}-1)}\widetilde{s}_{L-{\bf v}_r-1}(z) &\dots & \xi_{r} \hbar^{{\bf v}_r} \widetilde{s}_{L-{\bf v}_r-1}(\hbar^{{\bf v}_{r}-1}z) & \vdots & \cdots & \vdots \\  
\dots & \dots & \dots & \xi_{r+1}^{{\bf v}_r-1}\hbar^{{\bf v}_{r-1}} \widetilde{s}_{L-{\bf v}_r}(\hbar^{{\bf v}_{r-1}} z)  & \cdots & \widetilde{s}_{L-{\bf v}_r}(\hbar^{k-1}z)  \\
\vdots & \ddots & \vdots& \vdots & \ddots &\vdots \\
\cdots & \cdots& \cdots&\xi_{r+1}^{{\bf v}_r-1}\hbar^{{\bf v}_{r-1}({\bf v}_r-1)} \widetilde{s}_{L}(\hbar^{{\bf v}_{r-1}}z)  &\cdots & \widetilde{s}_{L}(\hbar^{k-1}z)  \,
\end{pmatrix} 
\end{equation}
}

This automatically guarantees that these minors have the same degrees as the corresponding minors for $\hbar{\rm Op}(X^{\boldsymbol{\lambda}})$.  At the same time, using the Lemma $\ref{Th:existencePoly}$, we can reconstruct polynomials $s_i(z)=c_i\prod_k(z-p^k_i)$ for the partial flag, e.g. the minor $\mathscr{D}_2$ for partial flag corresponding to the minor of true magnetic momenta above is as follows:
 \begin{eqnarray}
\mathscr{D}_2=\det\begin{pmatrix}
\xi_{r} \hbar s_{r-1}(\hbar z) & s_{r-1}(\hbar z) \\
\xi_{r+1} \hbar s_{r}(\hbar z) &  s_{r}(\hbar z) .
\end{pmatrix}
\end{eqnarray}

Thus we have the following statement.

\begin{Prop}\label{Prop:opertrue}
The isomorphism $${\rm Fun}^{\boldsymbol{\lambda}}(\hbar{\rm Op})(F\mathbb{F}l_L)\cong {\rm Wr}^{\boldsymbol{ \lambda}}(F\mathbb{F}l_L)\to {\rm Fun}(\hbar{\rm Op})(X^{\boldsymbol{\lambda}})\cong {\rm Wr}(X^{\boldsymbol{\lambda}})$$ 
is realized by identifying minors in the Wronskian formulation of both algebras as described above.
\end{Prop}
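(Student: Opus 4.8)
\emph{Proof idea.} The two algebras at the ends of the chain are, by Theorem~\ref{Wronskian}, realized as ${\rm Wr}$-algebras: each is generated by the elementary symmetric functions of the roots of the components of its oper-bundle section $s(z)$, modulo the Wronskian relations \eqref{eq:MiuraQOperCond}. The isomorphism of the underlying ${\rm Fun}(\hbar{\rm Op})$ algebras has already been established in the preceding Proposition through the electric--magnetic map of Lemma~\ref{Th:LemmatRS}; what remains is to exhibit it concretely at the level of the quantum Wronskian minors $\mathscr{D}_k$. The plan is threefold: first, build the degenerate full-flag Wronskian matrix whose $Z$-spectrum is the list displayed above; second, show that its minors $\mathscr{D}_k$, regarded as polynomials in $z$, coincide with the corresponding minors of $\hbar{\rm Op}(X^{\boldsymbol{\lambda}})$; third, use Lemma~\ref{Th:existencePoly} to pass uniquely, on each side, between the collection of minors and the individual section polynomials, thereby matching the two sets of generators.

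For the second step I would argue that the minors on both sides solve the same extended $\mathscr{D}\mathscr{D}$-system of Lemma~\ref{Th:PropQQtilde}, recovered from Wronskian data via Proposition~\ref{eq:qWronskiansShift2}. The defining $\mathscr{D}\mathscr{D}$-relations are algebraic in the twists $\xi_i$ and are insensitive to whether some of the full-flag eigenvalues coincide up to integer powers of $\hbar$; hence specializing the full-flag twists to the degenerate spectrum produces exactly the minors of the partial flag. The degree bookkeeping is supplied by Lemma~\ref{Th:DegreesQ}: the section degrees $\rho_i$ of $X^{\boldsymbol{\lambda}}$ agree with the degrees of the degenerate full-flag minors exhibited in \eqref{eq:MiuraQOperCond2}, as the explicit $\mathscr{D}_2$ above already illustrates. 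This pins down the identification $\mathscr{D}_k^{\,F\mathbb{F}l_L,\boldsymbol{\lambda}}=\mathscr{D}_k^{\,X^{\boldsymbol{\lambda}}}$.

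The third step, and the place where the real work lies, is the reconstruction of the individual section polynomials from the minors. Here I would apply Lemma~\ref{Th:existencePoly} with the constants $\gamma$ taken to be the block-leading twists $\xi_1,\dots,\xi_{r+1}$, which are $q$-distinct by the nondegeneracy hypothesis \eqref{assume}; this yields unique polynomials $s_i(z)=c_i\prod_k(z-p_i^k)$ for the partial flag and, run in the opposite direction, reconstructs the degree-one full-flag components from the partial-flag minors, so the map is invertible and respects \eqref{eq:MiuraQOperCond}. The main obstacle is that within a single degenerate block the full-flag twists form a genuine $q$-string $\xi_j,\xi_j\hbar,\xi_j\hbar^2,\dots$, so the hypothesis $\gamma_a\notin q^{\mathbb{N}_0}\gamma_b$ of Lemma~\ref{Th:existencePoly} fails if one tries to treat these components one at a time. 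The reconstruction must therefore be organized block by block: only the distinct block-leading twists enter as the $\gamma$'s of the Lemma, while the higher $\hbar$-powers inside a block are absorbed into the single higher-degree section component of $X^{\boldsymbol{\lambda}}$ that the block represents. Verifying that this grouping is compatible with the full set of Wronskian relations --- equivalently, that the block-wise application of Lemma~\ref{Th:existencePoly} assembles into a global solution of \eqref{eq:MiuraQOperCond} --- is the crux of the argument, and is precisely what the degree matching of the previous paragraph makes possible.
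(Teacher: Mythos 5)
Your proposal follows essentially the same route as the paper: construct the degenerate full-flag Wronskian matrix with $\hbar$-string spectrum as in \eqref{eq:MiuraQOperCond2}, observe that its minors $\mathscr{D}_k$ from \eqref{eq:Dkdef} have the same degrees as those of $\hbar{\rm Op}(X^{\boldsymbol{\lambda}})$, and then reconstruct the partial-flag section polynomials $s_i(z)=c_i\prod_k(z-p_i^k)$ via Lemma \ref{Th:existencePoly}, identifying the two Wronskian algebras minor by minor. The "main obstacle" you flag is in fact sidestepped automatically in this setup, since the reconstruction via Lemma \ref{Th:existencePoly} is only ever performed on the partial-flag side, where the relevant $\gamma$'s are the block-leading twists $\xi_1,\dots,\xi_{r+1}$, which are $q$-distinct by \eqref{assume} --- this is precisely the block-by-block organization you describe, so your argument is a correct (and slightly more careful) rendering of the paper's.
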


The generalization of this identification between minors and will be used below in the explicit formulation of the 3d Mirror map.

\vskip.1in

\vskip.1in
Now we are ready to formulate the 3d mirror map from this point of view. Let us denote the electric frame coordinates of $K^q_T(X_{\mathbf{\lambda}})$ and its degenerations to $K_T^q(Y_{\mathbf{u}, \mathbf{w}})$ 
as $\{(\mathfrak{a}_i, \mathfrak{p}_i)\}$, while $\{\upxi_j\}$ stand for the corresponding K\"ahler parameters.
Let us denote the true magnetic frame coordinates of $K^q_T(Y^!_{\mathbf{u}, \mathbf{w}})$ as 
$\{(\xi_i, p_i)\}$ and the corresponding framing torus parameters as $\{a_i\}$. 

For instance, the tRS equations in the electric frame for $X^{\boldsymbol{\lambda}}$ (cf. \eqref{eq:tRSRelXlambda}) will read
\begin{equation}\label{eq:tRSRelXlambda2}
\sum_{\substack{\mathcal{I}\subset\{1,\dots,L\} \\ |\mathcal{I}|=k}}\prod_{\substack{i\in\mathcal{I} \\ j\notin\mathcal{I}}}\frac{\hbar\,{\mfa}_i - {\mfa}_j }{{\mfa}_i-{\mfa}_j}\prod\limits_{m\in\mathcal{I}}{\mfp}_m = \ell_k (\upxi_i)\,.
\end{equation}

Then, according to the consideration from the previous section, the following theorem about 3d mirror duals holds:

\begin{Thm} 
Upon identification of parameters 
\begin{eqnarray}
j:(\mathfrak{a}_k, \mathfrak{p}_k)\rightarrow (\xi_k, p_k)\,,\qquad \upxi_l\rightarrow a_l \,,\qquad \hbar\to \hbar^!=\hbar^{-1}
\end{eqnarray}
 we have the following isomorphism:
\begin{eqnarray}\label{mirisom}
K_T^q(Y_{\mathbf{u}, \mathbf{w}})\cong K_T^q(Y^!_{\mathbf{u}, \mathbf{w}}).
\end{eqnarray}
\end{Thm}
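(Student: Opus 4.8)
The plan is to chain together the two frame presentations of the quantum K-theory ring developed in the preceding sections with the electric–magnetic map of Lemma~\ref{Th:LemmatRS}. First I would realize $K_T^q(Y_{\mathbf{u},\mathbf{w}})$ in its electric frame: by the recursive degeneration procedure of Section~\ref{Sec:FlagElDegen}, starting from $K_T^q(F\mathbb{F}l_L)$ and imposing the resonance conditions on the Kähler parameters $\{\upxi_j\}$, the ring ${\rm Fun}(\hbar{\rm Op})(Y_{\mathbf{u},\mathbf{w}})\cong K_T^q(Y_{\mathbf{u},\mathbf{w}})$ is presented as the algebra of functions on the intersection of two Lagrangian subvarieties in the tRS phase space $\mathcal{M}^e$ with coordinates $(\mathfrak{a}_i,\mathfrak{p}_i)$. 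One Lagrangian fixes $\mathfrak{a}_i$ to the equivariant parameters, while the other is cut out by setting the tRS Hamiltonians of the Lax matrix $T$ equal to the elementary symmetric functions of the (degenerate, $\hbar$-shifted) eigenvalues built from $\{\upxi_j\}$, as in \eqref{eq:tRSRelXlambda2}.

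Next I would produce the true magnetic frame for the dual quiver $Y^!_{\mathbf{u},\mathbf{w}}$. Using Proposition~\ref{Prop:opertrue} and its generalization to arbitrary $A_r$ quivers, the algebra $K_T^q(Y^!_{\mathbf{u},\mathbf{w}})\cong{\rm Fun}(\hbar{\rm Op})(Y^!_{\mathbf{u},\mathbf{w}})$ is realized through the Wronskian/oper presentation of Theorem~\ref{Wronskian} as functions on the intersection of two Lagrangians in the magnetic phase space $\mathcal{M}^m$ with coordinates $(\xi_i,p_i)$: one fixing $\xi_i$ to the eigenvalues of the matrix $M$, the other setting the tRS Hamiltonians of $M$ (with $\hbar\to\hbar^{-1}$) equal to the symmetric functions of the regular singularities $\{a_i\}$.

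The heart of the argument is then to apply the symplectic electric–magnetic map $i^{em}$ of Lemma~\ref{Th:LemmatRS}, which interchanges the roles of $M$ and $T$, inverts $\hbar$, and sends $\{\mathfrak{a}_i=\mathrm{const}\}\cap\mathcal{L}^e_{\xi}$ bijectively onto $\{\chi_i=\mathrm{const}\}\cap\mathcal{L}^m_{a}$. Under this map the electric-frame data of $Y_{\mathbf{u},\mathbf{w}}$ translates precisely into the true-magnetic-frame data of $Y^!_{\mathbf{u},\mathbf{w}}$, yielding the parameter dictionary $j:(\mathfrak{a}_k,\mathfrak{p}_k)\mapsto(\xi_k,p_k)$, $\upxi_l\mapsto a_l$, $\hbar\mapsto\hbar^{-1}$, and hence the algebra isomorphism \eqref{mirisom}.

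The main obstacle I anticipate is the bookkeeping step showing that the resonance pattern imposed on $\{\upxi_j\}$ to obtain $Y_{\mathbf{u},\mathbf{w}}$ from the full flag corresponds, after $i^{em}$, to exactly the framing and dimension data of the mirror quiver $Y^!_{\mathbf{u},\mathbf{w}}$ prescribed by Definition~\ref{eq:DefinitionMirror}. This requires matching the degenerate eigenvalue multiplicities (the Slodowy block structure visible in \eqref{eq:TRSSlodowy25}) against the linking-number data, using the identification of the NS5 linking numbers $r_i^!$ in \eqref{eq:weights} with the degrees $\rho_i$ of the section components $s_i(z)$ in \eqref{eq:weights0} (the Proposition relating NS5 linking numbers to these degrees, together with Lemma~\ref{Th:DegreesQ}). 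One must verify that the two Young diagrams $\lambda,\lambda^!$ encoding these linking numbers are genuinely swapped by $i^{em}$, so that the nondegeneracy hypotheses of Lemma~\ref{Th:LemmatRS} survive every degeneration and the minors in the Wronskian presentations of the two sides are correctly matched as in Proposition~\ref{Prop:opertrue}.
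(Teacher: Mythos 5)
Your proposal is correct and follows essentially the same route as the paper's proof: the electric-frame presentation of $K^q_T(Y_{\mathbf{u},\mathbf{w}})$ obtained by degenerating the full flag, the electric--magnetic swap of Lemma~\ref{Th:LemmatRS}, reconstruction of the dual oper data through the Wronskian minors (Proposition~\ref{Prop:opertrue} together with Lemma~\ref{Th:existencePoly}), and the degree/linking-number count identifying the result with $\hbar{\rm Op}(Y^!_{\mathbf{u},\mathbf{w}})$ --- which is precisely how the paper settles the ``bookkeeping obstacle'' you flag. Only note a terminological slip: the resonance conditions of Section~\ref{Sec:FlagElDegen} are imposed on the equivariant parameters, $\mathfrak{a}_i=\mathfrak{a}_j\hbar^{c_{ij}}$, not on the K\"ahler parameters $\{\upxi_j\}$; the latter degenerate into $\hbar$-strings in the separate full-flag-to-partial-flag step, and a general $Y_{\mathbf{u},\mathbf{w}}$ requires both.
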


\begin{Rem}In other words, given a Nakajima quiver variety $X_\lambda^{\lambda^!}$ of type A such that for each vertex
\begin{equation}\label{eq:linkingnumberscond}
2{\bf v}_i\leq {\bf v}_{i-1}+{\bf v}_{i-1}+{\bf v}_{i+1}+{\bf w}_{i}
\end{equation}
Then its 3d mirror dual variety is $X_{\lambda!}^{\lambda}$.
\end{Rem}

\begin{proof}
Let us consider the electric frame formulation of  $K_T^q(Y_{\mathbf{u}, \mathbf{w}})$. It is obtained by degenerations of the full 
flag quiver as prescribed in Section \ref{Sec:FlagElDegen}. Namely, we impose $\mathfrak{a}_i=\mathfrak{a}_j\hbar^{c_{ij}}$. Let us apply mirror map $j$ and write down the relations \eqref{eq:tRSRelXlambda2} using the Wronskian matrix with polynomials $\{\tilde{s}_i\}$ of degree one, as in \eqref{eq:MiuraQOperCond2}, so that its determinant is equal to
\begin{eqnarray}
W(z)=\prod^{r+1}_{k=1}\prod^{\bf{v}_r-\bf{v}_{r-1}}_{l_k=0}(z-a_k\hbar^{-l_k}),
\end{eqnarray}
which corresponds directly to the degeneration of $\{\upxi_i\}$ in the partial flag. 
Now we construct the minors $\mathscr{D}_k$ according to the prescribed degeneration procedure of $\mathfrak{a}_i$ and $\xi_i$ as it is done in the Proposition \ref{Prop:opertrue}.  Thus we obtain the minors and polynomials $\{s_k\}$ thanks to Lemma  \ref{Th:existencePoly}. At this step we have the partial flag oper with regular singularities given by the polynomial $W$. At the same time, from electric frame degeration procedure, we know that tRS system produces Bethe equations for quivers with no relations $a_i=a_j\hbar^{c_{ij}}$ between singularities. That means that we have to pick the appropriate reduction of the Bethe equations for partial flag. This means that minors $\mathscr{D}_i$ are divisible by the appropriate multipliers of 
$W(z)$, thus producing the $QQ$-system with regular singularities given by $\Lambda_i$ with distinct roots )producing appropriate framing of quiver). Counting the degrees of $s_k$ we obtain that they are defined by the linking numbers of the quiver $Y^!_{\mathbf{u}, \mathbf{w}}$, thus producing the $\hbar$-oper $\hbar$Op$(Y^!_{\mathbf{u}, \mathbf{w}})$ and thus the isomorphism \eqref{mirisom}.


\end{proof}

\begin{Rem}
Essentially the 3d Mirror map from a point of view of tRS system gives different realizations of the algebra of functions on the family of the intersection of Lagrangian submanifolds $\{\rho_i=a_i\}\cap\mathcal{L}^e_{\xi}$ withe some degenerations 
$\{a_i=\hbar^{c_ij}a_j\}$ and 
$\{x_k=\hbar^{kl}\xi_l\}$. 
In one formulation it produces function on 
$\hbar$Op$(Y_{\mathbf{u}, \mathbf{w}})$ in the electric frame on the other hand it produces functions on $\hbar$Op$(Y^!_{\mathbf{u}, \mathbf{w}})$ in the magnetic frame.
\end{Rem}

\subsection{Examples of 3d Mirror Pairs.}
In this section, we will look at the first nontrivial examples of 3d Mirror symmetry studied in the literature in various contexts.
In the end, we take a look at the most important example, the self-mirror quiver $X_{k,l}$, which we will need in the next Section.
Since under the mirror map parameter $\hbar$ gets inverted, in what follows, for the brevity of exposition, we shall write magnetic relations with $\hbar$ and electric relations with $\hbar^{-1}$.

\subsubsection{Complete flag}
Let $X_\lambda^{\lambda^!}=\mathbb{F}Fl_n$. In this case, the linking numbers are $\lambda=\lambda^!=(1,\dots,1)$. In the electric frame 
$T=\text{diag}(\xi_1,\dots,\xi_n)$ and $M$ has a canonical form of the tRS Lax matrix \eqref{eq:LaxFullFormula}. The energy equations are thus complete (unrestricted) tRS relations
\begin{equation}\label{eq:tRSRelFull}
\sum_{\substack{\mathcal{I}\subset\{1,\dots,L\} \\ |\mathcal{I}|=k}}\prod_{\substack{i\in\mathcal{I} \\ j\notin\mathcal{I}}}\frac{\hbar\,{\mfa}_i - {\mfa}_j }{{\mfa}_i-{\mfa}_j}\prod\limits_{m\in\mathcal{I}}{\mfp}_m = e_k (\upxi_1,\dots,\upxi_n)\,,
\end{equation}
where $e_k$ are elementary symmetric polynomials.

In the magnetic frame, the tRS relations are
\begin{equation}\label{eq:tRSRelFullMag}
\sum_{\substack{\mathcal{I}\subset\{1,\dots,L\} \\ |\mathcal{I}|=k}}\prod_{\substack{i\in\mathcal{I} \\ j\notin\mathcal{I}}}\frac{{\xi}_i - 
\hbar\,{\xi}_j }{{\xi}_i-{\xi}_j}\prod\limits_{m\in\mathcal{I}}{p}_m = e_k (a_1,\dots,a_n)\,.
\end{equation}
The mirror map is straightforward
\begin{equation}
a_i = \upxi_i,\qquad \mfa_i=\xi_i\,,\qquad \mfp_i = p_i\,,\qquad i =1,\dots, n\,.
\end{equation}

\begin{figure}[!h]
\includegraphics[scale=0.25]{Flagsmirror}
\caption{$\mathbb{F}Fl_n$ and its 3d mirror dual quiver.}
\label{fig: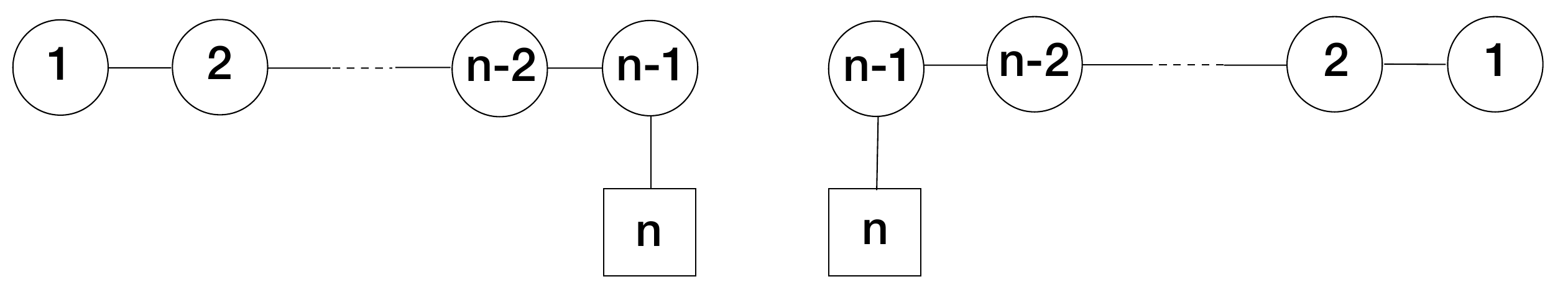}
\end{figure}

Notice that the 3d mirror dual to $\mathbb{F}Fl_n$ is the same quiver variety that underwent an $\mathbb{Z}_2$ outer automorphism. 

\vskip.1in

Recall that in general, if quivers $X$ and $X^!$ are 3d mirror dual to each other we have in fact two dualities simultaneously -- electric frame of $X$ vs. magnetic frame of $X^!$ and magnetic frame of $X$ vs. electric frame of $X^!$. In the case of $X=F\mathbb{F}l_n$, as well as for all self-dual quivers, all four descriptions are equivalent.

\subsubsection{The Cotangent Bundle to the Grassmannian}
Let $\lambda = (k,n-k)$ and $\lambda^!=(1,\dots,1)$. It means that $\lambda^!$ is generic. Let us denote $X^{\lambda}=T^*Gr_{k,n}$. 
On the mirror side quiver $X_\lambda$ is the $A_{n-1}$ quiver with the following data
\begin{equation}
{\bf v}_1 = 1, {\bf v}_2 = 2\dots, {\bf v}_k = k,\dots, {\bf v}_{n-k} = k,\dots, {\bf v}_{n-2} = 2, {\bf v}_{n-1} = 1\,,
\end{equation}
and framing
\begin{equation}
{\bf w}_k ={\bf w}_{n-k} = 1\,,
\end{equation}
with all other framings trivial, see \figref{fig:GrMirr}.
\begin{figure}[h]
\includegraphics[scale=0.25]{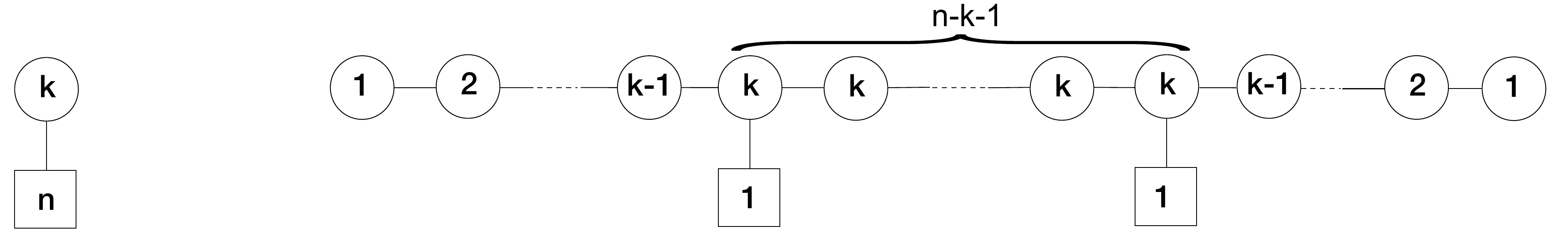}
\caption{$T^*Gr_{k,n}$ and its 3d mirror dual $A_{n-1}$ quiver.}
\label{fig:GrMirr}
\end{figure}

Below we list all four sets of relations for the tRS models in question

\begin{itemize}
\item Magnetic frame for $T^*Gr_{k,n}$. The relations are 
\begin{equation}\label{eq:MagneticGr}
\det(u-T)=\prod_{i=1}^n (u-a_i)\,.
\end{equation}
Matrix $T$ has Slodowy form  which is obtained from the unrestricted Lax matrix by imposing
$\xi_2 = \hbar \xi_1\,, \xi_3= \hbar \xi_2,\dots, \xi_{k}=\hbar \xi_{n-k}$ and $\xi_{n-k-2} = \hbar \xi_{n-k-1}\,,\dots, \xi_{n}=\hbar \xi_{n-1}$.

\item Electric frame for $T^*Gr_{k,n}$. The relations are 
\begin{equation}\label{eq:ElectricGr}
\det(u-M)=\prod_{i=1}^k (u-\hbar^{i-1}\upxi_1) \prod_{i=1}^{n-k} (u-\hbar^{i-1}\upxi_2)\,.
\end{equation}
Matrix $M$ is the unrestricted tRS $n$-body Lax matrix written in terms of $\mfa_i$ and $\mfp_i$.

\item Magnetic frame for $X_\lambda$.
The relations are 
\begin{equation}\label{eq:MagneticGrd}
\det(u-T)=\prod_{i=1}^k (u-\hbar^{i-1}a_1) \prod_{i=1}^{n-k} (u-\hbar^{i-1}a_2)\,,
\end{equation}
where $T$ is the $n$-body Lax matrix in terms of $\xi_i$ and $p_i$.

\item Electric frame for $X_\lambda$.
The relations are 
\begin{equation}\label{eq:ElectricGrd}
\det(u-M)=\prod_{i=1}^n (u-\upxi_i)\,,
\end{equation}
where $M$ has Slodowy form according to $\lambda$, namely $\mfa_2 = \hbar^{-1} \mfa_1\,, \mfa_3= \hbar^{-1} \mfa_2,\dots, \mfa_{k}=\hbar^{-1} \mfa_{n-k}$ and $\mfa_{n-k-2} = \hbar^{-1} \mfa_{n-k-1}\,,\dots, \mfa_{n}=\hbar^{-1} \mfa_{n-1}$ (see example \eqref{eq:TRSSlodowy25} for $T^*Gr_{2,5}$).

\end{itemize}

We can see that \eqref{eq:MagneticGr} coincides with \eqref{eq:ElectricGrd} and \eqref{eq:ElectricGr} coincides with \eqref{eq:MagneticGrd} via the 3d mirror map
$$
a_{i}\mapsto \upxi_i\,,\qquad \mathfrak{a}_a \mapsto \xi_a\,,\qquad \mathfrak{p}_a=p_a\,,\qquad \hbar\mapsto\hbar^{-1},
$$
where $a=1,\dots, n$ and $i=1,2$.

\subsubsection{Mirror for general $A_r$ Quiver}
Let $X_\lambda^{\lambda^!}$ be an $A_3$ quiver with labels ${\bf v}=(1,4,3)$ and ${\bf w}=(1,2,2)$. Then the linking numbers are $\lambda=(3,2,2,1,1)$ and $\lambda^!=(4,2,2,1)$. After swapping them and recalculating the dual labels we find the 3d mirror dual $A_4$ quiver $X^\lambda_{\lambda^!}$ with labels ${\bf v}^!=(1,1,2,1)$ and ${\bf w}^!=(1,0,2,1)$ (see \figref{fig:Amirrgen}).

\begin{figure}[!h]
\includegraphics[scale=0.47]{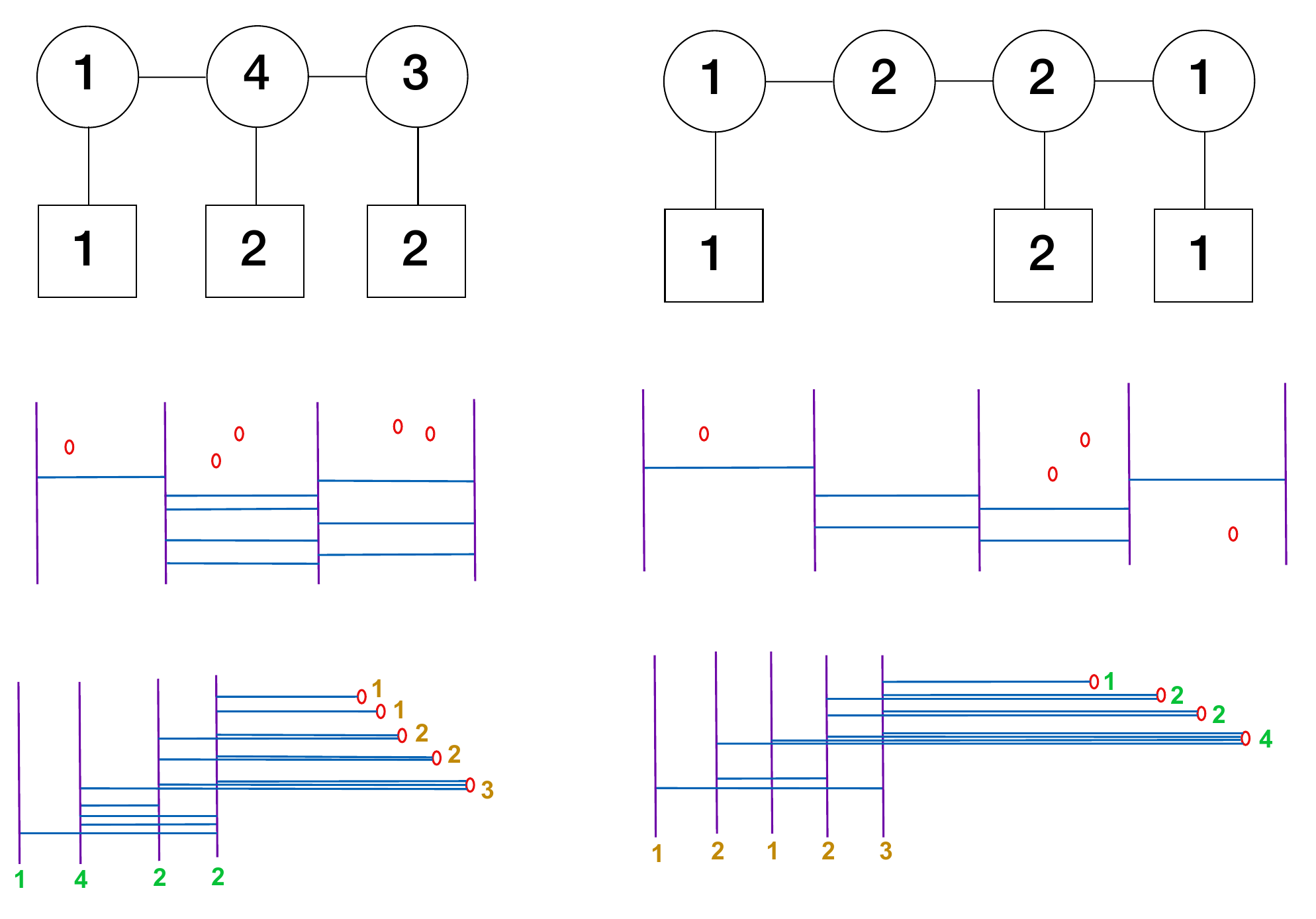}
\caption{3d mirror dual quivers with their labels.}
\label{fig:Amirrgen}
\end{figure}

In this example both partitions have sizes nine $|\lambda|=|\lambda^!|=9$ thus the tRS description of both quiver varieties involves a $9\times 9$ Lax matrix. The breakdown of the four frames goes as follows.

In the magnetic frame of $X_\lambda^{\lambda^!}$ (left of \figref{fig:Amirrgen}) we have relations 
\begin{equation}\label{eq:MagneticXgen}
\det(u-T)=(u-a_1)(u-\hbar a_1)(u-\hbar^2 a_1)\cdot (u-a_2)(u-\hbar a_2)\cdot (u-a_3)(u-\hbar a_3)\cdot(u-a_7)(u- a_8)(u- a_9)\,,
\end{equation}
where $T$ has Slodowy form for $\lambda^!$, namely in the presence of constraints 
$$
\xi_1=\hbar \xi_2 = \hbar^2 \xi_3 = \hbar^3 \xi_4\,,\quad \xi_5=\hbar \xi_6\,,\quad \xi_7=\hbar \xi_8\,.
$$

According to the mirror map \eqref{eq:MagneticXgen} also describes the electric frame of $X^\lambda_{\lambda^!}$ where $\xi_i$ are mapped to $\mfa_i$, $a_i$ are sent to $\upxi_i$, and $\hbar$ is inverted.

\vskip.1in
In the electric description of $X_\lambda^{\lambda^!}$ we get 
\begin{equation}\label{eq:ElectricXgen}
\det(u-M)=(u-\upxi_1)(u-\hbar^{-1} \upxi_1)(u-\hbar^{-2} \upxi_1)(u-\hbar^{-3} \upxi_1)\cdot (u-\upxi_5)(u-\hbar^{-1} \upxi_5)\cdot(u-\upxi_7)(u-\hbar^{-1} \upxi_7)(u- \upxi_9)\,,
\end{equation}
and where $M$ has Slodowy form for $\lambda$:
$$
\mfa_1=\hbar^{-1} \mfa_2 = \hbar^{-2} \mfa_3 = \hbar^{-3} \mfa_4\,,\quad \mfa_5=\hbar^{-1} \mfa_6\,,\quad \mfa_7=\hbar^{-1} \mfa_8\,.
$$

According to the mirror map \eqref{eq:ElectricXgen} also describes the magnetic frame of $X^\lambda_{\lambda^!}$ where $\upxi_i$ are mapped to $a_i$, $\mfa_i$ are sent to $x_i$, and $\hbar$ is inverted.

\subsubsection{Self-Dual Family $X_{k,l}$}
Consider quiver variety $X_{k,l}$ with K\"ahler parameters $\zeta_i=\frac{\xi_{i}}{\xi_{i+1}}$, equivariant parameters $a_k,\dots,a_{k+r-1}$ and $a_{k+r,1},\dots a_{k+r,k+1}$, and scaling weight of the cotangent fibers is $\hbar$ (see top of \figref{fig:mirroranew}). On the bottom of \figref{fig:mirroranew} we see quiver variety $X_{k,l}^!$ has equivariant parameters $\mathfrak{a}_{1,1,},\dots,\mathfrak{a}_{1,k+1}$ and $\mathfrak{a}_2\dots\mathfrak{a}_r$ as well as K\"ahler parameters $\mathfrak{z}_i=\frac{\upxi_i}{\upxi_{i+1}}$ and $\hbar^{!}$ for the weight of the $\mathbb{C}^\times$ action.
\begin{figure}[!h]
\includegraphics[scale=0.5]{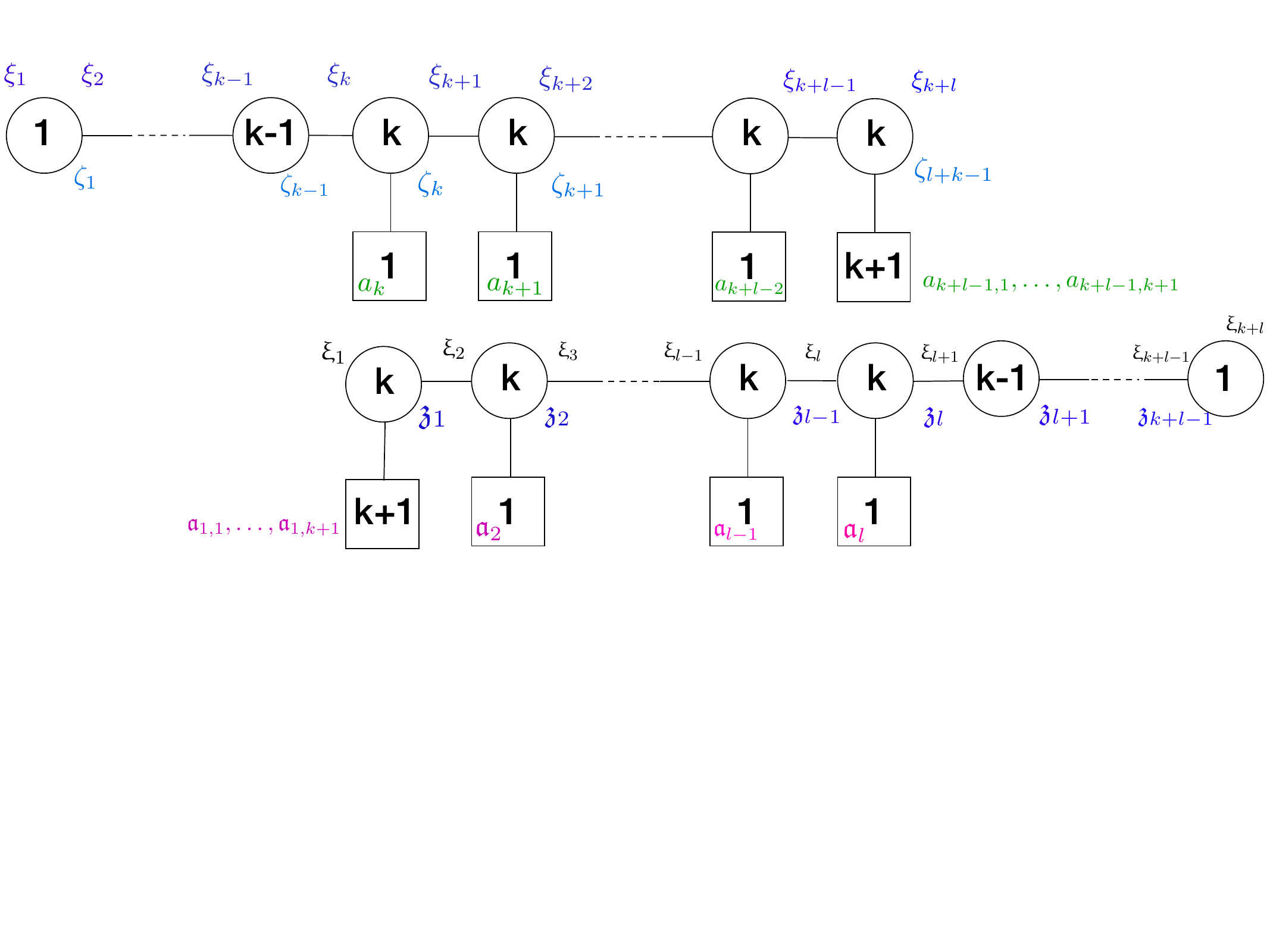}
\caption{$X_{k,l}$ (top) and its mirror $X_{k,l}^!$ (bottom).}
\label{fig:mirroranew}
\end{figure}

Thus we have the following Theorem.

\begin{Thm}\label{Th:MirrorXkr}
Under 3d mirror symmetry quiver varieties $X_{k,l}$ and $X_{k,l}^!$ are dual to each other. Moreover
\begin{equation}
K_T (X_{k,l}) \simeq K_{T^!} (X_{k,l}^!)\,,
\end{equation}
where the K\"ahler and equivariant tori parameters of $X_{k,l}$ and $X_{k,l}^!$ are mapped to each other as follows
\begin{align}\label{eq:MirrorMapTorus}
\upxi_j &= a_{k+l,j}\,,\quad j=1,\dots,k+1,\qquad \upxi_{k+i}= a_i\,,\quad i = 2,\dots,l\notag\\
\mathfrak{a}_{k+l,j}&=\xi_j\,,\quad j=1,\dots,k+1,\qquad  \mathfrak{a}_{i}=\xi_{k+i},\quad i=2,\dots, l\,,
\end{align}
as well as $\hbar^!=\hbar^{-1}$.
\end{Thm}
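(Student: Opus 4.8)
The plan is to obtain Theorem \ref{Th:MirrorXkr} as the self-dual specialization of the general type $A$ mirror map, by matching the electric frame of $X_{k,l}$ against the true magnetic frame of $X_{k,l}^!$ and then showing that the combinatorial data of the two coincide. The whole argument is an application of the machinery assembled in Sections \ref{Sec:tRSElectric}--\ref{Sec:MirrorElMag} to the particular datum \eqref{xlambdadata}; the only genuinely new content is the verification that this datum is a fixed point of the mirror involution.

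First I would record the two frame descriptions of $K_T^q(X_{k,l})$ that are already in hand. On the electric side, the degeneration isomorphism \eqref{eq:FunQuot} identifies ${\rm Fun}(\hbar{\rm Op})(X_{k,l})$ with ${\rm Fun}_{k,l}^{res}(\hbar{\rm Op})(X^{\boldsymbol{\lambda}})$ for the partial-flag data \eqref{xlambdadata}, so by Theorem \ref{Th:tRSXXZEl} the relations are the tRS relations \eqref{eq:tRSRelXlambda2} in coordinates $\mathfrak{a}_i$ and momenta $\mathfrak{p}_i$, with right-hand side $f(u,\{\upxi_i\},\hbar)$ of \eqref{eq:Frhs} whose root multiplicities form the $\hbar$-strings governed by $\boldsymbol{\lambda}$. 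On the magnetic side, Theorem \ref{Th:qOpMag}(ii) gives the Wronskian/true-magnetic description of ${\rm Fun}(\hbar{\rm Op})(X_{k,l})$ with regular-singularity data \eqref{eq:rootsWXkl}; for this family the oper magnetic frame coincides with the true magnetic frame, so no further degeneration of the flag is required.

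Second, I would apply the electric--magnetic map $i^{em}$ of Lemma \ref{Th:LemmatRS}, sending $M\mapsto T$, $T\mapsto M$ and $\hbar\mapsto\hbar^{-1}$. Under this symplectomorphism the tRS eigenvalues (the degenerate $Z$-twist spectrum \eqref{eq:degenerateeigenvalues} encoding $\lambda$) are exchanged with the regular singularities (encoding $\lambda^!$). Because the nondegeneracy hypothesis $\rho_i\neq\hbar^{\mathbb{Z}}\rho_j$ fails along the resonances \eqref{eq:HiggsingConditions}, I would use the momentum rescalings of Section \ref{Sec:FlagElDegen} (the analogues of \eqref{eq:rescalingp1} and \eqref{eq:rescale3}) to keep every expression finite. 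The general mirror theorem (the map $j$) then reconstructs, via Lemma \ref{Th:existencePoly} and Proposition \ref{Prop:opertrue}, an $\hbar^!$-oper whose section components $s_k(z)$ have degrees equal to the NS5 linking numbers of the dual quiver; reading off these degrees through Lemma \ref{Th:DegreesQ} and Corollary \ref{Th:LemmaDegrees} yields the labels of $X_{k,l}^!$, and the parameter dictionary \eqref{eq:MirrorMapTorus} is exactly the identification of eigenvalues with singularities built into $i^{em}$, together with $\hbar^!=\hbar^{-1}$.

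Finally, self-duality reduces to the combinatorial identity $\lambda=\lambda^!$. This is the coincidence flagged in the Remark after \eqref{eq:HiggsingConditions}: the partition data \eqref{xlambdadata} defining $X^{\boldsymbol{\lambda}}$ match term-by-term the $\hbar$-string structure \eqref{eq:rootsWXkl} of the regular singularities, both producing strings of lengths $1,2,\dots,l-1$ alongside the $k+1$ unresonant roots; equivalently, the NS5 and D5 linking numbers computed from the brane presentation of $X_{k,l}$ (as in the example $X_{2,4}$, where $\lambda=\lambda^!=(4,3,2,1,1,1)$) give the same Young diagram. The main obstacle I anticipate is precisely this step: one must check that the degenerate Slodowy eigenvalue pattern of the electric frame maps under $i^{em}$ onto the resonant singularity pattern that rebuilds \emph{the same} quiver, tracking every $\hbar$-string through the rescaled momenta so that the reconstructed $\Lambda_i$ carry distinct roots of the correct multiplicities and the degrees of $s_k(z)$ close up to $X_{k,l}$ rather than to a neighbouring quiver. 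Once $\lambda=\lambda^!$ is established, the isomorphism $K_T(X_{k,l})\simeq K_{T^!}(X_{k,l}^!)$ and the explicit map \eqref{eq:MirrorMapTorus} follow from the general mirror theorem applied to this self-dual datum.
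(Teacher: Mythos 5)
Your proposal is correct and follows essentially the same route as the paper: the paper derives Theorem \ref{Th:MirrorXkr} as a direct specialization of the general type $A$ mirror theorem (proved via the electric--magnetic map, the minor construction of Proposition \ref{Prop:opertrue} and Lemma \ref{Th:existencePoly}, and degree/linking-number counting), with self-duality resting on the combinatorial coincidence between \eqref{xlambdadata} and \eqref{eq:rootsWXkl} flagged in the paper's Remark. Your identification of that combinatorial matching as the one genuinely new verification is exactly where the paper places the weight as well.
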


\begin{Rem}

Geometrically the conjugate momenta correspond to the following quantum K-theory classes of $X_{k,l}$ (see \cite{Koroteev:2017aa})
in the magnetic frame:
\begin{equation}
p_i = \prod_{m=i}^{k+r} \prod_{l=1}^{M_m} a_{m,l} \cdot \frac{Q^+_{i-1}(0)}{Q^+_{i}(0)}= \bigotimes_{m=i}^{k+r+1}W_m\circledast \widehat{\Lambda^{i-1}V_{i-1}}\circledast \widehat{\Lambda^i V^*_i}\,
\end{equation}
and in the electric frame:
\begin{equation}
p^a_{j} = (-\hbar)^k\prod_{l=1}^j \xi_l \cdot \frac{Q^+_j\left(\hbar a_{j}\right)}{Q^+_j\left(a_{j}\right)}\,,\quad j = r,\dots, k+l-1,\,\qquad  p^a_{k+l,j} = (-\hbar)^k\prod_{l=1}^{k+l} \xi_l \cdot \frac{Q^+_{k+l,j}\left(\hbar a_{k+l,j}\right)}{Q^+_{k+r,j}\left(a_{k+r,j}\right)}.
\end{equation}

As for the mirror dual variety $X_{k,l}^!$ we get the following formulae for the electric frame
\begin{equation}
\mathfrak{p}_{1,c} = (-\hbar^!)^{-k}\prod_{l=1}^j \upxi_1 \cdot \frac{Q^{+,!}_1\left(\hbar^! \mathfrak{a}_{1,c}\right)}{Q^{+,!}_1\left(\mathfrak{a}_{1,c}\right)}\,,\quad c=1,\dots,k+1\,,\qquad
\mathfrak{p}_{b} = (-\hbar^!)^{-k}\prod_{l=1}^b \upxi_l \cdot \frac{Q^{+,!}_b\left(\hbar^! \mathfrak{a}_{b}\right)}{Q^{+,!}_b\left(\mathfrak{a}_{b}\right)}\,,
\end{equation}
and for the magnetic frame
\begin{equation}
\mfp^{\xi}_i =\prod_{l=i}^r  \mathfrak{a}_{l} \cdot \frac{Q^{+,!}_{i-1}(0)}{Q^{+,!}_{i}(0)}=\bigotimes_{m=i}^{k+r+1}W^!_m\circledast \widehat{\Lambda^{i-1}V^!_{i-1}}\circledast \widehat{\Lambda^i V^{!,*}_i}\,,
\end{equation}
Upon 3d mirror symmetry $\hbar^{-1} = \hbar^!$.
\end{Rem}

\section{The $A_{\infty}$ Quivers and 3d Mirror self-duality of the Hilbert Scheme}\label{Sec:DirectLimit}

\subsection{$A_{\infty}$ quiver as a direct limit}
In this section we will use the algebras $K_T^{q}(X_{k,l})$ quivers and consider the appropriate reduction to produce the algebra  
$K_T^{q}(X_{k,\infty})$. Here $X_{k,\infty}$ is the 
$A_{\infty}$ quiver with one-dimensional framing for each vertex (see the top picture in \figref{fig:Ainfmirror}).

Consider the $QQ$-system for $X_{k,l}$ and impose the following conditions on some equivariant and K\"ahler parameters
\begin{equation}\label{eq:droptail}
\xi_1= \dots = \xi_{k}=\xi_{k+l}=0\,,\qquad a_{k+l-1, 1}= \dots  =a_{k+l-1, k+1}=0\,.
\end{equation}
Then the system becomes
\begin{align}\label{eq:QQXklTrunc}
Q^+_k(z) Q^-_k(\hbar z) &=z\, Q^+_{k-1}(\hbar z)Q^+_{k+1}(z)\,,\cr
\dots&\dots\cr
\xi_{k+i} Q^+_{k+i}(\hbar z) Q^-_{k+i}(z) - \xi_{k+i+1} Q^+_{k+i}(z) Q^-_{k+i}(\hbar z) &=\Lambda_{k+i} (z) Q^+_{k+i-1}(\hbar z)Q^+_{k+i+1}(z)\,,\\
\dots&\dots\cr s
\xi_{k+l-1} Q^+_{k+l-1}(\hbar z) Q^-_{k+l-1}(z) &=z^{k+1} Q^+_{k+l-2}(\hbar z)\,,\notag
\end{align}
for $i=2,\dots,l-2$. Notice that polynomials $Q^+_{k-1}(z), Q^{\pm}_{k}(z)$ and $Q^-_{k+l-1}$ can be found from the first equation in \eqref{eq:QQXklTrunc} and polynomial $Q^-_{k+l-1}(z)$ can be found from the last one. The rest of the equations look alike. The truncated $QQ$-system \eqref{eq:QQXklTrunc} is describes quiver in \figref{fig:xkltrunc}.
\begin{figure}[!h]
\includegraphics[scale=0.33]{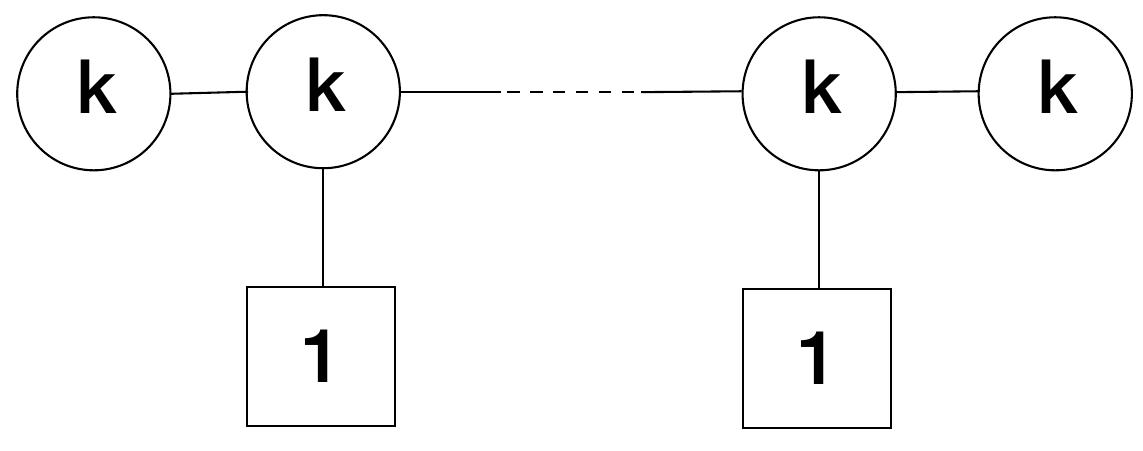}
\caption{Truncated $X_{k,l}$ Quiver.}
\label{fig:xkltrunc}
\end{figure}

We can define the reduced algebra as follows
\begin{eqnarray}
K^{{\rm red},q}_T(X_{k,l}):=\frac{S(a_{k+2},\dots a_{k+l-2};\xi_{k+2},\dots\xi_{k+l-2}, \hbar)(\{s_{i,k}\})}{\rm{(Bethe\, Ansatz\, for \,(\ref{eq:QQXklTrunc}))}}\,,
\end{eqnarray}
where we take quotient over the Bethe ansatz equations which correspond to the reduced $QQ$-system (\ref{eq:QQXklTrunc}).

This algebra remains self-dual, since the eliminated variables are mirror map to each other according to the homomorphism which was described in Theorem \ref{Th:MirrorXkr}. Moreover, this algebra is significantly simplified thanks to the reduced $QQ$-system \eqref{eq:QQXklTrunc}.

\begin{Rem}
The first $k-1$ equations of the $QQ$-system for $X_{k,l}$ become trivial in the limit when $\xi_1,\dots, \xi_k$ go to zero if the coefficients of the corresponding polynomials $\Lambda_i(z)$ are equal to $(\xi_i-\xi_{i+1})$. This limit corresponds to the classical equivariant K-theory for the full flag, which is a `tail' of $X_{k,l}$.
\end{Rem}

Thus, there is a natural homomorphism 
\begin{eqnarray}
i_k:K^{{\rm red},q}_T(X_{k,l})\to K^{{\rm red},q}_T(X_{k+1,l}).
\end{eqnarray}
Now one can take a direct limit of $K^{red,q}_T(X_{k,l})$ with respect to $l$.  
As a result we obtain the algebra, which can be characterized as follows:
\begin{eqnarray}
K^q_T(X_{k, \infty}):= \frac{S(\{a_i\},\{\xi_k\}, \hbar^{\pm 1})(s_{i,k})}{{\rm (Bethe~ equations)}},
\end{eqnarray}
where, as usual $\{s_{i,k}\}$ are the Bethe roots, i.e. roots of $Q_i^+(z)$, which satisfy Bethe equations, or equivalently the following infinite $QQ$-system: 
\begin{align}\label{eq:QQinfQform}
\xi_{i+1} \,Q^+_i(\hbar z)Q^-_i(z)-\xi_i\,  Q^+_i(z)Q^-_i(\hbar z)&=\Lambda_i (z) Q^+_{i-1}(\hbar z)Q^+_{i+1}(z)\,,\quad i\in\mathbb{Z}\,,
\end{align}
where $\Lambda_i(z)=z-a_i$.
Thus we arrive to the following theorem:

\begin{Thm}
The following is true:
\begin{enumerate}
\item[i)] $ K^q_T(X_{k, \infty})\cong \lim\limits_{\rightarrow}K^{red,q}_T(X_{k,l}).$
\item[ii)] The algebra $K^q_T(X_k, \infty)$ is 3d-mirror self-dual, i.e. 
\end{enumerate}
\begin{eqnarray}\label{infmirror}
K^q_T(X_{k, \infty})\vert_{\{\xi_{i}= \mathfrak{a}_{i}\}, \{a_{i}=\upxi_{i}\}, \hbar^!=\hbar^{-1}}\cong K^q_T(X^!_{k, \infty}),
\end{eqnarray}
so that the quivers $X_{k, \infty}$ and $X^!_{k, \infty}$ are given in \figref{fig:Ainfmirror}. 
\end{Thm}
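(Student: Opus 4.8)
The plan is to treat the two parts in turn: part (i) follows from the stabilization of the truncated QQ-system \eqref{eq:QQXklTrunc} as $l\to\infty$, and part (ii) is then obtained by descending the self-duality of the finite quivers from Theorem \ref{Th:MirrorXkr} through the reduction and the direct limit.

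For part (i), I would first make the $l$-transition maps explicit. Fixing $k$ and passing from $l$ to $l+1$ inserts one extra vertex into the bulk of the truncated quiver of \figref{fig:xkltrunc}; consequently the generic equations of \eqref{eq:QQXklTrunc}, those indexed by $i=2,\dots,l-2$, all share a single repeating shape, while the two boundary equations (the first, determining $Q^+_{k-1}$ and $Q^{\pm}_k$, and the last, determining $Q^-_{k+l-1}$) are pushed one step further out. I would define $K^{red,q}_T(X_{k,l})\to K^{red,q}_T(X_{k,l+1})$ as the inclusion sending the Bethe-root variables of the smaller system to the corresponding sub-collection of the larger one, compatibly with the symmetric-function presentation of both algebras. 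The essential point is that each relation of the source is one of the repeating bulk relations and therefore persists verbatim in the target. Passing to the colimit, the boundary relations recede to $\pm\infty$ and one is left with exactly the two-sided family of identical relations indexed by $i\in\mathbb{Z}$, i.e. the infinite QQ-system \eqref{eq:QQinfQform} with $\Lambda_i(z)=z-a_i$; by definition this is $K^q_T(X_{k,\infty})$, yielding $K^q_T(X_{k,\infty})\cong\varinjlim_{l} K^{red,q}_T(X_{k,l})$.

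For part (ii), the strategy is to show that the self-duality of $X_{k,l}$ survives both the reduction \eqref{eq:droptail} and the limit. The key observation is that \eqref{eq:droptail} is invariant under the mirror map \eqref{eq:MirrorMapTorus}: the K\"ahler parameters set to zero on one side are precisely the images of the equivariant parameters that are set to zero on the dual side, and conversely. Hence imposing \eqref{eq:droptail} commutes with the duality, and the isomorphism of Theorem \ref{Th:MirrorXkr} restricts to $K^{red,q}_T(X_{k,l})\cong K^{red,q}_{T^!}(X^!_{k,l})$, becoming a self-duality after the identifications $\{\xi_i=\mathfrak{a}_i\}$, $\{a_i=\upxi_i\}$, $\hbar^!=\hbar^{-1}$. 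It then remains to check that this family of self-dualities intertwines the $l$-transition maps of part (i), so that the square relating $K^{red,q}_T(X_{k,l})$, $K^{red,q}_T(X_{k,l+1})$ and their mirrors commutes; granting this, the isomorphism descends to the colimit and gives \eqref{infmirror}, with the limiting quiver $X^!_{k,\infty}$ read off from \figref{fig:Ainfmirror}.

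The main obstacle I anticipate is exactly this last compatibility. The mirror map \eqref{eq:MirrorMapTorus} is $l$-dependent, interchanging a growing collection of K\"ahler and equivariant parameters, and one must verify that it stabilizes into a single involution $\{\xi_i\}_{i\in\mathbb{Z}}\leftrightarrow\{a_i\}_{i\in\mathbb{Z}}$, $\hbar\leftrightarrow\hbar^{-1}$ of the two-sided parameter set, compatible with the inclusions. Equally, one must confirm that no relation is gained or lost in the colimit, namely that the bulk QQ-relations are genuinely preserved by the transition maps and that the receding boundary relations impose no residual constraint in the limit. Once this bookkeeping is settled, the self-duality of \eqref{eq:QQinfQform} under the limiting involution becomes manifest from its translation-invariant form together with the $Q^+\leftrightarrow Q^-$ exchange inherited from the finite-level dualities of Theorem \ref{Th:MirrorXkr}.
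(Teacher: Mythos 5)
Your proposal is correct and takes essentially the same approach as the paper: part (i) is obtained as the stabilization of the truncated QQ-system \eqref{eq:QQXklTrunc} under the natural $l$-transition maps, and part (ii) by observing that the reduction conditions \eqref{eq:droptail} eliminate mirror-dual pairs of parameters, so the self-duality of Theorem \ref{Th:MirrorXkr} restricts to the reduced algebras and then descends to the colimit. The transition-map compatibility you flag as the main obstacle is exactly the bookkeeping the paper leaves implicit, so your extra care only fills in the same argument rather than departing from it.
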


\begin{figure}[!h]
\includegraphics[scale=0.4]{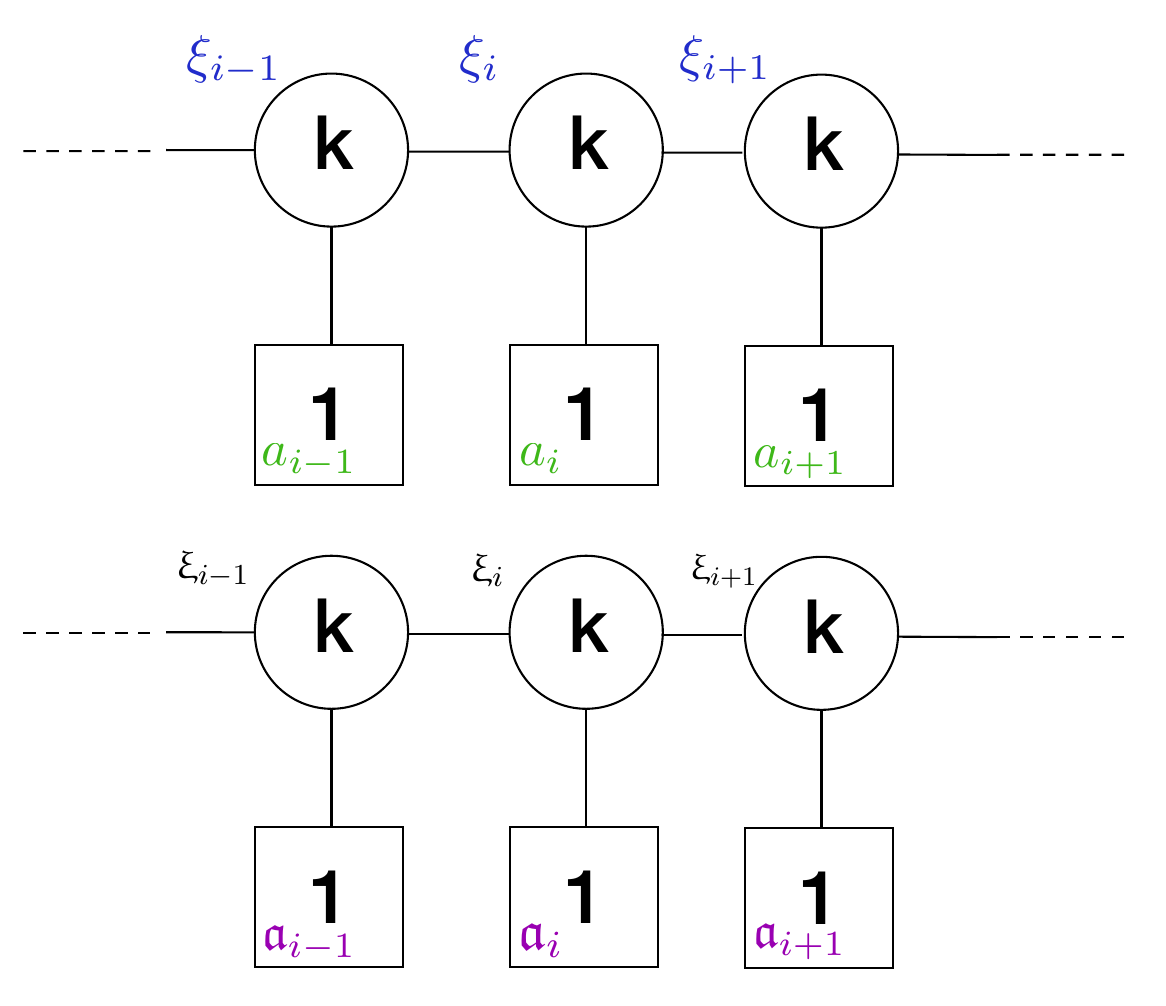}
\caption{$X_{k,\infty}$ and its mirror $X_{k,\infty}^!$.}
\label{fig:Ainfmirror}
\end{figure}

\vskip.1in

\textbf{Remark.}
As we have seen the nondegenerate solutions of the finite $QQ$-systems of rank $r$ were naturally related to the nondegenerate $Z$-twisted Miura $(SL(r+1), \hbar)$-opers. The infinite $QQ$-system can be related in a similar fashion to the Z-twisted Miura $(\overline{GL}(\infty), \hbar)$-opers, where $\overline{GL}(\infty)$ is a certain completion of $GL(\infty)$.  We won't discuss this object here and refer the reader to \cite{Koroteev:2020mxs} for more details.

\vskip.1in

Note that there is a natural action of the additive  group:
\begin{Prop}\label{symminf}
The algebra $K^q_T(X_{k, \infty})$ is invariant under the action of a 
symmetry group generated by the overall shift of vertices in $X_{k, \infty}$ quiver: 
\begin{eqnarray}
s_{i,k}\to s_{i+m,k}, \quad a_i\to a_{i+m}, \quad \xi_i\to \xi_{i+m}\,,\qquad m\in\mathbb{Z}\,.
\end{eqnarray}
The 3d Mirror map (\ref{infmirror}) is equivariant with respect to this symmetry.
\end{Prop}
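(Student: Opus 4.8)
The plan is to prove both assertions directly from the infinite QQ-system \eqref{eq:QQinfQform}, bypassing the direct-limit presentation, since it is there that the translational symmetry is manifest. First I would recall that $K^q_T(X_{k, \infty})$ is by definition the quotient of the ring $S(\{a_i\},\{\xi_i\}, \hbar^{\pm 1})(\{s_{i,k}\})$ of symmetric functions in the Bethe roots by the ideal generated by the Bethe equations, equivalently by the relations encoded in \eqref{eq:QQinfQform} for all $i\in\mathbb{Z}$. The prescription $s_{i,k}\mapsto s_{i+m,k}$, $a_i\mapsto a_{i+m}$, $\xi_i\mapsto \xi_{i+m}$ (with $\hbar$ held fixed, as it is a global parameter attached to no vertex) determines a ring homomorphism of $S(\{a_i\},\{\xi_i\}, \hbar^{\pm 1})(\{s_{i,k}\})$ into itself; its inverse is the shift by $-m$, so it is an automorphism of the unquotiented ring, and the family over $m\in\mathbb{Z}$ gives a $\mathbb{Z}$-action.

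The central step is to check that this automorphism preserves the relation ideal. This follows because the equation of \eqref{eq:QQinfQform} attached to vertex $i$ involves only $\xi_i,\xi_{i+1}$, the framing polynomial $\Lambda_i(z)=z-a_i$, and $Q^\pm_{i-1},Q^\pm_i,Q^\pm_{i+1}$ in a translation-covariant way. Applying the shift by $m$ carries this relation verbatim to the relation at vertex $i+m$: one has $\Lambda_i(z)\mapsto z-a_{i+m}=\Lambda_{i+m}(z)$, $Q^\pm_j\mapsto Q^\pm_{j+m}$, and $\xi_i,\xi_{i+1}\mapsto \xi_{i+m},\xi_{i+m+1}$. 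Since the defining system ranges over all $i\in\mathbb{Z}$, the image of each generator of the ideal is again such a generator, so the ideal is preserved and the shift descends to an automorphism of $K^q_T(X_{k, \infty})$. This establishes the first assertion.

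For the equivariance of the mirror map \eqref{infmirror}, I would endow the dual algebra $K^q_T(X^!_{k, \infty})$ with the analogous $\mathbb{Z}$-action $\mathfrak{s}_{i,k}\mapsto \mathfrak{s}_{i+m,k}$, $\mathfrak{a}_i\mapsto \mathfrak{a}_{i+m}$, $\upxi_i\mapsto \upxi_{i+m}$, well defined for the same reason. The isomorphism \eqref{infmirror} identifies parameters by $\xi_i=\mathfrak{a}_i$, $a_i=\upxi_i$, $\hbar^!=\hbar^{-1}$, and matches the Bethe data of $X_{k, \infty}$ with those of $X^!_{k, \infty}$ vertex by vertex. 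Because this identification is index-preserving, the shift $\xi_i\mapsto\xi_{i+m}$ on the source corresponds exactly to $\mathfrak{a}_i\mapsto\mathfrak{a}_{i+m}$ on the target, and $a_i\mapsto a_{i+m}$ to $\upxi_i\mapsto\upxi_{i+m}$; the inversion $\hbar\mapsto\hbar^{-1}$ commutes trivially with a shift that fixes $\hbar$. Hence the isomorphism intertwines the two $\mathbb{Z}$-actions, i.e. it is equivariant.

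The main obstacle I anticipate is not the bulk translation-covariance, which is transparent, but the careful bookkeeping needed to confirm that the shift is compatible with the realization of $K^q_T(X_{k, \infty})$ as the direct limit $\lim\limits_{\rightarrow}K^{{\rm red},q}_T(X_{k,l})$, and that the dual-side Bethe roots genuinely shift in lockstep under the mirror identification of Theorem \ref{Th:MirrorXkr}. I would handle the former by working intrinsically with \eqref{eq:QQinfQform}, where no finite truncation breaks translation invariance, and verifying that the structure maps $i_k$ commute with the shift; the latter reduces to the observation that the mirror map of Theorem \ref{Th:MirrorXkr} is assembled from vertex-local data whose index labels are merely relabeled, never permuted nontrivially, by the shift.
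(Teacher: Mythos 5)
Your proof is correct, and it follows exactly the route the paper implicitly relies on: the paper states Proposition \ref{symminf} without proof, treating it as evident from the manifest translation-covariance of the infinite QQ-system \eqref{eq:QQinfQform} (each relation at vertex $i$ is carried verbatim to the relation at vertex $i+m$) together with the index-preserving form of the identification $\{\xi_i=\mathfrak{a}_i\}$, $\{a_i=\upxi_i\}$, $\hbar^!=\hbar^{-1}$ in \eqref{infmirror}. Your write-up simply makes these self-evident details explicit, including the sensible choice to work intrinsically with the presentation $K^q_T(X_{k,\infty})=S(\{a_i\},\{\xi_k\},\hbar^{\pm 1})(s_{i,k})/(\text{Bethe equations})$ rather than through the direct limit.
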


\subsection{Periodic Boundary Conditions for the $QQ$-systems and 3d mirror self-duality for the Hilbert scheme}
The symmetry transformation from Proposition \ref{symminf} for the $QQ$-system associated with the quiver $X_k,\infty$ has a fixed point if the following periodic conditions are imposed:
\begin{eqnarray}\label{eq:xicqQ}
\xi_i = \xi_0\xi^i\,,\qquad Q^\pm_i(z)=\mathcal{Q}^\pm(t^iz)\,,\qquad \Lambda_i(t^iz) = {\xi_i} \Lambda(z)\,,
\end{eqnarray}
where $\Lambda(z)=z-a_0$ and $t\in\mathbb{C}^\times$ is the new parameter. 
We note that we introduced the constant coefficients in $Q^i_{\pm}$ and $\Lambda_i(z)$, which do not change the Bethe equations which produce the relations for the algebra $K_T(X_{k, \infty})$. 
The following Proposition holds.
 
\begin{Prop} 
Upon the periodic conditions \ref{eq:xicqQ}, the infinite $QQ$-system (\ref{eq:QQinfQform})  reduces to the following equation: 
\begin{equation}\label{eq:QQgl1}
\xi\mathcal{Q}^+(\hbar z)\mathcal{Q}^-( z)- \mathcal{Q}^+(z)\mathcal{Q}^-( \hbar z) = \Lambda (z)\mathcal{Q}^+(tz)\mathcal{Q}^+(t^{-1}\hbar z)\,.
\end{equation}
The condition \eqref{eq:xicqQ} translates into the following conditions:
\begin{equation}
a_i = a_0 t^{i}\,, \qquad s_{i,a} =s_at^{i} \,,\qquad a=1,\dots,k\,,\qquad  \Lambda_i(z)=\xi_0\Big(\frac{\xi}{t}\Big)^i(z-a_0)
\end{equation}
where $s_a$ are the roots of $\mathcal{Q}^+(z)$. 
\end{Prop}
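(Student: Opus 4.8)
The plan is to prove the Proposition by directly substituting the periodic ansatz \eqref{eq:xicqQ} into each equation of the infinite QQ-system \eqref{eq:QQinfQform}, and then performing a single multiplicative change of the spectral variable that renders every equation independent of the index $i\in\mathbb{Z}$, thereby collapsing the whole system to the one equation \eqref{eq:QQgl1}. The computation is elementary; the substance is to verify that the three scalings built into \eqref{eq:xicqQ} are mutually consistent.

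First I would insert $\xi_i=\xi_0\xi^i$ and $Q^\pm_i(z)=\mathcal{Q}^\pm(t^iz)$ into the $i$-th equation. Its left-hand side becomes $\xi_0\xi^{i+1}\mathcal{Q}^+(t^i\hbar z)\mathcal{Q}^-(t^iz)-\xi_0\xi^i\mathcal{Q}^+(t^iz)\mathcal{Q}^-(t^i\hbar z)$, while the right-hand side reads $\Lambda_i(z)\,\mathcal{Q}^+(t^{i-1}\hbar z)\mathcal{Q}^+(t^{i+1}z)$. Setting $w=t^iz$ converts the four $\mathcal{Q}$-factors on the left into $\mathcal{Q}^+(\hbar w)$, $\mathcal{Q}^-(w)$, $\mathcal{Q}^+(w)$, $\mathcal{Q}^-(\hbar w)$, and the two on the right into $\mathcal{Q}^+(t^{-1}\hbar w)$ and $\mathcal{Q}^+(tw)$ — exactly the combinations appearing in \eqref{eq:QQgl1}. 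Factoring $\xi_0\xi^i$ out of the left-hand side, the only remaining obstruction to $i$-independence is the behaviour of the singularity polynomial $\Lambda_i$.

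The key bookkeeping step is therefore the third relation in \eqref{eq:xicqQ}: it is precisely the requirement that, after the substitution $w=t^iz$, the factor $\Lambda_i(z)$ turn into $\xi_0\xi^i\,\Lambda(w)$, so that dividing the whole equation by $\xi_0\xi^i$ removes all residual dependence on $i$ and reproduces \eqref{eq:QQgl1} verbatim for every $i$. Matching the root and the leading coefficient in this scaling relation, using $\Lambda_i(z)=z-a_i$ together with $\Lambda(z)=z-a_0$, then forces $a_i=a_0t^{i}$ and fixes the normalization of $\Lambda_i$ to the form $\Lambda_i(z)=\xi_0(\xi/t)^i(z-a_0)$ recorded in the statement; likewise, reading off the zeros of $Q^+_i(z)=\mathcal{Q}^+(t^iz)$ in terms of the roots $s_a$ of $\mathcal{Q}^+$ yields $s_{i,a}=s_at^{i}$ for $a=1,\dots,k$.

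Finally I would observe that the constant rescalings introduced in \eqref{eq:xicqQ} are harmless at the level of the algebra: since the Bethe equations are unchanged when each $Q^\pm_i$ and each $\Lambda_i$ is multiplied by a $z$-independent constant, the periodic reduction is well defined on $K^q_T(X_{k,\infty})$ and not merely a formal manipulation of polynomials. The main obstacle I anticipate is not any single calculation but the simultaneous consistency of the three independent scalings — the geometric scaling $t$ acting on the spectral parameter through the $Q$'s, the multiplicative twist $\xi$ entering $\xi_i$, and the induced rescaling of $\Lambda_i$ — which must conspire so that one and the same equation \eqref{eq:QQgl1} emerges from all $i\in\mathbb{Z}$; verifying that this overdetermined-looking matching is in fact consistent, and pinning down the correct powers of $t$ in the condition on $\Lambda_i$, is the crux of the argument.
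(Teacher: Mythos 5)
Your method --- substituting the periodic ansatz into each equation of \eqref{eq:QQinfQform} and absorbing the index $i$ by the change of spectral variable $w=t^iz$ --- is the right one, and it is exactly the direct verification that the paper leaves implicit (the Proposition is stated there with no proof). Your first step is correct: with $\xi_i=\xi_0\xi^i$ and $Q^\pm_i(z)=\mathcal{Q}^\pm(t^iz)$, the left side of the $i$-th equation becomes $\xi_0\xi^i\bigl[\xi\,\mathcal{Q}^+(\hbar w)\mathcal{Q}^-(w)-\mathcal{Q}^+(w)\mathcal{Q}^-(\hbar w)\bigr]$ and the two $Q$-factors on the right become $\mathcal{Q}^+(t^{-1}\hbar w)\,\mathcal{Q}^+(tw)$, as in \eqref{eq:QQgl1}.

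The gap is in your handling of the third condition, which you yourself call the crux. What your reduction needs is $\Lambda_i(z)=\xi_i\Lambda(t^iz)$, so that $\Lambda_i(z)=\xi_i\Lambda(w)$ after setting $w=t^iz$; but the condition in \eqref{eq:xicqQ} literally reads $\Lambda_i(t^iz)=\xi_i\Lambda(z)$, i.e.\ $\Lambda_i(z)=\xi_i\Lambda(t^{-i}z)$, which is the $t\to t^{-1}$ mirror of what you use and which, after your substitution, leaves the $i$-dependent factor $\Lambda(t^{-2i}w)$ on the right instead of $\Lambda(w)$. Moreover, the consequences you read off in your third paragraph do not follow from the relation you actually need: $\Lambda_i(z)=\xi_i\Lambda(t^iz)$ has its root at $a_i=a_0t^{-i}$ with leading coefficient $\xi_0(\xi t)^i$ (not $a_0t^{i}$ and $\xi_0(\xi/t)^i$), and the zeros of $Q^+_i(z)=\mathcal{Q}^+(t^iz)$ sit at $s_{i,a}=s_at^{-i}$ (not $s_at^{i}$). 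To be fair, this inconsistency is already present in the Proposition as printed: the ansatz $Q^\pm_i(z)=\mathcal{Q}^\pm(t^iz)$ is incompatible with $s_{i,a}=s_at^{i}$, and $\Lambda_i(z)=\xi_0(\xi/t)^i(z-a_0)$ cannot have root $a_i=a_0t^i$. The two consistent conventions --- either $Q^\pm_i(z)=\mathcal{Q}^\pm(t^iz)$ with $\Lambda_i(z)=\xi_i\Lambda(t^iz)$, which yields \eqref{eq:QQgl1} verbatim but with all exponents $t^{-i}$ in the translated conditions, or $Q^\pm_i(z)=\mathcal{Q}^\pm(t^{-i}z)$ with $\Lambda_i(t^iz)=\xi_i\Lambda(z)$, which yields $s_{i,a}=s_at^{i}$, $a_i=a_0t^{i}$, $\Lambda_i(z)=\xi_0(\xi/t)^i(z-a_0t^i)$ and \eqref{eq:QQgl1} with $t$ replaced by $t^{-1}$ --- are exchanged by inverting $t$, so nothing of substance is at stake. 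But a correct write-up must pick one convention and derive its consequences consistently; pairing the ansatz of one convention with the conclusions of the other, which is what your proof (following the paper's statement) does, makes the claimed implication ``matching the root forces $a_i=a_0t^i$'' false as written.
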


\begin{Rem}
This reduction, on the level of Z-twisted Miura $(\overline{GL}(\infty), \hbar)$-opers leads to what we 
called toroidal $\hbar$-opers. The resulting Bethe equations, see below, appear in representation theory of toroidal 
algebra $\widehat{\widehat{\mathfrak{gl}}}(1)$. They have an explicit geometric realization, which we will talk briefly later.
\end{Rem}

We will refer to $K^q_T(X_{k,\infty})$ with imposed periodicity conditions as 
\begin{eqnarray}
K^{q,per}_T(X_{k,\infty}):=\frac{S(\xi, t, \hbar)(\{s_a\})}{(\rm{Bethe~ equations})},  
\end{eqnarray}
where Bethe equations read (for the Hilbert scheme $N=1$)
\begin{eqnarray}\label{eq:ADHMBethe}
\prod_{l=0}^{N-1}\frac{s_a-a_l}{\hbar^{-1} s_a-a_l}\cdot \prod_{\substack{b=1\\b\neq a}}^k \frac{t s_a-s_b}{t^{-1}s_a- s_b} \frac{\hbar^{-1} s_a-s_b}{\hbar s_a-s_b}\frac{t^{-1}\hbar s_a- s_b}{t \hbar^{-1} s_a-s_b} = \xi^{-1}\,,
\end{eqnarray}
for $\a=1,\dots,k$. These equations are produced by the roots of $\mathcal{Q}^{+}(z)$ in the $QQ$-system \eqref{eq:QQgl1}.

For the dual $X^!_{k,\infty}$ quiver with parameters $\mathfrak{a}_i$, $\upxi_i$ we have the following mapping upon 3d mirror symmetry:
\begin{equation}
\mathfrak{a}_i:=\xi_0\xi^i, \quad \upxi_i=a_0 t^i,\quad \hbar^!=\hbar^{-1}.
\end{equation}

Because the translations along the $X_{k, \infty}$ quiver act equivariantly with respect to the 3d mirror map, the periodicity condition for Bethe roots of the dual algebra transform into
$s^!_{i,a} =s^!_a\xi^{i}$.

Thus we obtain the following theorem.

\begin{Thm}\label{Th:Main}
The following two algebras are isomorphic:
\begin{eqnarray}
K^{q,per}_T(X^!_{k,\infty})\vert_{\{\mathfrak{a}_i=\xi_0\xi^i\}, \{\upxi_i=a_0 t^i\},\hbar^!=\hbar^{-1}}\cong K^{q,per}_T(X_{k,\infty}).
\end{eqnarray}
\end{Thm}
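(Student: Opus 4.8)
The plan is to \emph{descend} the mirror self-duality of the infinite quiver $X_{k,\infty}$, already established in \eqref{infmirror}, to its periodic reduction, using the shift-equivariance of Proposition \ref{symminf} as the central tool. Thus I would not re-prove the duality from scratch; instead I would show that imposing the periodicity ansatz \eqref{eq:xicqQ} is a procedure compatible with the mirror map, so that the isomorphism of the non-periodic algebras restricts to the periodic ones.

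First I would recall the two ingredients. By the theorem preceding Proposition \ref{symminf} there is an isomorphism
$$
\Phi\colon K^q_T(X_{k,\infty})\xrightarrow{\ \sim\ } K^q_T(X^!_{k,\infty})
$$
realized by the parameter dictionary $\xi_i=\mathfrak{a}_i$, $a_i=\upxi_i$, $\hbar^!=\hbar^{-1}$, under which the Bethe roots $s_{i,a}$ of $X_{k,\infty}$ are matched with those of $X^!_{k,\infty}$. By Proposition \ref{symminf} both algebras carry the additive shift action $\sigma_m\colon s_{i,a}\mapsto s_{i+m,a},\ a_i\mapsto a_{i+m},\ \xi_i\mapsto\xi_{i+m}$, and $\Phi$ intertwines $\sigma_m$ on the two sides. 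I would then interpret the periodicity conditions \eqref{eq:xicqQ} as cutting out a $\sigma$-invariant locus $P$ in the space of parameters and Bethe roots: on $P$ the shift acts simply by the overall rescalings $\xi_0\mapsto\xi^m\xi_0$, $a_0\mapsto t^m a_0$, $s_a\mapsto t^m s_a$, and restriction of functions to $P$ is exactly the passage from $K^q_T(X_{k,\infty})$ to $K^{q,per}_T(X_{k,\infty})$, where by the Proposition following \eqref{eq:xicqQ} the infinite system \eqref{eq:QQinfQform} collapses to the single equation \eqref{eq:QQgl1} with reduced Bethe equations \eqref{eq:ADHMBethe}.

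The heart of the argument is to check that $\Phi$ carries the periodic locus $P$ on the $X_{k,\infty}$ side to the periodic locus $P^!$ on the $X^!_{k,\infty}$ side. Under the dictionary the K\"ahler ansatz $\xi_i=\xi_0\xi^i$ becomes the equivariant ansatz $\mathfrak{a}_i=\xi_0\xi^i$, and the equivariant ansatz $a_i=a_0 t^i$ becomes the K\"ahler ansatz $\upxi_i=a_0 t^i$, which are precisely the periodicity conditions for the dual quiver in the statement. Because $\Phi$ intertwines the shifts, the quasi-periodicity of the $X$-side roots, $s_{i,a}=s_a t^i$, is transported to the dual-side quasi-periodicity $s^!_{i,a}=s^!_a\xi^i$ (the period in the dual equivariant direction being $\xi$, matching $\mathfrak{a}_i=\xi_0\xi^i$). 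Hence $\Phi(P)=P^!$ and $\Phi$ descends to a well-defined isomorphism $\bar\Phi\colon K^{q,per}_T(X_{k,\infty})\xrightarrow{\sim}K^{q,per}_T(X^!_{k,\infty})$, which is the claimed isomorphism. As a consistency check one can verify directly that the reduced equation \eqref{eq:QQgl1}, and equivalently the Bethe system \eqref{eq:ADHMBethe}, is invariant under the simultaneous swap $t\leftrightarrow\xi$ together with $\hbar\mapsto\hbar^{-1}$.

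The main obstacle I anticipate is precisely the compatibility of the specialization to $P$ with the mirror map, i.e. showing that restricting to the periodic locus commutes with $\Phi$ rather than merely that $\Phi$ is shift-equivariant on the full algebras. Concretely, one must ensure that the collapse of the infinite QQ-system to \eqref{eq:QQgl1} --- and in particular the finite reindexing of Bethe roots that produces \eqref{eq:ADHMBethe} --- is mirrored on the dual side by the induced periodicity $s^!_{i,a}=s^!_a\xi^i$, so that no information is lost or doubled in passing to the quotient. This is where the equivariance statement of Proposition \ref{symminf} must be used in its sharp form, guaranteeing that the periodic data on the two sides correspond term by term under the dictionary of Theorem \ref{Th:MirrorXkr}.
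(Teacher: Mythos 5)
Your proposal is correct and follows essentially the same route as the paper: the paper likewise derives Theorem \ref{Th:Main} by combining the non-periodic mirror self-duality \eqref{infmirror} with the shift-equivariance of the mirror map from Proposition \ref{symminf}, observing that under the parameter dictionary the K\"ahler periodicity $\xi_i=\xi_0\xi^i$ and equivariant periodicity $a_i=a_0t^i$ swap roles on the dual side, and that equivariance forces the dual Bethe roots to satisfy $s^!_{i,a}=s^!_a\xi^i$, so the isomorphism descends to the periodic quotients. Your extra consistency check (invariance of \eqref{eq:QQgl1} under $t\leftrightarrow\xi$, $\hbar\mapsto\hbar^{-1}$) is not in the paper but is a harmless confirmation of the same statement.
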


The algebra $K^{q,per}_T(X^!_{k,\infty})$ characterized by the $QQ$-system from (\ref{eq:QQgl1}) is known as the quantum K-theory of the quiver variety, corresponding to the quiver with one loop, one vertex and one-dimensional framing \figref{fig:QuivADHM} \cite{Koroteev:2020mxs}. This variety is also known as a Hilbert scheme of points in $\mathbb{C}^2$. 
\begin{figure}[!h]
\includegraphics[scale=0.37]{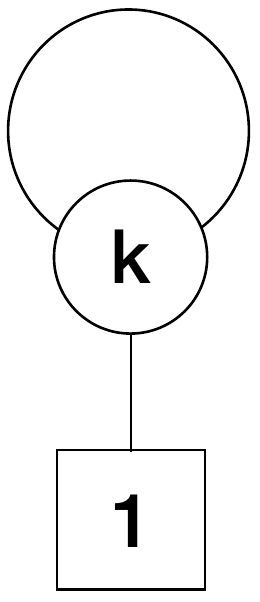}
\caption{Quiver description of Hilb$^k[\mathbb{C}^2]$.}
\label{fig:QuivADHM}
\end{figure}

\vskip.1in
Theorem \ref{Th:Main} can be understood as the 3d mirror self-duality of the quantum K-theory of the Hilbert scheme of points, which is the main result of this paper.

\vskip.1in

In the next section we will remind the reader of this variety and its generalizations, known as ADHM spaces.

\section{3d Mirror Symmetry for the $\mathcal{M}_{N,k}$ Quiver}\label{Sec:MirrorADHM}
In this section we discuss how the quiver variety $\mathcal{M}_{N,k}$, which describes the moduli space of sheaves of rank-$N$ $\mathbb{C}^2$, is transformed under 3d mirror symmetry.

\subsection{The ADHM Spaces}
Let $\mathcal{M}_N$ be the moduli space of rank-$N$ torsion-free sheaves on $\mathbb{P}^2$ with framing at infinity which forces its first Chern class to vanish. The second Chern class ranges over integers thus the space can be represented as a direct sum of disconnected components of all degrees 
$\mathcal{M}_{N} = \bigsqcup_k \mathcal{M}_{N,k}$ \cite{Feigin:2009aa, Negut:2012aa}.

There is an action of maximal torus $T_N:=\mathbb{C}_t^\times\times\mathbb{C}_\hbar^\times\times\mathbb{T}(GL(N))$ on each component $\mathcal{M}_{N,k}$. The first two $\mathbb{C}^\times$ factors act on $\mathbb{P}^2$, while the rest acts on the framing with equivariant parameters $\mathrm{a}_1,\dots \mathrm{a}_N$. The $T_N$-equivariant K-theory of $\mathcal{M}_{N,k}$ is formed by virtual equivariant vector bundles on $\mathcal{M}_{N,k}$. The space $K_{t,\hbar}(\mathcal{M}_{N,k})$ is a module over $\mathbb{C}[t^{\pm 1},\hbar^{\pm 1},\mathrm{a}_1^{\pm 1},\dots \mathrm{a}_N^{\pm 1}]$. 

The relations in the quantum K-theory of $\mathcal{M}_{N,k}$ are given by Bethe equations \eqref{eq:ADHMBethe}, where $\xi$ is the K\"ahler (quantum) parameter (see i.e. \cite{Koroteev:2018}, Section 5).

The space $\mathcal{M}_{N,k}$ can also be described as the moduli spaces of stable representations of the ADHM quiver (left in \figref{fig:MirrorPairsADHMN}).
For $N=1$ this quiver variety describes Hilbert schemes of $k$ points on $\mathbb{C}^2$ \figref{fig:QuivADHM}. The two descriptions in terms of sheaves and the ADHM are equivalent \cite{Nakajima:2014aa,Negut:thesis}.

\subsection{Approximation by A-type quivers}
Consider the following families of quiver varieties -- $X_{k,N,r}$ and $X_{k,N,r}^!$:
\begin{figure}[!h]
\includegraphics[scale=0.37]{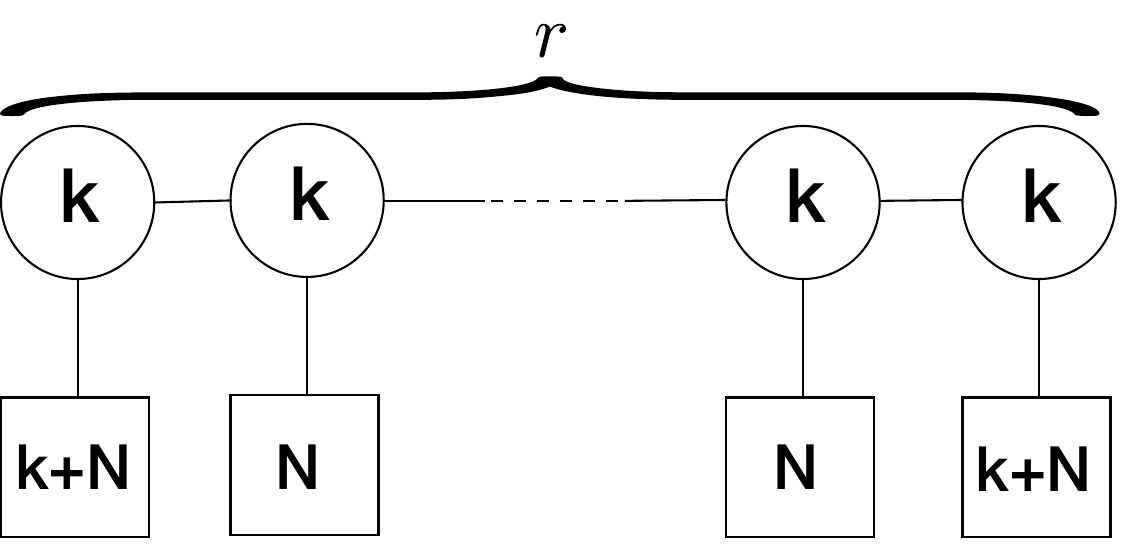}
\caption{The $X_{k,N,r}$ quiver family.}
\label{fig:Ainfmirror2}
\end{figure}

\begin{figure}[!h]
\hspace*{-1cm}\includegraphics[scale=0.5]{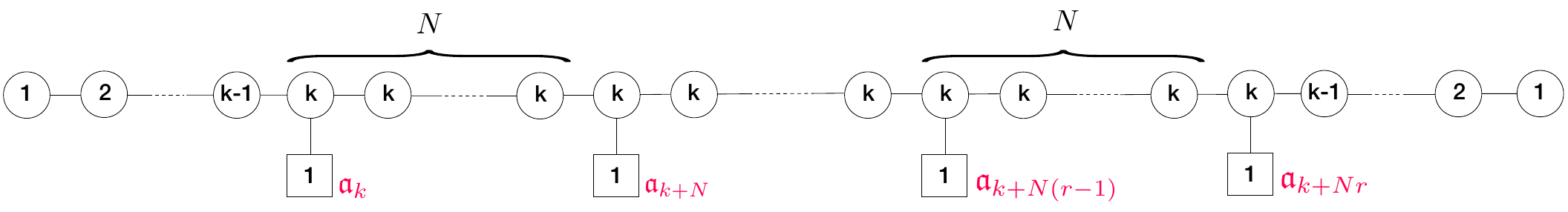}
\caption{The $X_{k,N,r}^!$ quiver family contains $r+1$ framing bundles separated by $N-1$ nodes.}
\label{fig:Binfmirror}
\end{figure}

The following statement we leave for the enthusiastic reader to prove. The proof goes along the way of the proof of Theorem \ref{Th:MirrorXkr}.

\begin{Thm}
Under 3d mirror symmetry quiver varieties $X_{k,N,r}$ and $X_{k,N,r}^!$ are dual to each other. 
\end{Thm}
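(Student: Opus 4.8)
The plan is to follow the proof of Theorem \ref{Th:MirrorXkr} essentially verbatim, replacing the self-dual combinatorics of $X_{k,l}$ by the genuinely dual combinatorics of the pair $(X_{k,N,r}, X_{k,N,r}^!)$, and only at the very end to pass to the cyclic quiver $\mathcal{M}_{N,k}$ by the periodic boundary conditions of Section \ref{Sec:DirectLimit}. First I would establish the combinatorial part. Reading off the quiver data of $X_{k,N,r}$ in \figref{fig:Ainfmirror2}, I would compute the two collections of linking numbers --- the NS5 numbers $r_i^!$ from \eqref{eq:Xlabelsr} and the D5 numbers $r_i$ --- and assemble them into Young diagrams $\lambda$ and $\lambda^!$. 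The claim to verify is that interchanging $\lambda \leftrightarrow \lambda^!$ and recomputing the dual labels through \eqref{eq:X1labelsr1} reproduces exactly the data of $X_{k,N,r}^!$: namely the $r+1$ framing bundles separated by runs of $N-1$ unframed nodes shown in \figref{fig:Binfmirror}. This is a direct check of Definition \ref{eq:DefinitionMirror}, and it is where the rank-$N$ structure enters, since the period-$N$ spacing of the framings on the dual side is precisely the image of the $N$-dimensional framing on the $\mathcal{M}_{N,k}$ side.

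Next I would realize both quantum K-theory algebras through the electric-frame tRS construction. Both $X_{k,N,r}$ and $X_{k,N,r}^!$ arise as resonant degenerations of a full flag quiver $F\mathbb{F}l_L$ with $L = |\lambda| = |\lambda^!|$, obtained by iterating the Higgsing Lemmas \ref{Th:HiggsingLemma} and \ref{Th:HiggsingLemma2} along the chains of equivariant parameters dictated by the linking-number diagram; this endows the tRS Lax matrix with the appropriate Slodowy block structure. Applying the electric-magnetic map $i^{em}$ of Lemma \ref{Th:LemmatRS} (exchanging $M \leftrightarrow T$ and $\hbar \to \hbar^{-1}$) converts the electric presentation of $K_T^q(X_{k,N,r})$ into a magnetic, Wronskian presentation of $F\mathbb{F}l_L$ whose $Z$-twist carries the degenerate eigenvalues fixed by the equivariant parameters of $X_{k,N,r}$.

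Then, exactly as in Proposition \ref{Prop:opertrue}, I would reconstruct the section polynomials $s_i(z)$ from the Wronskian minors $\mathscr{D}_k$ using Lemma \ref{Th:existencePoly}; divisibility of these minors by the appropriate factors of the Wronskian polynomial $W(z)$ produces a QQ-system with regular singularities whose distinct roots furnish precisely the framing of $X_{k,N,r}^!$, with $\deg s_i(z)$ equal to the NS5 linking numbers computed in the first step. This identifies the resulting oper algebra with ${\rm Fun}(\hbar{\rm Op})(X_{k,N,r}^!) \cong K_{T^!}^q(X_{k,N,r}^!)$ under the parameter dictionary of Definition \ref{mirdef}, which is the claimed duality at finite $r$.

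The main obstacle I expect is the bookkeeping of the $N$-periodic framing pattern rather than any conceptual novelty. In Theorem \ref{Th:MirrorXkr} the diagram was self-conjugate, so the two degeneration chains were symmetric; here one must track two different Slodowy patterns and check that the $\hbar$-power relations among equivariant parameters generated by the resonance conditions are compatible with the spacing of framings by $N-1$ nodes on the dual side. Once the finite-$r$ duality is in place, the final step is to take the $r \to \infty$ direct limit and impose the period-$N$ boundary conditions of Section \ref{Sec:DirectLimit}; verifying that the mirror map remains equivariant under the period-$N$ shift --- the rank-$N$ analogue of Proposition \ref{symminf} --- then yields the self-consistent Bethe system \eqref{eq:ADHMBethe} for $\mathcal{M}_{N,k}$ and completes the identification.
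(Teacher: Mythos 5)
Your proposal matches the paper: the paper gives no written proof of this theorem, stating only that it is left to the reader and ``goes along the way of the proof of Theorem \ref{Th:MirrorXkr}'', and your outline --- linking-number combinatorics checking Definition \ref{eq:DefinitionMirror}, electric-frame realization via resonant degeneration of the full flag quiver through the Higgsing lemmas, the electric-magnetic map of Lemma \ref{Th:LemmatRS}, and reconstruction of the dual $\hbar$-oper from Wronskian minors via Lemma \ref{Th:existencePoly} with degrees matching linking numbers --- is precisely that argument carried out for the pair $(X_{k,N,r}, X_{k,N,r}^!)$. The only caveat is that your final step (the $r\to\infty$ limit and period-$N$ boundary conditions) belongs to the subsequent theorems of Section \ref{Sec:MirrorADHM} rather than to this finite-$r$ statement, but including it is harmless since your finite-$r$ argument is already complete before that point.
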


\subsection{Approximation by A$_\infty$ quivers}
The next step is to take limits $r\to\infty$ for both $X_{k,N,r}$ and $X_{k,N,r}^!$ families along the lines of our derivation above.

\begin{figure}[!h]
\includegraphics[scale=0.4]{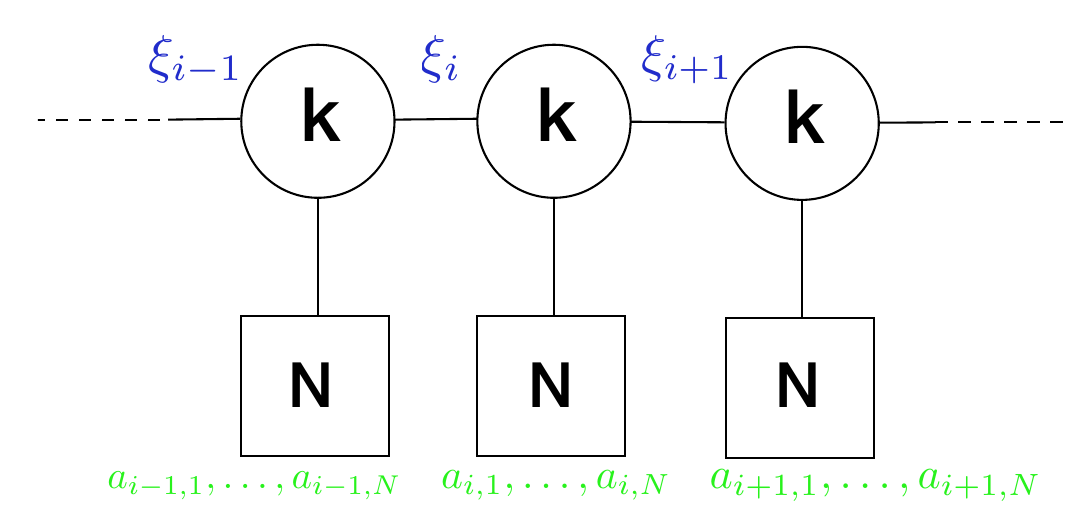}
\caption{The $X_{k,N,\infty}$ quiver family.}
\label{fig:Ainfmirrorinfty}
\end{figure}

\begin{figure}[!h]
\hspace*{-.7cm}\includegraphics[scale=0.5]{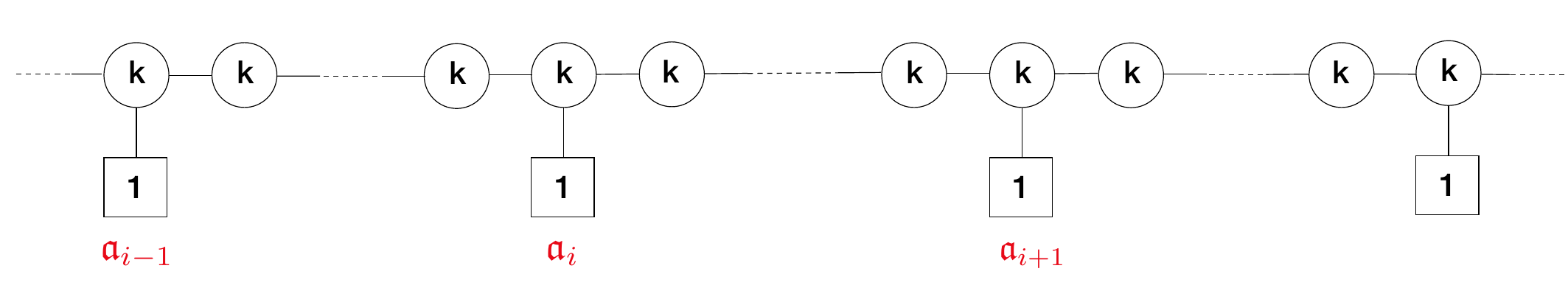}
\caption{The $X_{k,N,\infty}^!$ quiver family.}
\label{fig:Binfmirrorinfty}
\end{figure}

The  K\"ahler parameters of $X_{k,N,\infty}$ and $X_{k,N,\infty}^!$ are given by 
\begin{equation}
\zeta_i = \frac{\xi_{i+1}}{\xi_{i}}\,,\qquad \mathfrak{z}_{i,b} = \frac{\upxi_{i+1,b}}{\upxi_{i,b}}\,,\qquad i\in\mathbb{Z}\,,
\end{equation}
while the equivariant parameters are 
\begin{equation}
a_{j,1},\dots,a_{j,N}\,,\qquad \mathfrak{a}_{j}\,,\qquad j\in\mathbb{Z}\,,
\end{equation}
respectively (see Figures \ref{fig:Ainfmirrorinfty} and \ref{fig:Binfmirrorinfty}).

\begin{Thm}
Under 3d mirror symmetry quiver varieties $X_{k,N,\infty}$ and $X_{k,N,\infty}^!$  are dual to each other. Under the mirror map
\begin{equation}
\xi_i =\mathfrak{a_i}\,\qquad a_{i,b}=\upxi_{i,b}\,,\qquad i\in\mathbb{Z},\qquad b=1,\dots,N\,.
\end{equation}
\end{Thm}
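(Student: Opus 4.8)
The plan is to reduce this statement to the finite-rank duality of the preceding Theorem (the duality of $X_{k,N,r}$ and $X_{k,N,r}^!$) together with a direct-limit argument that mirrors the one carried out for $N=1$ in Section \ref{Sec:DirectLimit}. First I would fix the enumerative setup: the ring $K_T^q(X_{k,N,\infty})$ is the quotient of the algebra of symmetric functions of Bethe roots by the Bethe equations coming from the infinite QQ-system of the form \eqref{eq:QQinfQform}, but with the rank-one framing polynomial $\Lambda_i(z)=z-a_i$ replaced by the rank-$N$ framing $\Lambda_i(z)=\prod_{b=1}^N(z-a_{i,b})$ attached to every vertex, as in Figure~\ref{fig:Ainfmirrorinfty}. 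The dual ring $K_{T^!}^q(X_{k,N,\infty}^!)$ is built from the QQ-system of the quiver in Figure~\ref{fig:Binfmirrorinfty}, where rank-one framings are separated by $N-1$ empty nodes.

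Second, I would introduce reduced algebras $K_T^{red,q}(X_{k,N,r})$ exactly as in the $N=1$ construction: send the boundary twists and boundary equivariant parameters at the two ends of the finite quiver to zero, in analogy with \eqref{eq:droptail}, so that the terminal QQ-relations collapse and the outermost $Q$-polynomials are determined by the remaining data. This truncation is mirror-symmetric, since by the preceding Theorem the eliminated twist parameters on $X_{k,N,r}$ are identified with the eliminated equivariant parameters on $X_{k,N,r}^!$ and vice versa, so the reduction commutes with the mirror map. As in Section \ref{Sec:DirectLimit} one then obtains injective homomorphisms $i_r:K_T^{red,q}(X_{k,N,r})\to K_T^{red,q}(X_{k,N,r+1})$ that are compatible, on both sides, with the finite-rank isomorphisms of the preceding Theorem.

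Third, I would take the direct limit $r\to\infty$ on both sides. Because each $i_r$ intertwines the finite-rank mirror isomorphisms, the limit yields $K_T^q(X_{k,N,\infty})\cong\lim\limits_{\rightarrow} K_T^{red,q}(X_{k,N,r})$ together with its dual statement, and passing the mirror map $\xi_i=\mathfrak{a}_i$, $a_{i,b}=\upxi_{i,b}$, $\hbar^!=\hbar^{-1}$ through the limit produces the claimed isomorphism. Equivariance of the mirror map with respect to the lattice-translation symmetry of the $A_\infty$ Dynkin diagram (the rank-$N$ analogue of Proposition \ref{symminf}) guarantees that the limit is insensitive to where one truncates.

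The hard part will be verifying that the two families of finite-rank truncations are genuinely cofinal and that the limiting infinite QQ-systems agree under the asymmetric parameter dictionary: a single $\mathfrak{a}_i$ on the dual side corresponds to an $N$-tuple $a_{i,1},\dots,a_{i,N}$ on the original side, so one must check that the relations of the type \eqref{eq:ADHMBethe} assemble consistently as $r\to\infty$ and that no relation is lost or spuriously created in the limit. This is precisely the step where the combinatorics of linking numbers from the general type A mirror map (Definition \ref{eq:DefinitionMirror}) must be shown to stabilize under $r\to\infty$, which is why the preceding finite-rank Theorem is the essential input.
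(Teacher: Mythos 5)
Your proposal is correct and takes essentially the same route as the paper: the paper's own argument is precisely to invoke the finite-rank duality of $X_{k,N,r}$ and $X_{k,N,r}^!$ (the preceding theorem) and then ``take limits $r\to\infty$ along the lines of'' the $N=1$ direct-limit construction of Section \ref{Sec:DirectLimit}, i.e.\ boundary truncation as in \eqref{eq:droptail}, reduced algebras with compatible homomorphisms, and translation-equivariance of the mirror map. The paper in fact leaves all of these steps as a sketch, so your elaboration (including the cofinality/consistency check for the asymmetric parameter dictionary) supplies exactly the details the authors leave implicit, rather than diverging from their method.
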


\subsection{$\mathcal{M}_{N,k}$ and its Mirror Dual}
Finally we can impose periodic boundary conditions on the data of quivers $X_{k,N,\infty}$ and $X_{k,N,\infty}^!$ to obtain the desired mirror pair.
\begin{figure}[!h]
\includegraphics[scale=0.6]{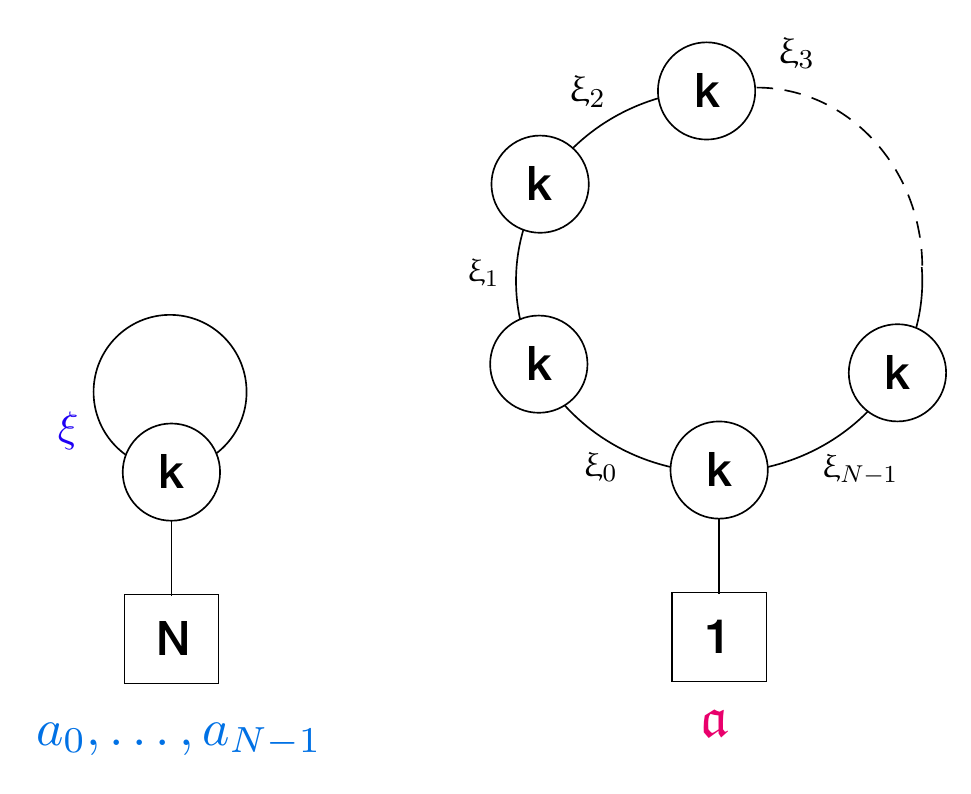}
\caption{The $\mathcal{M}_{N,k}$ quiver (left) and its 3d mirror dual $\widehat{A}_{N-1}$ quiver.}
\label{fig:MirrorPairsADHMN}
\end{figure}

The $QQ$-system for the magnetic frame of $X_{k,N,\infty}$ reads
\begin{align}\label{eq:QQinfQformN}
\xi_{i+1} \,Q^+_i(\hbar z)Q^-_i(z)-\xi_i\,  Q^+_i(z)Q^-_i(\hbar z)&=\Lambda_i (z) Q^+_{i-1}(\hbar z)Q^+_{i+1}(z)\,,\quad i\in\mathbb{Z}\,,
\end{align}
where $\Lambda_i(z)=c_i\prod_{b=0}^{N-1}(z-a_{i,b})$.
The periodic boundary conditions (see \cite{Koroteev:2020mxs}, section 10.3) 
\begin{equation}
\xi_i = \xi_0 \xi^i\,,\qquad a_{i,b}=a_b t^i\,, \qquad Q_i^\pm(z)=\mathcal{Q}^\pm(t^i z)\,, \qquad \Lambda_i(z)= \xi_0 \left(\frac{\xi}{t}\right)^{Ni} \Lambda(z)\,. 
\end{equation}
lead to the following equation
\begin{align}\label{eq:QQinfQformNper}
\xi \,\mathcal{Q}^+(\hbar z)\mathcal{Q}^-(z)-\mathcal{Q}^+(z)\mathcal{Q}^-(\hbar z)&=\Lambda (z) \mathcal{Q}^+(\hbar t^{-1}z)\mathcal{Q}^+(tz)\,,
\end{align}
where $\Lambda(z)=\prod_{b=0}^{N-1}(z-a_b)$ provided that we scaled Bethe roots as $s_{i,a}=s_a t^i$. The K\"ahler parameter of the $\mathcal{M}_{N,k}$ quiver $X_{N}$ is $\xi$ and the equivariant parameters are $a_1,\dots, a_N$.
Notice that when $N=1$ we arrive at \eqref{eq:QQgl1}.

\vskip.2in

Analogously we consider electric frame of the $X_{k,N,\infty}^!$ quiver. 
\begin{align}\label{eq:QQinfQformElN}
\upxi_{i+1} \,Q^{+,!}_i(\hbar^{-1} u)Q^{-,!}_i(u)-\upxi_i\,  Q^{+,!}_i(u)Q^{-,!}_i(\hbar^{-1} u)&=\Lambda^!_i (u) Q^{+,!}_{i-1}(\hbar^{-1} u)Q^{+,!}_{i+1}(u)\,,\quad i\in\mathbb{Z}\,,
\end{align}
where 
\begin{equation}
\Lambda^!_i(u)=
\begin{cases}
c_{i}(u-\mathfrak{a}_i)\,,&\quad i = k N\,,\quad k\in\mathbb{Z}\\
1\,,&\quad \text{otherwise}
\end{cases}
\end{equation}

After imposing periodic boundary conditions 
\begin{equation}
\upxi_{i+N-1} = \upxi_{i-1} t^{i+N-1}\,,\qquad \mathfrak{a_i} = \mathfrak{a}_0 \xi^{i+N}\,,\qquad Q^{\pm,!}_{i+N}(u)=\mathcal{Q}^{\pm,!}_i(\xi^N u)\,, \qquad \Lambda^!_i(u)= \upxi_i\left(\frac{t}{\xi}\right)^{Ni}\Lambda^!(u)\,,
\end{equation}
where $\Lambda^!(u)=u-\mathfrak{a}_0$.

The resulting $QQ$-system will contain $N$ coupled equations
\begin{align}
\upxi_{i+1} \mathcal{Q}^{+,!}_i(\hbar^{-1} u)\mathcal{Q}^{-,!}_i(u) - \upxi_i \mathcal{Q}^{+,!}_i(u)\mathcal{Q}^{-,!}_i(\hbar^{-1} u) &= \mathcal{Q}^{+,!}_{i-1}(\hbar^{-1} \xi^{-1} u)\mathcal{Q}^{-,!}_i(\xi u)\,,\qquad i =1,\dots, N-1\notag\\
t^N\upxi_{0} \mathcal{Q}^{+,!}_1(\hbar^{-1} u)\mathcal{Q}^{-,!}_1(u) - \upxi_{N-1} \mathcal{Q}^{+,!}_1(u)\mathcal{Q}^{-,!}_i(\hbar^{-1} u) &= \Lambda^!(u)\mathcal{Q}^{+,!}_{N}(\hbar^{-1} \xi^{-1} u)\mathcal{Q}^{-,!}_1(\xi u)\,,
\end{align}
where $\Lambda^!(u)=u-\mathfrak{a}_0$.
The K\"ahler parameters of the $\widehat{A}_{N-1}$ quiver $X_N^!$ are $\frac{\upxi_1}{\upxi_0},\dots,\frac{\upxi_{N-1}}{\upxi_{N-2}},\frac{\upxi_0}{\upxi_{N-1}}$, while its sole equivariant parameter is $t$.

Again we can see that for $N=1$ we get back the $QQ$-system for the Hilbert scheme.

\vskip.1in

We can formulate the final theorem
\begin{Thm}
Under 3d mirror symmetry the ADHM quiver varieties $X_{N}$ is dual to the $\widehat{A}_{N-1}$ quiver variety $X_N^!$ (right quiver in \figref{fig:MirrorPairsADHMN})
Upon this duality the torus parameters of $X_1$ and $X_1^!$ are mapped to each other as follows
\begin{equation}
\{\hbar,\xi,t,a_0,\dots,a_{N-1}\} \longleftrightarrow \{\hbar^{-1},t,\xi,\upxi_0,\dots,\upxi_{N-1}\}\,.
\end{equation}
\end{Thm}

\bibliography{cpn1}
\end{document}